
\documentclass[a4paper]{amsart}
\usepackage{amsmath,amssymb}
\usepackage{stmaryrd}
\usepackage{tikz}
\usepackage[normalem]{ulem}

\usepackage{color}





\title[On the Uniform Computational Content of Computability Theory]
{On the Uniform Computational Content\\ of Computability Theory}

\author{Vasco Brattka}
\address{Faculty of Computer Science, Universit\"at der Bundeswehr M\"unchen, Germany and 
             Department of Mathematics \& Applied Mathematics, University of Cape Town, South Africa\footnote{Vasco Brattka is supported by the National Research Foundation of South Africa.}} 
\email{Vasco.Brattka@cca-net.de}

\author{Matthew Hendtlass}
\address{University of Canterbury, Christchurch, New Zealand}
\email{Matthew.Hendtlass@canterbury.ac.nz}

\author{Alexander P.\ Kreuzer}
\address{Department of Mathematics, National University of Singapore, Singapore\footnote{Alexander P.\ Kreuzer is supported by the Ministry of Education of Singapore through grant R146-000-184-112 (MOE2013-T2-1-062).}}
\email{matkaps@posteo.de}



\def\AA{{\mathcal A}}

\def\DD{{\mathcal D}}

\def\PP{{\mathcal P}}



\def\IN{{\mathbb{N}}}
\def\IQ{{\mathbb{Q}}}
\def\IR{{\mathbb{R}}}


\def\TO{\Longrightarrow}
\def\In{\subseteq}

\def\into{\hookrightarrow}

\def\prefix{\sqsubseteq}

\def\mto{\rightrightarrows}

\def\id{{\rm id}}

\def\dom{{\rm dom}}
\def\range{{\rm range}}

\def\Baire{{\IN^\IN}}

\def\cc{\mathrm c}

\def\Tr{{\rm Tr}}

\newcommand{\SO}[1]{{{\bf\Sigma}^0_{#1}}}

\newcommand{\PO}[1]{{{\bf\Pi}^0_{#1}}}

\newcommand{\pO}[1]{{\Pi^0_{#1}}}

\def\LPO{\text{\rm\sffamily LPO}}
\def\LLPO{\text{\rm\sffamily LLPO}}
\def\WKL{\text{\rm\sffamily WKL}}
\def\BWKL{\text{\rm\sffamily BWKL}}

\def\BCT{\text{\rm\sffamily BCT}}

\def\BWT{\text{\rm\sffamily BWT}}

\def\C{\mbox{\rm\sffamily C}}

\def\LPO{\mbox{\rm\sffamily LPO}}
\def\LLPO{\mbox{\rm\sffamily LLPO}}

\def\MLPO{\mbox{\rm\sffamily MLPO}}

\def\WBWT{\text{\rm\sffamily WBWT}}
\def\SBWT{\text{\rm\sffamily SBWT}}

\def\Low{\text{\rm\sffamily L}}
\def\CL{\text{\rm\sffamily CL}}

\def\J{\text{\rm\sffamily J}}

\def\JIT{\text{\rm\sffamily JIT}}
\def\LBT{\text{\rm\sffamily LBT}}
\def\KPT{\text{\rm\sffamily KPT}}

\def\HYP{\text{\rm\sffamily HYP}}
\def\NON{\text{\rm\sffamily NON}}
\def\DNC{\text{\rm\sffamily DNC}}
\def\ACC{\text{\rm\sffamily ACC}}

\def\MLR{\text{\rm\sffamily MLR}}
\def\WWKL{\text{\rm\sffamily WWKL}}
\def\GEN{\text{\rm\sffamily GEN}}
\def\WGEN{\text{\rm\sffamily WGEN}}

\def\COH{\text{\rm\sffamily COH}}
\def\PA{\text{\rm\sffamily PA}}

\def\leqM{\mathop{\leq_{\mathrm{M}}}}

\def\leqT{\mathop{\leq_{\mathrm{T}}}}
\def\nleqT{\mathop{\not\leq_{\mathrm{T}}}}
\def\equivT{\mathop{\equiv_{\mathrm{T}}}}
\def\nequivT{\mathop{\not\equiv_{\mathrm{T}}}}

\def\nT{\mathop{|_{\mathrm{T}}}}

\def\leqW{\mathop{\leq_{\mathrm{W}}}}
\def\equivW{\mathop{\equiv_{\mathrm{W}}}}

\def\leqSW{\mathop{\leq_{\mathrm{sW}}}}

\def\equivSW{\mathop{\equiv_{\mathrm{sW}}}}

\def\nleqW{\mathop{\not\leq_{\mathrm{W}}}}
\def\nleqSW{\mathop{\not\leq_{\mathrm{sW}}}}

\def\lW{\mathop{<_{\mathrm{W}}}}
\def\lSW{\mathop{<_{\mathrm{sW}}}}
\def\nW{\mathop{|_{\mathrm{W}}}}
\def\nSW{\mathop{|_{\mathrm{sW}}}}

\def\bigtimes{\mathop{\mathsf{X}}}

\def\stars{*_{\rm s}\;\!}

\newcommand{\dash}{\mbox{-}}






\date{\today}

\newtheorem{theorem}{Theorem}[section]
\newtheorem{proposition}[theorem]{Proposition}
\newtheorem{lemma}[theorem]{Lemma}
\newtheorem{fact}[theorem]{Fact}

\newtheorem{corollary}[theorem]{Corollary}

\theoremstyle{definition}
\newtheorem{definition}[theorem]{Definition}

\newtheorem{example}[theorem]{Example}

\newtheorem{question}[theorem]{Question}

\begin{document}

\maketitle

\begin{abstract}
We demonstrate that the Weihrauch lattice can be used to classify 
the uniform computational content of computability-theoretic properties as well as the computational content 
of theorems in one common setting. 
The properties that we study include diagonal non-computability, hyperimmunity, 
complete consistent extensions of Peano arithmetic, 1-genericity, Martin-L\"of randomness,
and cohesiveness.  The theorems that we include in our case study are the
low basis theorem of Jockusch and Soare, the Kleene-Post theorem, and Friedberg's jump inversion theorem.
It turns out that all the aforementioned properties and many theorems in computability theory,
including all theorems that claim the existence of some Turing degree, have very little uniform computational content:
they are located outside of the upper cone of binary choice (also known as LLPO); we call problems with
this property {\em indiscriminative}.
Since practically all theorems from classical analysis whose computational content has been classified
are discriminative, our observation could
yield an explanation for why theorems and results in computability theory typically have very
few direct consequences in other disciplines such as analysis.
A notable exception in our case study is the low basis theorem which is discriminative. This is perhaps why it is
considered to be one of the most applicable theorems in computability theory.
In some cases a bridge between the indiscriminative world and the discriminative world of classical mathematics can be 
established via a suitable residual operation and we demonstrate this in the case of 
the cohesiveness problem and the problem of consistent complete extensions of Peano arithmetic.
Both turn out to be the quotient of two discriminative problems.
\  \bigskip \\
{\bf Keywords:} computable analysis, Weihrauch lattice, computability theory.
\end{abstract}

\setcounter{tocdepth}{1}
\tableofcontents

\section{Introduction}
\label{sec:introduction}

In this paper we start a new line of research with the aim of analyzing the uniform computational content
of theorems and properties of computability theory in the Weihrauch lattice.
This is very much in the spirit of previous research by Simpson \cite{Sim09a}, Downey et al.\ \cite{DGJM11}, Jockusch and Lewis \cite{JL13} and others
who have compared computability-theoretic properties in the Medvedev lattice and, in fact, we can 
import results from this setting (see Lemma~\ref{lem:Medvedev}).

However, the Weihrauch lattice allows a higher degree of uniformity than the Medvedev lattice in the sense that problems that depend on a parameter
can be classified: the objects in the Weihrauch lattice are (multi-valued) functions on Baire space and not just subsets of Baire space. Hence, we can not only
include computability properties in their fully relativized versions into our study but also theorems of computability theory
(in for all-exists form).

In this regard, this paper can be seen as a continuation of the classification of the uniform computational content of theorems from analysis,
which has been started in \cite{GM09,Pau10,BG11,BG11a,BBP12,BGM12,BLP12a,BGH15a,DDH+16}.
Theorems that have been classified in this context include, among others:

\begin{enumerate}
\item the Hahn-Banach theorem \cite{GM09},
\item the intermediate value theorem \cite{BG11a},
\item the (functional analytic) Baire category theorem \cite{BG11a},
\item the Banach inverse mapping theorem \cite{BG11a},
\item the Bolzano-Weierstra\ss{} theorem \cite{BGM12},
\item the Brouwer fixed point theorem \cite{BLP12a},
\item the Nash equilibria existence theorem \cite{Pau10},
\item the Radon-Nykodim theorem \cite{HRW12}.
\end{enumerate}

All these theorems have a certain computational property of combinatorial nature in common: they all compute binary choice---we call a theorem with this property {\em discriminative}. Mathematically speaking, this means that
choice of the two point space $\C_2$ or, equivalent $\LLPO$, is Weihrauch reducible to the problem
in question. 

The most basic part of computability theory is fully constructive
and computable, but as soon as the theory advances to non-trivial existence results it becomes increasingly 
non-constructive. The purpose of this study is to characterize the exact amount of constructivity (which corresponds to
the uniform computational content) of computability theory in terms of the Weihrauch lattice. 
In particular, we study the following theorems from computability theory:

\begin{enumerate}
\item the Kleene-Post theorem $\KPT$,
\item Friedberg's jump inversion theorem $\JIT$,
\item the low basis theorem of Jockusch and Soare $\LBT$.
\end{enumerate}

It turns out that almost all theorems of computability theory that we have studied---with the notable exception of the low basis theorem--- are 
{\em indiscriminative}: they do not even compute binary choice. As a consequence of this, a large part of 
computability theory cannot have any direct implications on more classical mathematics,
such as the part of analysis discussed above. More precisely, no indiscriminative theorem 
computes any discriminative theorem; that is, if $A$ is discriminative and $B$ is indiscriminative, then $A$ is not Weihrauch reducible to $B$. Of course, this does not mean that computability theory is not useful in analysis, 
it only means that some typical theorems in analysis cannot be a {\em direct consequence} (in the sense made precise
by Weihrauch reducibility) of a typical theorem in computability theory.

The one exception in the computability theory that we have considered is the low basis theorem, 
and this may explain why this theorem has been considered as a particularly applicable theorem
within computability theory. For instance, Barry Cooper writes of the low basis theorem \cite[Page 330]{Coo04}: 
\begin{quotation}
``Here is our most useful basis, proved by Jockusch and Soare around 1972.
You often come across applications of it in the most unexpected places." 
\end{quotation}
We identify one common topological reason behind the fact that large parts of computability theory are
indiscriminative: the corresponding theorems are densely realized in the sense that for-all-exists 
statements of the form 
\[(\forall x\in X)(\exists y\in Y)\;P(x,y)\] 
have multi-valued realizers (i.e., multi-valued Skolem functions)
whose image is dense in $Y$. 
In Section~\ref{sec:indiscriminative} we give a more precise definition of this property, which we call {\em densely realized}.
Sometimes this is a specific feature of $P$, but often a statement is densely realized because of the mere
choice of $Y$. For instance, a Turing degree is naturally represented by one of its members $p\in\IN^\IN$.
By $\DD$ we denote the set of Turing degrees and we use the symbol $\leqT$ for Turing reducibility on degrees.
Since Turing degrees are invariant under finite
modifications of their representatives, we immediately obtain that {\em any} existence theorem of the
form $(\forall x\in X)(\exists {\mathbf d}\in\DD)\;P(x,{\mathbf d})$, which claims the existence of a Turing degree, is automatically
densely realized and hence indiscriminative. This explains why large parts of computability are indiscriminative. 

The particular computability-theoretic properties that we include in our study are the following (of which only $\DNC$ is discriminative):

\begin{enumerate}
\item diagonal non-computability $\DNC$,
\item complete consistent extensions of Peano arithmetic $\PA$,
\item hyperimmunity $\HYP$,
\item Martin-L\"of randomness $\MLR$,
\item $1$--genericity $1\dash\GEN$,
\item cohesiveness $\COH$.
\end{enumerate}

In fact, one can also interpret all these properties as existence theorems for corresponding objects. 
For instance, $\MLR$ can be seen as the existence
of a Martin-L\"of random point $q$ relative to some given $p$, so $\MLR(p)$ is the set of 
all Martin-L\"of randoms relative to $p$. In this sense we consider fully relativized versions of these
properties in the Weihrauch lattice.

$\COH$ has already been studied from a uniform perspective, in the context
of combinatorial principles, by Dorais et al.\ \cite{DDH+16} and Hirschfeldt \cite{Hir15}.
$\MLR$ has been considered from a uniform perspective in \cite{BGH15} and \cite{BP17}. 
$\DNC$ has independently been studied by Higuchi and Kihara \cite{HK14a}.

For a quick survey of our results the reader might wish to consult the diagram in 
Figure~\ref{fig:diagram-DNC} on Page~\pageref{fig:diagram-DNC} and, in particular,
the final diagram in Figure~\ref{fig:diagram-Full} on Page~\pageref{fig:diagram-Full}.

In Section~\ref{sec:preliminaries} we present some basic facts on the Weihrauch lattice
that are included in order to keep this paper self-contained. 
In Section~\ref{sec:choice} we include a preliminary discussion on some versions
of choice and omniscience that we need throughout this paper.
In Section~\ref{sec:indiscriminative} we introduce the notion of an indiscriminative problem
or theorem and we prove some basic facts about them. 
In Section~\ref{sec:DNC} we discuss diagonal non-computability, and in Section~\ref{sec:PA} we study
the closely related problem of complete extensions of Peano arithmetic. Hereafter, whenever we refer to complete extensions of Peano arithmetic we mean complete and consistent extensions. Among other results, we 
prove the characterization
\[\PA\equivW(\C_\IN'\to\WKL).\]
This shows that $\PA$ can be seen as a quotient of weak K\H{o}nig's lemma $\WKL$ and $\C_\IN'$ (which denotes the jump of choice on
the natural numbers). Results like this show how an indiscriminative problem can be used to characterize
the relationship between two discriminative problems and hence establish a connection between analysis 
and computability theory. 
In Section~\ref{sec:MLR-WWKL} we consider Martin-L\"of randomness and different uniform versions of
weak weak K\H{o}nig's lemma (denoted by $\WWKL$) in relation to the previously mentioned problems. 
The relevant versions of $\WWKL$ are taken from \cite{BGH15a}.
The results achieved in this context are summarized in the diagram in Figure~\ref{fig:diagram-DNC} on Page~\pageref{fig:diagram-DNC}.
In Section~\ref{sec:LBT} we classify the uniform computational content of the low basis theorem in 
relation to other known problems related to lowness and weak K\H{o}nig's lemma. 
In Section~\ref{sec:HYP} we continue with a study of the hyperimmunity problem.
In Sections~\ref{sec:KPT} and \ref{sec:JIT} we study the Kleene-Post theorem and the jump inversion theorem
of Friedberg, respectively. The well-known fact that van Lambalgen's theorem can be used to 
create a pair of incomparable Turing degrees yields a uniform reduction of the Kleene-Post theorem to $\MLR$. 
Likewise a theorem of Yu allows a reduction of $\KPT$ to $1\dash\GEN$.
The jump inversion theorem appears to be the only current example of a theorem that is
continuous but not computable. 
Sections~\ref{sec:COH} and \ref{sec:COH-deg} are devoted to the cohesiveness problem
and Section~\ref{sec:BWT} to closely related weak versions of the Bolzano-Weierstra\ss{} theorem.
In this section we borrow some ideas of the proof-theoretic study of cohesiveness and the Bolzano-Weierstra\ss{} theorem
in \cite{Kre11}. One of the main results on the cohesiveness problem is that it can be characterized with the help
of the limit operation $\lim$  and the jump of weak weak K\H{o}nig's lemma by
\[\COH\equivW(\lim\to\WWKL').\]
This is another instance of the phenomenon that an indiscriminative problem appears as a quotient
between discriminative problems. 
The results of this paper are extended by further results on the uniform computational content 
of different versions of the Baire category theorem and $1$--genericity that are presented in \cite{BHK16}.
These results are also included in the diagram in Figure~\ref{fig:diagram-Full} on Page~\pageref{fig:diagram-Full}.

\section{Preliminaries}
\label{sec:preliminaries}

In this section we give a brief introduction into the Weihrauch lattice and we provide some basic notions from
probability theory. 

\subsection*{Pairing Functions}

We are going to use some standard pairing functions in the following that we briefly summarize.
By $\IN:=\{0,1,2,...\}$ we denote the set of {\em natural numbers}.
As usual, we denote by 
$\langle n,k\rangle :=\frac{1}{2}(n+k+1)(n+k)+k$
the Cantor pair of two natural numbers $n,k\in\IN$.
We define the pairing $\langle p,q\rangle\in\IN^\IN$ of two sequences $p,q\in\IN^\IN$ by 
$\langle p,q\rangle(n):=p(k)$ when $n=2k$ is even and by $\langle p,q\rangle(n)=q(k)$ when $n=2k+1$ is odd.
By $\langle k,p\rangle:=kp\in\IN^\IN$ 
we denote the natural pairing of a number $k\in\IN$ with a sequence $p\in\IN^\IN$.
By $\IN\times2^\IN$ we denote the set of sequences that have a natural number as first component and a 0 or 1 for every other component.
We also define a pairing function
$\langle p_0,p_1\rangle:=\langle\langle p_0(0),p_1(0)\rangle,\langle \overline{p_0},\overline{p_1}\rangle\rangle$,
for $p_0,p_1\in\IN\times2^\IN$, where $\overline{p_i}(n)=p_i(n+1)$.
Finally, we use the pairing $\langle p_0,p_1,p_2,...\rangle\in\IN^\IN$ for $p_i\in\IN^\IN$ defined by
$\langle p_0,p_1,p_2,...\rangle\langle i,j\rangle:=p_i(j)$.

\subsection*{The Weihrauch Lattice}

The original definition of Weihrauch reducibility is due to Klaus Weihrauch
and has been studied for many years (see \cite{Ste89,Wei92a,Wei92c,Her96,Bra99,Bra05}).
More recently it has been noticed that a certain variant of this reducibility yields
a lattice that is very suitable for the classification of the computational content of mathematical theorems
(see  \cite{GM09,Pau10,Pau10a,BG11,BG11a,BBP12,BGM12}). The basic reference for all notions
from computable analysis is Weihrauch's textbook \cite{Wei00}.
The Weihrauch lattice is a lattice of multi-valued functions on represented
spaces. 

A {\em representation} $\delta$ of a set $X$ is just a surjective partial
map $\delta:\In\IN^\IN\to X$. In this situation we call $(X,\delta)$ a {\em represented space}.
In general we use the symbol ``$\In$'' to indicate that a function is potentially partial.
We work with partial multi-valued functions $f:\In X\mto Y$ where $f(x)\In Y$ denotes the set of possible
values upon input $x\in\dom(f)$. If $f$ is single-valued, then for the sake of simplicity we identify $f(x)$ with its unique inhabitant.
We denote the {\em composition} of
two (multi-valued) functions $f:\In X\mto Y$ and $g:\In Y\mto Z$ either by $g\circ f$ or by $gf$.
It is defined by 
\[g\circ f(x):=\{z\in Z:(\exists y\in Y)(z\in g(y)\mbox{ and }y\in f(x))\},\]
where $\dom(g\circ f):=\{x\in X:f(x)\In\dom(g)\}$.
Using represented spaces we can define the concept of a realizer. 

\begin{definition}[Realizer]
Let $f : \In (X, \delta_X) \mto (Y, \delta_Y)$ be a multi-valued function on represented spaces.
A function $F:\In\IN^\IN\to\IN^\IN$ is called a {\em realizer} of $f$, in symbols $F\vdash f$, if
$\delta_YF(p)\in f\delta_X(p)$ for all $p\in\dom(f\delta_X)$.
\end{definition}

Realizers allow us to transfer the notions of computability
and continuity, and other notions available for Baire space, to any represented space;
a function between represented spaces will be called {\em computable} if it has a computable realizer, etc.
Now we can define Weihrauch reducibility.

\begin{definition}[Weihrauch reducibility]
Let $f,g$ be multi-valued functions on represented spaces. 
Then $f$ is said to be {\em Weihrauch reducible} to $g$, in symbols $f\leqW g$, if there are computable
functions $K,H:\In\IN^\IN\to\IN^\IN$ such that $H\langle \id, GK \rangle \vdash f$ for all $G \vdash g$.
Moreover, $f$ is said to be {\em strongly Weihrauch reducible} to $g$, in symbols $f\leqSW g$,
if the analogous condition holds with the property $HGK\vdash f$ in place of  $H\langle \id, GK \rangle \vdash f$.
\end{definition}

The difference between ordinary and strong Weihrauch reducibility is that the ``output modifier'' $H$ has
direct access to the original input in the case of ordinary Weihrauch reducibility but not in the case of strong Weihrauch reducibility. 
There are algebraic and other reasons to consider ordinary Weihrauch reducibility as the more natural variant. 
For instance, one can characterize the reduction $f\leqW g$ as follows: $f\leqW g$ holds if and only if a Turing machine can compute $f$ in such a way that
it evaluates the ``oracle'' $g$ exactly on one (usually infinite) input during the course of its computation (see \cite[Theorem~7.2]{TW11}).
We will use the strong variant $\leqSW$ of Weihrauch reducibility mostly for technical purposes; for instance,
it is better suited to study jumps, since jumps are monotone with respect to strong reductions but not in general for ordinary reductions.

We note that the relations $\leqW$, $\leqSW$ and $\vdash$ implicitly refer to the underlying representations, which
we will only mention explicitly if necessary. It is known that these relations only depend on the underlying equivalence
classes of representations and not on the specific representatives (see Lemma~2.11 in \cite{BG11}).
The relations $\leqW$ and $\leqSW$ are reflexive and transitive; thus they induce corresponding partial orders on the sets of 
their equivalence classes---we refer to these as {\em Weihrauch degrees} and {\em strong Weihrauch degrees}, respectively.
These partial orders will also be denoted by $\leqW$ and $\leqSW$. The induced lattice and semi-lattice, respectively, are distributive
(for~details~see~\cite{Pau10a}~and~\cite{BG11}).
We use $\equivW$ and $\equivSW$ to denote the respective equivalences regarding $\leqW$ and $\leqSW$, 
by $\lW$ and $\lSW$ we denote strict reducibility and by $\nW,\nSW$ we denote incomparability in the respective senses.

\subsection*{The Algebraic Structure}

The partially ordered structures induced by the two variants of Weihrauch reducibility are equipped with a number of useful algebraic operations that we summarize in the next definition.
We use $X\times Y$ to denote the ordinary set-theoretic {\em product}, $X\sqcup Y:=(\{0\}\times X)\cup(\{1\}\times Y)$ 
to denote {\em disjoint sums} or {\em coproducts}, and $\bigsqcup_{i=0}^\infty X_i:=\bigcup_{i=0}^\infty(\{i\}\times X_i)$ to denote the 
{\em infinite coproduct}. By $X^i$ we denote the $i$--fold product of a set $X$ with itself, where $X^0=\{()\}$ is some canonical singleton.
By $X^*:=\bigsqcup_{i=0}^\infty X^i$ we denote the set of all {\em finite sequences over $X$}
and by $X^\IN$ the set of all {\em infinite sequences over $X$}. 
All these constructions have parallel canonical constructions on representations. The corresponding representations
are denoted by: $[\delta_X,\delta_Y]$ for the product of $(X,\delta_X)$ and $(Y,\delta_Y)$; $\delta_X^n$ for the $n$-fold product of $(X,\delta_X)$ with itself, where $n\in\IN$ and $\delta_X^0$ is a representation of the one-point set $\{()\}=\{\varepsilon\}$; $\delta_X\sqcup\delta_Y$ for the coproduct; $\delta^*_X$ for $X^*$; and $\delta_X^\IN$ for $X^\IN$. For instance, $(\delta_X\sqcup\delta_Y)$ can be defined by $(\delta_X\sqcup\delta_Y)\langle n,p\rangle:=(0,\delta_X(p))$
if $n=0$ and $(\delta_X\sqcup\delta_Y)\langle n,p\rangle:=(1,\delta_Y(p))$ otherwise.
Likewise, $\delta^*_X\langle n,p\rangle:=(n,\delta_X^n(p))$.
See \cite{Wei00} or \cite{BG11,Pau10a,BBP12} for details of the definitions of the other representations. We will always assume that these canonical representations
are used, if not mentioned otherwise. 

\begin{definition}[Algebraic operations]
\label{def:algebraic-operations}
Let $f:\In X\mto Y$ and $g:\In Z\mto W$ be multi-valued functions. Then we define
the following operations:
\begin{enumerate}
\itemsep 0.2cm
\item $f\times g:\In X\times Z\mto Y\times W, (f\times g)(x,z):=f(x)\times g(z)$ \hfill (product)
\item $f\sqcap g:\In X\times Z\mto Y\sqcup W, (f\sqcap g)(x,z):=f(x)\sqcup g(z)$ \hfill (sum)
\item $f\sqcup g:\In X\sqcup Z\mto Y\sqcup W$, with $(f\sqcup g)(0,x):=\{0\}\times f(x)$ and\\
        $(f\sqcup g)(1,z):=\{1\}\times g(z)$ \hfill (coproduct)
\item $f^*:\In X^*\mto Y^*,f^*(i,x):=\{i\}\times f^i(x)$ \hfill (finite parallelization)
\item $\widehat{f}:\In X^\IN\mto Y^\IN,\widehat{f}(x_n):=\bigtimes\limits_{i\in\IN} f(x_i)$ \hfill (parallelization)
\end{enumerate}
\end{definition}

In this definition and in general we denote by $f^i:\In X^i\mto Y^i$ the $i$--th fold product
of the multi-valued map $f$ with itself ($f^0$ is the constant function on the canonical singleton).
It is known that $f\sqcap g$ is the {\em infimum} of $f$ and $g$ with respect to both strong and
ordinary Weihrauch reducibility (see \cite{BG11}, where this operation was denoted by $\oplus$).
Correspondingly, $f\sqcup g$ is known to be the {\em supremum} of $f$ and $g$ with respect to ordinary Weihrauch reducibility $\leqW$ (see \cite{Pau10a}).
This turns the partially ordered structure of Weihrauch degrees (induced by $\leqW$) into a lattice,
which we call the {\em Weihrauch lattice}.
The two operations $f\mapsto\widehat{f}$ and $f\mapsto f^*$ are known to be closure operators
in this lattice (see \cite{BG11,Pau10a}).

There is some useful terminology related to these algebraic operations. 
We say that $f$ is a {\em cylinder} if $f\equivSW\id\times f$ where $\id:\Baire\to\Baire$ here, and hereafter,
denotes the identity on Baire space. 
For a cylinder $f$ and any $g$ we have that $g\leqW f$ is equivalent to $g\leqSW f$ (see \cite{BG11}).
We say that $f$ is {\em idempotent} if $f\equivW f\times f$ and {\em strongly idempotent} if $f\equivSW f\times f$. 
We say that a multi-valued function $f$ on represented spaces is {\em pointed} if it has a computable
point in its domain. This property is equivalent to $\id\leqW f$ and hence we call $f$ {\em strongly pointed} if
$\id\leqSW f$ holds. For pointed $f$ and $g$ we obtain $f\sqcup g\leqSW f\times g$. 
The properties of pointedness and idempotency are both preserved under
equivalence and hence they can be considered as properties of the respective degrees.
For a pointed $f$, the finite parallelization $f^*$ can also be considered as {\em idempotent closure} since idempotency is equivalent to $f\equivW f^*$ in this case.
We call $f$ {\em parallelizable} if $f\equivW\widehat{f}$; it is easy to see that $\widehat{f}$ is always idempotent.
Analogously, we call $f$ {\em strongly parallelizable} if $f\equivSW\widehat{f}$.

More generally, we define {\em countable coproducts} $\bigsqcup_{i\in\IN} f_i:\In\bigsqcup_{i\in\IN} X_i\mto\bigsqcup_{i\in\IN} Y_i$ 
for a sequence $(f_i)$ of multi-valued functions $f_i:\In X_i\mto Y_i$ on represented spaces with the operation given by $(\bigsqcup_{i\in\IN} f_i)(i,u):=\{i\}\times f_i(u)$. 
Using this notation we obtain $f^*=\bigsqcup_{i\in\IN} f^i$. 
We can also define a {\em countable sum} $\bigsqcap_{i\in\IN} f_i:\In \bigtimes_{i\in\IN} X_i\mto\bigsqcup_{i\in\IN} Y_i$, by $\left(\bigsqcap_{i\in\IN} f_i\right)(x_i)_i:=\bigsqcup_{i\in\IN} f_i(x_i)$.

One should note however, that $\bigsqcap$ and $\bigsqcup$ do not provide infima and
suprema of sequences. By a result of Higuchi and Pauly \cite[Proposition~3.15]{HP13} the Weihrauch lattice has no non-trivial suprema (i.e., a sequence
$(s_n)$ has a supremum $s$ if and only if $s$ is already the supremum of a finite prefix of the sequence $(s_n)_n$)
and likewise by \cite[Corollary~3.18]{HP13} the pointed Weihrauch degrees do not have non-trivial infima. 
In particular, the Weihrauch lattice is not complete.\footnote{We note, however, that for the continuous variant of Weihrauch reducibility
the objects $\bigsqcup_{n\in\IN}f_n$ and $\bigsqcap_{n\in\IN}f_n$ are suprema and infima of the sequence $(f_n)_n$, respectively, and the corresponding
continuous version of the Weihrauch lattice is actually countably complete (see \cite{HP13}).}

\subsection*{Compositional Products and Implications}

While the Weihrauch lattice is not complete, some suprema and some infima exist in general.
The following result was proved by the first author and Pauly in \cite{BP16} and ensures the existence of certain important maxima and minima.

\begin{proposition}[Compositional products and implication]
Let $f,g$ be multi-valued functions on represented spaces. Then the following Weihrauch degrees exist:
\begin{enumerate}
\itemsep 0.2cm
\item $f *g:=\max\{f_0\circ g_0:f_0\leqW f\mbox{ and }g_0\leqW g\}$ \hfill (compositional product)
\item $f\to g:=\min\{h:g\leqW f* h\}$ \hfill (implication)
\end{enumerate}
\end{proposition}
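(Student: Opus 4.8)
The plan is to prove each existence statement by producing an explicit representative of the claimed degree and then verifying the two halves of the extremality condition: that the representative lies in the set, and that it dominates (respectively, is dominated by) every member of the set. Fix representations $\delta_X,\delta_Y,\dots$ of all spaces involved, write $\dom f$ etc.\ for the domains, and let $\mathcal{C}$ be the represented space of partial continuous functions on $\Baire$ with the standard representation making evaluation $(\Phi,p)\mapsto\Phi(p)$ computable; note that a computable function with a parameter hard-wired in is still continuous, so codes of such functions can be produced by the preprocessor of a Weihrauch reduction.

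\emph{Compositional product.} For $f:\In X\mto Y$ and $g:\In Z\mto W$ I would take the map $F:\In Z\times\mathcal{C}\mto\Baire\times Y$ whose domain is the set of $(z,\Phi)$ with $z\in\dom g$ such that $\Phi$ sends every $\delta_W$-name of every element of $g(z)$ to a $\delta_X$-name of an element of $\dom f$, and with
\[F(z,\Phi):=\{\langle q,v\rangle: q\text{ is a }\delta_W\text{-name of some }w\in g(z)\text{ and }\delta_Y(v)\in f\delta_X(\Phi(q))\}.\]
Keeping the intermediate name $q$ in the output is crucial, because the post-processor of a reduction into $F$ must be able to reconstruct the value of $g$. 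First I would show $F$ lies in the set: $F=f_0\circ g_0$, where $g_0(z,\Phi)$ returns a name $q$ of an element of $g(z)$ together with $\Phi$ untouched (a reduction to $g$ with first-projection preprocessor) and $f_0(q,\Phi)$ applies $f$ to $\Phi(q)$ and tags the output with $q$ (a reduction to $f$ with evaluation preprocessor); both reductions are witnessed by visibly computable $K,H$. Then I would show $F$ is an upper bound: given $f_1\leqW f$ via computable $(K_1,H_1)$ and $g_1\leqW g$ via computable $(K_2,H_2)$, on a name $p$ put $K(p):=\langle K_2(p),c_p\rangle$ with $c_p$ coding the continuous map $q\mapsto K_1(H_2\langle p,q\rangle)$, and $H\langle p,\langle q,v\rangle\rangle:=H_1\langle H_2\langle p,q\rangle,v\rangle$; unwinding the two given reductions shows this realizes $f_1\circ g_1$. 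This step uses that in $\leqW$, unlike $\leqSW$, the map $H$ sees the original input $p$. The two checks give $f*g\equivW F$, so the maximum exists.

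\emph{Implication.} I would proceed in the same style, reading off from the explicit form of $f*h$ what an $h$ with $g\leqW f*h$ must do. Such a reduction unwinds, through the representative of $f*h$, into computable maps $A$ (from a $g$-name $p$ to an $h$-input), $\Phi$ (from $(p,q)$, with $q$ an $h$-output, to an $f$-input) and $H$ (from $(p,q,v)$, with $v$ an $f$-output, to a $g$-solution), with the evident domain conditions; so the least such $h$ should be a problem that, on a $g$-instance, has to produce precisely the one piece of advice from which a single $f$-call followed by computable post-processing recovers a $g$-solution, and nothing more. I would then verify (a) $g\leqW f*h$ for this candidate, by choosing $A,\Phi,H$ so that the preprocessing already built into the compositional product does all the gluing, and (b) $h\leqW h'$ for every $h'$ with $g\leqW f*h'$, by unwinding the latter reduction into its $A,\Phi,H$ and feeding a value returned by $h'$ through them.

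The hard part is the implication, specifically pinning down the domain and output format of the candidate. A naive candidate whose value on $p$ is the set of advice from which one $f$-call and a \emph{continuous} post-processor yield a $g$-solution is too strong: such a post-processor can be taken constant, so the candidate can output a $g$-solution outright and would then dominate $g$ even when $f$ already solves $g$ (whereas $f\to g$ must be computable whenever $g\leqW f$). Restricting the post-processor to be \emph{computable} instead makes the minimality direction (b) break, because the post-processor $H(p,\cdot,\cdot)$ extracted from a reduction is only continuous in $p$ in general. Threading this needle --- getting a candidate that is simultaneously weak enough for (b) and strong enough for (a), and handling non-pointed $f$ --- is the delicate point; the full construction and verification are carried out in \cite{BP14a}.
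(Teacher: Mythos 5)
The paper offers no internal proof of this proposition to compare against: it is imported verbatim from Brattka and Pauly \cite{BP14a}. Your argument for part (1) is correct and is essentially the standard one from that source: you exhibit the canonical representative on pairs $(z,\Phi)$ of a $g$-instance and a code for a continuous function, with the intermediate name $q$ carried along in the output; you check that this representative is itself of the form $f_0\circ g_0$ with $f_0\leqW f$ and $g_0\leqW g$; and you check it dominates every such composition by packing the glue $q\mapsto K_1(H_2\langle p,q\rangle)$ into the $\mathcal{C}$-coordinate, correctly exploiting that the outer map of a $\leqW$-reduction has access to the original input. Modulo routine bookkeeping about domains of compositions of multi-valued maps and the points-versus-names shorthand, that half is fine.

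Part (2), however, is not proved in your proposal. You explain what a minimal $h$ ought to do, accurately diagnose why the naive candidates fail (a continuous post-processor can be chosen constant and makes the candidate too strong, e.g.\ it would not be computable when $g\leqW f$; a fixed computable post-processor breaks the minimality direction because the extracted post-processor is only continuous, uniformly in the advice), and then state that the actual construction and verification are carried out in \cite{BP14a}. That deferral coincides with what the paper itself does, but as a blind proof attempt the existence of $\min\{h:g\leqW f*h\}$ --- the genuinely delicate half of the statement --- remains unestablished: no candidate is ever defined, and neither of the two verification steps (a), (b) in your own plan is executed for it. So the attempt should be counted as a correct proof of the compositional product together with a sound diagnosis, but not a proof, of the implication.
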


Here $f*g$ is defined over all $f_0\leqW f$ and $g_0\leqW g$ which can actually be composed (i.e., the target space of $g_0$ and the source space of $f_0$ have to coincide).
In this way $f*g$ characterizes the most complicated Weihrauch degree that can be obtained by first performing a computation with the help of $g$ and then another one with the help of $f$.
Since $f*g$ is a maximum in the Weihrauch lattice, we can consider $f*g$ as some fixed representative of the corresponding degree.
It is easy to see that $f\times g\leqW f*g$ holds. 
The compositional products were originally introduced in \cite{BGM12}.
The implication $f\to g$ represents the weakest oracle which is needed in advance of $f$ in order to compute $g$ and it was introduced in \cite{BP16}.

We can also define the {\em strong compositional product} by 
\[f\stars g:=\max\{f_0\circ g_0:f_0\leqSW f\mbox{ and }g_0\leqSW g\},\]
where the maximum is taken with respect to $\leqSW$, 
but we neither claim that it exists in general. However, for cylinders $f,g$ this is the case.

\begin{lemma}[Strong compositional product]
\label{lem:strong-composition}
If $f,g$ are cylinders, then $f\stars g$ exists and is also a cylinder.
\end{lemma}
\begin{proof}
Let us consider $M:=\{f_0\circ g_0:f_0\leqSW f,g_0\leqSW g\}$. We need to show that $f\stars g\equivSW\max_{\leqSW}(M)$ exists.
Since $f,g$ are cylinders, we can replace both strong Weihrauch reductions in the definition of $M$ by ordinary Weihrauch reductions.
In \cite{BP16} a specific problem $f\star g\in M$ was defined and it was proved in \cite[Corollary~18]{BP16} that $f\star g\equivW\max_{\leqW}(M)$ holds.
This implies that $h\leqW f\star g$ for all $h\in M$.
Moreover, $f\star g$ is a cylinder by \cite[Lemma~17]{BP16} and hence $h\leqSW f\star g$ follows for all $h\in M$. 
Altogether, this proves the claim.
\end{proof}

We can conclude that $\stars$ is associative whenever it exists, which basically follows from associativity of the usual composition.

\begin{lemma}[Associativity]
\label{lem:associative}
$f\stars(g\stars h)\equivSW(f\stars g)\stars h$ holds for all $f,g,h$ such that all involved strong compositional products exist.
\end{lemma}
\begin{proof}
We prove $f\stars(g\stars h)\leqSW(f\stars g)\stars h$. The inverse reduction can be proved analogously.
Let $f_0\leqSW f$ and $g_0\leqSW g\stars h$. Without loss of generality we can assume the functions 
are of type $f_0,g_0,f,g,h:\In\IN^\IN\mto\IN^\IN$. It suffices to show that $f_0\circ g_0\leqSW(f\stars g)\stars h$.
The assumption and the fact that $g\stars h$ exists together imply that there are computable $H_0,K_0,H_1,G_1,K_1$ such that
$\emptyset\not=H_0fK_0(p)\In f_0(p)$ for all $p\in\dom(f_0)$ and $\emptyset\not=H_1gG_1hK_1(p)\In g_0(p)$ for all $p\in\dom(g_0)$. Hence 
\[f_0\circ g_0\leqSW H_0fK_0H_1gG_1hK_1\leqSW (f\stars g)\stars h.\qedhere\]
\end{proof}

\subsection*{Jumps}

In \cite{BGM12}  {\em jumps} or {\em derivatives} of multi-valued functions on represented spaces were introduced.
The {\em jump} $f':\In (X,\delta_X')\mto (Y,\delta_Y)$ of a multi-valued function $f:\In (X,\delta_X)\mto (Y,\delta_Y)$ on represented
spaces is obtained by replacing the input representation $\delta_X$ by its jump $\delta'_X:=\delta_X\circ\lim$, where
\[\lim:\In\IN^\IN\to\IN^\IN,\langle p_0,p_1,p_2,...\rangle\mapsto\lim_{n\to\infty}p_n\] 
is the limit operation on Baire space $\IN^\IN$ with respect to the product topology on $\IN^\IN$. It follows that $f'\equivSW f\stars\lim$
(see \cite[Corollary~5.16]{BGM12}). By $f^{(n)}$ we denote the $n$--fold jump. 
A $\delta_X'$--name $p$ of a point $x\in X$ is a sequence that converges to a $\delta_X$--name of $x$.
This means that a $\delta_X'$--name typically contains significantly less accessible information on $x$ than a $\delta_X$--name. 
Hence $f'$ is typically harder to compute than $f$, since less input information is available for $f'$.

The jump operation $f\mapsto f'$ plays a similar role in the Weihrauch lattice as the Turing jump operation
does in the Turing semi-lattice. In a certain sense $f'$ is a version of $f$ on the ``next higher'' level of complexity
(this can be made precise using the Borel hierarchy \cite{BGM12}).
It was proved in \cite{BGM12} that the jump operation $f\mapsto f'$ is monotone with respect to strong Weihrauch 
reducibility $\leqSW$ but not with respect to ordinary Weihrauch reducibility $\leqW$. This is another reason
why it is beneficial to extend the study of the Weihrauch degrees to the strong Weihrauch degrees. 

In general we use the notation $\lim_X:\In X^\IN\to X$ for the limit map of a metric space $X$.

\subsection*{Closed Choice}

A particularly useful multi-valued function in the Weihrauch lattice is closed choice $\C_X$ (see \cite{GM09,BG11,BG11a,BBP12}); it is known that many notions of computability can be calibrated using closed choice for a suitable computable metric 
space $X$. We recall that a {\em computable metric space} $X$ is a separable metric space with a given dense sequence 
such that the distance function is computable
on this sequence \cite{Wei00}. If $(B_n)_n$ is a standard enumeration of the open balls of $X$ with center
from the dense subset and rational radius (including the empty ball), then an open subset $U\In X$ is called {\em c.e.\ open}
if there is a computable $p\in\IN^\IN$ such that $U=\bigcup_{i=0}^\infty B_{p(i)}$.
Correspondingly, a closed subset $A\In X$ is called {\em co-c.e.\ closed} if $X\setminus A$ is c.e.\ open \cite{BP03}.
For subsets $A$ of the natural numbers 
this leads to the usual notion of c.e.\ and co-c.e.\ sets.
The co-c.e.\ closed subsets $A\In\IN^\IN$ of Baire space are exactly the usual $\pO{1}$--classes.

By $\AA_-(X)$ we denote the set of closed subsets of $X$ with the representation $\psi_-$ given by
\[\psi_-(p):=X\setminus\bigcup_{i=0}^\infty B_{p(i)}.\]
We are now prepared to define closed choice.

\begin{definition}[Closed Choice]
Let $X$ be a computable metric space. The {\em closed choice} problem 
of the space $X$ is defined by
\[\C_X:\In\AA_-(X)\mto X,A\mapsto A\]
with $\dom(\C_X):=\{A\in\AA_-(X):A\not=\emptyset\}$.
\end{definition}

Intuitively, $\C_X$ takes as input a non-empty closed set in negative description (i.e., given by $\psi_-$) 
and it produces an arbitrary point of this set as output.
Hence, $A\mapsto A$ means that the multi-valued map $\C_X$ maps
the input $A\in\AA_-(X)$ to the set $A\In X$ as a set of possible outputs.
We mention some classes of functions that can be characterized
by closed choice. The following results have mostly been proved in \cite{BBP12}.

\begin{proposition}
\label{prop:classes}
Let $f$ be a multi-valued function on represented spaces. Then
\begin{enumerate}
\item $f\leqW\C_1\iff f$ is computable,
\item $f\leqW\C_\IN\iff f$ is computable with finitely many mind changes,
\item $f\leqW\C_{2^\IN}\iff f$ is non-deterministically computable,
\item $f\leqW\C_{\IN^\IN}\iff f$ is effectively Borel measurable.
\end{enumerate}
In case (4) we have to assume that $f:X\to Y$ is single-valued and defined
on computable complete metric spaces $X,Y$.
\end{proposition}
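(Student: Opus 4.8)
The plan is to establish each of the four biconditionals by exhibiting realizers (or Weihrauch reductions) in both directions; items (1)--(3) are short and direct, while (4) carries all the weight. Since these results are essentially those of \cite{BBP12}, what follows is a reconstruction of the arguments. For (1): the one-point space underlying $\C_1$ has a unique nonempty closed subset, namely the whole space, whose negative name (the name of the empty complement) is computable and which trivially contains a computable point; hence $\C_1$ is pointed and has a computable realizer. Therefore, if $f$ is computable one takes $K$ to produce this name, discards the choice output, and lets $H$ compute $f$ directly, giving $f\leqW\C_1$; and conversely, from $H\langle\id,GK\rangle\vdash f$ with $G$ the computable realizer of $\C_1$ one reads off a computable realizer of $f$.

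For (2) and (3) I would unwind the relevant notions. For (2), recall that $f$ is computable with finitely many mind changes iff it has a realizer computed by a machine that outputs a sequence of candidate outputs and revises its guess only finitely often. A realizer of $\C_\IN$ works greedily: given an enumeration of $\IN\setminus A$ for nonempty $A\In\IN$, output the least number not yet excluded, revising only when the current candidate becomes excluded --- at most once per number below the final answer, hence finitely often; conversely a mind-change computation on input $p$ is encoded as a $\C_\IN$-instance by enumerating out of $\IN$ every index that a later guess supersedes, so that the chosen number codes $f(x)$. For (3), recall that $f$ is non-deterministically computable iff there is a computable $F$ on $\Baire\times\Cantor$ such that for every name $p$ of some $x\in\dom(f)$ the set $\{r\in\Cantor: F(p,r)\text{ is defined and names a value of }f(x)\}$ is nonempty and co-c.e.\ closed uniformly in $p$; this is verbatim the assertion that $K$ can compute this closed set of advices and $H$ can run $F$ on the advice returned by $\C_{2^\IN}$, i.e.\ $f\leqW\C_{2^\IN}$, and conversely.

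For (4), with $f:X\to Y$ single-valued on computable Polish spaces, recall that $f$ is effectively Borel measurable iff it lies in the effective Borel hierarchy of functions --- obtained from computable functions by iterated effective pointwise limits, equivalently $f^{-1}$ takes effectively open sets to effective $\SO{\alpha}$ sets uniformly. For the direction $(\Leftarrow)$, the key point is that $\C_{\IN^\IN}$ itself has an effectively Borel realizer: a nonempty closed subset of $\IN^\IN$ is a pruned tree and its leftmost path can be produced by an effectively Borel procedure (this is the algorithmic content of the uniform low basis theorem in \cite{BBP12}); since the effectively Borel measurable functions are closed under composition and under the $K,H$-sandwiching of a Weihrauch reduction, $f\leqW\C_{\IN^\IN}$ yields that $f$ is effectively Borel. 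For $(\Rightarrow)$ one reduces each effective pointwise limit to $\lim$ and absorbs the iteration into a single oracle call by induction along the Borel rank, using the closure properties of $\C_{\IN^\IN}$ from \cite{BBP12}, notably that it is parallelizable and absorbs $\lim$, so that $\C_{\IN^\IN}*\C_{\IN^\IN}\equivW\C_{\IN^\IN}$; single-valuedness and completeness of $X,Y$ are what make the layerwise reduction and the final gluing legitimate.

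I expect this last case to be the real obstacle. Both establishing the effectively Borel realizer for $\C_{\IN^\IN}$ (the uniform low basis argument) and, for the converse, the absorption of iterated limits into one $\C_{\IN^\IN}$-call along the Borel hierarchy are substantial and rely on the structural machinery of \cite{BBP12}, whereas (1)--(3) amount to routine realizer bookkeeping.
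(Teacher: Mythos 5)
First, a remark on the comparison itself: the paper does not prove Proposition~\ref{prop:classes} at all --- it is quoted as a collection of known characterizations ``mostly proved in \cite{BBP12}'' --- so there is no internal proof to measure your reconstruction against. Your unfoldings of (1)--(3) are the standard ones and are essentially correct (for (3) one should be slightly careful that ``non-deterministic computability'' requires wrong advice to be \emph{recognized} --- the set of non-rejected advices is co-c.e.\ closed and every non-rejected advice must lead to a correct output --- but that is exactly the unfolding you give).

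The genuine gap is in (4), in the direction from $f\leqW\C_{\IN^\IN}$ to effective Borel measurability. Your ``key point'' --- that $\C_{\IN^\IN}$ itself has an effectively Borel realizer obtained by pruning the tree and following the leftmost path --- is false, and it is not what the uniform low basis theorem of \cite{BBP12} says (that theorem concerns $\C_{2^\IN}$ and lowness). From negative information on a nonempty closed $A\In\Baire$ you are not given the pruned tree: deciding whether a node is extendible, i.e.\ whether the corresponding ball still meets $A$, is a $\Sigma^1_1$-complete question, so no Borel procedure follows a leftmost path. In fact $\C_{\IN^\IN}$ has no effectively Borel measurable realizer whatsoever: by Kleene there is a computable tree with infinite paths but no hyperarithmetic path, whereas an effectively Borel function maps a computable input to a $\Delta^1_1$ output; boldface Borel realizers are excluded by the non-existence of Borel uniformizations of closed subsets of $\Baire\times\Baire$ with nonempty sections. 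The tell-tale sign is that your argument never uses single-valuedness of $f$, yet without it the implication is false ($\C_{\IN^\IN}$ reduces to itself but is not Borel in any realizer sense). The correct use of single-valuedness is definitional rather than realizer-theoretic: from computable $H,K$ witnessing $f\leqW\C_{\IN^\IN}$ one sees that $x\in f^{-1}(V)$ is expressible both as ``some $q$ in the closed set named by $K(p)$ gives $H\langle p,q\rangle\in V$'' ($\Sigma^1_1$) and, because all such $q$ yield the same value $f(x)$, by the corresponding universal formula ($\Pi^1_1$); the effective Suslin--Kleene theorem then supplies Borel codes uniformly. For the converse direction your transfinite induction is exactly where the difficulty sits (``absorbing the iteration into a single oracle call'' at limit ranks is the unproven step); the standard route avoids it: the graph of an effectively Borel measurable single-valued $f$ is effectively Borel, hence effectively analytic, so it is the projection of a closed set $C\In X\times Y\times\Baire$ computable from the code; given $x$, the section $\{(y,z):(x,y,z)\in C\}$ is a nonempty closed subset of the computable complete metric space $Y\times\Baire$, and by single-valuedness any point of it has first component $f(x)$, which is a single application of $\C_{Y\times\Baire}\leqW\C_{\IN^\IN}$.
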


Here, and in general, we identify each natural number $n\in\IN$ with the corresponding finite subset $n=\{0,1,...,n-1\}$.
The problem $\C_0$,  closed choice for the empty set $0=\emptyset$, is the bottom element of the Weihrauch lattice.
In \cite{BGM12} the jumps $\C_X'\equivW\CL_X$ for computable metric spaces $X$ were characterized using
the cluster point problem $\CL_X$ of $X$, which is defined by
\[\CL_X:\In X^\IN\mto X,(x_n)_n\mapsto\{x\in X:x\mbox{ is a cluster point of $(x_n)_n$}\}\]
where $\dom(\CL_X)$ is the set of all sequences that have a cluster point.

\section{Choice and Omniscience}
\label{sec:choice}

In this paper we will make some essential use of a number of choice principles that we briefly 
discuss in this section.
Firstly, $\C_2$ is the problem of choosing a point in a non-empty subset $A\In\{0,1\}$ given by negative information. This information basically is a sequence that might contain no information
at all or it might eventually contain the information that one of the points $0$ or $1$ is not included in $A$.
It has been noticed \cite[Example~3.2]{BBP12} that $\C_2$ is equivalent to the {\em lesser limited principle of omniscience} $\LLPO$ 
as it is used in constructive analysis \cite{BB85}.

\begin{fact} 
\label{fact:LLPO}
$\C_2\equivSW\LLPO$.
\end{fact}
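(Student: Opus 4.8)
The plan is to unfold both sides of the claimed equivalence $\C_2\equivSW\LLPO$ to plain statements about realizers on Baire space and check that the identity maps (or trivial reindexings) witness the strong reduction in both directions. First I would recall the precise definitions. The space $\{0,1\}=2$ carries the standard representation of a two-point discrete space, and $\AA_-(2)$ is represented by $\psi_-$: a name $p$ of a nonempty $A\subseteq\{0,1\}$ is a list of the balls (here: points) that lie in the complement, so $p$ enumerates a subset of $\{0,1\}$ that is disjoint from $A$. Since $A\neq\emptyset$, at most one of $0,1$ is ever enumerated. Thus a $\psi_-$-name of an input to $\C_2$ is, up to trivial encoding, a sequence over $\IN$ which either never outputs a ``forbidden'' value, or eventually commits to forbidding exactly one of $0$ or $1$; a valid output is any element of $\{0,1\}$ not forbidden. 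On the other side, $\LLPO$ takes $p\in\Baire$ (or $p\in 2^\IN$, depending on the chosen formalisation from \cite{BB85,BBP12}) subject to the promise that $p$ has at most one $1$, i.e.\ $p(n)=1$ for at most one $n$; it must output $i\in\{0,1\}$ such that $p(2n+i)=0$ for all $n$ (equivalently: name one of the two ``halves'' of $p$ that is all zeros). The promise that there is at most one $1$ guarantees at least one such $i$ exists.

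Next I would exhibit the two strong reductions. For $\LLPO\leqSW\C_2$: given an $\LLPO$-instance $p$ with at most one $1$, the map $K$ should transform $p$ into a $\psi_-$-name of the set $A_p:=\{i\in\{0,1\}: p(2n+i)=0\text{ for all }n\}$. Concretely, $K$ reads $p$ and, as soon as it sees $p(2n+i)=1$ for some $n$, it starts enumerating $i$ into the complement; otherwise it enumerates nothing. By the promise this enumerates at most one point, so $A_p\in\dom(\C_2)$, and $A_p$ is exactly the set of correct $\LLPO$-answers. Then $H$ is the identity on the output $\{0,1\}$ (trivially composed with the identity representation), so $HGK\vdash\LLPO$ for every $G\vdash\C_2$, and the reduction is strong. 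Conversely, for $\C_2\leqSW\LLPO$: given a $\psi_-$-name $q$ of a nonempty $A\subseteq\{0,1\}$, let $K$ produce the sequence $r$ with $r(2n+i)=1$ iff by stage $n$ the name $q$ has enumerated $i$ into $A$'s complement, and $r=0$ otherwise. Since at most one of $0,1$ is forbidden, $r$ has at most one $1$, so $r\in\dom(\LLPO)$; and any $\LLPO$-answer $i$ for $r$ satisfies $r(2n+i)=0$ for all $n$, i.e.\ $i$ was never forbidden, i.e.\ $i\in A$. Again $H$ is the identity, so $\C_2\leqSW\LLPO$.

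Finally I would note that these two reductions are mutually witnessed by the identity output modifier $H=\id$ (no access to the original input is used), which is precisely what strong Weihrauch reducibility demands, and that the forward modifiers $K$ are patently computable (each is a simple monotone bookkeeping of finitely much data at each stage). Hence $\C_2\equivSW\LLPO$. I do not expect a genuine obstacle here: the only thing requiring care is getting the conventions straight — in particular which space $\LLPO$ is defined on ($2^\IN$ versus $\Baire$, and the exact ``at most one $1$'' versus ``the two branches are not both eventually nonzero'' phrasing) and checking that the $\psi_-$ representation of $\AA_-(2)$ really does reduce to ``enumerate the at-most-one forbidden point.'' Once the bookkeeping matches the cited formulation in \cite[Example~3.2]{BBP12}, the argument is essentially the observation that both problems are the same combinatorial task (pick a branch guaranteed to be all-zero) presented in two notations, so the remark-style proof in the paper will presumably just point to \cite{BBP12} for the equivalence and to the discussion above for why it is in fact \emph{strong}.
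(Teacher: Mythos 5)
Your argument is correct in substance, and it is essentially all there is to say: the paper does not prove this Fact at all, but simply cites \cite[Example~3.2]{BBP12}, and your unfolding of both problems into the common combinatorial task ``name a branch that is guaranteed to be all zero'' is exactly the standard argument behind that citation, including the observation that the output modifier is (a re-indexing of) the identity, which is what makes the equivalence strong rather than merely ordinary. The one point to tighten is the domain convention that you yourself flag: with the paper's own formulation (where $\LLPO=\LLPO_2$ and the domain only requires that at most one of the two interleaved subsequences differs from $\widehat{0}$), your $K$ in the direction $\C_2\leqSW\LLPO$ is fine as written; but under the stricter convention you state (``$p(n)=1$ for at most one $n$'') it is not, because once the $\psi_-$-name enumerates the forbidden point $i$, your $r$ satisfies $r(2n+i)=1$ for all sufficiently large $n$ and so has infinitely many $1$'s, leaving the domain of $\LLPO$. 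The repair is trivial --- raise the flag only once, at the first stage at which $i$ is seen to be enumerated --- and changes nothing else in the verification, so with that one-line adjustment (or by sticking to the paper's $\LLPO_2$ formulation) your proof is complete and matches the intended one.
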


More generally, one obtains $\C_n\equivSW\MLPO_n$ for all $n\geq2$ for the generalizations $\MLPO_n$ of $\LLPO$
that have been introduced by Weihrauch in \cite{Wei92c} (where ``$\text{\rm\sffamily M}$'' stands for {\em more} omniscient).
We are interested in the following variant of choice, which we call {\em all or co-unique choice}.\footnote{The name {\em all or co-unique choice} is motivated by
{\em all or unique choice}, which is related to solving linear equations and Nash equilibria,
see \cite{Pau11} or \cite{BGH15a}.}

\begin{definition}[All or co-unique choice]
Let $X$ be a represented space.
By $\ACC_X:\In\AA_-(X)\mto X,A\mapsto A$ we denote the {\em all or co-unique choice operation}
with $\dom(\ACC_X):=\{A\in\AA_-(X):|X\setminus A|\leq 1$ and $|A|>0\}$.
\end{definition}

Here $|A|$ denotes the cardinality of the set $A$. In other words, all or co-unique choice $\ACC_X$ is the problem
of choosing a point in a set $A\In X$ from which at most one element of $X$ is missing. 
It is easy to see that $\ACC_X$ is computable if $X$ has a non-isolated computable point: a non-isolated point cannot be the only point of $X$ that is missing in a closed set $A\In X$ and hence
a computable realizer can just choose that computable point as a solution.
In particular, $\ACC_X$ is computable for perfect computable metric spaces $X$.
This is the reason that, in contrast to $\C_X$, the problem $\ACC_X$ is mostly of interest
for spaces $X\In\IN$.

Again, the principle $\ACC_n$ has been studied before in form of an omniscience principle $\LLPO_n$ by Weihrauch \cite{Wei92c} and
it can be defined as a multi-valued function by
\[\LLPO_n:\In\IN^\IN\mto\IN,\langle p_1,...,p_n\rangle\mapsto\{i\in\IN:p_i=\widehat{0}\}\]
where $\dom(\LLPO_n)$ is the set of those $\langle p_1,...,p_n\rangle$ such that $p_i\not=\widehat{0}$ 
for at most one $i=1,...,k$ and $\widehat{n}\in\IN^\IN$ denotes the constant sequence with value $n\in\IN$.
With this definition we obtain $\LLPO_2=\LLPO$ and the following fact is obvious.

\begin{fact} 
\label{fact:ACC-LLPO}
$\ACC_n\equivSW\LLPO_n$ for all $n\geq 2$.
\end{fact}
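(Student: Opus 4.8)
The plan is to verify that the two multi-valued functions $\ACC_n$ and $\LLPO_n$ are strongly Weihrauch equivalent by exhibiting explicit computable transformations in both directions that moreover require no post-processing access to the input, so that the stronger reduction $\equivSW$ (rather than merely $\equivW$) holds. The only real content is to translate between the two representations of the relevant sets: $\ACC_n$ takes as input a $\psi_-$-name of a nonempty closed set $A\subseteq n$ whose complement $n\setminus A$ has at most one element, whereas $\LLPO_n$ takes a tuple $\langle p_1,\dots,p_n\rangle$ with at most one $p_i\not=\widehat0$, and the solution set is $\{i : p_i=\widehat0\}$. Under the identification $n=\{0,1,\dots,n-1\}$ one sees that the missing point of $A$ (if any) corresponds exactly to the unique index $i$ with $p_i\not=\widehat0$.

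First I would spell out the representation of $\AA_-(n)$ for the finite discrete space $n$: a $\psi_-$-name of $A\subseteq n$ is (computably equivalent to) a name for $\chi_{n\setminus A}$ into $\IS^n$, i.e., for each $i<n$ a sequence $q_i$ over $\{0,1\}$ that is eventually (indeed, stays) $\widehat0$ if $i\in A$ and contains a $1$ if $i\notin A$; negative information about $A$ is positive information about the finitely many excluded points. For the direction $\LLPO_n\leqSW\ACC_n$: given $\langle p_1,\dots,p_n\rangle\in\dom(\LLPO_n)$, define $K$ to produce the closed set $A$ whose complement is enumerated by putting $i-1$ into $n\setminus A$ as soon as a nonzero value is seen in $p_i$. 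Since at most one $p_i$ differs from $\widehat0$, this yields a valid input to $\ACC_n$ with $|n\setminus A|\le 1$ and (as $n\ge2$) $|A|>0$; any solution $j\in A$ to $\ACC_n$ is an index with $p_{j+1}=\widehat0$, so the output modifier $H$ simply returns $j+1$, and no access to the original input is needed — giving $\leqSW$.

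For the converse $\ACC_n\leqSW\LLPO_n$: given a $\psi_-$-name of $A\subseteq n$ with $|n\setminus A|\le1$ and $A\not=\emptyset$, the preprocessor $K$ reads off, for each $i<n$, the $i$-th component sequence $q_i$ of the complement's characteristic function and outputs $p_{i+1}:=q_i$ (possibly composed with a trivial monotone repackaging so that ``contains a $1$'' is preserved); then $p_{i+1}=\widehat0$ exactly when $i\in A$, and since at most one point is missing, at most one $p_{i+1}$ is $\not=\widehat0$, so $\langle p_1,\dots,p_n\rangle\in\dom(\LLPO_n)$. Any $\LLPO_n$-solution $j$ satisfies $p_j=\widehat0$, hence $j-1\in A$, and $H$ outputs $j-1$; again $H$ does not consult the original input, so the reduction is strong. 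Finally I would invoke Fact~\ref{fact:LLPO} for the base case $n=2$ as a sanity check and note that $\LLPO_2=\LLPO$ by the displayed definition, which is consistent with the general claim.

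The only step requiring any care — and the one I would present in slightly more detail — is the precise equivalence between the two concrete presentations of closed subsets of a finite discrete space (the $[\delta_X\to\delta_\IS]$-style $\psi_-$ versus a componentwise list of Sierpiński-names), and checking that the domain conditions match up exactly under the bijection $A\leftrightarrow (p_i)_{i}$; once that bookkeeping is done, both reductions are essentially identities composed with the index shift $i\mapsto i\pm1$, so there is no genuine obstacle, which is why the excerpt rightly calls the fact ``quite obvious''.
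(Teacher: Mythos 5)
Your proof is correct, and it simply writes out the routine translation that the paper leaves implicit (the paper states this Fact without proof, calling it ``quite obvious''): negative information about $A\In n$ is exactly an enumeration of the at most one excluded point, which matches the unique $p_i\not=\widehat{0}$ in an $\LLPO_n$-instance up to the index shift $i\mapsto i\pm1$, and both of your reductions are strong since the output modifier never consults the original input. No gaps.
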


The problem $\ACC_n$ also appeared under the name $\C_{n-1,n}$ in the proof of \cite[Theorem~10.1]{BGH15a}
from which we can conclude $\ACC_{n+1}\lW\ACC_n$ for all $n\geq 2$. 
However, this had already been proved by Weihrauch
and with \cite[Theorems~4.3 and 5.4]{Wei92c} we obtain the following result.

\begin{fact}[Weihrauch 1992]
\label{fact:ACC-chain}
For all $n>2$ we obtain
\[\ACC_\IN\lW\ACC_{n+1}\lW\ACC_{n}\lW\ACC_2=\C_2\lW\C_n\lW\C_{n+1}\lW\C_\IN.\]
An analogous result holds true with $\lSW$ instead of $\lW$.
\end{fact}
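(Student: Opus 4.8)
The plan is to establish the chain in Fact~\ref{fact:ACC-chain} piece by piece, using Fact~\ref{fact:ACC-LLPO} to translate between the choice operations $\ACC_n$, $\C_n$ and the omniscience principles $\LLPO_n$, $\MLPO_n$, and then invoking the cited results of Weihrauch~\cite{Wei92c}. Concretely, by Fact~\ref{fact:ACC-LLPO} we have $\ACC_n\equivSW\LLPO_n$ for $n\geq2$, and by the remark following Fact~\ref{fact:LLPO} we have $\C_n\equivSW\MLPO_n$ for $n\geq2$, while $\ACC_2=\LLPO_2=\LLPO=\MLPO_2=\C_2$ identifies the two halves of the chain at their meeting point. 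So it suffices to prove, first, the descending chain $\LLPO_\IN\lSW\cdots\lSW\LLPO_{n+1}\lSW\LLPO_n\lSW\cdots\lSW\LLPO_2$ and, second, the ascending chain $\MLPO_2\lSW\MLPO_3\lSW\cdots\lSW\C_\IN$ (where $\C_\IN$ sits on top, and $\ACC_\IN$ at the bottom).

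For the reductions themselves (the non-strict direction $\leqSW$ at each step), the moves are elementary and I would just note them: $\LLPO_{n+1}\leqSW\LLPO_n$ because an instance $\langle p_1,\dots,p_{n+1}\rangle$ of $\LLPO_{n+1}$ with at most one $p_i\not=\widehat0$ can be turned into an instance of $\LLPO_n$ by merging two of the components (e.g.\ replacing $p_n,p_{n+1}$ by the componentwise ``or'' $q$ with $q(k)=0$ iff $p_n(k)=p_{n+1}(k)=0$), solving, and then if the merged index is returned, using finitely many further queries is not needed since strong reducibility forbids it---instead one observes that $\LLPO_{n+1}\leqSW\LLPO_n$ can be obtained more directly, e.g.\ by the argument that $\ACC_{n+1}\leqSW\ACC_n$ because a closed $A\In\{0,\dots,n\}$ missing at most one point, after deleting the point $n$ from the space, still describes a closed subset of $\{0,\dots,n-1\}$ missing at most one point, unless $A=\{n\}$, which can be detected and handled. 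Symmetrically $\C_n\leqSW\C_{n+1}$ and $\C_n\leqSW\C_\IN$, $\ACC_\IN\leqSW\ACC_n$ are immediate by embedding $\AA_-(n)$ into $\AA_-(n+1)$ resp.\ $\AA_-(\IN)$; and $\ACC_n\leqSW\C_n$ trivially since $\dom(\ACC_n)\In\dom(\C_n)$. The genuinely substantive content---that each of these reductions is \emph{strict}---is exactly what Weihrauch established: by \cite[Theorems~4.3 and~5.4]{Wei92c} one has $\LLPO_n\nleqSW\LLPO_{n+1}$ and $\MLPO_{n+1}\nleqSW\MLPO_n$ (equivalently $\C_{n+1}\nleqW\C_n$), and these non-reducibilities are precisely what upgrades each $\leqSW$ above to $\lSW$; since $\leqSW$ implies $\leqW$, the same separations hold for $\leqW$, giving the $\lW$ version of the chain as well.

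The main obstacle is simply matching our notation to Weihrauch's: Weihrauch~\cite{Wei92c} works with the omniscience principles $\LLPO_n$ and $\MLPO_n$ rather than with $\ACC_n$ and $\C_n$, and with a continuous/computable reducibility that needs to be checked to coincide with $\leqW$ (and its strong variant with $\leqSW$) on these particular problems. Once Facts~\ref{fact:LLPO} and~\ref{fact:ACC-LLPO} are in hand this translation is routine, and the non-trivial separations are imported wholesale. I would therefore present the proof as: (i) cite Facts~\ref{fact:LLPO} and~\ref{fact:ACC-LLPO} to reduce everything to the $\LLPO_n$/$\MLPO_n$ world; (ii) observe the easy reductions giving the weak chain $\leqSW$; (iii) quote \cite[Theorems~4.3 and~5.4]{Wei92c} for strictness; and (iv) remark that $\leqSW\Rightarrow\leqW$ yields the analogous $\lW$-chain, with the two chains joined at $\ACC_2=\C_2=\LLPO$.
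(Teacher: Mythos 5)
Your overall route---translating $\ACC_n$ and $\C_n$ into Weihrauch's $\LLPO_n$ and $\MLPO_n$ via Facts~\ref{fact:LLPO} and~\ref{fact:ACC-LLPO}, noting the easy reductions, and importing the separations from \cite[Theorems~4.3 and~5.4]{Wei92c}---is exactly what the paper does; the Fact carries no proof of its own precisely because all substantive content is Weihrauch's. However, your final step has the logic backwards. From $g\nleqSW f$ one cannot conclude $g\nleqW f$: since $f\leqSW g$ implies $f\leqW g$, the $\leqW$-separations are the \emph{stronger} statements, and they imply the $\leqSW$-separations, not conversely. So quoting the separations in the form ``$\LLPO_n\nleqSW\LLPO_{n+1}$, hence the same for $\leqW$ because $\leqSW$ implies $\leqW$'' does not establish the $\lW$-chain, which is the primary claim of the Fact. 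The correct way round is to cite the separations at the level at which Weihrauch proves them: his theorems separate the $\LLPO_n$- and $\MLPO_n$-hierarchies with respect to the coarser (indeed continuous) reducibility, which gives $\LLPO_n\nleqW\LLPO_{n+1}$ and $\C_{n+1}\nleqW\C_n$ directly, and these then yield the $\lSW$-version a fortiori. (Alternatively, as the paper remarks, $\ACC_{n+1}\lW\ACC_n$ can also be extracted from the proof of \cite[Theorem~10.1]{BGH15a}, where $\ACC_n$ appears as $\C_{n-1,n}$.)

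Two smaller points. The strictness at the two ends of the chain, $\ACC_\IN\lW\ACC_{n+1}$ and $\C_{n+1}\lW\C_\IN$, is not literally one of the quoted separations but follows from the inner ones by transitivity (if $\ACC_{n+1}\leqW\ACC_\IN$ then $\ACC_{n+1}\leqW\ACC_{n+2}$, a contradiction; symmetrically at the top), which is how the paper handles the bottom end; you should say this explicitly. Also, your reduction $\ACC_{n+1}\leqSW\ACC_n$ is simpler than you make it: since $|X\setminus A|\leq1$ and the space has $n+1\geq3$ points, the case $A=\{n\}$ cannot occur, so one may simply restrict the negative information to $\{0,\dots,n-1\}$ and return the chosen point; no detection or case distinction (and in particular no second query, which strong reducibility would indeed forbid) is needed.
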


We note that $\ACC_\IN\leqSW\ACC_{n+1}\leqSW\ACC_n$ holds for all $n\geq2$ and that the condition $\ACC_{n+1}\lW\ACC_{n}$ for all $n\geq2$
implies the strictness of the reduction $\ACC_\IN\lW\ACC_{n+1}$.
We note that the $\LLPO_n$ hierarchy has recently also been separated over IZF+DC~\cite{HL16}.

\section{Indiscriminative Theorems}
\label{sec:indiscriminative}

The purpose of this section is to study a class of multi-valued functions and theorems
with very little uniform content. We will call these functions and theorems {\em indiscriminative}
since they cannot even be utilized to make binary choices.
In the Weihrauch lattice binary choice $\C_2$ represents the ability of making binary choices
and the much weaker principle $\ACC_\IN$ represents the ability of choosing one of countably 
many objects in a setting where at most one object is forbidden.

\begin{definition}[Discriminative degrees]
A multi-valued function $f$ on represented spaces is called {\em discriminative}
if $\C_2\leqW f$. Otherwise, it is called {\em indiscriminative}.
We call $f$ {\em $\omega$--discriminative} if $\ACC_\IN\leqW f$ and {\em $\omega$--indiscriminative} otherwise.
\end{definition}

It follows directly from Fact~\ref{fact:ACC-chain} that every discriminative multi-valued function is also $\omega$--discriminative
and every $\omega$--indiscriminative multi-valued function is also indiscriminative. 
All non-constructive theorems from classical analysis that have been
classified in the Weihrauch lattice so far are discriminative (see for instance \cite{BG11a,BGM12}).
In all cases that we will encounter here indiscriminativity has a common reason; namely, that the corresponding theorem has so many solutions that any finite
portion of a particular solution does not carry any useful information. 
This idea is made precise in the following definition.

\begin{definition}[Dense realization]
Let $f:\In X\mto Y$ be a multi-valued map on represented spaces $(X,\delta_X)$ and $(Y,\delta_Y)$.
Then $f$ is called {\em densely realized} if
$\delta_Y^{-1}\circ f\circ \delta_X(p)$ is dense in $\dom(\delta_Y)$
for every $p\in\dom(f\delta_X)$.  
\end{definition}

It is easy to see that every densely realized $f$
is $\omega$--indiscriminative, and further, that no such $f$ can be strongly pointed or a cylinder.
In the next proposition we formulate an even stronger property. We say that $f$ {\em strongly bounds} $g$ if $g\leqSW f$.

\begin{proposition}[Dense realization]
\label{prop:dense}
Let $f$ be a multi-valued function on represented spaces that is densely realized. 
Then $f$ is $\omega$--indiscriminative and does not strongly bound any single-valued, non-constant function on Baire space. 
In particular, $f$ is not strongly pointed, does not strongly bound any cylinder,
and is not a cylinder itself.
\end{proposition}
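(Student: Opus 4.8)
The plan is to exploit the definition of dense realization directly, together with the observation that the witnessing problems $\ACC_\IN$, the single-valued non-constant functions, and cylinders all have realizers whose outputs are in some sense ``isolated'' or ``rigid'', which is incompatible with density. First I would record the reduction to the case where $f$ has a realizer at all: if $f$ has no realizer, then (reading the definitions) $f$ vacuously fails to strongly bound anything with a realizer, and in particular it is $\omega$--indiscriminative and is not a cylinder, so the interesting case is when $f$ has realizers, and then by the remark preceding the statement we may freely replace the output of a realizer by any point of $\delta_Y^{-1}\circ f\circ\delta_X(p)$, which is dense in $\dom(\delta_Y)$ for every $p\in\dom(f\delta_X)$.

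Next I would prove the core claim: $f$ does not strongly bound any single-valued non-constant function $h:\Baire\to\Baire$. Suppose $h\leqSW f$ via computable $K,H$ with $HGK\vdash h$ for all $G\vdash f$. Pick $p\in\dom(h)=\Baire$ and set $q:=K(p)$, so $q\in\dom(f\delta_X)$ (via the relevant representations). By dense realization the set of possible values $G(q)$ as $G$ ranges over realizers of $f$ is dense in $\dom(\delta_Y)$. Now $H$ is continuous, so $p\mapsto H(r)$ depends continuously on $r$; more to the point, since $h$ is single-valued, $HGK(p)=h(p)$ is one fixed sequence $s_0$ regardless of which realizer $G$ we chose, i.e.\ $H$ maps the \emph{entire} dense set $\{G(q):G\vdash f\}$ into the single point $s_0$ (after prepending the fixed input $p$ is irrelevant here since the reduction is strong and $H$ sees only $GK(p)=G(q)$). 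A continuous function that is constant on a dense subset of $\dom(\delta_Y)$ is, by continuity and the fact that $\delta_Y$-names converge in $\Baire$, constant on the closure; hence $H\circ G\circ K$ would have to be constant on all inputs $p$ for which $K(p)$ lands in that closure. Choosing two inputs $p_1\neq p_2$ with $h(p_1)\neq h(p_2)$ (possible as $h$ is non-constant) and noting $K(p_1),K(p_2)$ both give valid $f$-inputs whose value sets are dense, we get $h(p_1)=s_0=h(p_2)$, a contradiction. This establishes the main assertion.

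The remaining consequences are then routine. For $\omega$--indiscriminativeness: if $\ACC_\IN\leqW f$ then, since $\ACC_\IN$ is a cylinder (or at least since $\leqW$ into a cylinder upgrades to $\leqSW$—here one instead argues directly that $\ACC_\IN$ restricted to a suitable instance computes a non-constant single-valued function, e.g.\ the identity on a two-point discrete subspace of $\IN$ realized via $\LLPO$), we would obtain a non-constant single-valued function strongly bounded by $f$, contradicting the core claim; more carefully, I would invoke Fact~\ref{fact:ACC-LLPO} to identify $\ACC_\IN$ with $\LLPO_\IN$ and exhibit an explicit non-constant single-valued $h$ with $h\leqSW\ACC_\IN$ (selecting between two fixed outputs according to the answer), so $h\leqSW f$, contradiction. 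That $f$ is not strongly pointed follows because a strongly pointed cylinder structure would let $f$ strongly bound the identity on $\Baire$, which is single-valued and non-constant; similarly $f$ strongly bounding a cylinder $c$ would, since $c\equivSW\id\times c$ strongly bounds $\id_{\Baire}$, again contradict the core claim, and in particular $f$ is not itself a cylinder.

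The main obstacle I anticipate is making the ``continuous and constant on a dense set implies constant'' step fully rigorous at the level of \emph{names} rather than points: one must be careful that density is asserted in $\dom(\delta_Y)\subseteq\Baire$ with the subspace topology, that $H$ is continuous on its domain in $\Baire$, and that the fibres $\delta_Y^{-1}(y)$ need not be singletons—so the correct formulation is that $H$ maps a dense subset of $\dom(\delta_Y)$ to a single \emph{sequence} $s_0\in\Baire$, and continuity of $H$ on $\dom(\delta_Y)$ (or on its closure within the relevant domain) forces $H$ to be constant there. One must also verify that $K(p)$ genuinely produces inputs in $\dom(f\delta_X)$ so that the density hypothesis applies—this is exactly what the reduction $HGK\vdash h$ guarantees, since $h$ is total. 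Handling the strong-reducibility bookkeeping (that $H$ sees only $GK(p)$, not $p$) is what makes the strong variant essential and is where I would be most careful.
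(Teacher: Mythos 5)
Your core claim and its consequences are handled correctly, and essentially by the paper's own method: the paper fixes a prefix $v\prefix HF_0K(p)$, pulls back a prefix $w\prefix F_0K(p)$ by continuity of $H$, and uses density at $K(q)$ to find a realizer $F$ with $w\prefix FK(q)$, which is the prefix-level version of your ``$H$ is constant on two dense subsets of $\dom(\delta_Y)$, hence the two constants coincide'' argument; that argument is sound, since both sets of realizer values are dense in $\dom(\delta_Y)$ and $H$ is continuous on a domain containing them. One side remark: a problem without any realizer is a \emph{top} element (the quantification over $G\vdash f$ is vacuous), so it strongly bounds everything rather than nothing; this case is harmless here only because a densely realized $f$ with $\dom(f\delta_X)\not=\emptyset$ must have realizers, the set of realizer values being dense and hence nonempty.

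The genuine gap is in your derivation of $\omega$--indiscriminativeness. You try to deduce $\ACC_\IN\nleqW f$ from the core claim, but the core claim concerns \emph{strong} reductions, and the chain $h\leqSW\ACC_\IN$, $\ACC_\IN\leqW f$ only gives $h\leqW f$, not $h\leqSW f$; cylindricity does not repair this, since upgrading $\leqW$ to $\leqSW$ requires the \emph{right-hand} side to be a cylinder, which is exactly what $f$ is being shown not to be (and $\ACC_\IN$ is not a cylinder anyway). Moreover, the auxiliary function you propose does not exist: no non-constant single-valued $h$ satisfies $h\leqSW\ACC_\IN$, because every $\ACC_\IN$--instance omits at most one number, so for any $p_1,p_2\in\dom(h)$ one can choose a value $n$ admissible for both instances named by $K(p_1),K(p_2)$ and a realizer $G$ returning the same name of $n$ on both, forcing $HGK(p_1)=HGK(p_2)$. (Your parenthetical ``selecting between two fixed outputs according to the answer'' works for $\C_2$, whose singleton instances pin the answer down, but never for $\ACC_\IN$.) So the first assertion cannot be reduced to the second; it needs a separate argument against the \emph{ordinary} reduction, in which $H$ also sees the original input name. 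That is what the paper does: feed in a name $p$ of $\IN$, let $i:=H\langle p,GK(p)\rangle$, use continuity of $H$ to find prefixes $w\prefix p$ and $v\prefix GK(p)$ that already determine the answer $i$, then switch to a name $q$ of $\IN\setminus\{i\}$ with $w\prefix q$ and use dense realization at the \emph{new} input to find a realizer $F$ with $v\prefix FK(q)$, contradicting correctness of the reduction. You need to add an argument of this kind for the first claim.
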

\begin{proof}
Let $f$ be densely realized and assume that $\ACC_\IN\leqW f$.
Then there are computable functions $H,K$ such that
$H\langle\id,FK\rangle\vdash\ACC_\IN$ whenever $F\vdash f$.
Consider a name $p$ of $\IN$ and let $G\vdash f$
be some realizer of $f$.
Let us assume $H\langle p,GK(p)\rangle=i\in\IN$.
By the continuity of $H$ there are some words $w\prefix p$ and $v\prefix GK(p)$
such that $H\langle w\IN^\IN,v\IN^\IN\rangle=\{i\}$.
Then we consider some name $q$ of $\IN\setminus\{i\}$ with $w\prefix q$.
Since $f$ is densely realized, there is some realizer $F\vdash f$
such that $v\prefix FK(q)$. Now we obtain $H\langle q,FK(q)\rangle=i$,
which contradicts $H\langle q,FK(q)\rangle\in\IN\setminus\{i\}=\ACC_\IN(\IN\setminus\{i\})$. 
Hence $\ACC_\IN\nleqW f$ and $f$ is $\omega$--indiscriminative.

Now suppose that $g\leqSW f$ where $g:\In\IN^\IN\to\IN^\IN$ is a non-constant function. 
Then there are computable $H,K$ such that $HFK=g$ whenever $F\vdash f$. 
Let $F_0\vdash f$ and let $p,q\in\dom(g)$ with $g(p)\not=g(q)$.
Then there is some prefix $v\prefix g(p)$ such that $v\not\prefix g(q)$.
Then $v\prefix HF_0K(p)$ and by continuity of $H$ there is some finite prefix $w\prefix F_0K(p)$
such that $H(w\IN^\IN)\In v\IN^\IN$. Since $f$ is densely realized, there is some
realizer $F\vdash f$ with $w\prefix FK(q)$ and hence $v\prefix HFK(q)$, which contradicts $HFK(q)=g(q)$. Hence $g\nleqSW f$.

In particular, we obtain $\id\nleqSW f$ and so $f$ is not strongly pointed.
Since $\id\leqSW g$ holds for any cylinder $g$, it follows that $g\leqSW f$ cannot hold for a cylinder $g$.
In particular, $f$ is not a cylinder.
\end{proof}

The first part of the statement of Proposition~\ref{prop:dense} was strengthened in \cite[Proposition~55]{BP16},
where it was shown that $g\leqW f$ implies that $g$ is computable for every $g:\In X\mto \IN$.
Sometimes a multi-valued function is densely realized just due to the mere type of its output.
This is the case if the output representation is densely realized itself in the following sense.

\begin{definition}[Densely realized]
A represented space $(Y,\delta)$ is called {\em densely realized} if $\delta^{-1}(y)$ is dense
in $\dom(\delta)$ for each $y\in Y$.
\end{definition}

It is clear that the final topology\footnote{We recall that a partial map $f:\In X\to Y$ from a topological space $X$ to a set $Y$
induces a topology on $Y$, which is called the {\em final topology}. A set $V\In Y$ belongs to this topology if and only if $f^{-1}(V)$ is open in $\dom(X)$.} induced by such a representation $\delta$ is always
the trivial topology on $Y$, i.e., equal to $\{\emptyset,Y\}$. Every multi-valued function $f:\In X\mto Y$ with
a densely realized target space $Y$ is densely realized.
 
\begin{proposition}
\label{prop:indiscrete}
Let $f:\In X\mto Y$ be a multi-valued function on represented spaces where $Y$ is densely realized.
Then $f$ is densely realized.
\end{proposition}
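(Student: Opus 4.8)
The plan is to reduce everything to the set-theoretic density criterion already recorded just before the statement: since $f$ is assumed to have a realizer, $f$ is densely realized if and only if $\delta_Y^{-1}\circ f\circ\delta_X(p)$ is dense in $\dom(\delta_Y)$ for every $p\in\dom(f\delta_X)$. So it suffices to check this density condition for the composition, and for that the hypothesis that $Y$ is densely realized will do all the work.

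First I would fix an arbitrary $p\in\dom(f\delta_X)$ and put $x:=\delta_X(p)$; by definition of the domain of a composition this means $x\in\dom(f)$, and by the usual convention on domains of multi-valued maps $f(x)$ is then a nonempty subset of $Y$. Choosing any $y\in f(x)$, the assumption that $(Y,\delta_Y)$ is densely realized gives that $\delta_Y^{-1}(y)$ is dense in $\dom(\delta_Y)$. Since $\delta_Y^{-1}(y)\In\delta_Y^{-1}(f(x))=\delta_Y^{-1}\circ f\circ\delta_X(p)$, the set on the right contains a dense subset of $\dom(\delta_Y)$ and hence is itself dense there. As $p$ was arbitrary, the required density condition holds for all $p\in\dom(f\delta_X)$.

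Finally, I would recall the observation made in the text that for an $f$ possessing a realizer the value of a realizer at a single point can be replaced by any name of any admissible output without spoiling the realizer property, so that $\{F(p):F\vdash f\}$ actually equals $\delta_Y^{-1}\circ f\circ\delta_X(p)$ for each $p\in\dom(f\delta_X)$; combined with the previous paragraph this gives that $\{F(p):F\vdash f\}$ is dense in $\dom(\delta_Y)$, which is exactly the assertion that $f$ is densely realized. There is no real obstacle here: the whole content is ``a set containing a dense set is dense'', and the only points requiring a little care are the bookkeeping that $p\in\dom(f\delta_X)$ genuinely yields $x\in\dom(f)$ and the convention that $f(x)\ne\emptyset$ for $x\in\dom(f)$, which is what guarantees we can pick the point $y$ in the first place.
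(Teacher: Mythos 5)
Your argument is correct and is essentially the paper's own proof: both reduce the claim to the density of $\delta_Y^{-1}\circ f\circ\delta_X(p)$ in $\dom(\delta_Y)$, obtain this from the density of $\delta_Y^{-1}(y)$ for a single $y\in f(\delta_X(p))$, and then invoke the remark that for an $f$ possessing a realizer this set coincides with $\{F(p):F\vdash f\}$. Your version merely spells out the bookkeeping (nonemptiness of $f(x)$, choice of $y$) a little more explicitly.
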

\begin{proof}
Let $(X,\delta_X)$ and $(Y,\delta_Y)$ be represented spaces. 
If $Y$ is densely realized, then $\delta_Y^{-1}(y)$ is dense in $\dom(\delta_Y)$ for all $y\in Y$.
Hence, $\delta_Y^{-1}\circ f\circ\delta_X(p)$ is dense in $\dom(\delta_Y)$ for all
$p\in\dom(f\delta_X)$.
\end{proof}

Densely realized spaces occur quite naturally, for instance all derived spaces have this property.

\begin{example}[Derived space]
If $(X,\delta)$ is a represented space, then the {\em derived space} $(X,\delta')$
with $\delta':=\delta\circ\lim$ is densely realized.
\end{example}

Derived spaces have previously been considered as output spaces for theorems, for instance
the weak Bolzano-Weierstra\ss{} theorem $\WBWT_\IR$ studied in \cite{Kre12} has a 
derived space as its output space.

\begin{example}[Weak Bolzano-Weierstra\ss{} theorem]
The Weak Bolzano-Weierstra\ss{} theorem $\WBWT_\IR$ is densely
realized and hence $\omega$--indiscriminative.
\end{example}

Another interesting case of a densely realized space is the space of Turing degrees.
By $[p]:=\{q\in\IN^\IN:p\equivT q\}$ we denote the Turing degree of $p\in\IN^\IN$.
In a natural way, $p$ can be considered as representative of its degree $[p]$.

\begin{definition}[Turing degrees]
Let $\DD:=\{[p]:p\in\IN^\IN\}$ be the set of Turing degrees with the 
representation $\delta_\DD:\IN^\IN\to\DD,p\mapsto[p]$.
\end{definition}

In the following we understand all computability statements on $\DD$ with respect to the
representation $\delta_\DD$. For every $p\in\IN^\IN$ and every word $w\in\IN^*$
there is a $q\in\IN^\IN$ such that $w\prefix q\in[p]$.
Hence $(\DD,\delta_\DD)$ is a densely realized space and all multi-valued
functions with output $\DD$ are $\omega$--indiscriminative.

\begin{corollary}[Turing degrees]
Let $X$ be a represented space and $f:\In X\mto\DD$ a multi-valued function.
Then $f$ is densely realized and hence $\omega$--indiscriminative.
\end{corollary}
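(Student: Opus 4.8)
The plan is to deduce the statement from the two propositions about dense realization already at our disposal (Proposition~\ref{prop:indiscrete} and Proposition~\ref{prop:dense}), so that the only thing left to verify is that the represented space $(\DD,\delta_\DD)$ is densely realized in the sense of the second \emph{Densely realized} definition, i.e.\ that $\delta_\DD^{-1}(\mathbf d)$ is dense in $\dom(\delta_\DD)=\IN^\IN$ for every Turing degree $\mathbf d\in\DD$.

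First I would check this density condition directly. Fix $\mathbf d\in\DD$, say $\mathbf d=[p]$ with $p\in\IN^\IN$, and fix a finite word $w\in\IN^*$; it suffices to produce some $q\in\IN^\IN$ with $w\prefix q$ and $\delta_\DD(q)=\mathbf d$, that is, $q\equivT p$. Take $q$ to be the concatenation $wp$. Prepending a finite word to a sequence does not change its Turing degree (from $p$ one computes $q$ by prepending $w$ and shifting, and from $q$ one recovers $p$ by deleting the first $|w|$ entries), hence $q\equivT p$ and $\delta_\DD(q)=[p]=\mathbf d$. Since $w$ was arbitrary, $\delta_\DD^{-1}(\mathbf d)$ is dense in $\IN^\IN$, which is exactly the statement that $(\DD,\delta_\DD)$ is densely realized.

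Next, for any represented space $X$ and any $f:\In X\mto\DD$ admitting a realizer, Proposition~\ref{prop:indiscrete} applies with $Y=\DD$ and yields that $f$ is densely realized as a multi-valued function. Proposition~\ref{prop:dense} then turns this into the conclusion that $f$ is $\omega$--indiscriminative (and, more than that, not strongly pointed and not a cylinder). This is the entire argument; there are no genuine calculations to carry out beyond the one-line observation above.

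The only delicate point is the hypothesis ``$f$ has a realizer'', which is required in order to invoke Proposition~\ref{prop:indiscrete}. Without the Axiom of Choice for Baire space an $f$ lacking a realizer would be a top element of the Weihrauch lattice, hence discriminative, so the corollary is to be read for realizable $f$; this covers in particular every computable or continuous $f$ and is automatic under $\mathsf{ZFC}$. Modulo this bookkeeping I do not expect any obstacle — essentially all of the work has already been done in Propositions~\ref{prop:indiscrete} and \ref{prop:dense}, and in the elementary fact that Turing degrees are invariant under finite modifications of their representatives.
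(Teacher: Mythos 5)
Your proof is correct and follows essentially the same route as the paper: the paper likewise observes that for every $p\in\IN^\IN$ and $w\in\IN^*$ there is $q$ with $w\prefix q\in[p]$ (your $q=wp$ makes this explicit), concludes that $(\DD,\delta_\DD)$ is densely realized, and then derives the corollary via Proposition~\ref{prop:indiscrete} and Proposition~\ref{prop:dense}. Your remark about the realizer hypothesis matches the paper's own caveat preceding Proposition~\ref{prop:indiscrete}, so there is nothing to add.
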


So every for-all-exists theorem that claims the existence of some Turing degree
is automatically $\omega$--indiscriminative.
This means that large parts of computability theory have very little uniform computational
content in terms of the Weihrauch lattice.
As an example we consider the statement that for every ${\mathbf a}\in\DD$ there
exists ${\mathbf b}\in\DD$ such that ${\mathbf b}\nleqT{\mathbf  a}$. 

\begin{example}[Non-computability problem]
The problem 
\[\NON:\DD\mto\DD,{\mathbf a}\mapsto\{{\mathbf b}:{\mathbf b}\nleqT{\mathbf  a}\}\] 
is  $\omega$--indiscriminative.
\end{example}

$\NON$ can be seen as one of the simplest non-computable existence statements asserting the existence of a Turing degree.
It is clear that $\NON$ is not computable, we prove that it is also not continuous.
In general, all $\omega$--discriminative problems are automatically discontinuous (since $\ACC_\IN$ is discontinuous), 
while for $\omega$--indiscriminative problems discontinuity needs to be checked individually. 

\begin{proposition} 
\label{prop:NON-discontinuous}
$\NON$ is discontinuous (even if restricted to an arbitrary cone $\{{\mathbf a}\in\DD:{\mathbf a_0}\leqT {\mathbf a}\}$ with ${\mathbf a_0}\in\DD$).
\end{proposition}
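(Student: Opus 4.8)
The plan is to show directly that $\NON$, restricted to a cone $\{\mathbf a\in\DD:\mathbf a_0\leq\mathbf a\}$, admits no continuous realizer. Fix a representative $a_0\in\Baire$ of $\mathbf a_0$ and suppose, for a contradiction, that $F:\In\Baire\to\Baire$ is a continuous realizer. Then $F$ is defined on every $p$ with $a_0\leqT p$ (these are exactly the names of elements of the cone, and $\NON$ is total) and satisfies $F(p)\nleqT p$ for every such $p$. I would construct a single $q$ with $a_0\leqT q$ and $F(q)\leqT q$; this contradicts the realizer condition at $q$, since it would mean $[F(q)]\leq[q]$, whereas a realizer requires $[F(q)]\nleq[q]$. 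Choosing $\mathbf a_0=\mathbf 0$ then also gives the unrestricted statement.

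The construction is a finite extension argument that builds $q$ in the form $q(i)=\langle a_0(i),r(i)\rangle$, so that $a_0\leqT q$ holds automatically and $r\leqT q$; the sequence $r=\bigcup_s\rho_s$ (with $\rho_0$ the empty word) is built so as to record the values $F(q)(s)$ in positions locatable from $q$ alone. At stage $s$, having committed to a finite word $\rho_s$, consider the point $p_s:=\langle a_0,\rho_s\widehat 0\rangle$ (with $\widehat 0$ the constant zero sequence); since $a_0\leqT p_s$, the point $p_s$ lies in the domain of $F$. By continuity of $F$ at $p_s$ there is an $N_s>|\rho_s|$ such that $F(\langle a_0,r'\rangle)(s)$ takes one fixed value $c_s$ for every $r'$ extending the word $\rho'_s:=\rho_s 0^{N_s-|\rho_s|}$ (this uses only that every such $\langle a_0,r'\rangle$, having $a_0$ on its even coordinates, lies in the domain of $F$). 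Now set $\rho_{s+1}$ to be $\rho'_s$ followed by the single symbol $c_s+1$. In the limit, $r$ consists of blocks of zeros separated by the nonzero symbols $c_0+1,c_1+1,c_2+1,\dots$ at the positions $N_0<N_1<N_2<\dots$, so the $s$-th nonzero entry of $r$ equals $F(q)(s)+1$. Hence $F(q)\leqT r\leqT q$, and since also $a_0\leqT q$ the point $q=\langle a_0,r\rangle$ is in the domain of $F$, giving the desired contradiction.

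The main point to get right is the apparent circularity: at stage $s$ we record a value $c_s$ which we intend to be $F(q)(s)$ for the not-yet-determined $q$. This is resolved by the invariant that, once $\rho'_s$ has been committed, $F(\langle a_0,r'\rangle)(s)=c_s$ for every $r'$ extending $\rho'_s$; since each of $\rho_{s+1},\rho_{s+2},\dots$ extends $\rho'_s$, the final $r$ extends all the $\rho'_s$ and the recorded values are correct. A secondary point worth noting is that the search for $N_s$ via continuity is not effective, but this is harmless: we never need to re-run the construction relative to $q$; recovering $F(q)$ from $q$ only requires reading off the nonzero entries of $r$, which is computable from $q$. The routine index bookkeeping — relating a modulus of continuity of $F$ at $p_s$ in the coordinates of $\Baire$ to the threshold $N_s$ in the coordinates of $r$ — I would suppress.
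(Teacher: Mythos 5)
Your proof is correct, but it takes a genuinely different route from the paper. The paper's argument is a two-line affair: since $F$ is a continuous partial function on Baire space, it is computable relative to some oracle $q\in\IN^\IN$; feeding it the input $p:=\langle p_0,q\rangle$ (with $p_0$ a representative of ${\mathbf a_0}$) then gives $p_0\leqT p$ and $F(p)\leqT p$, contradicting the realizer condition $[F(p)]\not\leq[p]$. You avoid invoking the oracle characterization of continuity altogether and instead run a finite extension construction: using only pointwise continuity of $F$ on the names of the cone, you freeze a modulus at each stage and splice the value $c_s=F(q)(s)$ into the input as the $s$-th nonzero entry of $r$, so that $F(q)\leqT r\leqT q$ while $a_0\leqT q$ keeps $q$ inside the cone. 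The bookkeeping you flag (non-effectivity of finding $N_s$, the invariant that every later extension respects the committed prefix $\rho'_s$, and the translation between the modulus in Baire-space coordinates and the threshold $N_s$) is handled or suppressed appropriately, and the domain issue is dealt with correctly since every $\langle a_0,r'\rangle$ is a name of a degree in the cone. What each approach buys: the paper's proof is much shorter and localizes all the work in the standard fact that continuous maps on $\Baire$ are computable in an oracle, with the oracle simply joined into the diagonal input; yours is self-contained and elementary, needing only continuity at the points actually constructed, and it is in the same spirit as the finite extension arguments used elsewhere in the paper (e.g.\ in the proof of Proposition~\ref{prop:JIT-cKid}), at the price of a stage-by-stage construction. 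Both correctly cover the cone-restricted statement, you by building $a_0$ into the input, the paper by choosing $p_0$ to represent ${\mathbf a_0}$.
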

\begin{proof}
Suppose that $F:\IN^\IN\to\IN^\IN$ is a continuous realizer of $\NON$.
Then $F$ is computable with respect to some oracle $q\in\IN^\IN$.
Let $p_0\in\IN^\IN$ be arbitrary and $p:=\langle p_0,q\rangle$. Then $p_0\leqT p$
and $F(p)\leqT p$, in contrast to the assumption that $F$ realizes $\NON$.
\end{proof}

\section{Diagonally Non-Computable Functions}
\label{sec:DNC}

In this section we discuss diagonally non-computable functions. 
By $\varphi:\IN\to\PP$ we denote a standard G\"odel numbering
of the set $\PP$ of partial computable functions $f:\In\IN\to\IN$. 
We recall that a function $q:\IN\to\IN$ is called {\em diagonally non-computable} if
$q(n)\not=\varphi_n(n)$ holds for all $n\in\IN$. Here the inequality $q(n)\not=\varphi_n(n)$
can hold for two reasons: either $\varphi_n(n)$ is not defined or it is defined and has a value
different from $q(n)$. We relativize this problem with respect to some oracle $p$ and some set $X\In\IN$ of values
and we use the relativized G\"odel numbering $\varphi^p$ for this purpose.

\begin{definition}[Diagonally non-computable functions]
Let $X\In\IN$.
We define $\DNC_X:\In\IN^\IN\mto\IN^\IN$ for all $p\in\IN^\IN$ by 
\[\DNC_X(p):=\{q\in X^\IN:(\forall n)\;\varphi^p_n(n)\not=q(n)\}.\]
\end{definition}

It is clear that $\DNC_0$ and $\DNC_1$ are nowhere defined and hence computable. The latter follows since for every $p\in\IN^\IN$ there is
some G\"odel number $n$ such that $\varphi^p_n$ is a total function with constant value $1$.
For every set $X\In\IN$ with at least two values, $\DNC_X$ is total.
It is also clear that the degree of $\DNC_X$ for finite $X$ only depends on the cardinality of $X$
and that $X\In Y$ implies $\DNC_Y\leqSW\DNC_X$.

In the next result we show that $\DNC_X$ is the parallelization of $\ACC_X$.
Via Fact~\ref{fact:ACC-LLPO} an equivalent result has been proved independently by Higuchi and Kihara \cite[Proposition~79]{HK14a}.

\begin{theorem}[Diagonally non-computable functions]
\label{thm:DNC}
$\DNC_X\equivSW\widehat{\ACC_{X}}$ for all $X\In\IN$ with at least two elements.
\end{theorem}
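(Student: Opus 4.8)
The plan is to prove the two strong Weihrauch reductions $\DNC_X \leqSW \widehat{\ACC_X}$ and $\widehat{\ACC_X} \leqSW \DNC_X$ separately, exploiting the fact that a $\DNC_X$ instance is essentially an infinite array of independent $\ACC_X$-type constraints, one for each index $n$.

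\textbf{Direction $\DNC_X \leqSW \widehat{\ACC_X}$.} Given an oracle $p \in \IN^\IN$, I would build, uniformly in $p$, a sequence of closed sets $A_n \in \AA_-(X)$ defined by $A_n := X \setminus \{\varphi^p_n(n)\}$ if $\varphi^p_n(n)$ converges, and $A_n := X$ otherwise. The point is that this is a valid name in $\AA_-(X)$ computable from $p$: we enumerate the complement of $A_n$ by waiting for $\varphi^p_n(n)$ to halt, and if it halts with value $v \in X$ we throw $v$ into the complement; since at most one value is ever removed, $|X \setminus A_n| \le 1$, so $(A_n)_n \in \dom(\widehat{\ACC_X})$ (note $A_n \neq \emptyset$ since $X$ has at least two elements). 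Any realizer of $\widehat{\ACC_X}$ returns a sequence $(q(n))_n$ with $q(n) \in A_n$ for all $n$, i.e.\ $q(n) \neq \varphi^p_n(n)$ whenever the latter is defined and $q(n) \in X$ always; hence $q \in \DNC_X(p)$. No output modification is needed, so this is a strong reduction.

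\textbf{Direction $\widehat{\ACC_X} \leqSW \DNC_X$.} This is the direction where I expect the only real work. Given a sequence $(A_n)_n$ of $\ACC_X$-instances in negative description, I need to produce an oracle $p$ such that solving $\DNC_X(p)$ solves every $A_n$ simultaneously. The idea is to choose, by the recursion theorem / a fixed uniform choice of Gödel numbers, for each $n$ an index $e_n$ such that $\varphi^p_{e_n}(e_n)$ runs the enumeration of $X \setminus A_n$ (extracted from the name of $(A_n)_n$, which is computable in $p$) and outputs the unique excluded value if and when it appears, diverging otherwise. Then any $q \in \DNC_X(p)$ satisfies $q(e_n) \neq \varphi^p_{e_n}(e_n)$, so if $A_n$ omits a point $v$ we get $q(e_n) \neq v$, and since $q(e_n) \in X$ this forces $q(e_n) \in A_n$; if $A_n = X$ then trivially $q(e_n) \in A_n$. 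The output modifier $H$ reads off $(q(e_n))_n$ from $q$. The indices $e_n$ must be obtainable uniformly and computably (as a computable function of $n$) from the padding lemma applied to a single index depending computably on the $\DNC$-oracle; packaging this cleanly — so that the whole map from the $\widehat{\ACC_X}$-name to the $\DNC_X$-oracle $p$ is computable, and $H$ depends only on the computable choice of the $e_n$, not on the input — is the main obstacle, but it is the standard use of the $s$-$m$-$n$ theorem and poses no genuine difficulty.

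Finally I would note that since both reductions use no access to the original input in the output phase beyond the fixed computable data (the indices $e_n$), they are genuinely strong, giving $\DNC_X \equivSW \widehat{\ACC_X}$. I would also remark in passing that the hypothesis that $X$ has at least two elements is exactly what is needed to ensure the constructed closed sets are non-empty and the problems are total, matching the earlier observation that $\DNC_0, \DNC_1$ are trivial.
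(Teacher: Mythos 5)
Your proposal is correct and follows essentially the same route as the paper: one direction computes the sets $A_n:=X\setminus\{\varphi^p_n(n)\}$ (with $A_n=X$ in case of divergence) by negative information and feeds them to $\widehat{\ACC_X}$ with no output modification, and the other uses the relativized s-m-n theorem to produce indices $e_n$ (the paper's $s(i)$) at which the diagonal value enumerates the excluded point of $A_n$, so that $H(q)=(q(e_n))_n$ solves $\widehat{\ACC_X}$. The only cosmetic difference is your passing mention of the recursion theorem and padding lemma, which, as you yourself conclude, are not needed beyond the standard s-m-n argument.
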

\begin{proof}
We use a representation of $\AA_-(X)$
such that $p$ is a name of a set $A\In X$ if $\range(p)-1=X\setminus A$ (that is, if
$n+1\in\range(p)\iff n\not\in A$).

We first prove $\widehat{\ACC_X}\leqSW\DNC_X$. Given a sequence $(A_i)_{i\in\IN}$
of non-empty sets $A_i\In X$ that miss at most one element, we need to find one element in each $A_i$.
We can assume that each $A_i$ is given by some $p_i\in\IN^\IN$ with $\range(p_i)-1=X\setminus A_i$.
We let $p=\langle p_0,p_1,p_2,...\rangle$. 
Then by the relativized smn-theorem there is a computable function $s:\IN\to\IN$ such that
\[\varphi^p_{s(i)}(n)=\left\{\begin{array}{ll}
x_i & \mbox{if $X\setminus A_i=\{x_i\}$}\\
\uparrow & \mbox{if $X\setminus A_i=\emptyset$}
\end{array}\right..\]
The program with G\"odel number $s(i)$ just has to scan the $i$--th projection $p_i$ of the oracle $p$
in order to find some non-zero element $x_i+1$ and halt with output $x_i$ if there is such an element; otherwise it can search
forever. Then every $q\in\DNC_X(p)$ has the property that $q(s(i))\in A_i$. 
In other words, the computable function $H(q):=q\circ s$ satisfies $H\circ\DNC_X(p)\In\widehat{\ACC_X}(p)$
and hence $\widehat{\ACC_X}\leqSW\DNC_X$.

Now we prove $\DNC_X\leqSW\widehat{\ACC_X}$. Let $p\in\IN^\IN$ be given.
Given $p$ we use a computable function $K$ to compute a name $r=\langle r_0,r_1,r_2,...\rangle=K(p)$ of a sequence
$(A_i)_{i\in\IN}$ of sets $A_i\In X$ with
\[X\setminus A_n:=\{\varphi^p_n(n)\}=\range(r_n)-1\]
for all $n\in\IN$. The function $K$ works such that it starts to compute $\varphi^p_n(n)$ and as
long as no result is available, it writes zeros into $r_n$; as soon as the computation $\varphi^p_n(n)$ halts,
the function $K$ switches to write the value $\varphi^p_n(n)+1$ into $r_n$.
We obtain $\widehat{\ACC_{X}}\circ K(p)=A_0\times A_1\times A_2\times...\In\DNC_X(p)$.
This implies $\DNC_X\leqSW\widehat{\ACC_{X}}$.
\end{proof}

It is clear that $\ACC_2=\C_2$ and it is known that $\widehat{\C_2}\equivSW\WKL$ (see \cite{BG11,BBP12}).
Hence, we obtain the following result.

\begin{corollary}[Diagonally non-computable functions]
\label{cor:WKL-DNC}
$\WKL\equivSW\DNC_2$.
\end{corollary}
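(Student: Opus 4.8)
The plan is to derive Corollary~\ref{cor:WKL-DNC} directly from Theorem~\ref{thm:DNC} together with two facts that are explicitly recalled in the paragraph immediately preceding the corollary, namely $\ACC_2=\C_2$ and $\widehat{\C_2}\equivSW\WKL$. First I would instantiate Theorem~\ref{thm:DNC} at $X=2=\{0,1\}$, which is a subset of $\IN$ with at least two elements, obtaining $\DNC_2\equivSW\widehat{\ACC_2}$.

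Next I would use the observation $\ACC_2=\C_2$. This holds because for $X=2$ the side condition $|X\setminus A|\le 1$ in the definition of $\ACC_X$ is automatic (any non-empty $A\In\{0,1\}$ misses at most one point), so $\ACC_2$ and $\C_2$ have the same domain and the same multi-valued assignment $A\mapsto A$; hence they are literally the same problem. Combining this with the previous step gives $\DNC_2\equivSW\widehat{\C_2}$. Finally, applying the known equivalence $\widehat{\C_2}\equivSW\WKL$ (from \cite{BG11,BBP12}) and transitivity of $\equivSW$ yields $\DNC_2\equivSW\WKL$, which is the claim.

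Since every ingredient is either proved earlier in the paper (Theorem~\ref{thm:DNC}) or quoted from the literature, there is essentially no obstacle here; the only point requiring a sentence of justification is the identity $\ACC_2=\C_2$, and even that is immediate from the definitions. This is why the statement is phrased as a corollary rather than a theorem.

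\begin{proof}
By Theorem~\ref{thm:DNC}, applied to $X=2$, we have $\DNC_2\equivSW\widehat{\ACC_2}$. Now $\ACC_2=\C_2$: for a non-empty set $A\In\{0,1\}$ the complement $\{0,1\}\setminus A$ always has cardinality at most $1$, so the domains of $\ACC_2$ and $\C_2$ coincide, and on this common domain both problems are given by $A\mapsto A$. Hence $\DNC_2\equivSW\widehat{\C_2}$. Since $\widehat{\C_2}\equivSW\WKL$ (see \cite{BG11,BBP12}), transitivity of $\equivSW$ yields $\WKL\equivSW\DNC_2$.
\end{proof}
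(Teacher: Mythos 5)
Your argument is correct and is precisely the paper's route: the corollary is obtained by instantiating Theorem~\ref{thm:DNC} at $X=2$, observing $\ACC_2=\C_2$, and invoking the known equivalence $\widehat{\C_2}\equivSW\WKL$. The extra sentence justifying $\ACC_2=\C_2$ is fine and matches what the paper treats as "clear".
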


We mention that the proof of $\DNC_2\leqSW\C_{2^\IN}\equivSW\WKL$ also follows 
directly from the fact that the set 
\[\DNC_{2}(p)=\{q\in 2^\IN:(\forall n)\;\varphi^p_n(n)\not=q(n)\}\]
is closed and a name for it with respect to negative information can easily
be computed from $p$. 

In order to generalize this observation, we introduce
a generalization of $\WKL$. Let $X\In\IN$. By $\Tr_X$ we denote the
set of trees $T\In X^*$ represented via their characteristic functions
$\chi_T:2^{(X^*)}\to\{0,1\}$.  We call a tree $T\In X^*$ {\em big}, if
it has the following property: if $w$ is a node of an infinite path of $T$,
then all but at most one successor node of $w$ are also on some infinite path of $T$.

\begin{definition}[Weak K\H{o}nig's lemma for big trees]
Let $X\In\IN$. By $\BWKL_X$ we denote the problem 
\[\BWKL_X:\In\Tr_X\mto X^\IN,\]
where $\BWKL_X(T)=[T]$ is the set of infinite paths of $T$
and $\dom(\BWKL_X)$ is the set of big trees $T$ with infinite paths.
\end{definition}

Since all binary trees are automatically big, we have $\BWKL_2=\WKL$.
In general we obtain the following.

\begin{theorem}[Diagonally non-computable functions and big trees]
\label{thm:DNC-WKL}
If $k\geq2$, then $\DNC_k\equivSW\BWKL_k$.
\end{theorem}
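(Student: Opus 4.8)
The plan is to prove $\DNC_k \equivSW \WKL_k$ for $k \geq 2$ by establishing the two reductions separately, using that both are strongly Weihrauch equivalent to $\widehat{\ACC_k}$ via Theorem~\ref{thm:DNC}. Since Theorem~\ref{thm:DNC} already gives $\DNC_k \equivSW \widehat{\ACC_k}$, it suffices to show $\WKL_k \equivSW \widehat{\ACC_k}$ as well; alternatively one can argue directly between $\DNC_k$ and $\WKL_k$, which is perhaps more transparent and parallels the proof of Corollary~\ref{cor:WKL-DNC}. I will present the direct argument.

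First I would prove $\DNC_k \leqSW \WKL_k$. Given $p \in \IN^\IN$, observe that $\DNC_k(p) = \{q \in k^\IN : (\forall n)\; \varphi^p_n(n) \neq q(n)\}$ is the set of infinite paths through the tree $T_p \In k^*$ consisting of all finite strings $w$ with $w(n) \neq \varphi^p_n(n)$ for every $n < |w|$ for which $\varphi^p_n(n)$ has halted (within $|w|$ steps, say, to keep things concrete). A characteristic-function name for $T_p$ is computable from $p$: to decide whether $w \in T_p$ we run the relevant finitely many computations for the appropriate number of steps. The key point is that $T_p$ is \emph{big}: if $w$ lies on an infinite path, then extending $w$ by any value in $k$ is forbidden only when $\varphi^p_{|w|}(|w|)$ halts with that value, which rules out at most one successor, and all the remaining successors again lie on infinite paths (the same reasoning propagates downward). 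Hence $T_p \in \dom(\WKL_k)$ and $\WKL_k(T_p) = [T_p] = \DNC_k(p)$, so the identity on the output side witnesses $\DNC_k \leqSW \WKL_k$.

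Next I would prove $\WKL_k \leqSW \DNC_k$. Given a big tree $T \In k^*$ with an infinite path, the idea is to encode, from the characteristic function $\cf_T$, an oracle $p$ so that $\DNC_k(p)$ feeds into a path of $T$. One natural route is to go through $\widehat{\ACC_k}$: from $T$ one computes a sequence $(A_w)_{w}$ indexed by nodes $w$ on infinite paths, where $A_w \In k$ is the set of values $i$ such that $wi$ lies on an infinite path of $T$; bigness guarantees $|k \setminus A_w| \leq 1$ and $|A_w| \geq 1$. The subtlety is that ``lies on an infinite path'' is not decidable, so $A_w$ is not directly given by negative information in a computable way. This is the main obstacle. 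The way around it — exactly as in the proof of Theorem~\ref{thm:DNC} — is to exploit that $\ACC_k$ only needs a \emph{co-unique} closed set: we give $A_w$ the name that starts enumerating ``nothing is forbidden'' and, \emph{if and when} we discover that exactly one successor $wi$ has no extension in $T$ up to the current level and the sibling successors do, we enumerate $i$ as forbidden; more carefully, since a big tree with an infinite path has the property that at each node on an infinite path all-but-one successor continues, we can search level by level and commit. Using the relativized smn-theorem we bundle this into an oracle $p = K(\cf_T)$ and a program assignment $s$ so that any $q \in \DNC_k(p)$ has $q(s(n)) \in A_{w_n}$ along the path $w_0 \prefix w_1 \prefix \dots$ being built, and then a computable $H$ reads off an element of $[T]$. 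This gives $\WKL_k \leqSW \DNC_k$, completing the proof.

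I expect the delicate point to be making the description of the sets $A_w$ (or the tree-pruning) genuinely computable from $\cf_T$ in the negative-information sense required by $\ACC_k$, and in handling the fact that the path is built adaptively: one must be careful that the indices $s(n)$ and the committed nodes $w_n$ depend only on computable information extracted so far, and that bigness is used precisely to guarantee that a wrong-looking but not-yet-eliminated successor never leaves us stuck. Once this bookkeeping is set up, both directions are strong reductions because the input transformations are computable and the output transformations ($\id$ in one direction, $q \mapsto$ the path assembled from $q \circ s$ in the other) do not consult the original input.
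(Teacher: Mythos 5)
Your proposal is correct and takes essentially the same route as the paper: the first direction is identical (the $\DNC_k$-solutions form the path set of a big tree computable from $p$), and the second goes through $\WKL_k\leqSW\widehat{\ACC_k}\equivSW\DNC_k$, using that emptiness of $w i k^\IN\cap[T]$ is c.e.\ in $T$ by compactness and that bigness yields the all-or-co-unique promise. The only difference is bookkeeping: the paper resolves the adaptivity/indexing worry you flag by assigning an $\ACC_k$-instance to \emph{every} word of $k^*$ via a fixed bijection (enumerating at most one forbidden successor per word, namely the first one detected dead), and then assembling the path purely from the parallelized output, which is exactly the fix your sketch gestures at.
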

\begin{proof}
For each $k$ we can compute
a tree $T\In k^*$ with $\DNC_k(p)=[T]$ from $p$, where 
\[\DNC_k(p)=\{q\in k^\IN:(\forall n)\;\varphi^p_n(n)\not=q(n)\}.\] 
This tree $T$ is 
automatically big and has infinite paths (since $k\geq2$).
Hence $\BWKL_k(T)=\DNC_k(p)$. This yields the reduction $\DNC_k\leqSW\BWKL_k$.

For the other direction it suffices to prove $\BWKL_k\leqSW\widehat{\ACC_k}$
by Theorem~\ref{thm:DNC}. We use some computable standard bijection $w:\IN\to k^*$.
Given a big tree $T\In k^*$ with some infinite path, we need to find such a path.
For each number $n$ we consider the word $w_n=w(n)$ and we assign a set $A_n\In k$ to it: $A_n=k\setminus\{i\}$ for the first $i\in k$ for which $w_nik^\IN\cap[T]=\emptyset$ can be detected
if such an $i$ exists and $A_n=k$ otherwise.
This assignment is computable in the sense that negative information on $A_n$ can be computed from $T$
since the property $w_nik^\IN\cap[T]=\emptyset$ is c.e.\ in $T$ due to compactness of $w_nik^\IN$.
If $w_nk^\IN\cap[T]\not=\emptyset$, then
\[A_n=\{i\in\{0,...,k-1\}:w_nik^\IN\cap[T]\not=\emptyset\}.\]
If $w_nk^\IN\cap[T]=\emptyset$, then $A_n\In k$ contains all but one element of $k$.
Now, given the sequence $(A_n)_n$ one
can determine a sequence $p\in k^\IN$ with $p(n)\in A_n$ for each $n\in\IN$ with the help of $\widehat{\ACC_k}$. 
Starting from the number $n_0$ of the empty word $w$ we can determine a sequence $q\in[T]$ inductively using $p$:
set $q(0):=p(n_0)$; if $q|_i=q(0)...q(i-1)$ has already been determined and $n_i\in\IN$ is such that $w_{n_i}=q|_i$,
then $q(i):=p(n_i)$. Altogether, this yields an infinite path $q\in[T]$ and hence the reduction $\BWKL_k\leqSW\widehat{\ACC_k}$.  
\end{proof}

We use the compactness of the set $\DNC_X(p)$ for finite $X\In\IN$
also for Proposition~\ref{prop:ACC-DNC}. First, we need the following observation.

\begin{lemma}
\label{lem:DNC-intersection}
Let $n\geq1$ and $p_0,...,p_{n-1}\in\IN^\IN$. Then $\bigcap_{i=0}^{n-1}\DNC_{n+1}(p_i)\not=\emptyset$.
\end{lemma}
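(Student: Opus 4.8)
The plan is to prove the lemma by a purely coordinatewise pigeonhole count, ignoring all computability and topological structure. First I would fix $n\geq1$ and $p_0,\dots,p_{n-1}\in\IN^\IN$ and, for each $m\in\IN$, consider the set of values that are forbidden at coordinate $m$, namely $F_m:=\{\varphi^{p_i}_m(m):i<n\text{ and }\varphi^{p_i}_m(m)\!\downarrow\}$. The key observation is that each of the $n$ oracles $p_0,\dots,p_{n-1}$ contributes at most one element to $F_m$ (none if the corresponding computation diverges, one if it converges), so $|F_m|\leq n$, whereas the value space $\{0,1,\dots,n\}=n+1$ has $n+1$ elements; hence $A_m:=(n+1)\setminus F_m$ is non-empty.

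Next I would assemble a witness $q\in(n+1)^\IN$ by taking, say, $q(m):=\min A_m$ for every $m$; this is an explicit choice, so no appeal to the axiom of choice is needed. By construction $q(m)\in\{0,\dots,n\}$ and $q(m)\notin F_m$, which means that for every $i<n$ and every $m$ we have $\varphi^{p_i}_m(m)\neq q(m)$ (either because $\varphi^{p_i}_m(m)$ is undefined, or because it is defined with a value different from $q(m)$). Hence $q\in\DNC_{n+1}(p_i)$ for each $i<n$, so $q\in\bigcap_{i=0}^{n-1}\DNC_{n+1}(p_i)$ and the intersection is non-empty.

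I do not expect any genuine obstacle here. The only subtlety worth flagging is that the witness $q$ produced this way is in general not computable from $(p_0,\dots,p_{n-1})$, since it encodes the halting behaviour of the $\varphi^{p_i}$; but the statement only asserts non-emptiness, so this is irrelevant. An alternative route would go through compactness, using that each $\DNC_{n+1}(p_i)$ is a non-empty, co-c.e.\ closed, hence compact subset of $(n+1)^\IN$ (as already exploited in the discussion preceding the lemma), and that such a finite intersection is non-empty as soon as every coordinate admits a common admissible value. Since there are only finitely many sets in play, however, the direct coordinatewise argument is shorter and self-contained, so that is the one I would write up.
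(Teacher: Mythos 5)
Your proof is correct and is essentially the argument the paper has in mind: the paper disposes of the lemma in one line ("follows immediately with the help of countable choice"), which is exactly your coordinatewise pigeonhole count that at most $n$ values are forbidden at each coordinate while $n+1$ are available. Your choice of $q(m):=\min A_m$ even makes the appeal to countable choice superfluous, since the admissible sets are non-empty subsets of $\IN$, but this is a cosmetic refinement rather than a different route.
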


The proof is immediate.

\begin{proposition}
\label{prop:ACC-DNC}
$\ACC_n\nleqW\DNC_{n+1}$ for all $n\geq2$.
\end{proposition}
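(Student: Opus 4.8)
The plan is to argue by contradiction, exploiting the compactness of $\DNC_{n+1}(p)$ together with Lemma~\ref{lem:DNC-intersection}. Suppose $\ACC_n\leqW\DNC_{n+1}$ via computable $H,K$, so that $H\langle\id,FK\rangle\vdash\ACC_n$ whenever $F\vdash\DNC_{n+1}$. Recall that an instance of $\ACC_n$ is a name (in the representation used in the proof of Theorem~\ref{thm:DNC}, where $\range(p)-1=X\setminus A$) of a set $A\in\{0,\dots,n-1\}$ with $|{n\setminus A}|\le 1$; there are $n$ ``hard'' instances $A_i:=n\setminus\{i\}$, plus the all-of-$n$ instance. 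The key point is that each instance has a name of the form $\widehat{0}$ (for $A=n$) or one that eventually is constant $i+1$ (for $A_i$), and these can be approximated by $\widehat{0}$ for arbitrarily long initial segments.

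First I would run $H\langle\cdot,FK(\cdot)\rangle$ on the name $\widehat{0}$ of the full set $n$, using an arbitrary realizer $F_0\vdash\DNC_{n+1}$; it must output some $j\in n$. By continuity of $H$ and $K$, there is a finite prefix $w\prefix \widehat 0$ and a finite prefix $v\prefix F_0K(\widehat 0)$ forcing the output to be $j$: whenever $p$ extends $w$ and $G$ is any realizer of $\DNC_{n+1}$ with $v\prefix GK(p)$, we get $H\langle p,GK(p)\rangle=j$. Now let $p$ be a name of the hard instance $A_j=n\setminus\{j\}$ that begins with $w$ (possible since after $w$ we may switch to the constant $j+1$). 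The reduction demands $H\langle p,GK(p)\rangle\in A_j$, i.e.\ $\ne j$, for every $G\vdash\DNC_{n+1}$. So to reach the contradiction it suffices to produce some $G\vdash\DNC_{n+1}$ with $v\prefix GK(p)$.

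The heart of the argument is this last step, and this is where Lemma~\ref{lem:DNC-intersection} enters. Write $p'_0:=K(\widehat 0)$ and $p'_1:=K(p)$, two oracles. Since $v\prefix F_0K(\widehat 0)=F_0(p'_0)$ and $F_0(p'_0)\in\DNC_{n+1}(p'_0)$, the cylinder $v\IN^\IN$ meets $\DNC_{n+1}(p'_0)$; I want a point of $\DNC_{n+1}(p'_1)$ also lying in this cylinder, or more precisely agreeing with a suitable already-committed finite part. Here one uses that $\DNC_{n+1}(p'_0)$ and $\DNC_{n+1}(p'_1)$ are nonempty closed subsets of the compact space $(n+1)^\IN$ whose \emph{intersection} is nonempty by Lemma~\ref{lem:DNC-intersection} (with the two oracles $p'_0,p'_1$, using $n\ge 2$ so that $n+1\ge 3$ covers the $n=2$ case via the lemma with parameter $n$). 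A point $q$ in the intersection lies in $\DNC_{n+1}(p'_1)$ and can be taken as the value of a realizer $G\vdash\DNC_{n+1}$ on the input $K(p)$; but to guarantee $v\prefix G K(p)$ one must be slightly more careful: one should choose $w$ (hence $p'_1$) late enough so that $K$ has not yet committed past the point where $\DNC_{n+1}(p'_0)$ and $\DNC_{n+1}(p'_1)$ look identical, and then glue. Concretely: extend $w$ so that $K(\widehat 0)$ and $K(p)$ share the prefix that $F_0$ read to produce $v$; then $F_0$'s computation of $v$ runs verbatim on $K(p)$ as well, and since $\DNC_{n+1}(p'_1)$ is nonempty and contains a point extending $v$ (again by the intersection lemma applied to the relevant finitely many oracles), we can complete $v$ to a full realizer value in $\DNC_{n+1}(p'_1)$.

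I expect the main obstacle to be precisely this gluing: making rigorous the claim that the finite amount of oracle $K$ must read to commit $v$ can be made to coincide for the two inputs $\widehat 0$ and $p$, and that the resulting partial output $v$ extends to an element of $\DNC_{n+1}(K(p))$. Both are handled by first fixing the continuity witnesses for $H$ and $K$, then choosing the branch point of $p$ from $\widehat 0$ beyond all the finitely many positions those witnesses inspect, and finally invoking Lemma~\ref{lem:DNC-intersection} (whose compactness/countable-choice proof guarantees the needed path exists) to fill in the tail. Once the gluing is set up, the contradiction $H\langle p, GK(p)\rangle = j \notin A_j$ is immediate, giving $\ACC_n\nleqW\DNC_{n+1}$.
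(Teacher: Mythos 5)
Your overall plan (continuity plus Lemma~\ref{lem:DNC-intersection}) points in the right direction, but the gluing step at its heart is a genuine gap, and it cannot be repaired in the form you state it. The claim you need is: after choosing the branch point of $p$ from $\widehat{0}$ late enough, the fiber $\DNC_{n+1}(K(p))$ contains a point extending $v$. Neither of your justifications supports this. First, realizers of $\DNC_{n+1}$ are arbitrary functions, not computations; since $\DNC_{n+1}$ is discontinuous it has no continuous realizer at all, so ``the prefix that $F_0$ read to produce $v$'' and ``$F_0$'s computation runs verbatim on $K(p)$'' have no meaning, and nothing forces $v\prefix F_0K(p)$. Second, long agreement between $K(\widehat{0})$ and $K(p)$ only preserves computations $\varphi_i^{K(\widehat{0})}(i)$ that already halt with use inside the agreed prefix; it does not prevent \emph{new} computations $\varphi_i^{K(p)}(i)$ with $i<|v|$ from halting, with use beyond the agreement, and with value exactly $v(i)$ --- which makes $v$ inextendible inside $\DNC_{n+1}(K(p))$. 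Lemma~\ref{lem:DNC-intersection} gives a nonempty intersection of fibers but says nothing about points with a prescribed prefix, and the prefix $v$ is fixed before $p$ is chosen, so you also cannot arrange $F_0K(\widehat{0})$ to lie in the relevant intersection (circularity: $p$ must name $n\setminus\{j\}$, and $j$ depends on the value $F_0K(\widehat{0})$).

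That the step must fail in general can be seen abstractly: your argument never uses the mismatch between $n$ (values on the $\ACC$ side) and $n+1$ (values on the $\DNC$ side) in an essential way --- the two-oracle intersection only needs $n+1\geq 3$ --- so, if valid, it would equally prove $\ACC_{n+1}\nleqW\DNC_{n+1}$, contradicting $\ACC_{n+1}\leqW\widehat{\ACC_{n+1}}\equivSW\DNC_{n+1}$ from Theorem~\ref{thm:DNC}. The paper's proof avoids exactly this trap: it works with the single full-set input $p$, uses \emph{uniform} continuity of $H$ on the compact fiber $\{p\}\times\DNC_{n+1}K(p)$ to get one modulus $k$ valid for all points of that fiber, uses the Use Theorem and continuity of $K$ to choose the branch point so that the finitely many halting computations among the first $k$ indices are preserved for \emph{all} $n$ perturbed names $p_0,\dots,p_{n-1}$ simultaneously, and then takes $r\in\bigcap_{i<n}\DNC_{n+1}K(p_i)$ (this $n$-fold intersection is where having $n+1$ values is really needed) and matches it with some $q\in\DNC_{n+1}K(p)$ satisfying $q|_k=r|_k$. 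Note the direction of that transfer: one passes from the perturbed fibers back to the original one, where only preservation of already-halting computations is required and extra constraints on the perturbed side are harmless --- the opposite of the direction your gluing needs. The single value $r$ then forces the same output for all $n$ hard instances, which is impossible since the output must avoid $i$ on $p_i$ for every $i<n$.
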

\begin{proof}
Let $n\geq 2$ and suppose that $\ACC_n\leqW\DNC_{n+1}$.
Then there are computable functions $H,K:\In\IN^\IN\to\IN^\IN$ such that
$H\langle\id,GK\rangle$ is a realizer of $\ACC_n$ whenever $G$ is a realizer
of $\DNC_{n+1}$. Let $p$ be a name of the set $n=\{0,...,n-1\}$. 
Since the set $\DNC_{n+1}K(p)$ is compact and $H$ is continuous,
it follows that $H$ restricted to $\langle\{p\}\times\DNC_{n+1}K(p)\rangle$
is uniformly continuous. Hence there is $k\in\IN$ such that for every
$q\in\DNC_{n+1}K(p)$ the value of $H\langle p,q\rangle$ is already
determined by prefixes $p|_k$ and $q|_k$ of $p$ and $q$, respectively, i.e., $H\langle p|_k\IN^\IN\times q|_k\{0,...,n\}^\IN\rangle=\{H\langle p,q\rangle\}$.
The values $\varphi_j^{K(p)}(j)$ are defined for $j$ from $D:=\{j\in\IN:j\in\dom(\varphi_j^{K(p)})$ and $j<k\}$.
It follows from the use theorem that a prefix of length $l$ of the oracle $K(p)$ is sufficient
to guarantee that the corresponding computations halt, i.e., that
$j\in\dom(\varphi_j^{K(p)|_l})$ for all $j\in D$. Due to the continuity of $K$
there is a prefix $p|_m$ of $p$ such that $K(p|_m\IN^\IN)\In K(p)|_l\IN^\IN$;
without loss of generality, we can assume $m\geq k$.
There are $p_0,...,p_{n-1}$ with
$p_i|_m=p|_m$ and $\ACC_n(p_i)=\{0,...,n-1\}\setminus\{i\}$ for all $i<n$. 
By Lemma~\ref{lem:DNC-intersection} there is $r\in\bigcap_{i=0}^{n-1}\DNC_{n+1}K(p_i)$ and 
by our choices of $m,l$ and $p_i$, we obtain $K(p_i)|_l=K(p)|_l$ and hence 
$\varphi_j^{K(p_i)}(j)=\varphi_j^{K(p)}(j)$ for all $j\in D$ and $i<n$.
Thus we can choose $q\in\DNC_{n+1}K(p)$ such that $r|_k=q|_k$.
Hence there is some realizer $G$ of $\DNC_{n+1}$ such that $GK(p_i)=r$ for all $i<n$
and $H\langle p_i,GK(p_i)\rangle=H\langle p_i,r\rangle=H\langle p_i,q\rangle$ does not 
depend on $i$, due to our choice of $k$. 
This contradicts that $H\langle p_i,GK(p_i)\rangle\in\ACC_n(p_i)=\{0,...,n-1\}\setminus\{i\}$ for all $i<n$.
\end{proof}

This yields an independent proof of $\ACC_n\nleqW\ACC_{n+1}$ (which is known by Fact~\ref{fact:ACC-chain})
and it also yields the following corollary, which alternatively follows from \cite[Corollary~80]{HK14a} or \cite[Theorem~6]{Joc89}.

\begin{corollary}
\label{cor:DNCN}
$\DNC_\IN\lW\DNC_{n+1}\lW\DNC_n$ for all $n\geq2$.
\end{corollary}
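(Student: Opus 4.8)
The plan is to derive Corollary~\ref{cor:DNCN} purely from the Proposition just proved ($\ACC_n\nleqW\DNC_{n+1}$), from Theorem~\ref{thm:DNC}, and from the elementary monotonicity $X\In Y\Rightarrow\DNC_Y\leqSW\DNC_X$ recorded earlier in this section. No new combinatorics is needed.

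First I would record the non-strict half of the chain. Since $\{0,\dots,n-1\}\In\{0,\dots,n\}\In\IN$, applying $X\In Y\Rightarrow\DNC_Y\leqSW\DNC_X$ (witnessed by the identity maps) yields $\DNC_\IN\leqSW\DNC_{n+1}\leqSW\DNC_n$, hence in particular the reducibilities $\DNC_\IN\leqW\DNC_{n+1}\leqW\DNC_n$ of the claimed chain.

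Next I would establish $\DNC_{n+1}\lW\DNC_n$ by contradiction. Since $\ACC_n\leqW\widehat{\ACC_n}$ for any problem (feed the constant sequence and project the first coordinate) and $\widehat{\ACC_n}\equivSW\DNC_n$ by Theorem~\ref{thm:DNC}, we have $\ACC_n\leqW\DNC_n$. If in addition $\DNC_n\leqW\DNC_{n+1}$ held, transitivity would give $\ACC_n\leqW\DNC_{n+1}$, contradicting the Proposition. Hence $\DNC_n\nleqW\DNC_{n+1}$, which combined with $\DNC_{n+1}\leqSW\DNC_n$ gives $\DNC_{n+1}\lW\DNC_n$ for every $n\geq2$.

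Finally, for $\DNC_\IN\lW\DNC_{n+1}$, I would again argue by contradiction: if $\DNC_{n+1}\leqW\DNC_\IN$, then composing with $\DNC_\IN\leqSW\DNC_{n+2}$ from the first step yields $\DNC_{n+1}\leqW\DNC_{n+2}$, contradicting the strictness just obtained (with $n+1\geq2$ in place of $n$). Hence $\DNC_{n+1}\nleqW\DNC_\IN$, and together with $\DNC_\IN\leqSW\DNC_{n+1}$ this gives $\DNC_\IN\lW\DNC_{n+1}$. The only points needing a line of care are that mixed transitivity steps of the form $(\leqW)$ followed by $(\leqSW)$ collapse to $\leqW$, and that $\ACC_n\leqW\widehat{\ACC_n}$; both are immediate, so the real weight of the argument rests entirely on the preceding Proposition.
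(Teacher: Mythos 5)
Your argument is correct and is precisely the route the paper intends: the strictness $\DNC_n\nleqW\DNC_{n+1}$ comes from $\ACC_n\leqW\widehat{\ACC_n}\equivSW\DNC_n$ (Theorem~\ref{thm:DNC}) together with the Proposition $\ACC_n\nleqW\DNC_{n+1}$, while $\DNC_\IN\lW\DNC_{n+1}$ follows by the same bootstrapping via $\DNC_\IN\leqSW\DNC_{n+2}$ that the paper uses for the analogous $\ACC$ chain. No gaps; the corollary indeed rests entirely on the preceding Proposition as you say.
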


In particular, $\DNC_n$ is indiscriminative for $n\geq 3$.

In \cite[Proposition~9.5]{BBP12} it was proved that $f\nleqT p$ holds for every $f,p\in\IN^\IN$ such that 
$f$ is diagonally non-computable and $p$ is limit computable in the jump. 
Hence we obtain the following result, where $\lim_\J$ denotes the limit operation $\lim$ restricted to
those $p=\langle p_0,p_1,....\rangle$ such that the sequence of Turing jumps $(\J(p_i))_i$ converges.\footnote{The operation $\lim_\J$ was introduced in \cite{BBP12} and is the
limit operation with respect to a topology that was studied under the name $\Pi$--topology 
by Joseph S.\ Miller \cite[Chapter~IV]{Mil02a}. It proved to be useful in the study of $1$--genericity \cite{BHK16}.}

\begin{corollary}
\label{cor:DNC-limJ}
$\DNC_\IN\nleqW\lim_\J$.
\end{corollary}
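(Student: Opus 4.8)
The claim is $\DNC_\IN\nleqW\lim_\J$. The plan is to combine the cited non-uniform result from \cite[Proposition~9.5]{BBP12} with a standard diagonalization against a putative reduction, exploiting the fact that $\lim_\J$ has the output type of a derived space (so that its realizers are, up to the reduction witnesses, functions into Baire space that produce a point limit-computable in the jump).

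First I would unfold the definition of a reduction $\DNC_\IN\leqW\lim_\J$: there would be computable $H,K$ with $H\langle\id,GK\rangle\vdash\DNC_\IN$ for every realizer $G\vdash\lim_\J$. Fix a computable point $p$ in the domain of $\DNC_\IN$ (e.g.\ some $p$ with $\DNC_\IN(p)$ nonempty); then $K(p)$ is computable, and I would pick a realizer $G$ of $\lim_\J$ so that $GK(p)$ has low complexity relative to $p$ — concretely, $GK(p)$ should be limit computable in the jump of (an oracle computable from) $p$, indeed computable relative to $p'$ or so. The key point is that a $\delta'$-name of a point can be chosen to converge slowly, and the limit it names can be taken to be exactly the input to the underlying $\lim_\J$, so the realizer's output $GK(p)$ is, for a suitable choice of $G$, limit computable in the jump. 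Then $H\langle p,GK(p)\rangle$ is computed from $p$ and from something limit computable in the jump (relative to $p$), hence the whole output $f:=H\langle p,GK(p)\rangle$ is itself limit computable in the jump relative to $p$, in particular $f\leqT p'$ or $f$ is $\Delta^0_2$ relative to $p$. But $f$ is supposed to lie in $\DNC_\IN(p)$, i.e.\ $f$ is diagonally noncomputable relative to $p$, and \cite[Proposition~9.5]{BBP12} says no such $f$ can be limit computable in the jump (relativized to $p$). This contradiction gives $\DNC_\IN\nleqW\lim_\J$.

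The step I expect to be the main obstacle is pinning down precisely in what sense one may choose the realizer $G$ of $\lim_\J$ so that $GK(p)$ is limit computable in the jump, and matching this exactly to the hypothesis of \cite[Proposition~9.5]{BBP12}. One has to be careful that $\lim_\J$ restricts $\lim$ to inputs $\langle p_0,p_1,\dots\rangle$ for which the Turing jumps $(\J(p_i))_i$ converge, and its output is the honest limit $\lim_n p_n$ (with no guarantee of effectivity); so to realize it we must, given a $\delta'_X$-name (a convergent sequence), output its limit — and the point is that one can arrange a name of the relevant closed-choice-style input whose limit is produced by a procedure that is limit computable in the jump, uniformly in $p$. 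Equivalently, one uses the characterization $\lim_\J$-realizers factor through jump-limit computations; this needs a short argument that the composition $H\langle\id,GK\rangle$ applied to the computable $p$ yields a point whose Turing complexity is bounded by "limit computable in the jump relative to $p$". Once that bookkeeping is done, the contradiction with the diagonal noncomputability of the output is immediate, and no further calculation is needed.

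Note that this is exactly parallel to how Corollary~\ref{cor:DNC-limJ} should be read as the relativization of \cite[Proposition~9.5]{BBP12} through the lens of Weihrauch reducibility, so the proof is essentially: a reduction would turn the abstract inclusion of degrees into the assertion that some diagonally noncomputable real relative to $p$ is limit computable in the jump relative to $p$, which is false.
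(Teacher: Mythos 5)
Your overall strategy is the paper's intended one: the corollary is stated as an immediate consequence of \cite[Proposition~9.5]{BBP12}, obtained by evaluating a hypothetical reduction $\DNC_\IN\leqW\lim_\J$ on a computable input $p$. Then $K(p)$ is computable, so $GK(p)=\lim K(p)$ is by definition a point that is limit computable in the jump (note that $\lim_\J$ is single-valued, so no choice of realizer is actually involved), and $f:=H\langle p,GK(p)\rangle\in\DNC_\IN(p)$ is a diagonally non-computable function with $f\leqT GK(p)$, which contradicts the cited proposition. In this respect your proposal matches the paper.

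However, the way you close the argument needs repair. You claim that $f$ is \emph{itself} limit computable in the jump (relative to $p$) and then invoke the proposition in the form ``no DNC function is limit computable in the jump''. Passing from ``$f\leqT GK(p)$ with $GK(p)$ limit computable in the jump'' to ``$f$ is limit computable in the jump'' presupposes that this class of points is downward closed under Turing reducibility, which you neither prove nor need: Proposition~9.5 is stated exactly as $f\nleqT q$ for every diagonally non-computable $f$ and every $q$ that is limit computable in the jump, so it applies directly to $f\leqT GK(p)$ without any closure property. Moreover, your fallback ``in particular $f\leqT p'$'' would not yield any contradiction on its own, since there are $\Delta^0_2$ (indeed low) diagonally non-computable functions; the specific hypothesis ``limit computable in the jump'' is essential and cannot be weakened to ``$\Delta^0_2$ relative to $p$''. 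With the proposition applied in its stated form, the rest of your argument is correct and coincides with the paper's.
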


Since $\C_\IN\leqW\lim_\J$ we also get the following corollary.

\begin{corollary}
\label{cor:DNC-CN}
$\DNC_\IN\nleqW\C_\IN$.
\end{corollary}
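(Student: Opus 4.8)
The plan is to derive this as an immediate consequence of Corollary~\ref{cor:DNC-limJ} together with the stated relation $\C_\IN\leqW\lim_\J$. First I would observe that $\leqW$ is transitive (as recorded in the preliminaries). Suppose, for contradiction, that $\DNC_\IN\leqW\C_\IN$. Combining this with $\C_\IN\leqW\lim_\J$ and transitivity would give $\DNC_\IN\leqW\lim_\J$, directly contradicting Corollary~\ref{cor:DNC-limJ}. Hence $\DNC_\IN\nleqW\C_\IN$.

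The only thing that requires a word of justification is the inequality $\C_\IN\leqW\lim_\J$, which is cited in the sentence preceding the corollary ("Since $\C_\IN\leqW\lim_\J$ we also get the following corollary"). I would either take this as known from the surrounding literature on choice principles and the limit operation, or sketch it as follows: given a non-empty closed $A\In\IN$ in negative description, one computes from the negative information an increasing sequence of finite sets $\IN\setminus A$ is being enumerated into, and from this one can produce, in the limit, a name of some element of $A$ --- e.g.\ always output the least number not yet excluded; this "least not yet excluded" value changes only finitely often (it is non-decreasing and bounded by any fixed element of $A$), so it converges, and moreover the approximating jumps stabilise as well since the whole process is driven by a $\Sigma^0_1$ enumeration, placing us in the domain of $\lim_\J$ rather than merely $\lim$. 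Thus $\C_\IN\leqW\lim_\J$.

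There is essentially no obstacle here: the corollary is a pure transitivity consequence of the two preceding results, and the substantive content lives entirely in Corollary~\ref{cor:DNC-limJ} (which rests on \cite[Proposition~9.5]{BBP12}) and in the routine reduction $\C_\IN\leqW\lim_\J$. I would therefore keep the proof to one or two lines, citing Corollary~\ref{cor:DNC-limJ} and the displayed reduction $\C_\IN\leqW\lim_\J$, and invoking transitivity of $\leqW$.
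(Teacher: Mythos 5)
Your proposal is correct and is exactly the paper's argument: the paper obtains the corollary from $\DNC_\IN\nleqW\lim_\J$ (Corollary~\ref{cor:DNC-limJ}) together with $\C_\IN\leqW\lim_\J$ and transitivity of $\leqW$. Your sketch of $\C_\IN\leqW\lim_\J$ (eventually constant guesses, hence eventually constant jumps) is a fine justification of the cited reduction.
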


\section{Degrees of Complete Extensions of Peano Arithmetic}
\label{sec:PA}

In this section we briefly want to discuss the problem of finding a degree that contains a complete extension of Peano arithmetic
relative to some given input. 
We recall that a Turing degree ${\mathbf a}$ is called a {\em PA--degree relative to} another Turing degree ${\mathbf b}$, 
in symbols ${\mathbf a}\gg{\mathbf b}$, if every ${\mathbf b}$--computable infinite binary tree has an ${\mathbf a}$--computable path.
The degrees ${\mathbf a}\gg{\mathbf 0}$ are exactly the degrees of complete extensions of Peano arithmetic by results of Jockusch, Soare, and Solovay (see for example \cite[Proposition~2]{Joc89}).
We obtain the following well-known characterization of the relation ${\mathbf a}\gg{\mathbf b}$ that was formally introduced by Simpson \cite{Sim77}.

\begin{proposition}
\label{prop:PA}
Let ${\mathbf a}$ and ${\mathbf b}$ be Turing degrees and let $n\geq 1$. Then the following conditions are equivalent:
\begin{enumerate}
\item ${\mathbf a}\gg{\mathbf b}$, every ${\mathbf b}$--computable infinite binary tree has an ${\mathbf a}$--computable path,
\item every ${\mathbf b}$--computable function $f:\In\IN\to\{0,1\}$ has a total ${\mathbf a}$--computable extension,
\item ${\mathbf a}$ is the degree of a function $f:\IN\to\{0,1,...,n\}$ that is diagonally non-computable relative to ${\mathbf b}$. 
\end{enumerate}
\end{proposition}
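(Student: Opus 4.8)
The plan is to prove Proposition~\ref{prop:PA} by establishing a cycle of implications, treating the equivalence as essentially a relativization of the classical Scott/Jockusch--Soare--Solovay characterization of PA-degrees, together with the standard fact that diagonal non-computability with a bounded finite range is equivalent to the PA property.

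First I would handle the easy equivalences $(1)\iff(2)$ and $(2)\iff(3)$. The equivalence $(1)\iff(2)$ is just the definition of $\gg$, once one observes that ``infinite binary tree'' may be replaced by ``tree with an infinite path'' (an infinite binary tree automatically has an infinite path by K\H{o}nig's Lemma, relativized trivially). For $(2)\implies(3)$, given a ${\mathbf b}$-computable partial $f:\In\IN\to\{0,1\}$, I would build the ${\mathbf b}$-computable tree $T$ of finite binary strings $\sigma$ that are consistent with $f$ on $\dom(f)\cap\{0,\dots,|\sigma|-1\}$; this $T$ is infinite and any infinite path is a total extension of $f$, so an ${\mathbf a}$-computable path gives an ${\mathbf a}$-computable total extension. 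For $(3)\implies(2)$, given a ${\mathbf b}$-computable infinite binary tree $T$, I would let $f(\sigma)$ be defined (say as the ``leftmost-extendible'' bit) exactly when $\sigma$ lies on $T$ and $\sigma$ has exactly one immediate successor on $T$ that is extendible --- more robustly, one uses the standard trick: $f$ is the partial function whose graph, viewed appropriately, forces any total $\{0,1\}$-valued extension to compute an infinite path of $T$; the existence of such an $f$ is exactly the content of the classical ``separating problem $\leq$ bounded DNC'' reductions. I expect this is where I should be careful to invoke the known proof rather than reinvent it, citing \cite{BGH15a} or the classical sources.

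Next I would treat $(4)$. For $(4)\implies(1)$: if ${\mathbf a}$ is the degree of some $g:\IN\to\{0,1,\dots,n\}$ with $g$ diagonally non-computable relative to ${\mathbf b}$, then by the Jockusch-style argument (the $n\geq1$ case, i.e.\ range of size $n+1\geq 2$) $g$ computes, uniformly, a $\{0,1\}$-valued DNC function relative to ${\mathbf b}$, equivalently a member of every ${\mathbf b}$-computable $\pO{1}$ class --- this is precisely the finitary combinatorial fact underlying Theorem~\ref{thm:DNC-WKL} / Corollary~\ref{cor:WKL-DNC} in the relativized form, and it shows ${\mathbf a}\gg{\mathbf b}$. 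For $(1)\implies(4)$: if ${\mathbf a}\gg{\mathbf b}$, consider the ${\mathbf b}$-computable $\pO{1}$ class of all $q\in\{0,1,\dots,n\}^\IN$ with $\varphi^{\mathbf b}_m(m)\neq q(m)$ for all $m$ (this set is nonempty since $n\geq1$, so the range has at least two values, and it is co-c.e.\ in ${\mathbf b}$); by the PA property relative to ${\mathbf b}$ it has an ${\mathbf a}$-computable member $g$, which is DNC relative to ${\mathbf b}$ with range in $\{0,\dots,n\}$. This gives a function \emph{of degree $\leq {\mathbf a}$}; to get one of degree exactly ${\mathbf a}$ I would, if necessary, code ${\mathbf a}$ itself into $g$ on a sparse set of coordinates where freedom remains (e.g.\ at indices $m$ with $\varphi^{\mathbf b}_m(m)$ undefined one may insert arbitrary values, and there are infinitely many such $m$; alternatively use the routine fact that the DNC degrees relative to ${\mathbf b}$ with bound $n$ are closed upward among PA-degrees over ${\mathbf b}$), thereby hitting ${\mathbf a}$ on the nose.

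The main obstacle I anticipate is the precise bookkeeping in $(3)\iff(2)$ and in the upgrade from ``degree $\leq{\mathbf a}$'' to ``degree $={\mathbf a}$'' in $(1)\implies(4)$: the former is the classical equivalence between finding paths through $\pO{1}$ classes and extending partial $\{0,1\}$-valued functions, and while it is entirely standard I would state it cleanly and cite it (it is implicit in the $\WKL$ results already used, e.g.\ Corollary~\ref{cor:WKL-DNC}); the latter requires noting that for ${\mathbf a}\gg{\mathbf b}$ the class of ${\mathbf b}$-relative DNC functions with range $\{0,\dots,n\}$ computable from ${\mathbf a}$ actually contains one of degree exactly ${\mathbf a}$, which follows by a simple coding argument using the infinitely many ``free'' coordinates. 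Everything else is direct. I would present the proof as the cycle $(1)\implies(2)\implies(3)\implies(2)$ [establishing $(2)\iff(3)$], $(2)\iff(1)$, and then $(1)\implies(4)\implies(1)$, remarking that the case $n=1$ already yields the full strength and larger $n$ follow \emph{a fortiori} since any $\{0,1\}$-valued DNC function is in particular $\{0,\dots,n\}$-valued.
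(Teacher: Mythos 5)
Your overall architecture is the same classical cycle the paper compresses into a few lines ((1)$\iff$(2) is the definition of $\gg$; (2)$\Rightarrow$(3) because the total extensions of a ${\mathbf b}$--partial computable $\{0,1\}$--valued function form a co-c.e.\ closed class in ${\mathbf b}$; (3)$\Rightarrow$(4) via the diagonal partial function $n\mapsto 1-\varphi^{\mathbf b}_n(n)$; (4)$\Rightarrow$(2) implicit in the proof of Theorem~\ref{thm:DNC-WKL}), but your treatment of (4)$\Rightarrow$(1) for $n\geq2$ contains a genuine gap. You assert that a $\{0,\dots,n\}$--valued function that is diagonally non-computable relative to ${\mathbf b}$ computes a $\{0,1\}$--valued one \emph{uniformly}, and that this is ``precisely the finitary combinatorial fact underlying Theorem~\ref{thm:DNC-WKL}/Corollary~\ref{cor:WKL-DNC}''. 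Both claims are wrong: Theorem~\ref{thm:DNC-WKL} only yields paths through \emph{big} $k$--branching trees from a $\DNC_k$--function, not through arbitrary binary trees, and the uniform reduction you invoke provably fails --- the paper itself shows $\ACC_n\nleqW\DNC_{n+1}$, hence $\DNC_2\nleqW\DNC_3$ (Corollary~\ref{cor:DNCN}), and Corollary~\ref{cor:DNC2-k} shows that an additional application of $\C_\IN'$ is exactly the non-uniformity needed. What (4)$\Rightarrow$(1) requires for $n\geq2$ is the \emph{non-uniform} relativized theorem of Friedberg and Jockusch that the Turing degrees of $\{0,\dots,n\}$--valued diagonally non-computable functions coincide with those of $\{0,1\}$--valued ones (\cite[Theorem~5]{Joc89}); this is precisely what the paper cites after establishing (2)$\Rightarrow$(3)$\Rightarrow$(4)$\Rightarrow$(2) for $n=1$ only. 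Your closing remark that larger $n$ ``follow a fortiori'' covers only (1)$\Rightarrow$(4)$_n$ (a $2$--valued DNC function is in particular $(n{+}1)$--valued); the direction (4)$_n\Rightarrow$(1) is the nontrivial one and does not follow a fortiori, so as written your proof of the proposition for $n\geq2$ is incomplete unless you explicitly import the Friedberg--Jockusch theorem.

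Three smaller points need the standard repairs, although they do not change the substance. First, the tree of strings ``consistent with $f$ on $\dom(f)\cap\{0,\dots,|\sigma|-1\}$'' is only co-c.e.\ in ${\mathbf b}$; to get a ${\mathbf b}$--computable tree one takes the strings not yet refuted within $|\sigma|$ computation steps. Second, in your sketch of (3)$\Rightarrow$(2) the partial function defined via ``extendible successors'' is not ${\mathbf b}$--partial computable, since extendibility is a $\pO{1}({\mathbf b})$ property; the standard function instead waits until one of the two subtrees above $\sigma$ is seen to be finite and points to the other side, which is enough to follow an infinite path given any total $\{0,1\}$--valued extension. Third, to upgrade the DNC function you obtain from degree $\leqT{\mathbf a}$ to degree exactly ${\mathbf a}$ it does not suffice that there are infinitely many $m$ with $\varphi^{\mathbf b}_m(m)$ undefined: you must be able to \emph{locate} infinitely many such $m$ computably (e.g.\ by padding an index of the nowhere defined function) and then code a set of degree ${\mathbf a}$ into those coordinates; the same remark applies to the analogous step in (3)$\Rightarrow$(4), which the paper leaves implicit in calling that implication obvious.
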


For the case $n=1$ it is easy to see that $(1)\TO(2)\TO(3)\TO(1)$. The first implication follows since for every ${\mathbf b}$--computable
function $f:\In\IN\to\{0,1\}$ the set $A\In2^\IN$ of total extensions is co-c.e.\ closed in ${\mathbf b}$. The second implication $(2)\TO(3)$ is obvious,
and the latter implication $(3)\TO(1)$ is implicit in the proof of Theorem~\ref{thm:DNC-WKL}.
The equivalence of (3) for $n=2$ and numbers $n>2$ follows from a relativized version of a theorem of Jockusch (and Friedberg) \cite[Theorem~5]{Joc89}.
We now introduce the problem of Peano arithmetic as follows.

\begin{definition}[Peano arithmetic]
We call $\PA:\DD\mto\DD,{\mathbf b}\mapsto\{{\mathbf a}:{\mathbf a}\gg{\mathbf b}\}$
the {\em problem of Peano arithmetic}.
\end{definition}

The following properties of the relation $\gg$ are easy to establish \cite[Theorem~6.2]{Sim77}.

\begin{proposition}
\label{prop:way-below}
Let ${\mathbf a},{\mathbf b},$ and ${\mathbf c}$ be Turing degrees. Then 
\begin{enumerate}
\item ${\mathbf a}\gg{\mathbf b}\TO {\mathbf a}>_{\rm T}{\mathbf b}$,
\item ${\mathbf a'}\gg{\mathbf a}$,
\item ${\mathbf a}\gg{\mathbf b}\geq_{\rm T}{\mathbf c}\TO{\mathbf a}\gg{\mathbf c}$,
\item ${\mathbf c}\geq_{\rm T}{\mathbf a}\gg{\mathbf b}\TO{\mathbf c}\gg{\mathbf b}$.
\end{enumerate}
\end{proposition}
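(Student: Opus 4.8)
The plan is to push each of the four items down to one of the equivalent reformulations collected in Proposition~\ref{prop:PA} --- principally to the tree characterization and to the characterization by total extensions of partial $\{0,1\}$-valued functions (for $n=1$) --- after which every item is a short argument, and only the strictness half of~(1) requires a genuine diagonalization.

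I would prove~(1) in two steps. For $\mathbf a\geq\mathbf b$, fix a representative $B$ of $\mathbf b$ and consider the binary tree $T_B:=\{\sigma\in 2^{<\IN}:(\forall i<|\sigma|)\;\sigma(i)=B(i)\}$; it is $\mathbf b$-computable, infinite, and has $B$ as its unique infinite path, so any $\mathbf a$-computable path of $T_B$ equals $B$, whence $\mathbf b\leq\mathbf a$. For $\mathbf a\neq\mathbf b$, assume $\mathbf a=\mathbf b$; then $\mathbf b\gg\mathbf b$, so by Proposition~\ref{prop:PA}(3) every $\mathbf b$-partial computable $\psi:\In\IN\to\{0,1\}$ has a total $\mathbf b$-computable extension. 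Applying this to $\psi(n):=1-\varphi^{\mathbf b}_n(n)$, which is defined exactly when $\varphi^{\mathbf b}_n(n)\downarrow\in\{0,1\}$, yields a total $\mathbf b$-computable $g$ with index $m$, and then $g(m)=\psi(m)=1-\varphi^{\mathbf b}_m(m)=1-g(m)$, a contradiction. (Equivalently, the binary tree separating the two disjoint sets $\{n:\varphi^{\mathbf b}_n(n)=0\}$ and $\{n:\varphi^{\mathbf b}_n(n)=1\}$, which are r.e.\ in $\mathbf b$, is a $\mathbf b$-computable infinite binary tree without a $\mathbf b$-computable path --- the standard reason why no degree is PA over itself.)

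For~(2) I would apply Proposition~\ref{prop:PA}(3) with $\mathbf a'$ in place of $\mathbf a$ and $\mathbf a$ in place of $\mathbf b$: it suffices that every $\mathbf a$-partial computable $\psi:\In\IN\to\{0,1\}$ has a total $\mathbf a'$-computable extension, which is immediate, since $\dom(\psi)$ is $\sO{1}$ in $\mathbf a$, hence $\mathbf a'$-decidable, so $g(n):=\psi(n)$ on $\dom(\psi)$ and $g(n):=0$ elsewhere is such an extension. (Alternatively: the leftmost infinite path of an $\mathbf a$-computable infinite binary tree is $\mathbf a'$-computable, because ``the subtree above $\tau$ is infinite'' is a $\pO{1}(\mathbf a)$ predicate and therefore $\mathbf a'$-decidable.)

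Items~(3) and~(4) are the two monotonicity halves of $\gg$ and need nothing new. For~(3): if $\mathbf b\geq\mathbf c$, then every $\mathbf c$-computable infinite binary tree is already $\mathbf b$-computable, hence has an $\mathbf a$-computable path by $\mathbf a\gg\mathbf b$, so $\mathbf a\gg\mathbf c$. For~(4): if $\mathbf c\geq\mathbf a$, then every $\mathbf a$-computable path is also $\mathbf c$-computable, so the paths furnished by $\mathbf a\gg\mathbf b$ are $\mathbf c$-computable and $\mathbf c\gg\mathbf b$ follows (accordingly the displayed conclusion of~(4) should read $\mathbf c\gg\mathbf b$, since $\mathbf c\gg\mathbf a$ already fails for $\mathbf c=\mathbf a$ by~(1)). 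No real obstacle arises: the only nontrivial ingredient is the diagonal argument behind the strictness in~(1), where the diagonalization/extension characterization of Proposition~\ref{prop:PA} is genuinely used, together with the routine $\sO{1}(\mathbf a)$/$\pO{1}(\mathbf a)$ complexity bookkeeping behind~(2).
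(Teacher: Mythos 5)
Your proposal is correct, and it necessarily differs from the paper in one respect: the paper offers no proof of Proposition~\ref{prop:way-below} at all, but simply points to \cite[Theorem~6.2]{Sim77} with the remark that the properties are easy to establish. The arguments you give are exactly the standard ones behind that citation: for (1), the tree of initial segments of a representative of ${\mathbf b}$ gives ${\mathbf a}\geq{\mathbf b}$, and strictness comes from diagonalizing against a total extension of $\psi(n)=1-\varphi^{\mathbf b}_n(n)$ (equivalently, the separating class for $\{n:\varphi^{\mathbf b}_n(n)=0\}$ and $\{n:\varphi^{\mathbf b}_n(n)=1\}$), which legitimately uses the equivalence of items (1) and (3) of Proposition~\ref{prop:PA} stated just before; for (2), either the ${\mathbf a}'$-decidability of $\dom(\psi)$ or, more directly from the definition, the ${\mathbf a}'$-computability of the leftmost path of an ${\mathbf a}$-computable infinite binary tree; and (3) together with the fixed version of (4) are the two trivial monotonicity properties of $\gg$.

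You are also right to repair item (4). As displayed, ${\mathbf c}\geq{\mathbf a}\gg{\mathbf b}\TO{\mathbf c}\gg{\mathbf a}$ is refuted by taking ${\mathbf c}={\mathbf a}$ (with, say, ${\mathbf a}={\mathbf b}'$, which satisfies the hypothesis by (2)), since ${\mathbf a}\gg{\mathbf a}$ contradicts (1) --- and the paper itself relies on irreflexivity of $\gg$ in the proof of Theorem~\ref{thm:HYP-COH}. The intended conclusion is ${\mathbf c}\gg{\mathbf b}$, i.e., upward closure in the first argument, and that is precisely the form in which the paper later invokes (4), both in the proof of Theorem~\ref{thm:PA} (from $[q]$ computing $f$ with $[f]\gg[p]$ one concludes $[q]\gg[p]$) and in the proof of Theorem~\ref{thm:COH-degrees}. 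So your reading of (4) as a typo, your counterexample to the literal statement, and your proof of the corrected statement are all appropriate; no gap.
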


The first condition implies $\NON\leqSW\PA$. It is also clear that $\PA\nleqW\NON$ holds,
since there are non-computable Turing degrees (for instance minimal ones \cite[Corollary~2]{Joc89}) that are not degrees of complete extensions of Peano arithmetic.
Since $\PA$ is densely realized, we also obtain $\ACC_\IN\nleqW\PA$.

If $f:\In X\mto\IN^\IN$ is a multi-valued function, then we denote by 
\[[f]:\In X\mto\DD,x\mapsto \{[p]:p\in f(x)\}\] 
the {\em degree version} of $f$. It is clear that $[f]\leqSW f$ holds in general, and often $f\nleqW[f]$ holds,
since $[f]$ is always densely realized while $f$ might have stronger uniform computational content. 
This is the case for $f=\DNC_n$, and the
characterization from Proposition~\ref{prop:PA}(3) yields the following result.

\begin{corollary}[Peano arithmetic]
\label{cor:PA-DNC}
$\PA\equivSW[\DNC_n]$ for all $n\geq 2$.
\end{corollary}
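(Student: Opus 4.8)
The goal is to show $\PA\equivSW[\DNC_n]$ for all $n\geq 2$. I would prove the two strong reductions separately, and the bridge between the two sides is Proposition~\ref{prop:PA}, specifically the equivalence of clauses (1) and (4): a Turing degree $\mathbf a$ satisfies $\mathbf a\gg\mathbf b$ if and only if $\mathbf a$ is the degree of a function $f\colon\IN\to\{0,1,\dots,n\}$ that is diagonally non-computable relative to $\mathbf b$. In the language of the problems at hand, this says precisely that for any $p$ with $[p]=\mathbf b$ we have $\PA(\mathbf b)=\{[q]:q\in\DNC_{n+1}(p)\}=[\DNC_{n+1}](p)$ (noting that $\DNC_{n+1}$ uses value set $\{0,\dots,n\}$, which has $n+1$ elements; I would be careful with this index shift, or just restate everything with $k\geq 2$ elements to match the theorem as written). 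Once this pointwise identity is in hand, both reductions are essentially the identity maps on realizers.

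\textbf{Direction $[\DNC_n]\leqSW\PA$.} Given an input $p\in\IN^\IN$ for $[\DNC_n]$, read it as a name of the Turing degree $\mathbf b=[p]$, i.e.\ take $K=\id$. Any realizer $G$ of $\PA$ on input $p$ produces a name $q$ of some degree $\mathbf a$ with $\mathbf a\gg\mathbf b$. By Proposition~\ref{prop:PA}(4)$\Rightarrow$(1) read backwards, $\mathbf a$ is the degree of some $g\colon\IN\to\{0,\dots,n-1\}$ that is diagonally non-computable relative to $p$, so $\mathbf a\in[\DNC_n](p)$. Thus the identity output modifier $H=\id$ works: $HGK(p)=G(p)$ is a name of a degree in $[\DNC_n](p)$. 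This is the easy direction.

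\textbf{Direction $\PA\leqSW[\DNC_n]$.} Given an input $p$ for $\PA$, again a name of $\mathbf b=[p]$, feed it unchanged to $[\DNC_n]$ ($K=\id$). A realizer of $[\DNC_n]$ returns a name $q$ of a degree $\mathbf a$ that is the degree of some $g\in\DNC_n(p)$, i.e.\ some $\{0,\dots,n-1\}$-valued function diagonally non-computable relative to $p$. By Proposition~\ref{prop:PA}(4)$\Rightarrow$(1), $\mathbf a\gg\mathbf b$, hence $\mathbf a\in\PA(\mathbf b)$, so again $H=\id$ delivers a correct output. The only subtlety is bookkeeping: the degree version $[\DNC_n]$ has already been noted to satisfy $[\DNC_n]\leqSW\DNC_n$ in general, and one should observe that here the reduction genuinely factors through the degree representation $\delta_\DD$ on both input and output sides, so that the realizers $H,K$ can be taken computable (indeed identities) at the level of $\IN^\IN$-names for degrees.

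\textbf{Main obstacle.} There is no deep obstacle once Proposition~\ref{prop:PA} is granted; the content of the corollary is essentially a repackaging of that proposition (together with the relativized Jockusch--Friedberg theorem that makes clause (4) independent of $n\geq 2$) in Weihrauch-lattice terms. The one place demanding genuine care is verifying that the equivalence of clause (4) across different values of $n$ is \emph{uniform} enough to give a \emph{strong} reduction, not merely an equality of degree-sets: concretely, that from a $\DNC_{n+1}$-solution relative to $p$ one can \emph{computably} (in $p$ and the solution) produce a $\DNC_{m+1}$-solution relative to $p$ for another $m\geq 2$, and conversely. This is exactly what the relativized version of \cite[Theorem~5]{Joc89} supplies, and it is the reason the statement can be asserted ``for all $n\geq 2$'' with a single strong equivalence class rather than a family of them. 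I would state this uniformity explicitly as the crux of the argument and then let $H=K=\id$ do the rest.
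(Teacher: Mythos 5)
Your proof is correct and is essentially the paper's own argument: the corollary is read off from Proposition~\ref{prop:PA}(1)$\iff$(4) (for each fixed $n\geq 2$, minding the index shift you note) together with the definition of the degree version $[\cdot]$, since on every input $p$ the set of admissible output degrees of $\PA$ and of $[\DNC_n]$ coincide, so identity maps $H=K=\id$ realize both strong reductions. One caveat about your closing ``main obstacle'' paragraph: the uniform conversion you describe -- computably (in the Weihrauch sense) turning a $\DNC_{n}$-solution relative to $p$ into a $\DNC_{m}$-solution relative to $p$ for other values $m\geq 2$ -- is neither needed nor available: uniformly it fails, since $\DNC_{n+1}\lW\DNC_n$ (Corollary~\ref{cor:DNCN}) and making the Friedberg--Jockusch equivalence uniform costs an extra $\C_\IN'$ (Corollary~\ref{cor:DNC2-k}); what saves the corollary is precisely that the bracket operation outputs only a degree, so the non-uniform, degree-level relativized theorem of Friedberg and Jockusch already packaged into Proposition~\ref{prop:PA}(4) suffices, exactly as your two displayed directions show.
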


By $\J_\DD:\DD\to\DD,{\mathbf a}\mapsto{\mathbf a'}$ we denote the Turing jump operator on degrees
(we have $[\J]\equivSW\J_\DD)$. We recall that $p\in\IN^\IN$ is called {\em low}
if $p'\leqT\emptyset'$ holds. The notion is used correspondingly for Turing degrees.
Proposition~\ref{prop:way-below}(2) yields the following
uniform version.

\begin{corollary}
\label{cor:PA-JD}
$\PA\leqSW\J_\DD$ and $\J_\DD\nleqW\PA$.
\end{corollary}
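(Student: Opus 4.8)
The plan is to treat the two assertions separately: $\PA\leqSW\J_\DD$ is a one-line consequence of the properties of $\gg$ collected in Proposition~\ref{prop:way-below}, while $\J_\DD\nleqW\PA$ needs a short diagonalization against a low complete extension of Peano arithmetic.

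For $\PA\leqSW\J_\DD$ I would invoke Proposition~\ref{prop:way-below}(2): since ${\mathbf a}'\gg{\mathbf a}$ for every Turing degree ${\mathbf a}$, the value $\J_\DD({\mathbf b})={\mathbf b}'$ always lies in $\PA({\mathbf b})=\{\,{\mathbf a}:{\mathbf a}\gg{\mathbf b}\,\}$. Hence the identity maps $K=H=\id$ already witness the (strong) reduction: for any realizer $G\vdash\J_\DD$ and any name $p$ of a degree $[p]$, the output $G(p)$ names $[p]'$, which is a point of $\PA([p])$, so $HGK=G$ realizes $\PA$.

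For $\J_\DD\nleqW\PA$ the plan is to assume a reduction via computable $K,H$ and run it on the single computable input $p:=\widehat{0}$, a name of ${\mathbf 0}$; then $K(\widehat{0})$ is a computable sequence. By the Low Basis Theorem the $\pO{1}$ class of complete extensions of Peano arithmetic has a member $A$ of low Turing degree, and by the results recalled before Proposition~\ref{prop:PA} the degree ${\mathbf a}:=[A]$ satisfies ${\mathbf a}\gg{\mathbf 0}$, so $A$ is a legitimate $\delta_\DD$-name of a point of $\PA({\mathbf 0})$, the value of $\PA$ at the degree $[K(\widehat{0})]={\mathbf 0}$. I would then choose a realizer $G\vdash\PA$ with $GK(\widehat{0})=A$ (any $\PA$-consistent values on the remaining inputs complete $G$; we use here, as elsewhere in the paper, that $\PA$ has realizers). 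Since $H$ and $\widehat{0}$ are computable, $H\langle\widehat{0},GK(\widehat{0})\rangle=H\langle\widehat{0},A\rangle\leqT A$. But $A$ is low, so $A'\equivT\emptyset'$, whence $A$ cannot compute $\emptyset'$ (otherwise $\emptyset''\leqT A'\equivT\emptyset'$, contradicting $\emptyset'\lT\emptyset''$); in particular $H\langle\widehat{0},A\rangle\not\equivT\emptyset'$. This contradicts the assumption that $H\langle\id,GK\rangle$ realizes $\J_\DD$, since its output on a name of ${\mathbf 0}$ must name $\J_\DD({\mathbf 0})={\mathbf 0}'$, i.e.\ be Turing equivalent to $\emptyset'$.

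I do not expect a serious obstacle; the only points needing care are that choosing the ``bad'' realizer $G$ with $GK(\widehat{0})=A$ is legitimate (realizers of $\PA$ are constrained only pointwise, and $A$ is an admissible output above ${\mathbf 0}$, every degree having a PA-degree above it), and that a single application of $\PA$ to a computable input already defeats the reduction, so no relativized form of the Low Basis Theorem is required. The one genuine ingredient is the classical fact, supplied by the Low Basis Theorem, that some complete extension of Peano arithmetic does not compute $\emptyset'$. In fact the argument yields slightly more: $\J_\DD$ is not Weihrauch reducible to $\PA$ even when $\J_\DD$ is restricted to the single degree ${\mathbf 0}$, and the reduction fails because the purported realizer of $\J_\DD$ is forced to output a set that does not even compute $\emptyset'$.
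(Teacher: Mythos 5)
Your proposal is correct and follows essentially the same route as the paper: the positive part is exactly the uniform reading of Proposition~\ref{prop:way-below}(2) (any realizer of $\J_\DD$ already realizes $\PA$), and the negative part is the paper's one-line argument spelled out, namely that the Low Basis Theorem supplies a low PA-degree (the paper applies it to $\DNC_2(\emptyset)$, you to the $\pO{1}$ class of complete extensions of PA, which is the same thing via Proposition~\ref{prop:PA}), so a realizer of $\PA$ can be chosen whose output on a computable input is low and hence cannot yield a name of $\mathbf{0}'$.
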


The latter negative result holds since there are low PA-degrees (by an application of the low basis theorem
to the set $\DNC_2(\emptyset)$).
Now we are going to provide an interesting characterization of $\PA$ as an implication.
As a preparation we prove the following result.

\begin{proposition}
\label{prop:PA-CN-WKL}
$\WKL\leqW\C_\IN'*\PA$.
\end{proposition}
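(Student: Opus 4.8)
The goal is to show $\WKL\leqW\C_\IN'*\PA$, i.e., that Weak König's Lemma can be computed by first applying $\PA$ and then applying $\C_\IN'$ (closed choice on $\IN$ with a jumped input, equivalently the cluster point problem $\CL_\IN$). The plan is to use a $\PA$-degree to obtain a path through a modified tree and then use $\C_\IN'$ to diagnose and correct that path into an actual path through the original tree.

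First I would set up the following. Given a binary tree $T\subseteq 2^*$ with $[T]\neq\emptyset$, the key difficulty is that a $\PA$-oracle does not obviously give us a path through $T$ itself unless $T$ is presented with positive information; it gives us paths through trees that are computable (not co-c.e.) in the oracle. So I would first transform the co-c.e.\ presentation of $T$ into something usable: consider the function $f_T:\In\IN\to\{0,1\}$ that is partial computable and codes, along each node, a decision of which way still looks possible. More concretely, a standard trick: from $T$ one computes a sequence of pruned trees $T_s$ with $[T_s]=[T]$, and a path through $[T]$ is $\Pi^0_1$ in $T$. By Proposition~\ref{prop:PA}, an $\bf a$ with ${\bf a}\gg\emptyset^{(T)}$-relative degree — but we only have ${\bf a}\gg$ the degree of $T$ itself, not its jump. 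This is exactly the obstacle: $\PA$ relative to $T$ gives paths through $T$-\emph{computable} infinite binary trees, and $[T]$ (as the set of paths through the co-c.e.\ tree) is such a tree only after we know which nodes are extendible, which is $\Pi^0_1(T)$ information, not $T$-computable.

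This is where $\C_\IN'\equivW\CL_\IN$ enters. The plan is: let the $\PA$-oracle ${\bf a}\gg[T]$ produce, in a $T\oplus A$-computable way (for $A$ of degree $\bf a$), a candidate path $p\in 2^\IN$ through the "full" tree together with, for each level $n$, a guess at whether the node $p{\restriction}n$ is extendible in $T$. Since ${\bf a}\gg[T]$, we can more carefully arrange (using clause (3) of Proposition~\ref{prop:PA}: every $[T]$-computable partial $\{0,1\}$-function has a total ${\bf a}$-computable extension) that $A$ uniformly decides the partial characteristic function of "is this node on an infinite path of $T$," which is co-c.e.\ in $T$ hence has a $[T]$-computable partial version. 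Wait — "is extendible" is $\Pi^0_1(T)$, i.e.\ its \emph{complement} (being non-extendible) is c.e.$(T)$; so the partial function "output $0$ once non-extendibility is confirmed, undefined otherwise" is partial computable in $T$. By Proposition~\ref{prop:PA}(3) applied relative to $T$ — here we DO only need $T$, not its jump — a $\PA$-degree ${\bf a}$ relative to (the degree of) $T$ gives a total $A$-computable extension $g:\IN\to\{0,1\}$ with $g(v)=0$ for every confirmed-non-extendible node $v$; so $g(v)=1$ forces $v$ to be extendible. Then following $g$ greedily from the root (at each node picking a successor $v i$ with $g(vi)=1$, which exists and is extendible) produces an honest infinite path through $T$, computably in $A$. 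Hence $\WKL\leqW\PA$ directly? No — the catch is that such a $g$ need not be \emph{consistent} as we walk down: $g$ is merely \emph{some} extension, and it could assign $1$ to both children of a node one of which is actually not extendible (non-extendibility not yet confirmed), leading us down a dead branch with no confirmed error ever appearing along the eventual path's prefixes but with the branch finite. The greedy walk could still get stuck.

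So the genuine role of $\C_\IN'$: after $\PA$ produces the $A$-computable guess data, we have an $A\oplus T$-computable sequence of candidate finite paths $p_s\in 2^{<\IN}$ (the greedy walk at stage $s$, backtracking when a confirmed error appears), and the true path is a \emph{cluster point} of a naturally derived sequence in $\IN$ — e.g.\ encode $p_s$ as a natural number via a bijection $2^{<\IN}\cong\IN$ and take the sequence of "current length-$n$ prefixes that never get refuted," of which at least one value recurs infinitely often and correctly identifies the path level by level. Using $\CL_\IN\equivW\C_\IN'$ we extract, for each $n$, a cluster value that pins down $p{\restriction}n$; diagonalizing over $n$ (this is where the compositional product structure and the fact that $\C_\IN'$ is parallelizable-enough, or rather that we can run it once with input coding all levels) yields the full path $p\in[T]$.

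I would structure the final argument as: (1) reduce $\WKL$ to the problem of finding a path through a co-c.e.\ binary tree with the extendibility predicate as "advice"; (2) observe the advice predicate has a $T$-partial-computable one-sided version, so $\PA$ (relative to the input) supplies a total extension $A$-computably; (3) show the $A\oplus T$-computable "backtracking greedy walk" stabilizes on each finite level to the correct prefix, but only in the limit, and that a single application of $\CL_\IN=\C_\IN'$ (applied to the sequence of encoded level-$n$ prefixes, parallelized over $n$) recovers all the correct prefixes and hence a genuine infinite path; (4) conclude $\WKL\leqW\C_\IN'*\PA$ by assembling the two reductions via the compositional product. The main obstacle, and the part requiring care, is step (3): arguing that the candidate-prefix sequence for each fixed level really does have its correct value as a cluster point (equivalently, that backtracking is triggered infinitely often on every wrong finite guess but the correct prefix is stable from some point on), and that this can be done uniformly and packaged as a single instance of $\C_\IN'$ rather than $\omega$-many, so that it composes as $\C_\IN'*\PA$ and not $\widehat{\C_\IN'}*\PA$.
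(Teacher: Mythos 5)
Your proposal is built around a difficulty that is not actually there, and the construction you substitute for it is flawed. In $\WKL$ the input tree is an element of $\Tr_2$, i.e.\ it is given by its characteristic function, so $T$ is computable from its name $t$; by the very definition of a PA--degree relative to $t$ (Proposition~\ref{prop:PA}(2)), any $q$ delivered by $\PA$ computes \emph{some} path of $T$ outright. No extendibility predicate is needed, and even if one insists on the negative-information presentation ($\C_{2^\IN}$), the path set is still the path set of a tree computable from the name. Your replacement argument via Proposition~\ref{prop:PA}(3) does not work as stated: the partial $t$-computable function ``output $0$ once non-extendibility is confirmed'' is defined, with value $0$, at \emph{every} non-extendible node (if non-extendibility holds it is eventually confirmed), so a total extension $g$ really does satisfy ``$g(v)=1\Rightarrow v$ extendible'' --- the failure mode you describe (a $1$ at a non-extendible node whose non-extendibility is ``not yet confirmed'') cannot occur for a genuine total extension; the actual failure is the opposite one: the constant-$0$ function is a legitimate total extension, so $g$ may give no positive guidance whatsoever and the greedy walk gets stuck at the root. (The classical repair is the ``which child dies first'' partial function, whose total extensions do guide you along extendible nodes, so that a path is computable from $q\oplus t$ --- but not uniformly, which is consistent with $\WKL\nleqW\PA$.)

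The only genuine non-uniformity, and hence the only thing $\C_\IN'$ has to supply, is an \emph{index}: one knows that some Turing functional applied to $q$ (and $t$) outputs a path of $T$, but not which one. The paper's proof uses $\C_\IN'\equivW\CL_\IN$ exactly for this: run all $\Phi_n(q)$ in parallel and output the number $n$ each time $\Phi_n(q)$ produces a longer prefix lying in $T$; then $n$ is output infinitely often if and only if $\Phi_n(q)$ is an infinite path of $T$, so a single application of the cluster point problem on $\IN$ returns a good index, and the path is then computed from $q$ and that index. Your step (3) never arrives at this idea, and as sketched it cannot be packaged into a single use of $\C_\IN'$: the output of $\C_\IN'$ is one natural number, and the cluster points in your scheme are codes of individual finite prefixes (or pairs $(n,p|_n)$), so recovering the whole path level by level would require $\omega$-many applications, i.e.\ $\widehat{\C_\IN'}$ --- precisely the issue you flag in your last sentence but do not resolve. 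This is a genuine gap; the missing idea is to make the single number returned by $\C_\IN'$ an index of a functional rather than a piece of the path.
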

\begin{proof}
Given an infinite binary tree $T$ by a name $t\in\IN^\IN$, we can, with the help of $\PA$, obtain a $q\in\IN^\IN$ that is of PA degree relative to $t$.
This $q$ computes an infinite path $p\in2^\IN$ in $T$. 
We use an enumeration of all computable functions $\Phi_n:\In\IN^\IN\to\IN^\IN$.
Then there must be some $n\in\IN$ such that $p=\Phi_n(q)$ is a path in $T$.
We test all numbers $n\in\IN$ in parallel and try to compute longer and longer prefixes of $\Phi_n(q)$.
Whenever a longer prefix of $\Phi_n(q)$ than before lies completely in $T$, we output the number $n$.
Hence, any fixed number $n\in\IN$ will be produced infinitely often if and only if $p=\Phi_n(q)$ is an infinite path in $T$.
It was proved in \cite[Theorem~9.4]{BGM12} that $\C_\IN'$ is equivalent to the cluster point problem on the natural numbers; hence
it can be used to find one number $n\in\IN$ that has been produced infinitely often. Then $p=\Phi_n(q)$ is 
an infinite path in $T$, as desired.  
\end{proof}

As a corollary we obtain the following  characterization of $\PA$.

\begin{theorem}[Peano arithmetic]
\label{thm:PA}
$\PA\equivW(\C_\IN'\to\WKL)$.
\end{theorem}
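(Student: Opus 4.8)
The plan is to establish the two Weihrauch reductions separately, using the abstract characterization of the implication operation $f \to g$ as the minimal $h$ with $g \leqW f * h$ together with the compositional product result from Proposition~\ref{prop:PA-CN-WKL}.

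For the inequality $\PA \leqW (\C_\IN' \to \WKL)$, the strategy is to show that $\PA$ itself is a legitimate candidate for the minimum defining $\C_\IN' \to \WKL$; since the implication is the \emph{least} such $h$, this gives $(\C_\IN' \to \WKL) \leqW \PA$, which is the wrong direction — so instead I need the reverse: I must show $\WKL \leqW \C_\IN' * \PA$, which is exactly Proposition~\ref{prop:PA-CN-WKL}, and hence by minimality $(\C_\IN' \to \WKL) \leqW \PA$. Wait — that yields only one direction. Let me reorganize. The clean approach is: (i) Proposition~\ref{prop:PA-CN-WKL} gives $\WKL \leqW \C_\IN' * \PA$, so by the defining minimality property of $\to$ we immediately get $(\C_\IN' \to \WKL) \leqW \PA$. (ii) For the converse $\PA \leqW (\C_\IN' \to \WKL)$, I would use the characterization of $\PA$ via big trees, i.e.\ Corollary~\ref{cor:PA-DNC} and Corollary~\ref{cor:WKL-DNC}, together with Corollary~\ref{cor:PA-JD} ($\PA \leqSW \J_\DD$). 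The key observation is that a PA-degree relative to $\mathbf b$ can be extracted from a solution to $\WKL$ on the tree $\DNC_2$ (viewed via names), but we are handed a $\C_\IN' \to \WKL$-oracle which by definition satisfies $\WKL \leqW \C_\IN' * (\C_\IN' \to \WKL)$. So: given input $t$ for $\PA$, compute (as in Proposition~\ref{prop:PA-CN-WKL}) the relevant binary tree $T = \DNC_2(t)$, feed its name into the $\WKL \leqW \C_\IN' * (\C_\IN'\to\WKL)$ reduction — this produces a path $p \in [T]$ using one call to $\C_\IN'$ \emph{after} one call to $h := (\C_\IN' \to \WKL)$; but we only have $h$, not $\C_\IN'$. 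Here is where the degree-invariance of $\PA$ saves us: the $\C_\IN'$ call only picks out a natural number $n$ such that $\Phi_n(q)$ is a path, and the \emph{degree} of the resulting path is already $\leqT q$, hence already computes the information we need at the level of degrees. So by composing the $h$-computation with the observation that $[\text{any path in } T] \gg [t]$, the output lies in $\PA(\mathbf b)$ where $\mathbf b = [t]$, without needing the final $\C_\IN'$ application.

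The main obstacle I anticipate is making precise exactly how the $\C_\IN'$ at the top of the compositional product in Proposition~\ref{prop:PA-CN-WKL} can be eliminated for the $\PA$-reduction. The point is that $\C_\IN'$ there is only used to \emph{select} one of infinitely many candidate natural numbers $n$, and every candidate $\Phi_n(q)$ that survives the "longer and longer prefix in $T$" test is an actual infinite path through $T$ — but an \emph{arbitrary} such path already has degree $\gg [t]$ because $T = \DNC_2(t)$ and every path in $\DNC_2(t)$ is of PA-degree relative to $t$ (this is the content of Proposition~\ref{prop:PA}(2) plus Corollary~\ref{cor:WKL-DNC}). Since $\PA$ outputs a \emph{degree}, not a specific sequence, we never need to commit to a single path: any output of the $h$-computation that is forced to be (the name of) an element in some $\DNC_2$-style big tree automatically names something in the target. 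Thus the reduction $\PA \leqW h$ only needs the $K$ and $H$ witnessing $\WKL \leqW \C_\IN' * h$, post-composed with the identity-on-degrees map $\IN^\IN \to \DD$, plus the fact that this map absorbs the $\C_\IN'$ since "$q$ computes a path in $T$" and "all paths in $T$ lie in $\PA([t])$" together mean the degree of the $h$-part of the output already lies in $\PA([t])$.

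Combining (i) and (ii) gives $\PA \equivW (\C_\IN' \to \WKL)$. I would present it as: "$\leqW$" follows from Proposition~\ref{prop:PA-CN-WKL} by minimality of the implication; "$\geqW$" follows by unwinding the same reduction at the level of Turing degrees, using that $\PA$ is densely realized with target $\DD$ so that the cluster-point selection step (the $\C_\IN'$ factor) contributes nothing beyond what $h$ already provides. I expect the write-up to be short once the degree-invariance trick for eliminating $\C_\IN'$ is stated cleanly.
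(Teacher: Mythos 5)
Your proposal is correct and takes essentially the same route as the paper: the direction $(\C_\IN'\to\WKL)\leqW\PA$ is Proposition~\ref{prop:PA-CN-WKL} plus minimality of the implication, and the direction $\PA\leqW(\C_\IN'\to\WKL)$ rests on exactly the paper's observation that the $\C_\IN'$ factor only contributes a natural number, so the output of the $h$-stage (joined with the input) already computes a $\DNC_2$ function relative to the input and hence, via Propositions~\ref{prop:PA} and \ref{prop:way-below}, names a degree in $\PA$ of the input. The only adjustment needed in the write-up is to carry out the elimination of $\C_\IN'$ for an \emph{arbitrary} reduction witnessing $\WKL\leqW\C_\IN'*h$ (the final path is computable from the input, the $h$-output, and a single natural number), rather than appealing to the specific cluster-point construction inside the proof of Proposition~\ref{prop:PA-CN-WKL}, whose internal structure you may not assume for $h=(\C_\IN'\to\WKL)$.
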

\begin{proof}
Proposition~\ref{prop:PA-CN-WKL} implies $(\C_\IN'\to\WKL)\leqW\PA$. 
By Corollary~\ref{cor:WKL-DNC} we have $\WKL\equivSW\DNC_2$. 
Now let  $h$ be such that $\DNC_2\leqW\C_\IN'*h$. 
Without loss of generality we can assume that $h$ is of type $h:\In\IN^\IN\mto\IN^\IN$.
Note that $\C_\IN'$ only produces a natural number output.
Now given some $p\in\IN^\IN$, the function $h$ must be able
(potentially after some additional computation) to produce an output $q\in\IN^\IN$
that (potentially after some further computation that uses the discrete output of $\C_\IN'$)  
computes a diagonally non-computable function $f$ relative to $p$. 
Such a function $f$ is of PA degree relative to $p$.
By Proposition~\ref{prop:way-below} (4) we obtain that also $q$ is of PA degree
relative to $p$. Hence $q\in\PA(p)$. This proves $\PA\leqW h$ and hence $\PA\leqW(\C_\IN'*\WKL)$.
\end{proof}

We note that for the direction $\PA\leqW(\C_\IN'*\WKL)$ we have not used any property of $\C_\IN'$ other than that it produces a natural number output. Hence, the same
proof shows $\PA\equivW(\C_\IN^{(n)}\to\WKL)$ for every $n\geq1$.
It is unclear what happens in case of $n=0$.
Using Proposition~\ref{prop:PA-CN-WKL} we can also obtain the following result.

\begin{corollary}
\label{cor:DNC2-k}
$\DNC_2\leqW\C_\IN'*\DNC_k$ for all $k\geq2$.
\end{corollary}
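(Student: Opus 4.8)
$\DNC_2\leqW\C_\IN'*\DNC_k$ for all $k\geq 2$.

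The plan is to reduce this corollary to Proposition~\ref{prop:PA-CN-WKL} together with Corollary~\ref{cor:PA-DNC} (or equivalently Proposition~\ref{prop:PA}(4)), rather than repeating the tree-path construction. First I would recall that Proposition~\ref{prop:PA-CN-WKL} gives $\WKL\leqW\C_\IN'*\PA$, and that by Corollary~\ref{cor:WKL-DNC} we have $\WKL\equivSW\DNC_2$; hence $\DNC_2\leqW\C_\IN'*\PA$. So it suffices to show that $\PA$ can be replaced by $\DNC_k$ inside the compositional product, i.e.\ that $\C_\IN'*\PA\leqW\C_\IN'*\DNC_k$. For this I would use that $\PA\equivSW[\DNC_k]$ by Corollary~\ref{cor:PA-DNC}, and that $[\DNC_k]\leqSW\DNC_k$ trivially (the degree version is always below the original), so $\PA\leqW\DNC_k$. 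Then monotonicity of the compositional product $*$ in its second argument (from its definition as a maximum over $g_0\leqW g$) yields $\C_\IN'*\PA\leqW\C_\IN'*\DNC_k$, and chaining the reductions gives $\DNC_2\leqW\C_\IN'*\DNC_k$.

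The one subtlety to be careful about is that the reduction $\PA\leqW\DNC_k$ is only a Weihrauch reduction between Turing-degree-valued problems, whereas in Proposition~\ref{prop:PA-CN-WKL} the output of $\PA$ feeds into a further computation (producing an infinite path in $T$ from a PA-degree representative). But in that proof only a \emph{representative} $q\in\IN^\IN$ of the PA degree relative to $t$ is used, and any $q\in\DNC_k(t)$ is of PA degree relative to $t$ by Proposition~\ref{prop:PA}(4); so a representative coming out of $\DNC_k$ works verbatim in place of one coming out of $\PA$. Thus the cleanest route is actually to inspect the proof of Proposition~\ref{prop:PA-CN-WKL}: replacing "with the help of $\PA$ we obtain $q$ of PA degree relative to $t$" by "with the help of $\DNC_k$ we obtain $q=\DNC_k(t)\in k^\IN$, which is of PA degree relative to $t$" leaves the rest of the argument (the parallel search over the $\Phi_n$ and the use of $\C_\IN'$ as the cluster point problem on $\IN$) completely unchanged, and directly produces $\DNC_2\equivSW\WKL\leqW\C_\IN'*\DNC_k$.

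I expect no real obstacle here: the main point is simply recognizing that the proof of Proposition~\ref{prop:PA-CN-WKL} never uses any feature of $\PA$ beyond "it hands us a representative of a degree that computes a path through every binary tree", a property shared by $\DNC_k$ for every $k\geq 2$. If one prefers the abstract route, the only thing worth stating explicitly is monotonicity of $*$ in its arguments and the chain $\DNC_2\leqW\C_\IN'*\PA$, $\PA\leqSW[\DNC_k]\leqSW\DNC_k$; everything else is formal.
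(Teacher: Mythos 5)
Your proposal is correct and matches the paper's derivation: the paper also obtains this corollary directly from Proposition~\ref{prop:PA-CN-WKL}, the point being exactly that its proof only uses a representative of a PA degree relative to the tree, which any element of $\DNC_k(t)$ provides via Proposition~\ref{prop:PA}(4). Your abstract variant via $\PA\leqSW[\DNC_k]\leqSW\DNC_k$ and monotonicity of $*$ is a harmless formal repackaging of the same idea.
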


R.\ Friedberg proved that the Turing degrees of $\DNC_2$--functions coincide with the Turing degrees
of $\DNC_k$--functions for all $k\geq2$ \cite[Theorem~5]{Joc89}. Dorais, Hirst and Shafer analyzed 
the uniform content of the equivalence in reverse mathematics under the presence of $\SO{2}$--induction \cite[Theorem~2.7]{DHS15}.
Since $\C_\IN'$ is the counterpart of $\SO{2}$--induction in the Weihrauch lattice, 
Corollary~\ref{cor:DNC2-k} can be seen as a uniform version of their result.
Again it remains unclear whether we can replace $\C_\IN'$ by $\C_\IN$ here.

\section{Martin-L\"of Randomness and Weak Weak K\H{o}nig's Lemma}
\label{sec:MLR-WWKL}

Another problem that is located in the neighborhood of diagonally non-computable functions in the Weihrauch lattice
is Martin-L\"of randomness. By $\MLR:\IN^\IN\mto2^\IN$ we denote the multi-valued function such that 
$\MLR(p)$ contains all $q$ that are Martin-L\"of random relative to $p$ (see \cite{Nie09,DH10} for definitions).
If $p\leqT q$, then $\MLR(q)\In\MLR(p)$.
Since any finite modification of $q\in\MLR(p)$ is also in $\MLR(p)$, we immediately get the following corollary
of Proposition~\ref{prop:dense}.

\begin{lemma}[Martin-L\"of Randomness]
\label{lem:MLR}
$\MLR$ is densely realized and hence $\omega$--indiscriminative.
\end{lemma}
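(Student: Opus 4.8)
The plan is to verify the two assertions in Lemma~\ref{lem:MLR} directly from the definitions: that $\MLR$ is densely realized, and that consequently it is $\omega$--indiscriminative via Proposition~\ref{prop:dense}.

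First I would recall that the output space $2^\IN$ carries the Cauchy representation $\delta$ (for computable metric spaces), and that a $\delta$--name of a point $q\in2^\IN$ essentially supplies longer and longer prefixes of $q$, so density in $\dom(\delta)$ amounts to density of the underlying set of sequences in $2^\IN$ together with a routine unfolding of the representation. The key computability-theoretic input is the standard fact that Martin-L\"of randomness relative to $p$ is invariant under finite modifications: if $q\in\MLR(p)$ and $q'$ differs from $q$ on finitely many bits, then $q'\in\MLR(p)$ as well. (This is immediate from the definition via Martin-L\"of tests, or from the characterization via prefix-free Kolmogorov complexity, since $K(q\restriction n)$ and $K(q'\restriction n)$ differ by only a constant.) Moreover, for every $p$ there exists at least one $q\in\MLR(p)$, so $\MLR$ is total and has realizers (this last point lets us invoke the simplified criterion for dense realization noted just after the definition, rather than quantifying over all realizers).

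Next I would put these together. Fix $p\in\IN^\IN=\dom(\MLR\delta_{\IN^\IN})$. By totality pick any $q_0\in\MLR(p)$. Given any finite word $w\in 2^{<\IN}$, consider the sequence $q:=w\,\overline{q_0}$ obtained by replacing the first $|w|$ bits of $q_0$ by $w$; this is a finite modification of $q_0$, hence $q\in\MLR(p)$, and $w\prefix q$. Thus $\MLR(p)$ meets every basic open neighbourhood of $2^\IN$, i.e.\ $\MLR(p)$ is dense in $2^\IN$, and correspondingly $\delta^{-1}\circ\MLR\circ\delta_{\IN^\IN}(p)$ is dense in $\dom(\delta)$. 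Since $\MLR$ has a realizer, this is exactly the statement that $\MLR$ is densely realized. The $\omega$--indiscriminativeness then follows immediately by applying Proposition~\ref{prop:dense}.

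There is no real obstacle here; the only thing to be careful about is the bookkeeping of passing between the set-theoretic density in $2^\IN$ and density in $\dom(\delta)$ for the Cauchy representation, and making sure we have invoked totality of $\MLR$ (so that realizers exist) before using the convenient reformulation of dense realization. Everything else is the cited finite-modification invariance of Martin-L\"of randomness plus Proposition~\ref{prop:dense}.
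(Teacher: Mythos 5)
Your proposal is correct and matches the paper's argument: the paper derives Lemma~\ref{lem:MLR} in one line from the fact that any finite modification of a $q\in\MLR(p)$ is again in $\MLR(p)$ (together with non-emptiness of $\MLR(p)$), which gives dense realization, and then applies Proposition~\ref{prop:dense}. Your extra care about the representation of $2^\IN$ and the existence of realizers is just an explicit spelling-out of the same steps.
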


In particular, this implies that $\DNC_\IN\nleqW\MLR$. This is in sharp contrast to the non-uniform situation
where Ku{\v{c}}era \cite{Kuc85} proved that each Martin-L\"of random computes a diagonally non-computable function
(see also \cite[Theorem~8.8.1]{DH10}). Like in Corollary~\ref{cor:DNC-limJ} the result of Ku{\v{c}}era yields the following
corollary with the help of \cite[Proposition~9.5]{BBP12} (see the statement after Corollary~\ref{cor:DNCN}).

\begin{corollary}
\label{cor:MLR-limJ}
$\MLR\nleqW\lim_\J$.
\end{corollary}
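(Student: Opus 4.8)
The plan is to transfer the non-uniform theorem of Ku\v{c}era to the uniform setting by exactly the same mechanism used for Corollary~\ref{cor:DNC-limJ}, namely \cite[Proposition~9.5]{BBP12}. First I would recall what that proposition says: if $f\in\IN^\IN$ is diagonally non-computable (relative to the empty oracle) and $p\in\IN^\IN$ is limit computable in the jump, then $f\nleqT p$. The point of $\lim_\J$ is precisely that its outputs are of this second kind: if $q=\langle q_0,q_1,\dots\rangle$ is in the domain of $\lim_\J$, then $\lim_\J(q)=\lim_n q_n$ is limit computable in $\J(q)=\emptyset'$, since the sequence $(\J(q_n))_n$ converges and each $q_n$ is computable from $q$.

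Now suppose for contradiction that $\MLR\leqW\lim_\J$. Then there are computable $H,K$ with $H\langle\id,GK\rangle\vdash\MLR$ for every realizer $G$ of $\lim_\J$. Apply this to the computable input $p=\widehat{0}$ (a name for $0\in\IN^\IN$, say). Then $K(p)$ is computable, so it lies in the domain of $\lim_\J$ and $r:=\lim_\J K(p)$ is limit computable in $\emptyset'$. Any realizer value $G K(p)$ encodes $r$, hence $s:=H\langle p, GK(p)\rangle$ is computable from $r$ together with the computable data $p$, so $s$ is itself limit computable in $\emptyset'$. But $s$ must be a Martin-L\"of random relative to $p=0$, i.e.\ an ordinary Martin-L\"of random real, and by Ku\v{c}era's theorem \cite{Kuc85} (see also \cite[Theorem~8.8.1]{DH10}) every Martin-L\"of random real computes a diagonally non-computable function $f$. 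Thus $f\leqT s$ and $s$ is limit computable in the jump, contradicting \cite[Proposition~9.5]{BBP12}, which forces $f\nleqT s$. Hence $\MLR\nleqW\lim_\J$.

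I do not expect any genuine obstacle here: this is a routine combination of an earlier result in the excerpt (the argument pattern of Corollary~\ref{cor:DNC-limJ}) with Ku\v{c}era's classical theorem. The only point requiring a little care is bookkeeping the Turing degrees: one must check that the composite $s=H\langle p,GK(p)\rangle$ really inherits the property of being limit computable in the jump, which follows because $p$ is computable, $K$ and $H$ are computable, and $\lim_\J K(p)$ is limit computable in $\emptyset'$, so any finite-use computation on top of it stays in the same class. Equivalently one can phrase this by noting $\lim_\J$ restricted to computable inputs has all outputs limit computable in $\emptyset'$ and that the class of reals limit computable in the jump is closed under Turing-below; then the reduction constants cannot escape this class.
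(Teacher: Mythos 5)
Your strategy is exactly the one the paper intends: Corollary~\ref{cor:MLR-limJ} is derived, just as Corollary~\ref{cor:DNC-limJ} was, by unfolding a hypothetical reduction at a computable input and combining Ku\v{c}era's theorem with \cite[Proposition~9.5]{BBP12}. The gap is in how you handle the hypothesis of that proposition. You gloss ``limit computable in the jump'' as ``limit computable in $\emptyset'$'', and with that reading the cited proposition is false: the function $f$ with $f(n)=\varphi_n(n)+1$ if $\varphi_n(n)$ halts and $f(n)=0$ otherwise is diagonally non-computable and computable from $\emptyset'$, hence limit computable in $\emptyset'$, and trivially $f\leqT f$. (The same objection applies if one reads the phrase as ``low'', since the Low Basis Theorem yields low DNC functions.) So the concluding step ``$s$ is limit computable in $\emptyset'$, contradiction'' rests on a version of the proposition that cannot be what is meant. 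The property that actually does the work is the one you mention in passing and then discard in your summary: being the limit of a computable sequence whose Turing jumps also converge, i.e.\ being a value of $\lim_\J$ on a computable input. (A small additional slip: $K(p)$ lies in $\dom(\lim_\J)$ not because it is computable, but because $K$ is the inner function of a correct reduction and $\dom(\MLR)=\IN^\IN$.)

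The repair is immediate and also removes your unverified side claim that the relevant class is closed downwards under Turing reducibility (which is not obvious for the correct notion, and is not needed): the point $r:=\lim_\J K(p)$ is limit computable in the jump by definition, and since $p$ is computable you have $f\leqT s\leqT\langle p,r\rangle\equivT r$ for the diagonally non-computable $f$ supplied by Ku\v{c}era's theorem \cite{Kuc85}; applying \cite[Proposition~9.5]{BBP12} directly to the pair $(f,r)$ gives the contradiction. With that adjustment your argument is precisely the paper's proof.
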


The relation between diagonally non-computable functions and Martin-L\"of randomness has also been 
studied in the non-uniform sense of reverse mathematics, for instance by Ambos-Spies et al.\ in \cite{AKLS04}.
We utilize these results in order to show that $\MLR$ and $\DNC_\IN$ are actually incomparable in the 
uniform sense of the Weihrauch lattice.

\begin{proposition}
\label{prop:MLR-DNC}
$\DNC_\IN\nW\MLR$.
\end{proposition}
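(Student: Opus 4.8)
The plan is to establish the two non-reductions $\DNC_\IN\nleqW\MLR$ and $\MLR\nleqW\DNC_\IN$ separately, drawing on the known reverse-mathematics separations between randomness and diagonal non-computability. The first direction $\DNC_\IN\nleqW\MLR$ has in fact already been observed in the excerpt: Lemma~\ref{lem:MLR} says $\MLR$ is $\omega$-indiscriminative, while Corollary~\ref{cor:DNCN} says $\DNC_\IN$ strictly bounds $\DNC_n$, and one knows $\DNC_\IN$ is $\omega$-discriminative (indeed $\ACC_\IN\leqW\DNC_\IN$ follows from Theorem~\ref{thm:DNC} together with $\ACC_\IN\leqW\widehat{\ACC_\IN}$ and $\ACC_\IN\leqW\ACC_n$); so $\DNC_\IN\nleqW\MLR$ is immediate. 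So the real work is the reverse non-reduction $\MLR\nleqW\DNC_\IN$.

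For $\MLR\nleqW\DNC_\IN$, the idea is to use the fact, going back to Ambos-Spies et al.\ \cite{AKLS04}, that there is a diagonally non-computable function (even one of effectively bounded growth) that computes no Martin-L\"of random real; equivalently, the DNC degrees are not all contained in the ML-random-bounding degrees. First I would suppose, for contradiction, that $\MLR\leqW\DNC_\IN$ via computable $H,K$, so that $H\langle\id,GK\rangle\vdash\MLR$ for every realizer $G\vdash\DNC_\IN$. Fix the computable input $p=\widehat 0$ (say). Then $K(\widehat 0)$ is a fixed computable oracle, and $\DNC_\IN(K(\widehat 0))$ is the set of functions diagonally non-computable relative to a computable oracle, i.e.\ just the set of (ordinary) DNC functions. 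The key point is to choose a realizer $G$ of $\DNC_\IN$ with $GK(\widehat 0)$ equal to a specific DNC function $f$ that computes no ML-random real; such an $f$ exists by the cited results, and since $f$ is itself DNC relative to the computable oracle $K(\widehat 0)$, there is a realizer $G\vdash\DNC_\IN$ realising this value on the input $K(\widehat 0)$ — here one needs that $\DNC_\IN$ can be realized to take any prescribed admissible value on a single point, which is fine because $\DNC_\IN$ is total and one can extend any partial assignment to a full realizer (using countable choice, as the paper does elsewhere). Then $H\langle\widehat 0, f\rangle$ must be a real that is ML-random relative to $\widehat 0$, hence ML-random; but $H\langle\widehat 0, f\rangle\leqT f$, contradicting that $f$ computes no ML-random real.

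The main obstacle, and the step that needs the most care, is making the "choose a realizer with prescribed value" argument airtight: one must verify that every DNC function arises as $GK(\widehat 0)$ for some global realizer $G\vdash\DNC_\IN$, and more subtly that the reduction functionals $H,K$ cannot somehow exploit varying the input $p$ to evade the argument. For the latter it suffices that we only ever need the single computable input $p=\widehat 0$ (or indeed any single computable point), so no uniformity over inputs is at issue; for the former, since $\DNC_\IN(K(\widehat 0))$ contains $f$ by hypothesis, and realizers of a total multivalued map can be glued together, the construction goes through. An alternative route, avoiding realizer bookkeeping, is to invoke the general principle that $g\leqW f$ for total $f,g$ on Baire space with computable $f$-domain implies that every value of $g$ on a computable point is computable from some value of $f$ on a (uniformly obtained) computable point; applied to $\MLR\leqW\DNC_\IN$ at $p=\widehat 0$ this says some DNC function computes an ML-random, contradicting \cite{AKLS04}. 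I would present the argument in the first, hands-on form, citing \cite{AKLS04} for the existence of a DNC function computing no ML-random real, and citing Corollary~\ref{cor:DNCN} and Lemma~\ref{lem:MLR} for the already-established direction $\DNC_\IN\nleqW\MLR$.
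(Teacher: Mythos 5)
Your proposal is correct and takes essentially the same route as the paper: the first direction is likewise dispatched via Lemma~\ref{lem:MLR} and Theorem~\ref{thm:DNC}, and for $\MLR\nleqW\DNC_\IN$ the paper also fixes a computable input, chooses a realizer of $\DNC_\IN$ whose value on $K(p)$ is the Ambos-Spies et al.\ function $g$ computing no computably bounded DNC function, and derives the contradiction from the fact that every Martin-L\"of random computes a computably bounded DNC function (\cite[Theorems~1.4 and~1.8]{AKLS04}) --- which is exactly your packaged statement that some DNC function computes no ML-random real. Two small caveats: your parenthetical ``even one of effectively bounded growth'' is not what \cite{AKLS04} provides (their $g$ cannot be computably bounded, since it computes itself), though it is unused; and identifying $\DNC_\IN(K(\widehat{0}))$ with the ordinary DNC functions requires the standard degree-preserving translation between the relativized and unrelativized diagonals, a point the paper's own proof glosses over in the same way.
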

\begin{proof}
As mentioned above, $\DNC_\IN\nleqW\MLR$ follows from Lemma~\ref{lem:MLR} and Theorem~\ref{thm:DNC}.
By \cite[Theorems~1.4 and 1.8]{AKLS04} there exists a diagonally non-computable $g\in\IN^\IN$ that does not
compute any Martin-L\"of random $r\in2^\IN$.
Hence $\MLR\nleqW\DNC_\IN$.
\end{proof}

Dorais et al.\ \cite{DDH+16} introduced a quantitative version $\varepsilon\dash\WWKL$ of weak weak K\H{o}nig's lemma that was studied further in \cite{BGH15a}.
Here $\varepsilon\dash\WWKL:\In\Tr_2\mto2^\IN,T\mapsto[T]$ is the same problem as $\WKL$, but restricted
to the set $\dom(\varepsilon\dash\WWKL)$ of trees $T$ with measure $\mu([T])>\varepsilon$, where $\mu$ denotes the uniform measure on Cantor space $2^\IN$.
So in particular, $\WWKL:=0\dash\WWKL$.
We also study the problem $(1-*)\dash\WWKL:\In\Tr_2^\IN\mto2^\IN$, introduced in \cite{BGH15a}, which is defined by
\[(1-*)\dash\WWKL((T_n)_n):=\bigsqcup_{n\in\IN}(1-2^{-n})\dash\WWKL(T_n).\]
Intuitively, this problem can be described as follows: given a sequence $(T_n)_n$ of trees with $\mu([T_n])>1-2^{-n}$, we need to
find one path in any one of these trees; that is, we can find $n$ and a path $p\in[T_n]$. Logically this corresponds to a uniform existential quantification
$(\exists n)(1-2^{-n})\dash\WWKL$. It is clear that $(1-*)\dash\WWKL\leqW\varepsilon\dash\WWKL$ for all $\varepsilon<1$.
We obtain the following.

\begin{lemma}
\label{lem:ACCN-WWKL}
$\ACC_\IN\lW(1-*)\dash\WWKL$ and $\ACC_\IN\lSW(1-*)\dash\WWKL$.
\end{lemma}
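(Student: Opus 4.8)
The plan is to establish the positive reduction $\ACC_\IN\leqSW(1-*)\dash\WWKL$ first, and then argue strictness separately. For the positive direction, given a name for a set $A\In\IN$ with $|\IN\setminus A|\leq 1$ and $A\neq\emptyset$, I would compute a sequence $(T_n)_n$ of binary trees such that $\mu([T_n])>1-2^{-n}$ and such that any path through any $T_n$ encodes an element of $A$. The idea is to reserve for each candidate ``forbidden'' value $i\in\IN$ a small ``slab'' of Cantor space of measure roughly $2^{-i-n-2}$, and to prune away the slab attached to $i$ as soon as the negative information reveals that $i\notin A$. Since at most one value is ever actually excluded, at most one slab is removed from $T_n$, so $\mu([T_n])>1-2^{-n}$ is maintained; a path in $[T_n]$ then tells us (by which slab it lies in, or the default slab) a value that has not been excluded, i.e.\ an element of $A$. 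The strong realizer $H$ reads off this value, giving $\ACC_\IN\leqSW(1-*)\dash\WWKL$, hence also $\ACC_\IN\leqW(1-*)\dash\WWKL$.

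For strictness, I would invoke Proposition~\ref{prop:dense} together with the known structure of $(1-*)\dash\WWKL$. The point is that $(1-*)\dash\WWKL$ is \emph{not} densely realized and in fact has genuine uniform content above $\ACC_\IN$: for instance, it strongly bounds a non-constant single-valued function on Baire space (its outputs are members of $2^\IN$ determined up to the chosen tree, not up to finite modification within one tree, so one can extract, say, a bit), whereas by Proposition~\ref{prop:dense} nothing densely realized can do this. More directly, one can argue $(1-*)\dash\WWKL\nleqW\ACC_\IN$ by noting that $\ACC_\IN$ is $\omega$--indiscriminative and has only discrete (natural number) output, while $(1-*)\dash\WWKL$ produces an element of $2^\IN$ that cannot be computed with finitely many mind changes from $\ACC_\IN$'s output; alternatively, by Lemma~\ref{lem:MLR-WWKL} and Lemma~\ref{lem:MLR}, since $\MLR\leqW(1-*)\dash\WWKL$ but $\MLR$ has content not below $\ACC_\IN$ (e.g.\ $\MLR$ is not computable with finitely many mind changes). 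Either route gives $\ACC_\IN\lW(1-*)\dash\WWKL$, and the analogous argument with strong reductions (using that the positive reduction was already strong, and that $\ACC_\IN\not\equivSW(1-*)\dash\WWKL$ by the same cardinality/output-type considerations) yields $\ACC_\IN\lSW(1-*)\dash\WWKL$.

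The main obstacle I anticipate is the explicit construction of the trees $T_n$ with the correct measure bookkeeping: one must lay out the slabs so that (i) their total measure is small enough that $\mu([T_n])>1-2^{-n}$ even before any pruning, (ii) removing the single slab for the forbidden value (if any) keeps the measure above $1-2^{-n}$, (iii) the assignment ``path $\mapsto$ element of $A$'' is computable and uniform in $n$, and (iv) the default behaviour when no value is ever excluded still yields a legitimate element of $A$. This is routine but fiddly; the geometric series $\sum_i 2^{-i-n-2}=2^{-n-1}<2^{-n}$ gives enough room. The strictness direction is comparatively soft once one has either Proposition~\ref{prop:dense} or the $\MLR$ comparison in hand.
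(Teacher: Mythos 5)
Your positive reduction, as described, has a genuine gap, and it is exactly the ``default slab''. Your reserved slabs have total measure $\sum_{i}2^{-i-n-2}=2^{-n-1}$, so a region of measure $1-2^{-n-1}$ remains which is attached to no value and is never pruned. A realizer of $(1-*)\dash\WWKL$ may return a path lying in this region, and such a path certifies nothing: it remains in $[T_n]$ no matter which single value is later revealed to lie outside $A$. Concretely, if your outer functional ever commits to an output $v$ while the path lies in the default region and the negative information has not yet excluded anything, the name of $A$ can afterwards exclude exactly $v$; the tree only loses the slab attached to $v$, the returned path is still a legitimate solution, and the answer $v\notin A$ is wrong. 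If instead it never commits in that situation, it produces no output on $A=\IN$. Nor can the layout be repaired by giving every $i\in\IN$ its own slab so that the slabs exhaust $2^\IN$: by compactness, $2^\IN$ admits no partition into infinitely many nonempty clopen pieces, so a leftover region of positive measure is unavoidable in any ``one slab per $i\in\IN$'' scheme. The paper's construction avoids this by working, for each $n$, with the \emph{finite} partition of $2^\IN$ into the $2^{n+1}$ cylinders $w2^\IN$ with $|w|=n+1$, each of measure $2^{-n-1}$: the cylinders indexed by $i\leq 2^{n+1}-2$ are pruned when $i$ is excluded, and the last cylinder is a \emph{catch-all} which is pruned as soon as \emph{any} value $\geq 2^{n+1}-1$ is excluded, so that its survival certifies $2^{n+1}-1\in A$. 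At most one cylinder is ever removed, hence $\mu([T_n])\geq 1-2^{-n-1}>1-2^{-n}$, and the first $n+1$ bits of any returned path computably yield an element of $A$. Your proposal is missing precisely this catch-all device, and without it the construction cannot be completed.

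Concerning strictness, your first route is a non sequitur: $\ACC_\IN$ is certainly not $\omega$--indiscriminative (it reduces to itself), and Proposition~\ref{prop:dense} only restricts what reduces \emph{to} a densely realized problem; since $\ACC_\IN$ is not densely realized, observing that $(1-*)\dash\WWKL$ is not densely realized or that it strongly bounds some non-constant function does not yield $(1-*)\dash\WWKL\nleqW\ACC_\IN$. Your alternative routes are sound once made precise, and one of them is the paper's argument: on computable inputs $\ACC_\IN$ produces only computable outputs (indeed natural numbers), whereas a universal Martin-L\"of test yields a computable sequence $(T_n)_n$ of trees with $\mu([T_n])>1-2^{-n}$ in which no $T_n$ has a computable path; hence $(1-*)\dash\WWKL\nleqW\ACC_\IN$, and the same witness rules out a strong reduction as well. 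The detour via $\MLR\leqW(1-*)\dash\WWKL$ together with $\MLR\nleqW\ACC_\IN$ also works, but it rests on the same universal-test witness.
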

\begin{proof}
Given a set $A\In\IN$ in which at most one element of $\IN$ is missing and a number $n$,
we consider the subtrees $w_i2^*$ where $\{w_0,...,w_{2^{n+1}-1}\}=\{0,1\}^{n+1}$ is 
the set of all binary words of length $n+1$. We want to construct a sequence of trees $(T_n)_n$ to which we can apply $\lSW(1-*)\dash\WWKL$. If some $i\in\{0,...,2^{n+1}-2\}$ is missing from $A$, 
then we remove the corresponding subtree $w_i2^*$ from the full tree in order to get a tree $T_n$;
if a number $i\geq2^{n+1}-1$ is missing in $A$, then we remove $w_{2^{n+1}-1}2^*$ in order to obtain $T_n$; and if no number $i$ is missing from $A$, then $T_n$ is the full tree $2^*$. 
The map that takes $(n,A)$ (where $A$ is given by negative information) to $T_n$ is computable, and $T_n$ satisfies $\mu([T_n])\geq1-2^{-n-1}>1-2^n$.
Any infinite path $p\in [T_n]$ can be used to identify in a computable way a number $i\in A$. 
Hence $\ACC_\IN\leqSW(1-*)\dash\WWKL$. 
It is clear that the reductions are strict, since $\ACC_\IN$ only produces computable values on computable inputs (in fact, natural numbers),
while there is a computable sequence $(T_n)_n$ of trees $T_n$ with $\mu([T_n])>1-2^{-n}$ and such that no $T_n$ has has an infinite computable path
(such a sequence of trees can be obtained, for instance, by a universal Martin-L\"of test, as explained below).
\end{proof}

In particular, $(1-*)\dash\WWKL$ is $\omega$--discriminative. 
The following result is also easy to obtain.

\begin{lemma}
\label{lem:MLR-WWKL}
$\MLR\lW(1-*)\dash\WWKL$ and $\MLR\lSW(1-*)\dash\WWKL$.
\end{lemma}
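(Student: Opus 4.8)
The plan is to establish the non-strict reductions $\MLR\leqW(1-*)\dash\WWKL$ and $\MLR\leqSW(1-*)\dash\WWKL$ first, and then to argue that they are strict by invoking the density result from Lemma~\ref{lem:MLR}. For the positive direction, I would use the standard fact from algorithmic randomness that for each $n$ one can effectively construct, uniformly in $p$, a tree $T_n\In2^*$ with $\mu([T_n])>1-2^{-n}$ (for instance, by unfolding the complement of the first $n$ levels of a universal Martin-L\"of test relative to $p$, truncated so that the measure bound is met) such that every path of $T_n$ is Martin-L\"of random relative to $p$. Actually, since we only need {\em one} path for {\em one} $n$, it already suffices that $[T_n]\In\MLR(p)$ for a single $n$, say $n=1$: take $T$ to be a tree whose paths avoid the first component $U_1$ of a universal relative ML-test with $\mu(U_1)<1/2$, so $\mu([T])>1/2>1-2^{-1}$ and every path of $T$ is ML-random relative to $p$. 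Feeding the constant sequence $(T,T,T,\dots)$ (or the genuinely varying sequence $(T_n)_n$) into $(1-*)\dash\WWKL$ returns a path of some $T_m$, which lies in $\MLR(p)$; the output map $H$ is the identity. Since no access to the original input is needed on the output side, this gives the strong reduction $\MLR\leqSW(1-*)\dash\WWKL$ directly, and hence also $\MLR\leqW(1-*)\dash\WWKL$.

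For strictness, the key observation is that $(1-*)\dash\WWKL$ is discriminative whereas $\MLR$ is $\omega$--indiscriminative by Lemma~\ref{lem:MLR}. Indeed $(1-*)\dash\WWKL\geqW\varepsilon\dash\WWKL$ fails in that direction, but one can note that $(1-*)\dash\WWKL$ still computes $\C_2$: given a closed $A\In\{0,1\}$ with at most one point removed, one builds a single tree $T$ of large measure all of whose paths encode a point of $A$ (e.g. fix coordinate $0$ of each path to a value that stays in $A$, which is detectable with negative information, and leave the remaining coordinates free so that $\mu([T])=1/2>1/2 - \varepsilon$ for suitable indexing), so $\C_2\leqW(1-*)\dash\WWKL$. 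Since $\MLR$ is $\omega$--indiscriminative it is in particular indiscriminative, so $\C_2\nleqW\MLR$, whence $(1-*)\dash\WWKL\nleqW\MLR$. Combined with $\MLR\leqSW(1-*)\dash\WWKL$ this yields $\MLR\lW(1-*)\dash\WWKL$ and $\MLR\lSW(1-*)\dash\WWKL$.

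The main obstacle I anticipate is pinning down the precise measure bookkeeping in the reduction $\MLR\leqSW(1-*)\dash\WWKL$: one must make sure that the tree(s) $T_n$ produced from the oracle $p$ genuinely satisfy the strict inequality $\mu([T_n])>1-2^{-n}$ uniformly, and that this is computable in $p$ (negative information on the tree suffices since $\Tr_2$ is given by characteristic functions, so this is routine once the universal test is fixed). The other point requiring a little care is that in the strong reduction the ``input preprocessing'' map $K$ is allowed, so the fact that $\MLR$ takes an arbitrary $p\in\IN^\IN$ as input while $(1-*)\dash\WWKL$ expects an element of $\Tr_2^\IN$ is handled entirely by $K$; no clash arises. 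All of this is standard, so the lemma is indeed ``immediate'' as claimed, and I would present the proof in the compressed form: cite the construction of a large-measure tree of ML-randoms for the positive part, and cite Lemma~\ref{lem:MLR} together with the observation $\C_2\leqW(1-*)\dash\WWKL$ for strictness.
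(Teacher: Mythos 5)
Your positive direction, in its primary form, is exactly the paper's argument: convert a universal Martin--L\"of test relative to $p$ into a sequence $(T_n)_n$ of trees with $\mu([T_n])>1-2^{-n}$ all of whose paths are random in $p$, feed it to $(1-*)\dash\WWKL$, and output the returned path unchanged (so the reduction is strong). However, the ``simplification'' you propose is not valid: the constant sequence $(T,T,T,\dots)$ with $\mu([T])>1/2$ is not in $\dom((1-*)\dash\WWKL)$, since the domain requires $\mu([T_n])>1-2^{-n}$ for \emph{every} $n$ (forcing $\mu([T_n])\to1$), and you cannot repair this by padding with high-measure dummy trees, because the realizer of $(1-*)\dash\WWKL$ chooses the index $m$ it answers for, so every tree in the sequence must have only random paths. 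Hence ``one good tree suffices'' is not a correct shortcut; you do need the genuinely varying sequence.

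The real gap is in your strictness argument: the claim $\C_2\leqW(1-*)\dash\WWKL$ is false, so $(1-*)\dash\WWKL$ is \emph{not} discriminative. Indeed, as noted right after its definition, $(1-*)\dash\WWKL\leqW\varepsilon\dash\WWKL$ for every $\varepsilon<1$, in particular $(1-*)\dash\WWKL\leqW\frac{1}{2}\dash\WWKL$, while Lemma~\ref{lem:ACC-WWKL} states $\ACC_2\nleqW\frac{1}{2}\dash\WWKL$, i.e.\ $\C_2\nleqW\frac{1}{2}\dash\WWKL$; so $\C_2\leqW(1-*)\dash\WWKL$ would be a contradiction. Your concrete construction fails on exactly the measure bookkeeping you flagged: fixing coordinate $0$ of all paths leaves a tree of measure at most $1/2$, which never meets the bound $\mu([T_n])>1-2^{-n}$ for $n\geq1$ --- encoding a binary choice would require deleting measure $1/2$, which the $(1-*)$ constraints forbid. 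The separation that does work, and is what the paper uses, lives one level lower: Lemma~\ref{lem:ACCN-WWKL} gives $\ACC_\IN\leqSW(1-*)\dash\WWKL$ (there only one of $2^{n+1}$ branches must be removed, costing measure only $2^{-n-1}$), whereas $\MLR$ is densely realized and hence $\omega$--indiscriminative by Lemma~\ref{lem:MLR}, so $\ACC_\IN\nleqW\MLR$ and therefore $(1-*)\dash\WWKL\nleqW\MLR$, which yields strictness of both $\lW$ and $\lSW$. So replace your $\C_2$ step by the $\ACC_\IN$ step and drop the constant-sequence shortcut; the rest of your outline is fine.
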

\begin{proof}
The reduction can be shown using a universal Martin-L\"of test $(U_n)_n$ in $p$: we can computably convert the $(U_n)_n$ into a sequence $(T_n)_n$ of trees
with $\mu([T_n])>1-2^{-n}$, and any such tree has only infinite paths, which are Martin-L\"of random in $p$ 
\cite[Theorem~12]{BGH15}.
That the reduction is strict follows from Lemmas~\ref{lem:MLR} and \ref{lem:ACCN-WWKL}.
\end{proof}

Next we want to show that also $\DNC_\IN\leqW(1-*)\dash\WWKL$. This follows from Lemma~\ref{lem:ACCN-WWKL}
and the following result.

\begin{proposition}
\label{prop:WWKL-parallel}
$(1-*)\dash\WWKL$ is strongly parallelizable.
\end{proposition}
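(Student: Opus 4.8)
The statement asserts $\widehat{(1-*)\dash\WWKL}\equivSW(1-*)\dash\WWKL$. Since $f\leqSW\widehat f$ holds for every $f$ (reduce an input $x\in\dom f$ via $x\mapsto(x,x,x,\dots)\in\dom\widehat f$ and read off the first output component), only the reduction $\widehat{(1-*)\dash\WWKL}\leqSW(1-*)\dash\WWKL$ requires proof. The plan is to combine a whole countable family of $(1-*)\dash\WWKL$--instances into a single one by forming, along the standard homeomorphism $2^\IN\cong(2^\IN)^\IN$, a ``product'' of trees, one coordinate for each instance, with the measure parameters along the diagonal shifted upward far enough that the required bound survives passage to the infinite product.

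In detail, I would fix the pairing $\langle p_0,p_1,p_2,\dots\rangle\langle i,j\rangle:=p_i(j)$ and, for a finite binary string $\sigma$, write $\sigma^{[i]}$ for its $i$--th de-interleaved substring, namely $\sigma(\langle i,0\rangle)\sigma(\langle i,1\rangle)\cdots$, taken over all $j$ with $\langle i,j\rangle<|\sigma|$. Given an input $((T_{i,n})_n)_{i\in\IN}$ for $\widehat{(1-*)\dash\WWKL}$, so $\mu([T_{i,n}])>1-2^{-n}$ for all $i$ and $n$, put
\[S_m:=\{\sigma\in2^*:\sigma^{[i]}\in T_{i,\,i+m+2}\text{ for every }i\}.\]
Only finitely many of the conditions are non-vacuous for a given $\sigma$, so the map $((T_{i,n})_n)_i\mapsto(S_m)_m$ is computable, and prefix-closedness of the $T_{i,n}$ makes each $S_m$ a tree. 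A sequence $q\in2^\IN$ lies in $[S_m]$ exactly when every de-interleaved coordinate $p_i$ (defined by $p_i(j):=q(\langle i,j\rangle)$) is an infinite path of $T_{i,\,i+m+2}$, so under the homeomorphism $[S_m]$ is precisely $\bigtimes_{i\in\IN}[T_{i,\,i+m+2}]$; note that this avoids any (noncomputable) pruning of the $T_{i,n}$.

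Since the uniform measure on $2^\IN$ pushes forward under the homeomorphism to the product of the uniform measures on the coordinates, it follows that
\[\mu([S_m])=\prod_{i=0}^\infty\mu([T_{i,\,i+m+2}])\geq\prod_{j=m+2}^\infty(1-2^{-j})\geq1-\sum_{j=m+2}^\infty2^{-j}=1-2^{-(m+1)}>1-2^{-m},\]
where I have used the elementary inequality $\prod_j(1-a_j)\geq1-\sum_ja_j$ for $a_j\in[0,1]$. Hence $(S_m)_m\in\dom((1-*)\dash\WWKL)$. Feeding it to the oracle returns a pair $(m,q)$ with $q\in[S_m]$; de-interleaving $q$ produces, for every $i$, a path $p_i\in[T_{i,\,i+m+2}]$, and the sequence $\big((i+m+2,\,p_i)\big)_{i\in\IN}$ is a solution of $\widehat{(1-*)\dash\WWKL}$ on the original input. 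The pre-processing $K$ and the post-processing $H$ are computable and $H$ does not consult the original input, so altogether this is a strong Weihrauch reduction, which completes the proof.

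The only genuinely delicate point is the measure bookkeeping: shifting the index by just $1$ (using $T_{i,\,i+m+1}$) would yield only $\mu([S_m])\geq1-2^{-m}$, and this non-strict inequality is not good enough, whereas the shift by $2$ retains the slack $2^{-(m+1)}$; correspondingly one must make sure that defining $S_m$ via the coordinatewise membership condition really produces the product tree on the nose (so that no pruning enters) while keeping the whole family $(S_m)_m$ uniformly computable in the input.
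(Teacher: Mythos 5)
Your proof is correct, but it assembles the countably many instances in a genuinely different way than the paper does. The paper stays inside a single copy of Cantor space: from the double sequence $(T_{k,n})$ it computes (uniformly) trees $T_n$ with $[T_n]=\bigcap_{k=0}^\infty[T_{k,n+k+1}]$, so that countable subadditivity gives $\mu([T_n])>1-\sum_{k=0}^\infty2^{-n-k-1}=1-2^{-n}$ (strictness is automatic, since already the $k=0$ complement is strictly below its bound), and then one and the same path $p\in[T_n]$ is simultaneously a path of every $T_{k,n+k+1}$, so the post-processing is trivial and needs no decoding. You instead realize the product $\bigtimes_{i\in\IN}[T_{i,i+m+2}]$ as the path set of a single tree $S_m$ by interleaving coordinates, which obliges you to check that the uniform measure pushes forward to the product measure, to invoke $\prod_j(1-a_j)\geq1-\sum_ja_j$, and to shift the index by $2$ rather than $1$ because the infinite product only preserves non-strict inequalities; your bookkeeping on all three points is sound (in fact a shift by $1$ would also do if one keeps a single strict factor outside the product estimate, but your coarser estimate with the extra shift is cleaner), and your handling of the vacuous coordinate conditions keeps $(S_m)_m$ uniformly computable, so the reduction is indeed strong. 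What the intersection approach buys is economy: the very same witness solves all instances, strictness comes for free, and no de-interleaving is needed. What your product approach buys is independence from the fact that all solution sets live in a common space where intersecting makes sense, so it adapts more readily to variants where the instances would have to be kept in separate coordinates.
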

\begin{proof}
We need to prove $\widehat{(1-*)\dash\WWKL}\leqSW(1-*)\dash\WWKL$.
Given a double sequence $(T_{k,n})$ of binary trees with $\mu([T_{k,n}])>1-2^{-n}$ for all $k,n$ we need to compute
one sequence $(T_n)$ of trees with $\mu([T_n])>1-2^{-n}$ such that from an infinite path $p\in[T_n]$ for an arbitrary $n$,
we can compute one infinite path $p_k\in[T_{k,n_k}]$ for some arbitrary $n_k$ for each $k\in\IN$.
Given the double sequence $(T_{k,n})$ we can compute a sequence $(T_n)$ of trees such that
\[[T_n]=\bigcap_{k=0}^\infty[T_{k,n+k+1}]\]
for all $n$. Then $\mu[T_n]>1-\sum_{k=0}^\infty2^{-n-k-1}=1-2^{-n}$.
Moreover, an infinite path $p\in[T_n]$ for some $n$ is also an infinite path $p\in[T_{k,n+k+1}]$ for all $k$,
which completes the desired reduction.  
\end{proof}

Lemma~\ref{lem:ACCN-WWKL}, Theorem~\ref{thm:DNC} and Proposition~\ref{prop:WWKL-parallel} yield the desired corollary.
That the reduction in the following corollary is strict follows from Lemma~\ref{lem:MLR-WWKL} and Proposition~\ref{prop:MLR-DNC}.

\begin{corollary}
\label{cor:DNC-WWKL}
$\DNC_\IN\lW(1-*)\dash\WWKL$ and $\DNC_\IN\lSW(1-*)\dash\WWKL$.
\end{corollary}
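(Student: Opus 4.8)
The plan is to assemble the non-strict reduction by a short chain through parallelization and then obtain strictness by a separate cone argument via $\MLR$.

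First I would invoke Theorem~\ref{thm:DNC} to rewrite $\DNC_\IN\equivSW\widehat{\ACC_\IN}$, which reduces the task to bounding $\widehat{\ACC_\IN}$ by $(1-*)\dash\WWKL$. By Lemma~\ref{lem:ACCN-WWKL} we have $\ACC_\IN\leqSW(1-*)\dash\WWKL$, and since parallelization $f\mapsto\widehat f$ is a monotone closure operator in the (strong) Weihrauch lattice, applying it to this reduction yields $\widehat{\ACC_\IN}\leqSW\widehat{(1-*)\dash\WWKL}$. Now Proposition~\ref{prop:WWKL-parallel} tells us $(1-*)\dash\WWKL$ is strongly parallelizable, i.e.\ $\widehat{(1-*)\dash\WWKL}\leqSW(1-*)\dash\WWKL$. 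Composing these three reductions gives
\[\DNC_\IN\equivSW\widehat{\ACC_\IN}\leqSW\widehat{(1-*)\dash\WWKL}\leqSW(1-*)\dash\WWKL,\]
and in particular $\DNC_\IN\leqW(1-*)\dash\WWKL$ as well.

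For strictness, I would argue by contradiction: suppose $(1-*)\dash\WWKL\leqW\DNC_\IN$. By Lemma~\ref{lem:MLR-WWKL} we have $\MLR\leqW(1-*)\dash\WWKL$, so transitivity would give $\MLR\leqW\DNC_\IN$, contradicting $\DNC_\IN\nW\MLR$ from Proposition~\ref{prop:MLR-DNC}. Hence $\DNC_\IN\lW(1-*)\dash\WWKL$. Since $\leqSW$ implies $\leqW$, the same contradiction rules out $(1-*)\dash\WWKL\leqSW\DNC_\IN$, so $\DNC_\IN\lSW(1-*)\dash\WWKL$ as well.

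I do not anticipate a genuine obstacle here, since every ingredient has already been established; the only points meriting a moment's care are that all the reductions feeding into the chain are strong (which is exactly how Lemma~\ref{lem:ACCN-WWKL} and Proposition~\ref{prop:WWKL-parallel} are stated), and that parallelization respects $\leqSW$ — this is the standard fact that $f\mapsto\widehat f$ is a closure operator, recorded in the preliminaries.
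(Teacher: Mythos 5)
Your proposal is correct and matches the paper's own argument: the positive part is exactly the chain $\DNC_\IN\equivSW\widehat{\ACC_\IN}\leqSW\widehat{(1-*)\dash\WWKL}\leqSW(1-*)\dash\WWKL$ via Theorem~\ref{thm:DNC}, Lemma~\ref{lem:ACCN-WWKL} and Proposition~\ref{prop:WWKL-parallel}, and strictness is obtained just as in the paper from Lemma~\ref{lem:MLR-WWKL} together with Proposition~\ref{prop:MLR-DNC}.
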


We note that this result shows that $\DNC_\IN$ admits a Las Vegas algorithm in the sense of \cite{BGH15a},
even one of any success probability arbitrarily close to $1$, while $\DNC_\IN$ cannot be reduced to $\MLR$ by Lemma~\ref{lem:MLR}.

We next want to prove that $\PA$ is not reducible to any jump of $\WWKL$.
This can be established using a theorem of Jockusch and Soare \cite[Corollary~5.4]{JS72}.

\begin{lemma}[Jockusch and Soare 1972]
\label{lem:PA-measure}
Let $A=\{B\In\IN:B$ is of PA--degree$\}$. Then $\mu(A)=0$.
\end{lemma}

We recall (see \cite{BP10} and \cite{BGH15a}) that a function $f:\In(X,\delta_X)\mto(Y,\delta_Y)$ on represented spaces is called {\em probabilistic}
if there is a computable function $F:\In\IN^\IN\times2^\IN\to\IN^\IN$ and a family $(A_p)_{p\in D}$ of measurable subsets of $2^\IN$
with $D:=\dom(f\delta_X)$ such that $\mu(A_p)>0$ for all $p\in D$ and $\delta_YF(p,r)\in f\delta_X(p)$ for all $p\in D$ and $r\in A_p$.
Roughly speaking, a function is probabilistic if it can be computed with the help of a piece of random advice originating from
some set of positive measure that can depend non-uniformly and non-effectively on the input. 
We now transfer the proof of \cite[Theorem~20]{BP10} into our setting.

\begin{proposition}
\label{prop:PA-probabilistic}
$\PA$ is not probabilistic.
\end{proposition}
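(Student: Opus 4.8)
The plan is to derive a contradiction from the assumption that $\PA$ is probabilistic, using the measure-theoretic obstruction provided by Lemma~\ref{lem:PA-measure}. So suppose $\PA$ were probabilistic. By definition there would be a computable $F:\In\IN^\IN\times2^\IN\to\IN^\IN$ and, for each $b\in D:=\dom(\PA\circ\delta_\DD)=\IN^\IN$, a measurable set $A_b\In2^\IN$ with $\mu(A_b)>0$ such that $\delta_\DD F(b,r)\in\PA\delta_\DD(b)$ for all $r\in A_b$; that is, $[F(b,r)]\gg[b]$ for every $r\in A_b$. First I would specialise to a fixed computable input, say $b$ with $[b]=\mathbf 0$, so that $A:=A_b\In2^\IN$ has positive measure and $F(b,\cdot)$ is a computable function of $r$ alone.

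Next I would observe that for every $r\in A$ the degree $[F(b,r)]$ is a PA-degree (relative to $\mathbf 0$), and that $F(b,\cdot)$ is computable, hence $F(b,r)\leqT r$, so $[F(b,r)]\leq[r]$. Therefore the set $A$ has the property that each $r\in A$ bounds a PA-degree, namely $[F(b,r)]\leq[r]$. But then Lemma~\ref{lem:PA-measure} (Jockusch and Soare) forces $\mu(A)=0$, contradicting $\mu(A_b)>0$. This completes the argument.

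The only subtlety, and the step I expect to require a word of care, is the bookkeeping around the representation $\delta_\DD$ of Turing degrees: one must check that "$\delta_\DD F(b,r)\in\PA\delta_\DD(b)$ for all $r\in A_b$" really does say that $F(b,r)$ is (a representative of) a PA-degree relative to $[b]$, and that $F$ being computable in the sense of the definition of probabilistic functions genuinely gives $F(b,r)\leqT r$ for fixed computable $b$. Both are immediate from the definitions recalled just before the statement (a $\delta_\DD$-name of a degree is any sequence in that degree, and a computable $F:\In\IN^\IN\times2^\IN\to\IN^\IN$ maps a computable oracle $b$ together with $r$ to something computable in $r$), so once these identifications are made the rest is just quoting Lemma~\ref{lem:PA-measure}. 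There is no serious obstacle; the content of the proposition is entirely carried by the Jockusch--Soare measure lemma, exactly as the analogous argument in \cite[Theorem~20]{BP10}.
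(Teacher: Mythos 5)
Your proposal is correct and is essentially the paper's own proof: fix a computable input (the paper takes the zero sequence), note that computability of $F$ gives $[F(p,r)]\leq[r]$ while $[F(p,r)]\in\PA([p])$ is a PA--degree, and conclude $\mu(A_p)=0$ from Lemma~\ref{lem:PA-measure}, contradicting $\mu(A_p)>0$. The representational bookkeeping you flag is handled exactly as you say, so nothing further is needed.
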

\begin{proof}
Let us assume that $\PA:\DD\mto\DD$ is probabilistic. Then there exists a computable function $F:\In\IN^\IN\times2^\IN\to\IN^\IN$
and a family $(A_p)_{p\in\IN^\IN}$ of measurable sets $A_p\in2^\IN$ such that $\mu(A_p)>0$ for all $p\in\IN^\IN$ and such that $[F(p,r)]\in\PA([p])$ for all $p\in\IN^\IN$ and $r\in A_p$. 
We fix the computable zero sequence $p$. Then we obtain for each $r\in A_p$ that ${\mathbf a}:=[F(p,r)]\in\PA(p)$ is a PA--degree and ${\mathbf a}\leqT [r]$.
Hence $\mu(A_p)=0$ by Lemma~\ref{lem:PA-measure}, which is a contradiction.
\end{proof}

Since probabilistic Weihrauch degrees are closed downwards \cite[Proposition~14.3]{BGH15a} we obtain the following corollary.

\begin{corollary}
\label{cor:DNC-probabilistic}
$\DNC_n$ is not probabilistic for all $n\geq 2$.
\end{corollary}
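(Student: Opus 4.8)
The plan is to obtain this as a direct consequence of Proposition~\ref{prop:PA-probabilistic}, using Corollary~\ref{cor:PA-DNC} to transfer non-probabilisticity from $\PA$ down to $\DNC_n$, and the downward closure of the class of probabilistic Weihrauch degrees.

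First I would record the reduction $\PA\leqW\DNC_n$ for every $n\geq2$. By Corollary~\ref{cor:PA-DNC} we have $\PA\equivSW[\DNC_n]$, and for any multi-valued function $f$ the degree version satisfies $[f]\leqSW f$, as observed above. Chaining these gives $\PA\leqSW\DNC_n$ and in particular $\PA\leqW\DNC_n$.

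Then I would argue by contradiction. Suppose $\DNC_n$ is probabilistic for some $n\geq2$. Since the probabilistic Weihrauch degrees are closed downwards by \cite[Proposition~14.3]{BGH15a}, the reduction $\PA\leqW\DNC_n$ established in the previous step forces $\PA$ to be probabilistic as well. This contradicts Proposition~\ref{prop:PA-probabilistic}, so $\DNC_n$ is not probabilistic.

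There is essentially no obstacle here: the argument is a two-line combination of an already proved result and a cited closure property. The only point worth stating explicitly is that \cite[Proposition~14.3]{BGH15a} applies to arbitrary Weihrauch reductions, so the reduction $\PA\leqW\DNC_n$ indeed suffices, notwithstanding that $\PA$ has output space $\DD$ while $\DNC_n$ has output in $\IN^\IN$.
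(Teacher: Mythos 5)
Your proof is correct and follows exactly the paper's route: the reduction $\PA\leqSW[\DNC_n]\leqSW\DNC_n$ from Corollary~\ref{cor:PA-DNC} together with the downward closure of probabilistic degrees and Proposition~\ref{prop:PA-probabilistic}. Nothing further is needed.
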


We note that this contrasts with the situation for $\DNC_\IN$, which is probabilistic by Corollary~\ref{cor:DNC-WWKL} and \cite[Corollary~14.7]{BGH15a}.
Proposition~\ref{prop:PA-probabilistic} also yields the following.

\begin{corollary}
\label{cor:PA-WWKL-jump}
$\PA\nleqW\WWKL^{(k)}$ for all $k\in\IN$. 
\end{corollary}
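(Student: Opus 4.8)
The plan is to derive this as a direct consequence of Proposition~\ref{prop:PA-probabilistic} together with the fact that the class of probabilistic Weihrauch degrees is closed downwards, as recorded just before the statement (citing \cite[Proposition~14.3]{BGH15a}). So the only thing I really need to show is that every jump $\WWKL^{(k)}$ is probabilistic; then if $\PA\leqW\WWKL^{(k)}$ held, $\PA$ would be probabilistic too, contradicting Proposition~\ref{prop:PA-probabilistic}.

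First I would recall that $\WWKL=\WWKL^{(0)}$ is probabilistic essentially by definition: given a binary tree $T$ with $\mu([T])>0$, a randomly chosen $r\in2^\IN$ lies in $[T]$ with positive probability, and from such an $r$ one trivially computes an infinite path. The set $A_T$ of positive measure here is $[T]$ itself, and the computable function $F$ just reads off the path. For the jumps, the cleanest route is to invoke the general principle that probabilistic problems are closed under jumps — if this is available in \cite{BGH15a} (it is the natural companion to \cite[Proposition~14.3]{BGH15a} and Corollary~14.7) I would cite it directly, since the excerpt explicitly states results of this flavour are "taken from \cite{BGH15a}". If I instead want to argue from scratch: $\WWKL^{(k)}\equivSW\WWKL\stars\lim^{(k-1)}\stars\dots$, and more simply $f'\equivSW f\stars\lim$; the key observation is that $\lim$ is computable relative to no random advice is not what we want — rather, one observes that a probabilistic algorithm for $f$ composed with the (deterministic) reformatting that turns a $\delta'$-name into a $\delta$-name in the limit still only needs the same random advice, because $\lim$ contributes no new nondeterminism. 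Concretely, if $F:\In\IN^\IN\times2^\IN\to\IN^\IN$ with advice sets $(A_p)$ witnesses that $f$ is probabilistic, then $(p,r)\mapsto F(\lim(p),r)$ with advice sets $A_{\lim(p)}$ witnesses that $f'$ is probabilistic, since the input side is handled deterministically.

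Having established that $\WWKL^{(k)}$ is probabilistic for every $k\in\IN$, I then argue: suppose for contradiction $\PA\leqW\WWKL^{(k)}$ for some $k$. Since probabilistic degrees are closed downwards, $\PA$ would be probabilistic, contradicting Proposition~\ref{prop:PA-probabilistic}. Hence $\PA\nleqW\WWKL^{(k)}$ for all $k\in\IN$.

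The main obstacle is the claim that jumps of probabilistic problems are probabilistic. If \cite{BGH15a} contains this (Corollary~14.7 is cited nearby for a closely related fact, so it is plausible), the corollary is immediate. If not, I would need to verify the argument sketched above carefully — in particular checking that the measurability of the advice sets is preserved under the substitution $p\mapsto\lim(p)$ and that $\lim$ being only partial does not cause a problem (it does not, because $\dom(f'\delta_X') = \dom(f\delta_X\circ\lim)$ and we only need the advice sets indexed over this domain). This is routine but is the one place where something could go wrong, so it is where I would spend my care.
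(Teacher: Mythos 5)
Your overall strategy is exactly the paper's: show that $\WWKL^{(k)}$ is probabilistic, then combine the downward closure of probabilistic degrees with Proposition~\ref{prop:PA-probabilistic}. The gap is in your justification of the key step, namely that jumps of probabilistic problems are probabilistic. That general principle is false, and your concrete argument does not meet the definition: being probabilistic requires a \emph{computable} function $F:\In\IN^\IN\times2^\IN\to\IN^\IN$, whereas $(p,r)\mapsto F(\lim(p),r)$ is only limit computable, since $\lim$ is not computable. What this composition witnesses is only the strictly weaker property that the paper later calls limit probabilistic (Section 11). For a counterexample to the hoped-for closure principle, note that $\id$ is probabilistic (it is computable), while $\id'\equivSW\lim$ is not: if $\lim$ were probabilistic, then by downward closure and $\WKL\leqW\lim$ also $\WKL$ would be probabilistic, contradicting \cite[Proposition~14.8]{BGH15a} as quoted in Section 11. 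So no companion result of that form can exist in \cite{BGH15a}, and in fact the point of Lemma~\ref{lem:WKL-limit-probabilistic} is precisely that jumps escape this kind of probabilistic computation.

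Fortunately, the claim you need does not require any closure under jumps: your witness for $k=0$ works verbatim for every $k$, because it never consults the input. A name $p$ for an input of $\WWKL^{(k)}$ is (a $k$-fold limit) name of a tree $T$ with $\mu([T])>0$; take $A_p:=[T]$ and $F(p,r):=r$. Then $\mu(A_p)>0$, $F$ is computable, and $F(p,r)=r\in[T]$, which is exactly the solution set of $\WWKL^{(k)}$ at $p$ — the jump only changes the input representation, which this witness ignores. With that repair your argument coincides with the paper's (implicit) proof: $\WWKL^{(k)}$ is probabilistic, probabilistic degrees are closed downwards by \cite[Proposition~14.3]{BGH15a}, and $\PA$ is not probabilistic by Proposition~\ref{prop:PA-probabilistic}, so $\PA\nleqW\WWKL^{(k)}$.
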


We note that the proof of \cite[Theorem~10.1]{BGH15a} yields the following result.

\begin{lemma}
\label{lem:ACC-WWKL}
$\ACC_{n+1}\leqSW\frac{n-1}{n}\dash\WWKL$ and $\ACC_n\nleqW\frac{n-1}{n}\dash\WWKL$ for all $n\geq2$.
\end{lemma}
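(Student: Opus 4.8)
The statement has two parts, which I would prove separately.

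\emph{The reduction $\ACC_{n+1}\leqSW\frac{n-1}{n}\dash\WWKL$.} The plan is to encode a co-unique subset of $\{0,\dots,n\}$ into a binary tree via a fixed clopen partition of Cantor space. Fix $m$ with $2^m>n(n+1)$ and partition $\{0,1\}^m$ into $n+1$ blocks $B_0,\dots,B_n$ of sizes $\lfloor 2^m/(n+1)\rfloor$ or $\lceil 2^m/(n+1)\rceil$; then $C_i:=\bigcup_{w\in B_i}w2^\IN$ is clopen with $\mu(C_i)\le\lceil 2^m/(n+1)\rceil\cdot 2^{-m}<\tfrac1n$, and the $C_i$ partition $2^\IN$. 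Given a negative name of a set $A\In\{0,\dots,n\}$ with $|\{0,\dots,n\}\sm A|\le 1$, the computable map $K$ outputs the $\cf$-name of the tree $T$ which starts as the full binary tree and, as soon as some $i$ is seen to be missing from $A$, deletes all nodes lying below $B_i$. This is monotone in the information received, hence a legitimate $\cf_T$-name, and $\mu([T])\ge 1-\max_i\mu(C_i)>\tfrac{n-1}{n}$ while $[T]\not=\emptyset$, so $T\in\dom(\frac{n-1}{n}\dash\WWKL)$. From any path $r\in[T]$ the prefix $r\restriction m$ determines the unique $i$ with $r\in C_i$; since $r$ cannot lie in the deleted block, this $i$ belongs to $A$. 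The output map reads only $r\restriction m$ together with the fixed partition, so the reduction is strong.

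\emph{The non-reduction $\ACC_n\nleqW\frac{n-1}{n}\dash\WWKL$.} Here the plan is a measure argument modelled on the proof of \cite[Theorem~10.1]{BGH15a}. Suppose $H\langle\id,GK\rangle\vdash\ACC_n$ for every $G\vdash\frac{n-1}{n}\dash\WWKL$. For each name $q$ of a set in $\dom(\ACC_n)$ the tree $T_q$ named by $K(q)$ satisfies $\mu([T_q])>\tfrac{n-1}{n}$, and for every $r\in[T_q]$ the value $H\langle q,r\rangle$ lies in the set named by $q$ (choose a realizer $G$ of $\frac{n-1}{n}\dash\WWKL$ with $GK(q)=r$). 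Let $q_0=\widehat 0$ be the name of the full set $\{0,\dots,n-1\}$, put $U:=[T_{q_0}]$ and $C_v:=\{r\in U:H\langle q_0,r\rangle=v\}$ for $v<n$. Since $H\langle q_0,r\rangle$ depends on a finite prefix of $r$, the $C_v$ are relatively clopen in $U$ and partition it, so $\sum_{v<n}\mu(C_v)=\mu(U)>\tfrac{n-1}{n}$.

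For fixed $v<n$ and $d\in\IN$, let $q_v^{(d)}$ be a name of $\{0,\dots,n-1\}\sm\{v\}$ agreeing with $q_0$ on its first $d$ entries and only afterwards announcing that $v$ is missing. If $r\in[T_{q_v^{(d)}}]$ and the computation of $H\langle q_v^{(d)},r\rangle$ reads at most $d$ entries of the oracle name, then $H\langle q_v^{(d)},r\rangle=H\langle q_0,r\rangle$, which must differ from $v$; hence $[T_{q_v^{(d)}}]\cap C_v^{<d}=\emptyset$, where $C_v^{<d}:=\{r\in C_v:\text{the computation of }H\langle q_0,r\rangle\text{ reads }\le d\text{ entries of }q_0\}$ increases to $C_v$ as $d\to\infty$. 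On the other hand, by continuity of $K$ the tree $T_{q_v^{(d)}}$ agrees with $T_{q_0}$ on an initial segment of levels whose length tends to $\infty$ with $d$, so $[T_{q_v^{(d)}}]$ lies inside finite-level approximations of $U$ whose measures decrease to $\mu(U)$. Combining these facts with $\mu([T_{q_v^{(d)}}])>\tfrac{n-1}{n}$ and letting $d\to\infty$ yields $\mu(C_v)\le\mu(U)-\tfrac{n-1}{n}$ for every $v<n$; summing over $v$ gives $\mu(U)\ge 1$, hence $\mu(U)=1$ and $\mu(C_v)=\tfrac1n$ for all $v$, and each advice set $[T_{q_v^{(d)}}]$ is thereby forced to exhaust (up to a null set) $U\sm C_v$ while still having measure $>\tfrac{n-1}{n}$. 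The remaining step is to turn this rigidity at the exact threshold $\tfrac{n-1}{n}$ into an outright contradiction; this is the delicate core of \cite[Theorem~10.1]{BGH15a}, carried out by a finer analysis of the advice sets attached to these delayed-reveal names, and I expect it to be the main obstacle, since the configuration in which the $n$ co-singleton inputs simultaneously demand advice sets of measure exceeding $\tfrac{n-1}{n}$ that avoid pairwise distinct pieces of the partition $U=\bigsqcup_vC_v$ is only barely possible and must be shown not to persist. Everything preceding this step is routine bookkeeping with the continuity of $K$ and the countable additivity of $\mu$.
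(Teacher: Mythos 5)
The first half of your argument is essentially right: the balanced clopen partition of $2^\IN$ into $n+1$ pieces of measure less than $\frac1n$, with the piece of the revealed forbidden index pruned away, does give $\ACC_{n+1}\leqSW\frac{n-1}{n}\dash\WWKL$. One small caveat: trees are given by characteristic functions, so you cannot literally ``delete'' nodes you have already enumerated into $T$; you may only prune at levels not yet committed, which leaves dead ends but does not change $[T]$, so the construction goes through. (For the record, the paper does not prove this lemma itself; it only remarks that the proof of \cite[Theorem~10.1]{BGH15a} yields it, where $\ACC_n$ appears as $\C_{n-1,n}$.)

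The second half has a genuine gap, which you yourself flag: your bookkeeping ends with $\mu(U)=1$ and $\mu(C_v)=\frac1n$ for all $v$, and this configuration is perfectly consistent with the assumed reduction, so no contradiction has been reached and the non-reduction is not proved. The missing idea is not a ``finer analysis of the advice sets'' but a compactness/uniform-use observation: $U=[T_{q_0}]$ is compact and $H$ must be defined on all of $\{q_0\}\times U$ (every path of $T_{q_0}$ is a possible answer of a realizer of $\frac{n-1}{n}\dash\WWKL$ on $K(q_0)$), so by continuity there is a single bound $d^*$ such that for every $r\in U$ the computation of $H\langle q_0,r\rangle$ consults at most $d^*$ entries of $q_0$; hence $C_v^{<d^*}=C_v$ for every $v<n$. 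Now reveal the missing element only after stage $d^*$ (take any $d\geq d^*$): then $[T_{q_v^{(d)}}]\cap C_v=\emptyset$, so $\mu(C_v)\leq 1-\mu([T_{q_v^{(d)}}])<1-\frac{n-1}{n}=\frac1n$ for every $v$, a strict inequality contradicting the exact values $\mu(C_v)=\frac1n$ that your limit argument established. Note that both ingredients are needed: the uniform-use step alone only yields $\mu(U)<1$, which is compatible with $\mu(U)>\frac{n-1}{n}$, while your level-approximation limit alone only yields the exact values; the contradiction comes from playing the two against each other. So the step you defer is a standard compactness argument rather than an intractable core --- but without it your proposal establishes only the positive half of the lemma.
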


As a side result we obtain the following conclusion from Lemma~\ref{lem:ACC-WWKL}, Corollary~\ref{cor:PA-WWKL-jump}
and Theorem~\ref{thm:DNC}, which contrasts with Proposition~\ref{prop:WWKL-parallel}.

\begin{corollary}
For all $\varepsilon\in[0,1)$, $\varepsilon\dash\WWKL$ is not parallelizable. 
\end{corollary}

\begin{figure}[htb]
\begin{tikzpicture}[scale=.5,auto=left,every node/.style={fill=black!15}]

\def\rvdots{\raisebox{1mm}[\height][\depth]{$\huge\vdots$}};

  \node (J) at (0,17) {$\J$};
  \node (JD) at (-4,16) {$\J_\DD$};
  \node (DNC2) at (0,15) {$\DNC_2\equivSW\WKL$};
  \node (DNC3) at (0,12) {$\DNC_3\equivSW\BWKL_3$};
  \node (DNCn1) at (0,9) {$\DNC_{n+1}\equivSW\BWKL_{n+1}$};
  \node (DNCN) at (3,5) {$\DNC_\IN$};
  \node (PA) at (0,4) {$\PA$};
  \node (LIMJ) at (8,16) {$\lim_\J$};
  \node (CN) at (8,13) {$\C_\IN$};
  \node (ACC2) at (8,11) {$\ACC_2\equivSW\LLPO$};
  \node (ACC3) at (8,9) {$\ACC_3\equivSW\LLPO_3$};
  \node (ACCn1) at (8,7) {$\ACC_{n+1}\equivSW\LLPO_{n+1}$};
  \node (ACCN) at (8,4) {$\ACC_\IN$};
  \node (NON) at (8,2) {$\NON$};
  \node (WWKL) at (16,14) {$\WWKL$};
  \node (12WWKL) at (16,11.5) {$\frac{1}{2}\dash\WWKL$};
  \node (n1WWKL) at (16,9) {$\frac{n-1}{n}\dash\WWKL$};
  \node (1SWWKL) at (16,6) {$(1-*)\dash\WWKL$};
  \node (MLR) at (16,4) {$\MLR$};

  \foreach \from/\to in {
  J/DNC2,
  J/LIMJ,
  J/JD,
  DNC2/DNC3,
  DNC3/DNCn1,
  DNCn1/DNCN,
  DNCN/NON,
  DNC2/WWKL,
  DNC2/ACC2,
  DNC3/ACC3,
  DNCn1/ACCn1,
  DNCn1/PA,
  PA/NON,
  DNCN/ACCN,
  CN/ACC2,
  ACC2/ACC3,
  ACC3/ACCn1,
  ACCn1/ACCN,
  WWKL/12WWKL,
  12WWKL/n1WWKL,
  n1WWKL/1SWWKL,
  1SWWKL/MLR,
  MLR/NON,
  WWKL/ACC2,
  12WWKL/ACC3,
  n1WWKL/ACCn1,
  1SWWKL/DNCN,
  LIMJ/CN}
  \draw [->,thick] (\from) -- (\to);


 \draw [->,thick,looseness=1] (JD) to [out=260,in=180] (PA);


\end{tikzpicture}
  
\caption{Diagonally non-computable functions in the Weihrauch lattice. }
\label{fig:diagram-DNC}
\end{figure}

In order to import further knowledge on diagonally non-computable functions into our lattice, it
is useful to mention the following relation between Weihrauch reducibility and Medvedev reducibility.
We recall that for two sets $A,B\In\IN^\IN$, $A$ is {\em Medvedev reducible to} $B$, written $A\leqM B$,
if there exists a computable function $F:\In\IN^\IN\to\IN^\IN$ such that $F(B)\In A$. For the following result
we need to assume that $g$ has a realizer (or we need to assume the Axiom of Choice for Baire space, which
implies this property).

\begin{lemma}[Weihrauch and Medvedev reducibility]
\label{lem:Medvedev}
For $f,g:\In\IN^\IN\mto\IN^\IN$ (such that $g$ has a realizer) we obtain
\[f\leqW g\TO (\forall\mbox{ computable }p\in\dom(f))(\exists\mbox{ computable }q\in\dom(g))\;f(p)\leqM g(q).\]
\end{lemma}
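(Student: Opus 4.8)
The plan is to unwind the definition of $f \leqW g$ directly and specialise it to a computable input. Suppose $f \leqW g$ via computable functions $K, H : \In \IN^\IN \to \IN^\IN$, so that $H\langle \id, GK\rangle \vdash f$ for every realizer $G \vdash g$. Let $p \in \dom(f)$ be computable. Then $K(p)$ is computable, and since $K(p) \in \dom(g)$ (because $g \circ \delta_{\IN^\IN}$ must be defined where needed — here the representations are the identity on Baire space, so I can suppress them), the image $g(K(p)) \In \IN^\IN$ is a nonempty set; pick $q := K(p)$, which is a computable element of $\dom(g)$. It remains to show $f(p) \leqM g(q)$, i.e. to produce a \emph{single} computable $F_0 : \In \IN^\IN \to \IN^\IN$ with $F_0(g(q)) \In f(p)$.

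The key step is to define $F_0$ from $H$ and the fixed computable inputs $p$ and $q = K(p)$. For $r \in g(q)$, set $F_0(r) := H\langle p, r\rangle$. This is computable: $H$ is computable, $p$ is computable, and $\langle \cdot, \cdot\rangle$ is a computable pairing, so $F_0$ is computable (its partiality is inherited from $H$). To verify $F_0(r) \in f(p)$ for every $r \in g(q)$, I invoke the hypothesis that $g$ has a realizer: given any $r \in g(q)$, I can choose a realizer $G \vdash g$ with $G(q) = r$ — this is where the realizer assumption (or AC for Baire space) is used, exactly as flagged before the lemma. Then $H\langle \id, GK\rangle \vdash f$ gives $H\langle p, GK(p)\rangle \in f(p)$, i.e. $H\langle p, G(q)\rangle = H\langle p, r\rangle = F_0(r) \in f(p)$. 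Hence $F_0(g(q)) \In f(p)$, which is precisely $f(p) \leqM g(q)$.

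The main obstacle — really the only subtle point — is the passage from "$r$ is \emph{some} value of $g$ at $q$" to "$r = G(q)$ for \emph{some realizer} $G$ of $g$". Without any choice principle there is no guarantee that an arbitrary selection of values across all inputs assembles into a realizer, but the lemma's hypothesis (that $g$ has a realizer $G_1$) lets me repair a single value: define $G$ to agree with $G_1$ everywhere except at $q$, where $G(q) := r$; since $r \in g(q)$, this $G$ is still a realizer of $g$. (If one prefers to avoid even this, assuming AC for Baire space makes the existence of such $G$ immediate.) Everything else is routine bookkeeping about composing computable maps and keeping track of which inputs are held fixed, so I would not belabour it.
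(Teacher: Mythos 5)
Your proposal is correct and follows essentially the same route as the paper's proof: set $q:=K(p)$, define $F(r):=H\langle p,r\rangle$, and observe that every $r\in g(q)$ arises as $GK(p)$ for some realizer $G$ of $g$, so that $F(g(q))\In f(p)$. Your explicit single-point modification of the given realizer is exactly the detail the paper leaves implicit when it asserts that such a $G$ exists, and it is the precise place where the hypothesis that $g$ has a realizer is used.
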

\begin{proof}
If $f\leqW g$, then there are computable $H,K$ such that $H\langle\id,GK\rangle$ is a realizer of $f$ whenever $G$ is a realizer of $g$.
Let $p\in\dom(f)$ be computable. Then $q:=K(p)$ is computable and $GK(p)\in g(q)$ for every realizer $G$ of $g$.
In fact, for every $r\in g(q)$ there is a realizer $G$ of $g$ such that $GK(p)=r$, and hence $H\langle p,r\rangle\in f(p)$ for all $r\in g(q)$.
In other words, the function $F:\In\IN^\IN\to\IN^\IN$ with $F(r):=H\langle p,r\rangle$ is computable and satisfies $F(g(q))\In f(p)$. 
This means that $f(p)\leqM g(q)$.
\end{proof}

In \cite[Theorem~5.4]{DGJM11} Downey et al.\ proved that the Martin-L\"of random points are not Medvedev reducible to the diagonally non-computable functions with three values.
We note that the Medvedev degrees of $\DNC_3(q)$ are identical for all computable $q$. 
This implies $\MLR\nleqW\DNC_3$ by Lemma~\ref{lem:Medvedev}, and hence together with Proposition~\ref{prop:MLR-DNC} we obtain the following result.

\begin{corollary}
\label{cor:MLR-DNC3}
$\MLR\nW\DNC_3$.
\end{corollary}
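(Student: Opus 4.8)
The plan is to establish the two directions of incomparability separately and then combine them. For $\MLR \nleqW \DNC_3$, I would invoke the cited non-uniform result: by \cite[Theorem~5.4]{DGJM11}, the set of Martin-L\"of randoms is not Medvedev reducible to $\DNC_3(q)$ for computable $q$. First I would note, as remarked in the text, that the Medvedev degree of $\DNC_3(q)$ does not depend on the choice of computable $q$ (finite modifications and the relativized $s$-$m$-$n$ theorem make all these sets Medvedev-equivalent). Then an application of Lemma~\ref{lem:Medvedev} does the work: if we had $\MLR \leqW \DNC_3$, then for the computable input $p = \widehat 0 \in \dom(\MLR)$ there would be a computable $q \in \dom(\DNC_3)$ with $\MLR(p) \leqM \DNC_3(q)$; since $\MLR(\widehat 0)$ is exactly the set of Martin-L\"of randoms and $\DNC_3(q)$ has the universal Medvedev degree of $\DNC_3$-sets, this contradicts \cite[Theorem~5.4]{DGJM11}.

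For the other direction, $\DNC_3 \nleqW \MLR$, I would simply cite Proposition~\ref{prop:MLR-DNC}, which already proves the stronger statement $\DNC_\IN \nW \MLR$; since $\DNC_\IN \leqSW \DNC_3$ (as noted, $X \subseteq Y$ implies $\DNC_Y \leqSW \DNC_X$, and here we could also argue directly that a reduction $\DNC_3 \leqW \MLR$ would compose with $\DNC_\IN \leqW \DNC_3$ — wait, that inequality goes the wrong way). More carefully: we want $\DNC_3 \nleqW \MLR$. The cleanest route is to observe that $\DNC_3 \leqW \MLR$ together with the known $\DNC_\IN \leqW \DNC_3$ would give $\DNC_\IN \leqW \MLR$, contradicting Lemma~\ref{lem:MLR} (which says $\MLR$ is densely realized, hence $\omega$-indiscriminative, and in particular $\DNC_\IN \nleqW \MLR$ as recorded right after that lemma). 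Here I use Corollary~\ref{cor:DNCN}, which gives $\DNC_\IN \lW \DNC_n$ for all $n \geq 2$, in particular $\DNC_\IN \leqW \DNC_3$.

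Putting the two halves together yields $\MLR \nleqW \DNC_3$ and $\DNC_3 \nleqW \MLR$, which is exactly the incomparability $\MLR \nW \DNC_3$. The main obstacle, such as it is, is purely bookkeeping: one must be careful that Lemma~\ref{lem:Medvedev} only transfers Weihrauch reductions to Medvedev reductions on \emph{computable} inputs, so we need $\widehat 0$ to lie in $\dom(\MLR)$ (it does, since every oracle has relative randoms) and we need the Medvedev degree of $\DNC_3$ on computable inputs to be well-defined and to coincide with the degree class handled in \cite{DGJM11}. Both points are routine, so the corollary follows directly as claimed — it is essentially a two-line argument once Proposition~\ref{prop:MLR-DNC}, Lemma~\ref{lem:Medvedev}, and the external result \cite[Theorem~5.4]{DGJM11} are in hand.
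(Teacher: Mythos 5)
Your argument is correct and takes essentially the same route as the paper: $\MLR\nleqW\DNC_3$ via Lemma~\ref{lem:Medvedev} together with \cite[Theorem~5.4]{DGJM11} (noting the Medvedev degree of $\DNC_3(q)$ is independent of the computable input $q$), and $\DNC_3\nleqW\MLR$ by combining $\DNC_\IN\leqW\DNC_3$ with $\DNC_\IN\nleqW\MLR$ from Proposition~\ref{prop:MLR-DNC}. Your parenthetical worry that the inequality ``goes the wrong way'' is unfounded: since $3\subseteq\IN$ gives $\DNC_\IN\leqSW\DNC_3$, this is exactly the direction needed, and your ``more careful'' version uses it correctly.
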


In the diagram in Figure~\ref{fig:diagram-DNC} we collect the results on diagonally non-computable functions, weak weak K\H{o}nig's lemma
and Martin-L\"of randomness. All lines indicate strong Weihrauch reductions $\leqSW$ against the direction of the arrow; i.e., if
$f\leqSW g$, then the arrow points from $g$ to $f$.
We note that by Corollary~\ref{cor:PA-WWKL-jump} and Corollary~\ref{cor:MLR-DNC3}
we also get the following separation.

\begin{corollary}
\label{cor:WKL-WWKL}
$\BWKL_n\nW\frac{k-1}{k}\dash\WWKL$ for all $n\geq 3$ and $k\geq2$.
\end{corollary}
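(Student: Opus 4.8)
The statement is an incomparability, so the plan is to prove the two non-reductions $\WKL_n\nleqW\frac{k-1}{k}\dash\WWKL$ and $\frac{k-1}{k}\dash\WWKL\nleqW\WKL_n$ separately. Neither requires a new construction: in both directions one sandwiches $\WKL_n$---equivalently $\DNC_n$, by Theorem~\ref{thm:DNC-WKL}---between a problem already known not to reduce to the $\WWKL$-side and a problem already known not to be reduced to from it. The two anchors are Corollary~\ref{cor:PA-WWKL-jump} (``$\PA\nleqW\WWKL^{(k)}$ for all $k$'') for the first direction and Corollary~\ref{cor:MLR-DNC3} (``$\MLR\nleqW\DNC_3$'') for the second.

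For $\WKL_n\nleqW\frac{k-1}{k}\dash\WWKL$, first I would record that $\PA\leqW\WKL_n$, which is immediate from $\PA\equivSW[\DNC_n]$ (Corollary~\ref{cor:PA-DNC}), the general inequality $[f]\leqSW f$, and $\DNC_n\equivSW\WKL_n$ (Theorem~\ref{thm:DNC-WKL}). Next I would note that $\frac{k-1}{k}\dash\WWKL\leqSW\WWKL$: since $\frac{k-1}{k}\ge0$, the former is literally a restriction of $\WWKL=0\dash\WWKL$ to a smaller class of trees and acts identically on it. A hypothetical reduction $\WKL_n\leqW\frac{k-1}{k}\dash\WWKL$ would then compose to $\PA\leqW\WWKL$, contradicting Corollary~\ref{cor:PA-WWKL-jump} at $k=0$.

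For $\frac{k-1}{k}\dash\WWKL\nleqW\WKL_n$, I would go in the opposite direction. Because $\frac{k-1}{k}<1$ for $k\ge2$, Lemma~\ref{lem:MLR-WWKL} together with the already noted fact $(1-*)\dash\WWKL\leqW\varepsilon\dash\WWKL$ for every $\varepsilon<1$ gives $\MLR\leqW(1-*)\dash\WWKL\leqW\frac{k-1}{k}\dash\WWKL$. On the other side, since $n\ge3$ we have $3\In n$ (identifying $m$ with $\{0,\dots,m-1\}$), hence $\DNC_n\leqSW\DNC_3$, and so $\WKL_n\equivSW\DNC_n\leqSW\DNC_3$ by Theorem~\ref{thm:DNC-WKL}. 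A hypothetical reduction $\frac{k-1}{k}\dash\WWKL\leqW\WKL_n$ would therefore yield $\MLR\leqW\DNC_3$, contradicting Corollary~\ref{cor:MLR-DNC3}. Combining the two directions gives $\WKL_n\nW\frac{k-1}{k}\dash\WWKL$.

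I do not expect any real obstacle: the argument is entirely an assembly of earlier results, and the only points worth double-checking are the two monotonicity observations (shrinking the measure threshold only weakens $\WWKL$; enlarging the value set only weakens $\DNC$) and that the hypothesis $n\ge3$ is essential---for $n=2$ one has $\WKL_2=\WKL$ and $\frac{k-1}{k}\dash\WWKL\leqW\WWKL\leqW\WKL$, so the second non-reduction fails there.
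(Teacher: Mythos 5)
Your proof is correct and is exactly the argument the paper intends: the paper derives the corollary by citing Corollary~\ref{cor:PA-WWKL-jump} and Corollary~\ref{cor:MLR-DNC3}, and your two directions (via $\PA\leqW\DNC_n\equivSW\WKL_n$ with $\frac{k-1}{k}\dash\WWKL\leqSW\WWKL$, and via $\MLR\leqW(1-*)\dash\WWKL\leqW\frac{k-1}{k}\dash\WWKL$ with $\WKL_n\leqSW\DNC_3$) just spell out the details the paper leaves implicit.
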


\section{The Low Basis Theorem}
\label{sec:LBT}

The purpose of this section is to classify the computational content of 
the low basis theorem of Jockusch and Soare \cite[Theorem~2.1]{JS72}.
It states that every computable infinite binary tree has a low path. We consider the natural relativized version
that states that every computable infinite binary tree has a path that is low relative to the tree.
This version has a straightforward interpretation in the Weihrauch lattice.

\begin{definition}[Low basis theorem]
By $\LBT:\In\Tr_2\mto2^\IN$ we denote the multi-valued function with
\[\LBT(T):=\{p\in[T]:p'\leqT T'\},\]
where $\dom(\LBT):=\{T\in\Tr_2:T$ infinite$\}$.
\end{definition}

The (relativized version of the) classical low basis theorem guarantees
that $\LBT$ is actually well-defined, i.e., $\LBT(T)$ is non-empty whenever
$T$ is an infinite binary tree.
The uniform low basis theorem \cite[Theorem~8.3]{BBP12} can
be used in order to derive a rough classification of $\LBT$.
Here $\Low:=\J^{-1}\circ\lim$ denotes the {\em low map} introduced in \cite{BBP12},
which is the composition of the inverse of the Turing jump operator $\J:\IN^\IN\to\IN^\IN$ with the limit map $\lim$.
Hence $\Low$ takes as input a sequence $\langle p_0,p_1,p_2,...\rangle$ with $p_i\in\IN^\IN$ and maps it
to the $q\in\IN^\IN$ such that $q'=\lim_{i\to\infty}p_i$ (if there is such a $q$, otherwise it is undefined). That is,  
the image of the set of computable points in $\dom(\Low)$ under $\Low$ is exactly the set of low points.

\begin{proposition}
\label{prop:LBT}
$\WKL\leqSW\LBT\leqSW\Low$.
\end{proposition}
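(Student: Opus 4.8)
The plan is to establish the two strong reductions separately, both essentially by unwinding definitions and invoking the Uniform Low Basis Theorem for the second one.

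For the first reduction $\WKL\leqSW\LBT$: recall $\WKL=\WKL_2$ has as input an infinite binary tree $T$ and as output an arbitrary infinite path $p\in[T]$, whereas $\LBT$ has the same input type and produces a path $p\in[T]$ with the additional property $p'\leqT T'$. Since $\dom(\LBT)=\dom(\WKL)$ (both consist of infinite binary trees) and $\LBT(T)\subseteq\WKL(T)$ for every such $T$, the identity maps $K=H=\id$ witness $\WKL\leqSW\LBT$: given any realizer $G$ of $\LBT$ we have $G\vdash\WKL$ directly, so in particular $HGK=G\vdash\WKL$. This is immediate from the fact that the classical Low Basis Theorem guarantees $\LBT(T)\neq\emptyset$, which is already noted in the excerpt.

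For the second reduction $\LBT\leqSW\Low$: here I would use the Uniform Low Basis Theorem, cited as \cite[Theorem~8.3]{BBP12}, which states (in the form needed) that there is a computable procedure that, given an infinite binary tree $T$, produces a path in $[T]$ uniformly with the help of $\Low=\J^{-1}\circ\lim$; more precisely one should be able to extract computable $K,H$ such that for every $G\vdash\Low$, $HGK(T)$ is a path $p\in[T]$ whose jump is computed by $T'$. The key point is that the standard proof of the Low Basis Theorem constructs the low path by a $T'$-computable (indeed $\emptyset'$-relative-to-$T$) approximation: one builds a sequence of nested nonempty $\Pi^0_1(T)$-classes whose ``leftmost'' branching decisions are decided by questions of the form ``is this subtree infinite?'', which are $\Sigma^0_1(T)$ hence decidable by $\lim$ relative to $T$; feeding this approximating sequence into $\Low$ returns a genuine path together with (a name for) its jump, and by construction that jump is $T'$-computable, i.e. $p'\leqT T'$. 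Thus $HGK(T)\in\LBT(T)$, giving $\LBT\leqSW\Low$.

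The main obstacle is making the second reduction genuinely \emph{strong} and \emph{uniform}: one must check that the output modifier $H$ does not secretly need access to the original input $T$ (which it may not, for a strong reduction) and that the construction of the approximating sequence fed to $\Low$, together with the post-processing that reads off the path and its jump from the output of $\Low$, is all done by fixed computable maps independent of $T$. This is precisely the content of the Uniform Low Basis Theorem of \cite{BBP12}, so the honest thing to do is to cite that theorem and observe that its proof yields a path $p\in[T]$ satisfying $p'\leqT T'$, which is exactly membership in $\LBT(T)$; the representation-level bookkeeping (that a $\Low$-realizer delivers both the jump and, via $\J^{-1}$, the path itself in a usable form) is routine given the definitions of $\Low$ and of the jump recalled earlier in the paper.
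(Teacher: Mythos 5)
Your proposal is correct and follows the paper's own argument: the first reduction is exactly the observation that $\LBT$ is a restriction of $\WKL$ in the image (so any realizer of $\LBT$ already realizes $\WKL$), and the second is exactly the appeal to the Uniform Low Basis Theorem \cite[Theorem~8.3]{BBP12}, which provides a realizer of $\WKL$ computed from $\Low$ whose outputs are low relative to the input, hence a realizer of $\LBT$. The extra detail you give about the internal $T'$-computable approximation is not needed once that theorem is cited, which is also how the paper proceeds.
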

\begin{proof}
Firstly, it is clear that $\WKL\leqSW\LBT$ holds, since $\LBT$ is a restriction of $\WKL$ in the image. 
Secondly, the uniform low basis theorem~\cite[Corollary~8.5]{BBP12} states that $\WKL\equivSW\C_{2^\IN}\leqSW\Low$ holds, and hence $\LBT\leqSW\Low$ follows,
since the former shows that $\Low$ computes a realizer of $\WKL$ that produces outputs which are low relative to the input
and since $\LBT$ is the restriction of $\WKL$ in the image to exactly such outputs. 
Hence $\Low$ computes a realizer of $\LBT$.
\end{proof}

In order to separate the low basis theorem $\LBT$ from $\WKL$, we can use the hyperimmune-free basis
theorem of Jockusch and Soare \cite[Theorem~2.4]{JS72}, which states
that every computable infinite binary tree has a path that is of hyperimmune-free degree
(i.e., each function computable from the degree is dominated by a computable function, see Section~\ref{sec:HYP} for precise definitions).
We reformulate this theorem in a way that is directly applicable for our purposes.

\begin{theorem}[Hyperimmune-free basis theorem]
\label{thm:HFBT} 
If $f$ is a multi-valued function on represented spaces with $f\leqW\C_\IR$,
then $f$ has a realizer that maps computable inputs to outputs of hyperimmune-free degree.
\end{theorem}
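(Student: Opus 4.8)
The plan is to reduce the statement to the classical Hyperimmune-free Basis Theorem of Jockusch and Soare \cite[Theorem~2.4]{JS72} by exploiting the structure of $\C_\IR$. Since $f\leqW\C_\IR$, fix computable $H,K$ with $H\langle\id,GK\rangle\vdash f$ for every realizer $G$ of $\C_\IR$. It then suffices to produce \emph{one} realizer $G$ of $\C_\IR$ that maps computable inputs to outputs of hyperimmune-free degree: for then $F:=H\langle\id,GK\rangle$ is a realizer of $f$, and for every computable $p\in\dom(f\delta_X)$ the name $K(p)$ is computable, $GK(p)$ is of hyperimmune-free degree, and $H\langle p,GK(p)\rangle\leqT GK(p)$ (as $p$, $K(p)$ and $H$ are computable), so that $F(p)=H\langle p,GK(p)\rangle$ is again of hyperimmune-free degree, the hyperimmune-free degrees being downward closed under Turing reducibility.

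Thus everything reduces to establishing a ``uniform Hyperimmune-free Basis Theorem for $\C_\IR$''. For this I would use the known characterization $\C_\IR\equivW\C_\IN\times\C_{2^\IN}$ (see e.g.\ \cite{BBP12}), together with $\C_{2^\IN}\equivW\WKL$. The $\C_\IN$-factor is harmless: every realizer of $\C_\IN$ produces, on any input, a name of a natural number, which we may take to be computable, hence of hyperimmune-free degree. For the $\C_{2^\IN}$-factor the desired realizer is exactly what the classical Hyperimmune-free Basis Theorem provides: every computable infinite binary tree has a path of hyperimmune-free degree, so selecting one such path for each of the (countably many) computable trees yields a realizer of $\WKL$, and hence of $\C_{2^\IN}$, mapping computable inputs to hyperimmune-free outputs. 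Transporting the product of these two realizers along the effective equivalence $\C_\IR\equivW\C_\IN\times\C_{2^\IN}$ gives a realizer $G$ of $\C_\IR$ which, on a computable input, outputs a pair consisting of a computable point and a point of hyperimmune-free degree; since the join of a computable sequence with one of hyperimmune-free degree is again of hyperimmune-free degree, $G$ has the required property.

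The only genuine content here is the classical Hyperimmune-free Basis Theorem and the (known) localization $\C_\IR\equivW\C_\IN\times\C_{2^\IN}$; the remaining steps are bookkeeping about closure of the hyperimmune-free degrees under Turing reductions and finite joins. The one point to be careful about is the existence of realizers at all: as elsewhere in the paper we grant the Axiom of Choice for Baire space, but it is worth noting that the only \emph{essential} choices lie over the countably many computable inputs, where a bona fide existence statement (the classical basis theorem) is invoked; on the remaining inputs any selection works, and one could even pick a definable one (e.g.\ the point of the given closed set of least absolute value, taken positive in case of a tie). If one insisted on a self-contained argument, the main obstacle would be re-deriving the classical Hyperimmune-free Basis Theorem together with the reduction of $\C_\IR$ to the compact case via $\C_\IN$, but both are available off the shelf.
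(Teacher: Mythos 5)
Your proposal is correct and takes essentially the same route as the paper: both arguments rest on the classical Hyperimmune-Free Basis Theorem of Jockusch and Soare, the decomposition $\C_\IR\equivSW\C_\IN\times\WKL$ (your $\C_\IN\times\C_{2^\IN}$ is the same fact, since $\C_{2^\IN}\equivSW\WKL$), and the closure of the hyperimmune-free degrees downwards under $\leqT$ and under join with computable points. The only minor difference is that the paper upgrades $f\leqW\C_\IR$ to $f\leqSW\C_\IR$ via the cylinder property of $\C_\IR$, whereas you handle the ordinary reduction directly by observing that $H\langle p,GK(p)\rangle\leqT GK(p)$ for computable $p$; both ways of dealing with the output modifier's access to the input are fine.
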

\begin{proof}
Firstly, $\WKL$ has a realizer which maps computable inputs to outputs of hyperimmune-free degree,
which is a direct consequence of the hyperimmune-free basis theorem of Jockusch and Soare.
Since $\C_\IR\equivSW\C_\IN\times\WKL$ by \cite[Corollary~4.9]{BBP12}, the same holds
for $\C_\IR$, as $np$ is of hyperimmune-free degree if $n\in\IN$ and $p$ is of hyperimmune-free degree.
Since hyperimmune-free Turing degrees are closed downwards with respect to Turing reducibility,
it follows that every $f\leqSW\C_\IR$  has a realizer that maps computable inputs to
output of hyperimmune-free degree. 
Since $\C_\IR$ is a cylinder by \cite[Proposition~8.11]{BBP12}, it follows that $f\leqW\C_\IR$ implies $f\leqSW\C_\IR$,
which proves the result.
\end{proof}

Since there are computable infinite binary trees without computable paths and since 
low points that are non-computable are not of hyperimmune-free degree (see \cite[Proposition~1.5.12]{Nie09}), we can conclude
that $\LBT$ does not have a realizer that maps computable inputs to outputs of hyperimmune-free degree.
This implies that $\LBT\nleqW\C_\IR$ by Theorem~\ref{thm:HFBT}.
Now we also prove a separation in the reverse direction. We recall that $\LPO:\IN^\IN\to\{0,1\}$
is the characteristic function of $\{\widehat{0}\}$.
For any $f:\In X\mto Y$ we denote by $f_\cc$ the restriction of $f$ to computable inputs. 
It is clear that $f\mapsto f_\cc$ is an interior operator in the Weihrauch lattice and, in particular,
$f\leqW g$ implies $f_\cc\leqW g_\cc$.

\begin{proposition}
\label{prop:LPO-LBT}
$\LPO\nleqW\LBT$.
\end{proposition}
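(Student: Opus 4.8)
The plan is to show that no realizer of $\LBT$ can be used, via fixed computable transformations $H,K$, to decide whether a given sequence is identically zero. The key observation is that $\LBT$ has a realizer whose outputs on computable inputs are always low, hence in particular have low (and a fortiori $\leqT\emptyset'$) Turing jumps; but $\LPO$ requires a discrete answer that is not stable under small perturbations of the input, and the output of $\LBT$ on a fixed computable tree does not pin down such an answer. More concretely, I would argue by contradiction: suppose $\LPO\leqW\LBT$, so there are computable $H,K$ with $H\langle\id,GK\rangle\vdash\LPO$ for every $G\vdash\LBT$. Feed in the computable input $p=\widehat 0$, so $\LPO(\widehat 0)=1$. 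Then $T:=\delta_{\Tr_2}K(\widehat 0)$ is a fixed computable infinite binary tree, and for \emph{every} $G\vdash\LBT$ we have $GK(\widehat 0)\in[T]$ with $(GK(\widehat0))'\leqT T'\equivT\emptyset'$, and $H\langle\widehat0,GK(\widehat0)\rangle=1$.

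The heart of the argument is to exploit the fact that $[T]$ has more than one path (indeed uncountably many, unless $T$ has a computable path, which we can also handle) together with continuity of $H$. From $H\langle\widehat0,q\rangle=1$ for the particular path $q=GK(\widehat0)$ produced by some fixed realizer $G_0$, continuity of $H$ gives a finite prefix $w\prefix q$ such that $H\langle\widehat0,w\Cantor\rangle=\{1\}$. Now I would pick a name $p'$ of a \emph{nonzero} sequence (so $\LPO(p')=0$) that agrees with $\widehat0$ on a long enough prefix $p'|_m$ so that $K$ cannot yet distinguish it from $\widehat0$, i.e.\ $K(p'|_m\Baire)\subseteq K(\widehat0)|_\ell\Baire$ for a prefix long enough that the tree $T':=\delta_{\Tr_2}K(p')$ agrees with $T$ up to level $|w|$; this is possible by continuity of $K$. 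Since $T$ and $T'$ agree up to level $|w|$ and both are infinite binary trees, $T'$ still has an infinite path extending $w$, so there is a realizer $G_1\vdash\LBT$ with $w\prefix G_1K(p')$. Then $H\langle p',G_1K(p')\rangle=H\langle p',$ something extending $w\rangle$. The mild subtlety is that $H$ also reads $p'$, not just $\widehat0$, so I should instead take $w$ and $m$ jointly: by uniform continuity of $H$ on $\{\widehat0|_k\Baire\}\times([T]\cap w\Cantor)$ I can first fix $k$ so that $H\langle\widehat0|_k\Baire,w\Cantor\rangle=\{1\}$, then choose $p'$ with $p'|_k=\widehat0|_k$. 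Then $H\langle p',G_1K(p')\rangle=1$, contradicting $H\langle\id,G_1K\rangle\vdash\LPO$ and $\LPO(p')=0$.

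The main obstacle I anticipate is the bookkeeping between the three continuity arguments (for $K$, for $H$ in the oracle slot, and for $H$ in the input slot) and making sure the truncation points $k,\ell,m$ are chosen in a consistent order so that (i) $p'$ is forced to agree with $\widehat0$ far enough that $K$'s output still lies in a cylinder where $T'$ agrees with $T$ up to level $|w|$, and (ii) that agreement is enough to guarantee $T'$ has an infinite path through $w$. Point (ii) is where I use that $T$ is a \emph{binary} tree with an infinite path through $w$: agreeing up to a level $\geq|w|$ does not by itself give an infinite path of $T'$ through $w$, so I should instead choose $w$ to be a node of $T$ that lies on an infinite path \emph{and} arrange, using that $H\langle\widehat0,w\Cantor\rangle=\{1\}$ already, freedom to replace $w$ by any longer node on an infinite path of $T$; then the real constraint is only that $T'$ shares with $T$ \emph{some} infinite path, which one gets more cheaply by noting that if $K(p')$ and $K(\widehat0)$ agree on a prefix of length $\ell$, we can in fact force $T'=T$ on inputs that $K$ has not distinguished — i.e.\ choose $p'|_m$ so large that $K(p'|_m\Baire)\subseteq\{K(\widehat0)\}$-cylinder of length $\ell$ with $\ell$ large enough that $T'$ and $T$ literally have a common infinite path through some node forcing $H$-value $1$. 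This is routine once the order of quantifiers is fixed, and it is exactly the standard ``continuity + richness of $[T]$'' schema, so I expect no deeper difficulty. An alternative, cleaner route avoiding all of this: since $\LPO\equivSW\id_{\{0,1\}}\ast\ldots$ — more precisely $\LPO$ is discontinuous and $\LBT$, being a restriction of $\WKL$, satisfies $\LBT\leqW\WKL\leqW\C_{\Cantor}\equivSW\C_{2^\IN}$, so $\LPO\leqW\LBT$ would give $\LPO\leqW\C_{2^\IN}$, making $\LPO$ non-deterministically computable by Proposition~\ref{prop:classes}(3); but $\LPO$ is single-valued and total on $\Baire$, and a non-deterministically computable total single-valued function is computable, contradicting the well-known fact that $\LPO$ is not computable. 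I would present the second route as the proof, falling back on the first if the required characterization of single-valued non-deterministic computability is not available in the form needed.
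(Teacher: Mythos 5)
Your primary route collapses at its first link: $\LBT\leqW\WKL$ is false. $\LBT$ is a restriction of $\WKL$ \emph{in the image} --- it demands a path that is in addition low relative to the tree --- and shrinking the set of admissible outputs makes a problem harder, not easier. Indeed, Proposition~\ref{prop:LBT} gives $\WKL\leqSW\LBT$, and Corollary~\ref{cor:LBT} gives $\WKL\lW\LBT$ and $\LBT\nW\C_\IR$; since $\C_{2^\IN}\leqW\C_\IR$, in fact $\LBT\nleqW\C_{2^\IN}$, so the chain $\LPO\leqW\LBT\leqW\C_{2^\IN}$ you want to exploit is unavailable. (The auxiliary principle you invoke --- that a total single-valued non-deterministically computable function into $\IN$ is computable --- is true, but it is not what fails here.)

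Your fallback continuity argument contains exactly the gap you flag, and the patch you sketch does not close it: no finite agreement between the names $K(p')$ and $K(\widehat{0})$ can force $T'=T$, nor force $T'$ to retain an infinite (let alone low) path through your chosen node $w$; the tree $T'$ may kill the cone above $w$ just beyond the agreed level while keeping a path elsewhere. The correct repair works with all of $[T]$ at once rather than with one node: by compactness of $\{\widehat{0}\}\times[T]$ choose $k$ such that for every $q\in[T]$ the answer $1$ is already fixed by the prefixes $(\widehat{0}|_k,q|_k)$; every level-$k$ node of $T$ that is not a prefix of an element of $[T]$ dies by some uniform finite level $D$ (there are only finitely many such nodes, and K\H{o}nig's lemma applies); now pick $p'\neq\widehat{0}$ with $p'|_k=\widehat{0}|_k$ and agreeing with $\widehat{0}$ so far that $T':=K(p')$ coincides with $T$ up to a level beyond $\max(k,D)$. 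Then \emph{every} infinite path of $T'$ --- in particular every admissible output of every realizer of $\LBT$, low or not --- has its length-$k$ prefix among the level-$k$ prefixes of $[T]$, so $H$ answers $1$ on input $p'$, contradicting $\LPO(p')=0$. (Your single-node version would in addition need the relativized Low Basis Theorem applied to $[T']\cap w\Cantor$ to know that a low path through $w$ exists at all; the all-nodes version makes this unnecessary.) For comparison, the paper's proof is different and shorter: $\LPO\equivSW\LPO_{\rm c}$, the restriction $\LBT_{\rm c}$ of $\LBT$ to computable trees is parallelizable, so $\LPO\leqW\LBT$ would yield $\lim\equivW\widehat{\LPO}\leqW\widehat{\LBT_{\rm c}}\equivW\LBT_{\rm c}\leqW\Low$, contradicting $\lim\nleqW\Low$.
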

\begin{proof}
We claim that $(\widehat{\LBT})_\cc\equivW\LBT_\cc$.
Given a sequence of infinite binary trees $(T_i)_i$ we can compute 
the product tree $T$ with $[T]=\langle [T_0]\times[T_1]\times[T_2]\times...\rangle$.
Then $p=\langle p_0,p_1,...\rangle$ is an infinite path in $T$ if and only if $p_i$ is an infinite path in $T_i$ for
all $i\in\IN$. If the sequence $(T_i)_i$ is computable and $p\in[T]$ is low relative to $T$, then $T$ is computable and
\[\langle p_0',p_1',p_2',...\rangle\leqT \langle p_0,p_1,p_2,...\rangle'=p'\leqT T'\equivT\emptyset',\] 
and hence $p_i'\leqT\emptyset'\equivT T_i'$ for all $i$. This proves $(\widehat{\LBT})_\cc\leqSW\LBT_{\rm c}$.
The inverse reduction is clear.

Let us now assume that $\LPO\leqW\LBT$ holds. It is known that $\widehat{\LPO}\equivW\lim$ (see \cite[Corollary~6.4]{BG11} and \cite[Proposition~9.1]{Bra05}).
We obtain with Proposition~\ref{prop:LBT}
\[\lim\nolimits_\cc\leqW(\widehat{\LPO})_\cc\leqW(\widehat{\LBT})_\cc\leqW\LBT_\cc\leqW\Low_\cc\lW\lim\nolimits_\cc,\]
which is a contradiction. The latter reduction is strict, since there are limit computable $p$ that are not low.
\end{proof}

Since $\LPO\leqW\C_\IN\leqW\C_\IR$, we can conclude that $\C_\IR\nleqW\LBT$ holds.
Since also $\WKL\lW\C_\IR\lW\Low$ is known \cite[Theorem~8.7]{BBP12}, we also obtain $\Low\nleqW\LBT$.
Altogether, the results of this section can be summarized as follows. 

\begin{corollary}[Low basis theorem]
\label{cor:LBT}
$\WKL\lW\LBT\lW\Low$ and $\LBT\nW\C_\IR$.
\end{corollary}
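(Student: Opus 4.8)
The plan is to assemble the corollary purely by bookkeeping from the three main results of this section, so no new argument is needed beyond transitivity of $\leqW$. The non-strict reductions $\WKL\leqW\LBT$ and $\LBT\leqW\Low$ are exactly the content of Proposition~\ref{prop:LBT} (indeed they hold with $\leqSW$), so only the strictness of these two reductions and the incomparability $\LBT\nW\C_\IR$ remain to be checked, and for these I would combine Theorem~\ref{thm:HFBT} and Proposition~\ref{prop:LPO-LBT} with the known facts $\LPO\leqW\C_\IN\leqW\C_\IR$ and $\WKL\lW\C_\IR\lW\Low$ from \cite[Theorem~8.7]{BBP12}.

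First I would record $\LBT\nleqW\C_\IR$. By Theorem~\ref{thm:HFBT}, every $f\leqW\C_\IR$ has a realizer sending computable inputs to outputs of hyperimmune-free degree; but $\LBT$ has no such realizer, since fixing a computable infinite binary tree with no computable path, each element of its $\LBT$-image is low and non-computable, hence of hyperimmune degree by \cite[Proposition~1.5.12]{Nie09}. Dually, $\C_\IR\nleqW\LBT$: Proposition~\ref{prop:LPO-LBT} gives $\LPO\nleqW\LBT$, and since $\LPO\leqW\C_\IN\leqW\C_\IR$, a reduction $\C_\IR\leqW\LBT$ would force $\LPO\leqW\LBT$ by transitivity, a contradiction. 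Together these two non-reductions yield $\LBT\nW\C_\IR$.

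Finally I would deduce the strictness of the chain from the incomparability with $\C_\IR$. If $\LBT\leqW\WKL$ held, then using $\WKL\leqW\C_\IR$ we would get $\LBT\leqW\C_\IR$, contradicting the previous paragraph; hence $\WKL\lW\LBT$. Likewise, if $\Low\leqW\LBT$ held, then using $\C_\IR\leqW\Low$ we would get $\C_\IR\leqW\LBT$, again a contradiction; hence $\LBT\lW\Low$. Combining these with Proposition~\ref{prop:LBT} gives the full statement $\WKL\lW\LBT\lW\Low$ together with $\LBT\nW\C_\IR$.

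There is no real obstacle left at this point: all of the substantive work is already in Proposition~\ref{prop:LBT} (the Uniform Low Basis Theorem giving $\LBT\leqSW\Low$), Theorem~\ref{thm:HFBT} (the Hyperimmune-free Basis Theorem ruling out $\LBT\leqW\C_\IR$), and Proposition~\ref{prop:LPO-LBT} (the parallelization argument ruling out $\LPO\leqW\LBT$). The only thing that needs care is quoting the classical inputs correctly — the existence of a computable infinite binary tree with no computable path, the fact that a non-computable low degree is hyperimmune, and the strict chain $\WKL\lW\C_\IR\lW\Low$ — after which the corollary is immediate.
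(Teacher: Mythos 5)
Your proposal is correct and follows essentially the same route as the paper: the non-strict reductions from Proposition~\ref{prop:LBT}, the Hyperimmune-Free Basis Theorem~\ref{thm:HFBT} together with the existence of non-computable low paths to rule out $\LBT\leqW\C_\IR$, and Proposition~\ref{prop:LPO-LBT} with $\LPO\leqW\C_\IN\leqW\C_\IR$ to rule out $\C_\IR\leqW\LBT$, after which strictness follows from $\WKL\lW\C_\IR\lW\Low$ exactly as in the paper.
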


This characterizes the position of $\LBT$ in the Weihrauch lattice relative to its known immediate neighborhood.

\section{The Hyperimmunity Problem}
\label{sec:HYP}

In this section we want to study the hyperimmunity problem.
The statement behind this problem is that for every function
$p:\IN\to\IN$ there exists a function $q:\IN\to\IN$ such that no function $r$ that is computable
in $p$ dominates $q$. Here we say that $r$ {\em dominates} $q$ if it satisfies $(\forall n)\;q(n)\leq r(n)$.
Functions that are not dominated by any computable function 
are called {\em hyperimmune}, and hence the principle can also be stated such that 
for every function $p$ there is a function $q$, which is hyperimmune relative to $p$.

\begin{definition}[Hyperimmunity]
We call $\HYP:\IN^\IN\mto\IN^\IN$ with
\[\HYP(p):=\{q\in\IN^\IN:(\forall r\leqT p)(\exists n)\;r(n)<q(n)\}\]
the {\em hyperimmunity problem}.
\end{definition}

It is clear that $\HYP(p)$ contains exactly all hyperimmune $q$ if $p$ is computable.
We will implicitly prove below that $\HYP$ is actually total.\footnote{A referee noted that one can also see this directly,
since there are only countably many $r\leqT p$, and a diagonalization argument shows that there is a $q$ that dominates all these $r$'s eventually.}

Now we want to compare the hyperimmunity problem with the weak 1--genericity problem, since it is known
by a result of Kurtz that hyperimmune and weakly 1--generic degrees coincide \cite[Theorem~2.24.14]{DH10}.
We recall that $q\in2^\IN$ is called {\em weakly $1$--generic} in $p\in2^\IN$
if $q\in U$ for each dense set $U\In2^\IN$ that is c.e.\ open in $p$ (see \cite[Definition~1.8.47]{Nie09}). 

\begin{definition}[Weak 1-genericity]
By $1\dash\WGEN:2^\IN\mto2^\IN$ we denote the problem
\[1\dash\WGEN(p):=\{q:\mbox{$q$ is weakly $1$--generic in $p$}\}.\]
\end{definition}

It is a well-known result, due to Kurtz, that every weakly $1$--generic point $q\in2^\IN$ is hyperimmune \cite{Kur81} (see also \cite[Proposition~1.8.49]{Nie09} and \cite[Theorem~2.24.12]{DH10}).
We follow this idea and show that it also holds uniformly. 

\begin{proposition}
\label{prop:HYP-1WGEN}
$\HYP\leqSW1\dash\WGEN$.
\end{proposition}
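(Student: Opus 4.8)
The plan is to give a uniform, strong Weihrauch reduction realizing Kurtz's classical argument that every weakly $1$-generic point is hyperimmune. Given $q\in 2^\IN$, the output modifier must produce from any weakly $1$-generic $p$ (relative to $q$) a function $h\in\IN^\IN$ that is hyperimmune relative to $q$; by strongness, the input modifier $K$ may transform $q$ and the output modifier $H$ gets only the realizer's output $p$, not $q$ itself. So the key idea is to pack enough information into the instance of $1\dash\WGEN$ we feed in, and then to read a hyperimmune function off the generic via a computable decoding that does not need $q$.

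First I would fix an effective enumeration $(W^q_e)_{e\in\IN}$ of the sets c.e.\ open in $q$, and recall Kurtz's construction: for each $e$, the set $D_e$ of $p\in2^\IN$ such that $p$ encodes (via some fixed computable blocking of $2^\IN$, e.g.\ reading off the lengths of maximal blocks of $0$'s) a value exceeding $\Phi^q_e$ at some argument is dense and c.e.\ open in $q$ whenever $\Phi^q_e$ is total; when $\Phi^q_e$ is not total it is automatically not a dominating function, so nothing is needed. The input modifier $K$ is then essentially the identity (it just passes $q$, perhaps padded into the format expected by $1\dash\WGEN$ — though note $1\dash\WGEN$ as defined takes its genericity parameter directly, so $K$ can be taken to be a computable map into the right representation of $2^\IN$). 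The output modifier $H$ takes the weakly $1$-generic $p$ and computably extracts the function $h(n):=$ (the length of the $n$-th maximal block of consecutive $0$'s in $p$), which requires no oracle. Since $p\in D_e$ for every $e$ with $\Phi^q_e$ total and $D_e$ dense, for each total $r\leqT q$, say $r=\Phi^q_e$, we have $p\in D_e$, which by design of $D_e$ forces $h(k)>r(k)$ for some $k$; hence $r$ does not dominate $h$, i.e.\ $h\in\HYP(q)$.

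The step I expect to be the main obstacle is arranging the encoding so that (i) $D_e$ is genuinely \emph{dense} and c.e.\ open in $q$ — density is the crux of a weak genericity argument, and one must check that from any finite binary string one can extend, using only finitely much of $\Phi^q_e$'s computation, to a string already witnessing $h(k)>\Phi^q_e(k)$ for some fresh $k$; and (ii) the decoding $p\mapsto h$ is total and computable on all of $2^\IN$ (or at least on the relevant $G_\delta$ set of generics), which needs the blocks-of-zeros reading to be well-defined, so one should use a representation or encoding in which infinitely many block-boundaries are guaranteed — e.g.\ work with $p$ viewed as $\langle p_0,p_1,\dots\rangle$ and let $h(n)$ be determined by $p_n$, or insert forced separator bits. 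A clean way is to let $h(n)$ be the first $m$ with $p(\langle n,m\rangle)=1$ if it exists; density then says any finite condition can be extended to make this first $1$ appear beyond $\Phi^q_e(n)$ for a suitable $n$, which is clearly c.e.\ in $q$ and dense, and $h$ is computable from $p$ provided $p$ is not eventually $0$ on any column — a property one can either build into the represented space $2^\IN$ here or obtain for free since generics are dense in every column. Once these encoding details are pinned down, verifying $HGK\vdash\HYP$ for every $G\vdash 1\dash\WGEN$ is routine, and the reduction is strong because $H$ never consults $q$.
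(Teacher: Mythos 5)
Your proof is correct and follows essentially the same route as the paper: both uniformize Kurtz's argument by passing the oracle itself into $1\dash\WGEN$ and decoding, via a computable output modifier that never consults the oracle, a fast-growing function from the generic, with hyperimmunity witnessed by a family of dense sets c.e.\ open in the oracle, one for each total $r\leqT p$. The only difference is cosmetic: the paper decodes via the principal function of the set of $1$-positions and uses the dense sets $\bigcup_{w}w0^{r(|w|)+1}\IN^\IN$, whereas you use a first-$1$-in-each-column decoding; the totality concern you flag is resolved exactly as you suggest, since the sets guaranteeing a $1$ in each column are dense and c.e.\ open outright, so every weakly $1$-generic meets them.
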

\begin{proof}
Given some $p\in\IN^\IN$ we use $1\dash\WGEN$ and some computable standard embedding $\iota:\IN^\IN\into 2^\IN$ 
to find some point $s\in1\dash\WGEN(\iota(p))$ that is weakly $1$--generic in $p$.
We consider $s\in2^\IN$ as the characteristic function of the set $A:=s^{-1}\{1\}$ and we compute $q:=p_A$, 
the principal function of $A$ (which is the strictly monotone function that enumerates the elements of $A$).
Let $r:\IN\to\IN$ be an arbitrary function that is computable from $p$. For every word $w\in\{0,1\}^*$ we define $n_{r,w}:=r(|w|)+1$
and
\[U_r:=\bigcup_{w\in\{0,1\}^+}w0^{n_{r,w}}\IN^\IN.\]
Then $U_r\In2^\IN$ is a dense set that is c.e.\ open in $r$ and hence in $p$. Since $s$ is weakly $1$--generic in $p$,
it follows that $s\in U_r$, and hence there is some $w\in\{0,1\}^+$ such that $w0^{n_{r,w}}\prefix s$.
Let $n:=|w|$. Then $q(n-1)=p_A(n-1)\geq n-1$ and hence $q(n)=p_A(n)\geq n_{r,w}=r(n)+1>r(n)$.
Hence $q$ is hyperimmune relative to $p$, i.e., $q\in\HYP(p)$.
\end{proof}

We note that the proof implicitly includes a proof that $\HYP$ is actually total.
We prove that we get at least an ordinary Weihrauch reduction in the opposite direction.
The result that hyperimmune and weakly $1$--generic degrees coincide is due to Kurtz, and 
the following proof is essentially a uniformized (and simplified) version of the proof of 
\cite[Theorem~2.24.14]{DH10}.

\begin{proposition}
\label{prop:1WGEN-HYP}
$1\dash\WGEN\leqW\HYP$.
\end{proposition}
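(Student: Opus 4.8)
The plan is to give a uniform version of Kurtz's theorem that every hyperimmune degree computes a weakly $1$--generic real \cite[Theorem~2.24.14]{DH10}; the proof of Proposition~\ref{prop:HYP-1WGEN} was the easy half of that theorem, and here one must uniformize the hard half. The reduction $1\dash\WGEN\leqW\HYP$ will use $K=\id$, so that for any realizer $G$ of $\HYP$ and any input $q\in2^\IN$ the function $h:=G(q)$ is hyperimmune relative to $q$, i.e.\ not dominated by any $q$--computable function. The output modifier $H$ has access to both $q$ and $h$ --- this is exactly where we use ordinary rather than strong reducibility, since $H$ needs $q$ to enumerate the relevant c.e.\ open sets but $h$ to drive the searches. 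First $H$ replaces $h$ by the function $n\mapsto n+1+\max_{i\le n}h(i)$, which is strictly increasing and still not dominated by any $q$--computable function, so without loss of generality $h$ is strictly increasing; then $H$ runs a forcing construction relative to $q$, using $h$ to bound all searches.

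For the construction, fix a uniformly $q$--c.e.\ enumeration $(S_e)_{e\in\IN}$ of all $q$--c.e.\ sets of binary strings and put $W_e:=\bigcup_{\sigma\in S_e}[\sigma]$; then $(W_e)_e$ lists all $q$--c.e.\ open subsets of $2^\IN$, and $p\in2^\IN$ is weakly $1$--generic relative to $q$ if and only if $p\in W_e$ for every $e$ such that $W_e$ is dense. We schedule the requirements $R_e$ (``$p\in W_e$, provided $W_e$ is dense'') so that each $R_e$ is attacked at infinitely many stages, and we build $p=\bigcup_s\sigma_s$ as an increasing union of finite binary strings: at a stage attacking $R_e$ with current prefix $\sigma_s$, we search, for a number of steps bounded in terms of $h$, for a string $\tau$ with $\sigma_s\prefix\tau$ and $\tau\in S_e$ (or a $\tau\in S_e$ with $\tau\prefix\sigma_s$, in which case the current branch already enters $W_e$); if the search succeeds within the allotted budget we pass to $\tau$, suitably padded with $0$'s, and otherwise we extend $\sigma_s$ by $0$'s. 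The whole construction is computable from $q\oplus h$, hence from the information available to $H$, and it outputs some $p\in2^\IN$; since $1\dash\WGEN$ is total this is all the typing we need.

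It remains to verify that $p$ is weakly $1$--generic relative to $q$. Suppose not; then there is $e$ with $W_e$ dense but $p\notin W_e$, so every attack on $R_e$ fails. When $W_e$ is dense, its ``density modulus'' --- the function sending a string $\sigma$ to the number of steps needed to find a string in $S_e$ comparable with $\sigma$ --- is total and $q$--computable; failure of every attack on $R_e$ then feeds into this modulus to produce a $q$--computable function dominating $h$, contradicting the hyperimmunity of $h$ relative to $q$. The main obstacle, and the delicate heart of Kurtz's argument that has to be transcribed carefully, is that the prefixes $\sigma_s$ are themselves built using $h$, so the density modulus evaluated along the construction is a priori only $q\oplus h$--computable; one must choose the requirement schedule and keep the prefix lengths under enough control that the dominating function ultimately extracted depends on $q$ alone. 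Once this bookkeeping is set up correctly, the rest of the argument is routine.
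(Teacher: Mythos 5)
Your overall plan is the paper's: take $K=\id$, use the hyperimmune function supplied by $\HYP$ to drive a finite-extension construction relative to the input, and argue that if some dense $q$--c.e.\ open set were avoided, the failure would yield a $q$--computable function dominating the hyperimmune one. But the difficulty you flag in your last paragraph is not side bookkeeping; it is the entire content of the uniformization, and your proposal stops exactly there. As you yourself observe, the density modulus evaluated along the construction depends on the prefixes $\sigma_s$, which are built using $h$, so it is a priori only $(q\oplus h)$--computable; unless one exhibits a concrete mechanism that makes the extracted dominating function depend on $q$ alone, the contradiction with hyperimmunity of $h$ relative to $q$ does not go through. Saying that the schedule and the prefix lengths "must be kept under enough control" names the problem without solving it, and your step ``pass to $\tau$, suitably padded with $0$'s'' would, as written, let the prefix length at a stage grow like the $h$--bounded search, reintroducing the dependence on $h$ that has to be eliminated.

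The paper's proof closes this gap with two interlocking devices that your sketch omits. First, the dominating function is not the modulus of the actual prefix but a uniform one: for requirement $R_i$ one sets $g(s)$ to be the least $k$ such that \emph{every} string of length $s+1$ has an extension among $f_i(0),\dots,f_i(k)$, where $(f_i)_i$ enumerates the $q$--c.e.\ sets of strings; when $S_i$ is dense this $g$ is total and computable from the input alone. Second, to make $g$ applicable the construction enforces $|\sigma_s|\leq s$: at stage $s+1$ a candidate witness $f_i(m)$ is adopted only if $|f_i(m)|\leq s+1$, otherwise the construction waits (the stage-$s$ approximation $S_i[s]$ is cut off at index $q(s)$, which is your search budget). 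One must then verify that this postponement does not defeat the requirement: after the stronger requirements have stabilized, any stage $t$ with $q(t)>g(t)$ makes $R_i$ require attention, and it keeps requiring attention until the stage bound exceeds the length of its witness, at which point it is met. This waiting argument, together with the length bound and the uniform definition of $g$, is precisely the ``delicate heart'' you deferred, so the proposal as it stands has a genuine gap, even though the ingredients you anticipated needing are the right ones.
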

\begin{proof}
Let $p\in2^\IN$. Then with the help of $\HYP$ we can find a $q\in\IN^\IN$ that is hyperimmune
relative to $p$, i.e., no function computable from $p$ dominates $q$.
We now describe a computable function $H$ that can compute from $p$ and any such $q$ an $r\in2^\IN$ that is weakly $1$--generic 
relative to $p$. With this function we obtain that $H\langle \id,G\rangle$ is a realizer of $1\dash\WGEN$ whenever $G$ is a realizer of $\HYP$.

Without loss of generality, we can assume that $q$ is increasing. 
From $p$ we can compute a sequence $(f_i)_i$ of functions  $f_i:\IN\to\{0,1\}^*$ such that $(S_i)_i$ with $S_i:=\range(f_i)$ is an enumeration of all
subsets of $\{0,1\}^*$ that are c.e.\ relative to $p$. We let 
\[S_i[s]:=\{\sigma\in\{0,1\}^*:(\exists j\leq q(s))\;f_i(j)=\sigma\}.\]
Now we compute a sequence $\sigma_s\in\{0,1\}^*$ of words that will converge to $r$.
In order to ensure that $r$ is weakly $1$--generic relative to $p$, it is sufficient to satisfy the requirements
\[R_i:\mbox{ If $S_i2^\IN$ is dense in $2^\IN$, then $(\exists \sigma\in S_i)\;\sigma\prefix r$.}\]
We say that $R_i$ requires attention at stage $s$ if $(\forall\sigma\in S_i[s])\;\sigma\not\prefix\sigma_s$ and $(\exists\sigma\in S_i[s])\;\sigma_s\prefix\sigma$.
With the help of $p$ and $q$, we can decide whether a requirement $R_i$ requires attention at a certain stage.

Now we describe the algorithm that computes $r$ in stages $s=0,1,2,...$. 
At Stage $0$ we set $\sigma_0:=q(0)$. 
At Stage $s+1$, if no requirement $R_i$ with $i\leq s+1$ requires attention, then we let $\sigma_{s+1}:=\sigma_s$.
Otherwise, let $R_i$ be the strongest requirement that requires attention and let $m$ be the least value such that 
$\sigma_s\prefix f_i(m)$. If $|f_i(m)|>s+1$, then we let $\sigma_{s+1}:=\sigma_s$ and otherwise we let 
$\sigma_{s+1}:=f_i(m)$.

We need to prove that $r$ is weakly $1$--generic. Assume that $S_i$ is dense. Let $g(s)$ be the least $k$ such that 
for each $\sigma\in\{0,1\}^{s+1}$ there is a $j\leq k$ with $\sigma\prefix f_i(j)$. Then $g\leqT p$.
Let $s$ be a stage after which no requirement stronger than $R_i$ ever requires attention.
Since $q$ is not dominated by any function computable from $p$, there is a $t>s$ such that $q(t)>g(t)$.
At Stage $t$ either $R_i$ is already satisfied or it requires attention and continues to require attention until it is met.
Thus for some $t^\prime\geqslant t$ there exists $\sigma\in S_i[t^\prime]$, and hence in $S_i$, such that $\sigma\prefix r$.
\end{proof}

We obtain the following corollary.

\begin{corollary}
\label{cor:HYP-1WGEN}
$\HYP\equivW1\dash\WGEN$.
\end{corollary}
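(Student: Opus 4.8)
The statement to prove is Corollary~\ref{cor:HYP-1WGEN}: $\HYP\equivW1\dash\WGEN$.

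This is an immediate consequence of the two preceding propositions. The plan is simply to combine Proposition~\ref{prop:HYP-1WGEN}, which gives $\HYP\leqSW1\dash\WGEN$, with Proposition~\ref{prop:1WGEN-HYP}, which gives $1\dash\WGEN\leqW\HYP$. Since strong Weihrauch reducibility implies ordinary Weihrauch reducibility, the first proposition yields $\HYP\leqW1\dash\WGEN$. Combining the two reductions in opposite directions gives the equivalence $\HYP\equivW1\dash\WGEN$.

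There is essentially no obstacle here: the corollary is a one-line bookkeeping consequence of the two propositions that were just proved. The only minor point worth noting is the asymmetry in strength of the two reductions — we have a strong reduction in one direction ($\HYP\leqSW1\dash\WGEN$) but only an ordinary reduction in the other ($1\dash\WGEN\leqW\HYP$), so the resulting equivalence is only an ordinary Weihrauch equivalence $\equivW$, not a strong one $\equivSW$. One might wonder whether the reduction $1\dash\WGEN\leqSW\HYP$ also holds; the proof of Proposition~\ref{prop:1WGEN-HYP} uses the input $p$ directly in the output modifier $H$ (it computes $r$ from both $p$ and $q$), which is exactly what ordinary but not strong reducibility permits, so we only claim $\equivW$.

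\begin{proof}
By Proposition~\ref{prop:HYP-1WGEN} we have $\HYP\leqSW1\dash\WGEN$, and hence in particular $\HYP\leqW1\dash\WGEN$.
By Proposition~\ref{prop:1WGEN-HYP} we have $1\dash\WGEN\leqW\HYP$.
Together this yields $\HYP\equivW1\dash\WGEN$.
\end{proof}
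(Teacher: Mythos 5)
Your proof is correct and is exactly how the paper obtains the corollary: it follows immediately by combining Proposition~\ref{prop:HYP-1WGEN} ($\HYP\leqSW1\dash\WGEN$, hence $\HYP\leqW1\dash\WGEN$) with Proposition~\ref{prop:1WGEN-HYP} ($1\dash\WGEN\leqW\HYP$). Your side remark about the asymmetry is also on target — the paper indeed leaves $\HYP\equivSW1\dash\WGEN$ as an open question immediately after this corollary.
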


This immediately raises the following question.

\begin{question}
$\HYP\equivSW1\dash\WGEN$?
\end{question}

By the hyperimmune-free basis theorem~\ref{thm:HFBT} it is clear that we obtain
the following corollary.

\begin{corollary}
\label{cor:HYP-CR}
$\HYP\nleqW\C_\IR$.
\end{corollary}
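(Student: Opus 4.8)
The plan is to argue by contradiction, reusing essentially the same strategy that separated $\LBT$ from $\C_\IR$ in Section~\ref{sec:LBT}. Suppose $\HYP\leqW\C_\IR$. Then by the Hyperimmune-free Basis Theorem~\ref{thm:HFBT}, $\HYP$ has a realizer $F$ that maps every computable input to an output of hyperimmune-free degree.

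Next I would unwind the definitions. The proof of Proposition~\ref{prop:HYP-1WGEN} already shows that $\HYP$ is total, so $\dom(\HYP)=\IN^\IN$ and, for a computable $p$, the realizer condition yields $F(p)\in\HYP(p)$. But for computable $p$ the only functions $r$ with $r\leqT p$ are the computable ones, so $\HYP(p)$ is exactly the set of functions not dominated by any computable function, i.e.\ the hyperimmune functions. Hence $q:=F(p)$ would be simultaneously hyperimmune and of hyperimmune-free degree.

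This is the contradiction: since $q\leqT q$, any $q$ of hyperimmune-free degree is dominated by a computable function, so $q$ cannot be hyperimmune. Therefore $\HYP\nleqW\C_\IR$.

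The argument has no serious obstacle; the only point requiring care is the bookkeeping that ``hyperimmune relative to a computable oracle'' coincides with plain ``hyperimmune'', and that totality of $\HYP$ lets us invoke the realizer clause at every computable input. (One could alternatively route the proof through Corollary~\ref{cor:HYP-1WGEN} together with the classical fact that weakly $1$-generic degrees are hyperimmune, but going directly through Theorem~\ref{thm:HFBT} is shorter and self-contained.)
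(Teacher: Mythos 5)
Your proof is correct and follows exactly the route the paper intends: the paper derives Corollary~\ref{cor:HYP-CR} directly from the Hyperimmune-Free Basis Theorem~\ref{thm:HFBT}, and your argument simply spells out the implicit contradiction (a realizer value on a computable input would be hyperimmune yet of hyperimmune-free degree). Nothing is missing.
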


In the following we also use the problem of 1--genericity.
For each $p\in2^\IN$ we consider some fixed enumeration $(U_i^p)_{i\in\IN}$ of all sets $U_i^p\In2^\IN$ that are c.e.\ open in $p$.
A point $q\in2^\IN$ is called {\em 1--generic}  in $p\in2^\IN$, if for all $i\in\IN$ there exists some $w\prefix q$ such that
$w2^\IN\In U_i^p$ or $w2^\IN\cap U_i^p=\emptyset$.
It follows directly from this definition that every point $q\in2^\IN$ which is $1$--generic in $p$ is also
weakly $1$--generic in $p$.
We call $q$ {\em 1--generic} if it is $1$--generic in some computable $p\in2^\IN$.
We use the concept of $1$--genericity in order to define the problem $1\dash\GEN$ of {\em 1--genericity}.

\begin{definition}[Genericity]
We define $1\dash\GEN:2^\IN\mto2^\IN$ by 
\[1\dash\GEN(p):=\{q:q\mbox{ is $1$--generic in $p$}\}\]
for all $p\in2^\IN$.
\end{definition}

Since $\MLR\leqW\WWKL\leqW\C_\IR$ by Lemma~\ref{lem:MLR-WWKL} and $\HYP\leqW1\dash\GEN$ by Proposition~\ref{prop:HYP-1WGEN}
and Corollary~\ref{cor:HYP-CR}, 
we obtain that $1\dash\GEN\nleqW\MLR$.
By Corollary~\ref{cor:MLR-limJ} we have $\MLR\nleqW\lim_\J$ and since $1\dash\GEN\leqW\lim_\J$ by \cite[Corollary~9.7]{BHK16}, 
we also obtain that $\MLR\nleqW1\dash\GEN$. Altogether, we have the following corollary.

\begin{corollary}
\label{cor:MLR-1GEN}
$\MLR\nW1\dash\GEN$.
\end{corollary}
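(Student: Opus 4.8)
The plan is to prove the incomparability by establishing the two non-reductions $1\dash\GEN\nleqW\MLR$ and $\MLR\nleqW1\dash\GEN$ separately, in each case sandwiching one of the two problems between previously established comparabilities and invoking transitivity of $\leqW$.

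For $1\dash\GEN\nleqW\MLR$, I would first note that $\MLR\leqW\C_\IR$: by Lemma~\ref{lem:MLR-WWKL} we have $\MLR\leqW(1-*)\dash\WWKL\leqW\WWKL\leqW\C_\IR$. On the other side, since every point that is $1$--generic in $q$ is in particular weakly $1$--generic in $q$, we have $1\dash\WGEN\leqSW1\dash\GEN$, and hence by Corollary~\ref{cor:HYP-1WGEN} also $\HYP\leqW1\dash\GEN$. If $1\dash\GEN\leqW\MLR$ held, transitivity would yield $\HYP\leqW\MLR\leqW\C_\IR$, contradicting $\HYP\nleqW\C_\IR$ (Corollary~\ref{cor:HYP-CR}, which rests on the Hyperimmune-Free Basis Theorem~\ref{thm:HFBT}). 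Hence $1\dash\GEN\nleqW\MLR$.

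For $\MLR\nleqW1\dash\GEN$, I would invoke the external fact, proved as a corollary in \cite{BHK15a}, that $1\dash\GEN\leqW\lim_\J$, together with $\MLR\nleqW\lim_\J$ from Corollary~\ref{cor:MLR-limJ} (which in turn relies on Ku{\v c}era's theorem via \cite[Proposition~9.5]{BBP12}). If $\MLR\leqW1\dash\GEN$ held, then $\MLR\leqW1\dash\GEN\leqW\lim_\J$ by transitivity, a contradiction. Combining the two non-reductions gives $\MLR\nW1\dash\GEN$, as desired.

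Since every ingredient is already available from the preceding results, no step here is genuinely hard; the corollary is essentially an assembly. The only nontrivial content sits in the two feeder facts, namely $\HYP\nleqW\C_\IR$ on one side and the imported reduction $1\dash\GEN\leqW\lim_\J$ on the other, and of these the latter --- producing a $1$--generic point from a name supplied only in the limit-of-jumps sense --- is the more delicate, although its proof is not needed in full here.
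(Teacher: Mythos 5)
Your proof is correct and follows essentially the same route as the paper: the non-reduction $1\dash\GEN\nleqW\MLR$ is obtained by sandwiching via $\HYP\leqW1\dash\GEN$ and $\MLR\leqW\C_\IR$ against the Hyperimmune-Free Basis Theorem, and $\MLR\nleqW1\dash\GEN$ via $1\dash\GEN\leqW\lim_\J$ from \cite{BHK15a} together with Corollary~\ref{cor:MLR-limJ}. The only cosmetic difference is that you spell out the intermediate step $1\dash\WGEN\leqSW1\dash\GEN$ and Corollary~\ref{cor:HYP-1WGEN}, which the paper leaves implicit when citing Proposition~\ref{prop:HYP-1WGEN}.
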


\section{The Kleene-Post Theorem}
\label{sec:KPT}

A basic theorem in computability theory is the Kleene-Post theorem, which shows
that Turing reducibility does not generate a linear order, i.e., there are Turing
incomparable degrees (see \cite{KP54} or \cite[Theorem~V.2.2]{Odi89}).

\begin{theorem}[Kleene and Post 1954]
There exist $p,q\in\IN^\IN$ such that $p\nT q$.
\end{theorem}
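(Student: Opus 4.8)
The plan is to carry out the classical finite-extension argument of Kleene and Post. I would build $p,q\in\Cantor\subseteq\IN^\IN$ as the unions of two nested sequences of finite binary strings $\sigma_0\prefix\sigma_1\prefix\dots$ and $\tau_0\prefix\tau_1\prefix\dots$, defined in stages so as to meet, for every $e\in\IN$, the two requirements $R_{2e}\colon\Phi_e^q\not=p$ and $R_{2e+1}\colon\Phi_e^p\not=q$, where $(\Phi_e)_{e\in\IN}$ is a standard enumeration of the Turing functionals with oracle. Meeting all of the $R_{2e}$ gives $p\nleqT q$ and meeting all of the $R_{2e+1}$ gives $q\nleqT p$, hence $p\nT q$.

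I would start with $\sigma_0:=\tau_0:=\varepsilon$. At stage $s=2e+1$ I treat $R_{2e}$: given $\sigma_{2e}$ and $\tau_{2e}$, set $n:=|\sigma_{2e}|$ and ask whether there is a finite string $\tau$ with $\tau_{2e}\prefix\tau$ and $\Phi_e^\tau(n){\downarrow}$. If so, fix one such $\tau$, put $\tau_{2e+1}:=\tau$, and put $\sigma_{2e+1}:=\sigma_{2e}b$ for a bit $b\in\{0,1\}$ with $b\not=\Phi_e^\tau(n)$ (possible, since at most one element of $\{0,1\}$ can equal $\Phi_e^\tau(n)$). If no such $\tau$ exists, then $\Phi_e^{\rho}(n){\uparrow}$ for every $\rho$ with $\tau_{2e}\prefix\rho$, so $R_{2e}$ is already met; in this case put $\sigma_{2e+1}:=\sigma_{2e}0$ and $\tau_{2e+1}:=\tau_{2e}$. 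At stage $s=2e+2$ I treat $R_{2e+1}$ by the symmetric procedure, with the roles of the $\sigma$'s and $\tau$'s (and of $p$ and $q$) interchanged. Since $|\sigma_s|$ strictly increases at every odd stage and $|\tau_s|$ at every even stage, both $p:=\bigcup_s\sigma_s$ and $q:=\bigcup_s\tau_s$ are well-defined elements of $\Cantor$.

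It remains to verify that every requirement is met. Consider $R_{2e}$. In the first case above, $\sigma_{2e+1}\prefix p$ gives $p(n)=b\not=\Phi_e^\tau(n)$; since $\tau=\tau_{2e+1}\prefix q$ and the halting computation $\Phi_e^\tau(n)$ queries only finitely much of $\tau$, the Use Theorem yields $\Phi_e^q(n)=\Phi_e^\tau(n)$, so $\Phi_e^q(n)\not=p(n)$ and hence $\Phi_e^q\not=p$. In the second case $\Phi_e^q(n){\uparrow}$, so again $\Phi_e^q\not=p$. The argument for $R_{2e+1}$ is identical with $p$ and $q$ exchanged. Thus neither of $p,q$ is Turing reducible to the other, i.e.\ $p\nT q$.

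The construction is entirely routine and uses no priorities or injury, so there is no real obstacle; the only point that deserves a little care is the verification just given that in the diagonalizing case the value $\Phi_e^q(n)$ with the final oracle $q$ still equals $\Phi_e^\tau(n)$, which is exactly where the Use Theorem is invoked. I also note that the question ``is there a finite $\tau$ with $\tau_{2e}\prefix\tau$ and $\Phi_e^\tau(n){\downarrow}$?'' is $\sO{1}$, so the whole construction can be carried out with oracle $\emptyset'$; this refinement is not needed here but will be relevant when the Kleene--Post Theorem is recast as a problem in the Weihrauch lattice in Section~\ref{sec:KPT}.
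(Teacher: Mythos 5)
Your finite-extension construction is correct and is exactly the classical Kleene--Post argument; the paper itself gives no proof of this statement, citing only \cite{KP54} and \cite[Theorem~V.2.2]{Odi89}, where this same argument appears. Your closing remark that the construction is computable in $\emptyset'$ is also accurate and consistent with how the theorem is later used in Section~\ref{sec:KPT}.
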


We consider this theorem as a computational problem in the following more general sense:
for any given $r$ we want to find two incomparable 
degrees above it (alternatively, one could also impose an upper bound here, for instance $r'$, but that
would yield a different problem).

\begin{definition}[Theorem of Kleene and Post]
By $\KPT:\IN^\IN\mto\IN^\IN$ we denote the function with
\[\KPT(r):=\{\langle p,q\rangle\in\IN^\IN:r\leqT p,r\leqT q\mbox{ and }p\nT q\}\]
for all $p\in\IN^\IN$.
\end{definition}

In order to prove that $\KPT$ is reducible to $\MLR$ we use the following version of 
van Lambalgen's theorem \cite{vLam90}.

\begin{theorem}[van Lambalgen's theorem 1990]
\label{thm:van-Lambalgen}
Let $p,q,r\in2^\IN$. If $\langle p,q\rangle$ is Martin-L\"of random in $r$, then
$q$ is Martin-L\"of random in $r$ and $p$ is Martin-L\"of random in $\langle q,r\rangle$.
\end{theorem}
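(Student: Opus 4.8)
The plan is to run the standard ``test lifting'' argument, identifying $2^\IN$ with the product $2^\IN\times2^\IN$ via the measure-preserving bijection sending $z$ to the pair $(z_{\mathrm{even}},z_{\mathrm{odd}})$ of its even- and odd-indexed subsequences; under this identification $\langle p,q\rangle$ becomes the pair $(p,q)$ and the uniform measure $\mu$ becomes the product measure. I will fix, for each oracle $z$, a universal Martin-L\"of test $(V^z_n)_n$, so that $x$ is not Martin-L\"of random in $z$ exactly when $x\in\bigcap_n V^z_n$, with each $V^z_n$ a $z$-c.e.\ open set of measure at most $2^{-n}$, uniformly in $n$ and $z$.

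For the first assertion ($q$ random in $r$) I would argue by contraposition: if $q\in\bigcap_n V^r_n$, then the cylindrified sets $W_n:=2^\IN\times V^r_n$ form a uniformly $r$-c.e.\ sequence of open sets, and since the first coordinate is unconstrained, $\mu(W_n)=\mu(V^r_n)\leq2^{-n}$; as $q\in V^r_n$ for all $n$ we get $\langle p,q\rangle=(p,q)\in\bigcap_n W_n$, so $\langle p,q\rangle$ would not be Martin-L\"of random in $r$, a contradiction.

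The second assertion ($p$ random in $\langle q,r\rangle$) is the substantive part. Again by contraposition, assume $p\in\bigcap_n V^{\langle q,r\rangle}_n$. The device is the finite-use property of oracle enumerations: a basic cylinder $[\sigma]$ enters $V^{\langle y,r\rangle}_n$ after reading only a finite prefix $\tau$ of $y$ (together with the whole oracle $r$). Hence, using only $r$, one can enumerate for each $n$ and each finite binary string $\tau$ a c.e.\ set $E^\tau_n$ of strings, monotone in $\tau$, with $V^{\langle y,r\rangle}_n=\bigcup_{\tau\prefix y}\bigcup_{\sigma\in E^\tau_n}[\sigma]$ for every $y$. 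Setting $W_n:=\{(x,y):(\exists\tau\prefix y)(\exists\sigma\prefix x)\ \sigma\in E^\tau_n\}$, one checks that $W_n$ is uniformly $r$-c.e.\ open, that the $y$-section of $W_n$ is precisely $V^{\langle y,r\rangle}_n$ and therefore has measure $\leq2^{-n}$, and that $(p,q)\in W_n$ for all $n$ because $p\in V^{\langle q,r\rangle}_n$. By Fubini the sectionwise bound gives $\mu(W_n)\leq2^{-n}$, so $(W_n)_n$ is a Martin-L\"of test relative to $r$ containing $\langle p,q\rangle$, contradicting the hypothesis.

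The step I expect to require the most care is this last one: the bound $\mu(W_n)\leq2^{-n}$ must be extracted via Fubini from the uniform sectionwise bound rather than read off directly, and the $r$-c.e.\ openness of $W_n$ hinges entirely on the finite-use observation. I note that only this ``decomposition'' direction of van Lambalgen's Theorem is needed below; the converse implication, which is the genuinely delicate half, plays no role here, so I would not attempt it.
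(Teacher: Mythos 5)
Your argument is correct, and it is precisely the standard test-transformation proof of this direction of van Lambalgen's Theorem, relativized to $r$: cylindrify a universal $r$-test for the first claim, and use the finite-use property plus Fubini to convert a $\langle q,r\rangle$-test capturing $p$ into an $r$-test capturing $\langle p,q\rangle$ for the second. The paper does not spell out a proof at all---it only cites the proofs in Downey--Hirschfeldt (Theorem~6.9.1) and Nies (Theorem~3.4.6) and remarks that they relativize---so your write-up is essentially the same argument as the one the paper points to.
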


The proofs given in \cite[Theorem~6.9.1]{DH10} or \cite[Theorem~3.4.6]{Nie09} relativize in the stated sense.
It is well-known in computability theory that any Martin-L\"of random yields 
an incomparable pair of degrees due to van Lambalgen's theorem (see \cite[Corollary~6.9.4]{DH10}).
We just translate this observation into our setting.\footnote{A strong Weihrauch reduction does not hold here as pointed out by a referee, 
since only measure zero many sets compute a non-computable $r$, hence most Martin-L\"of randoms relative to $r$
cannot produce anything above $r$.} 

\begin{proposition}
\label{prop:KPT-MLR}
$\KPT\leqW\MLR$.
\end{proposition}
\begin{proof}
Given $r$ we obtain $\langle p,q\rangle\in\MLR(r)$, and by the theorem of van Lambalgen we obtain that $q$ is random relative to $\langle p,r\rangle$
and $p$ is random relative to $\langle q,r\rangle$. Hence $q\nleqT\langle p,r\rangle$ and $p\nleqT\langle q,r\rangle$, which implies that $\langle p,r\rangle$
and $\langle q,r\rangle$ are incomparable, and both are clearly above $r$.
\end{proof} 

Likewise, we can use the analogue of van Lambalgen's theorem for genericity to prove a similar statement for 1--genericity.
We first formulate a suitable relativized version of a theorem of Yu (see \cite{Yu06a} and \cite[Theorem~8.20.1]{DH10}).

\begin{theorem}[Yu's theorem 2006]
\label{thm:Yu}
Let $p,q,r\in2^\IN$. If $\langle p,q\rangle$ is 1--generic in $r$, then
$q$ is 1--generic in $r$ and $p$ is 1--generic in $\langle q,r\rangle$.
\end{theorem}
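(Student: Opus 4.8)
The statement is the relativization to an arbitrary oracle $r$ of Yu's analogue of van~Lambalgen's theorem for $1$-genericity. My plan is to note first that the proof in \cite{Yu06a} (or \cite[Theorem~8.20.1]{DH10}) relativizes uniformly, and then, for completeness, to give the direct argument below. I will use the standard characterization: a point $x\in2^\IN$ is $1$-generic in $s$ if and only if for every set $W\In2^{<\IN}$ that is c.e.\ relative to $s$, either some initial segment of $x$ lies in $W$ (``$x$ meets $W$'') or some initial segment of $x$ has no extension in $W$ (``$x$ strongly avoids $W$''). Recall that the bits of $\langle p,q\rangle$ in even positions form $p$ and those in odd positions form $q$, so a string $\tau$ of even length $2m$ splits into an even part $\tau_{\mathrm e}\in2^m$ (a candidate initial segment of $p$) and an odd part $\tau_{\mathrm o}\in2^m$ (a candidate initial segment of $q$). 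Both conclusions I would prove by contraposition, turning a c.e.\ set that witnesses a failure of $1$-genericity for $q$ (resp.\ for $p$) into one witnessing a failure for $\langle p,q\rangle$ relative to $r$.

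\emph{First conclusion ($q$ is $1$-generic in $r$).} Suppose $W$ is c.e.\ relative to $r$, dense along $q$, and met by no initial segment of $q$. Let $V$ be the set of strings $\tau$ of even length such that some initial segment of $\tau_{\mathrm o}$ lies in $W$; then $V$ is c.e.\ relative to $r$. If some initial segment of $\langle p,q\rangle$ lay in $V$, then some initial segment of $q$ would lie in $W$, which is excluded. And $V$ is dense along $\langle p,q\rangle$: given $\langle p,q\rangle\restriction 2m_0$, density of $W$ along $q$ gives $w\in W$ with $q\restriction m_0\prefix w$, and interleaving $p\restriction m$ with any length-$m$ extension of $w$, where $m=\max(|w|,m_0)$, yields an extension of $\langle p,q\rangle\restriction 2m_0$ lying in $V$. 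Hence $\langle p,q\rangle$ is not $1$-generic in $r$.

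\emph{Second conclusion ($p$ is $1$-generic in $\langle q,r\rangle$).} Suppose $W$ is c.e.\ relative to $\langle q,r\rangle$, dense along $p$, and met by no initial segment of $p$; fix an enumeration of $W$ relative to $\langle q,r\rangle$. Define $V$ by putting a string $\tau$ of even length $2m$ into $V$ precisely when some initial segment of $\tau_{\mathrm e}$ is enumerated into $W$ by a computation that queries the $q$-part of its oracle only at positions $<m$ and receives there the answers dictated by $\tau_{\mathrm o}$. Only the oracle $r$ remains live in this definition, so $V$ is c.e.\ relative to $r$, uniformly in $\tau$. If $\langle p,q\rangle\restriction 2m\in V$, then some initial segment $a_0\prefix p\restriction m$ is enumerated into $W$ by a computation whose $q$-answers, coming from $q\restriction m$, agree with $q$; hence $a_0\in W$ and $p$ meets $W$, a contradiction. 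For density, given $\langle p,q\rangle\restriction 2m_0$ pick $\sigma\in W$ with $p\restriction m_0\prefix\sigma$; the computation enumerating $\sigma$ queries $q$ only below some finite bound $k$, so with $m:=\max(|\sigma|,k,m_0)+1$ the interleaving of any length-$m$ extension of $\sigma$ (even part) with $q\restriction m$ (odd part) extends $\langle p,q\rangle\restriction 2m_0$ and lies in $V$. Thus $\langle p,q\rangle$ is not $1$-generic in $r$, completing the contrapositive.

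\emph{Main obstacle.} The delicate point is the second conclusion: the bad set $W$ is c.e.\ in the compound oracle $\langle q,r\rangle$, yet the witness $V$ must be c.e.\ in $r$ alone, and a condition $\tau$ records only the finite initial segment $\tau_{\mathrm o}$ of $q$. The resolution is that any convergent enumeration uses only a finite piece of $q$, so one may hardwire $\tau_{\mathrm o}$ as the $q$-oracle and accept exactly the computations whose $q$-use lies below $|\tau_{\mathrm o}|$; one then has to verify that this truncation does not damage density (handled by padding the even part and taking $m$ past the $q$-use) while being exactly strong enough to keep $\langle p,q\rangle$ out of $V$ (a genuine membership $a_0\in W$ being recovered whenever the guessed initial segment $\tau_{\mathrm o}$ agrees with $q$).
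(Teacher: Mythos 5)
Your argument is correct and is exactly what the paper intends: the paper proves this theorem only by remarking that the proof of \cite[Theorem~8.20.1]{DH10} relativizes, and your contrapositive construction (translating an $r$-c.e.\ witness against $q$, respectively a $\langle q,r\rangle$-c.e.\ witness against $p$ with the $q$-oracle use hardwired into the odd part of the condition, into an $r$-c.e.\ witness against $\langle p,q\rangle$) is precisely that relativized standard proof. No gaps; the use-bound bookkeeping in the second half is handled correctly.
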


The proof is a direct relativization of the proof given in \cite[Theorem~8.20.1]{DH10}.
We obtain the following corollary.

\begin{corollary}
\label{cor:KPT-GEN}
$\KPT\leqW1\dash\GEN$.
\end{corollary}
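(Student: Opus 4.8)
The plan is to mimic the proof of Corollary~\ref{cor:KPT-MLR} verbatim, replacing van Lambalgen's Theorem (Theorem~\ref{thm:van-Lambalgen}) with Yu's Theorem (Theorem~\ref{thm:Yu}). Concretely, I would show that $\KPT\leqSW 1\dash\GEN$ by exhibiting computable $K$ and $H$ witnessing the strong reduction. On input $r\in 2^\IN$ we simply pass $r$ unchanged to $1\dash\GEN$, so $K=\id$; given any $s\in 1\dash\GEN(r)$, that is, any $s$ which is $1$--generic in $r$, we decompose $s=\langle p,q\rangle$ using the standard sequence pairing and output this pair, so $H=\id$ as well (up to the trivial renaming $s\mapsto\langle p,q\rangle$, which is computable and computably invertible). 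Since the reduction uses no access to the original input beyond what is passed through $K$, it is a \emph{strong} Weihrauch reduction.

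The verification has two parts. First, $H$ must actually land in $\KPT(r)$, i.e.\ we need $r\leqT\langle p,q\rangle$ and $p\nT q$. For the first clause, note that if $s$ is $1$--generic in $r$ then in particular $s\geqT r$: every $1$--generic-in-$r$ real computes $r$, since $r$ is encoded into the relevant c.e.\ open sets; alternatively, one can use that $1$--generics in $r$ are weakly $1$--generic in $r$ and hence hyperimmune in $r$, but the cleanest route is simply that the oracle $r$ is recoverable from any sufficiently generic real — and since $s\equivT\langle p,q\rangle$, we get $r\leqT\langle p,q\rangle$. For the incomparability $p\nT q$: by Theorem~\ref{thm:Yu} applied with $r$ as the base oracle, $q$ is $1$--generic in $r$ and $p$ is $1$--generic in $\langle q,r\rangle$. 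A $1$--generic in $\langle q,r\rangle$ cannot be computed by $\langle q,r\rangle$ (a standard fact: no $1$--generic relative to an oracle is computable from that oracle, since the requirement ``diagonalize against the $e$-th functional with that oracle'' is met on a dense open set), so $p\nleqT\langle q,r\rangle$, and in particular $p\nleqT q$. Symmetrically — or by applying the theorem after swapping the roles of the two halves, which is legitimate since $\langle q,p\rangle$ is also $1$--generic in $r$ whenever $\langle p,q\rangle$ is (the pairing is symmetric up to a computable permutation of coordinates, and c.e.\ openness is preserved) — we get $q\nleqT\langle p,r\rangle$, hence $q\nleqT p$. Thus $p\nT q$ as required.

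Second, one must check that $\KPT(r)\neq\emptyset$, i.e.\ that $\dom(1\dash\GEN)\supseteq 2^\IN$ so the reduction is everywhere defined; this is immediate since $1\dash\GEN(q)\neq\emptyset$ for every $q$ by a standard forcing/finite-extension construction, and the inclusion $\KPT(r)\supseteq H(1\dash\GEN(r))$ we just verified is non-empty accordingly.

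The main obstacle — really the only non-formality — is making sure the relativized Yu's Theorem is invoked correctly and that the ``$1$--generic relative to $A$ is not computable from $A$'' fact is cited or noted, together with the observation that $\langle p,q\rangle\geqT r$ for $s=\langle p,q\rangle$ $1$--generic in $r$. All of these are routine relativizations of textbook facts (as the excerpt already notes for Theorem~\ref{thm:Yu} itself), so the corollary follows exactly as Corollary~\ref{cor:KPT-MLR} does from van Lambalgen's Theorem. I would therefore write the proof as a single short paragraph: ``The proof is completely analogous to that of Corollary~\ref{cor:KPT-MLR}, using Yu's Theorem~\ref{thm:Yu} in place of van Lambalgen's Theorem~\ref{thm:van-Lambalgen},'' possibly spelling out the one line about $p$ being $1$--generic in $\langle q,r\rangle$ and hence $p\nleqT q$.
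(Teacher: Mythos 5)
The incomparability half of your proposal is exactly the paper's own (one-line) proof: as with Corollary~\ref{cor:KPT-MLR} and van Lambalgen's Theorem~\ref{thm:van-Lambalgen}, the corollary is read off from Yu's Theorem~\ref{thm:Yu} --- $p$ is $1$--generic in $\langle q,r\rangle$, hence $p\nleqT\langle q,r\rangle$ (no real that is $1$--generic in an oracle is computable from that oracle), and by the computable symmetry of the pairing also $q\nleqT\langle p,r\rangle$, so $p\nT q$. Up to the harmless point that $K$ must recode $r\in\IN^\IN$ into $2^\IN$ degree-preservingly rather than being literally the identity, that part matches the paper.

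The genuine gap is your verification of the clause $r\leqT\langle p,q\rangle$. The claim that every real which is $1$--generic in $r$ computes $r$ is false, and both justifications you offer fail: for non-computable $r$ the upper cone $\{s\in2^\IN:r\leqT s\}$ is meager, while the reals that are $1$--generic in $r$ form a comeager class, so comeager many $1$--generics in $r$ do \emph{not} compute $r$ --- sufficient genericity makes a real avoid the cone above any fixed non-computable real, never enter it; and hyperimmunity relative to $r$ (via weak $1$--genericity, Proposition~\ref{prop:HYP-1WGEN}) is a domination property that yields no computation of $r$ whatsoever. Note that the paper's own justification (``translate the well-known observation into our setting'') only addresses the incomparability clause and never attempts to extract $r$ from the generic; indeed this clause cannot be obtained from the oracle's answer alone, since with $K$ and $H$ as you set them up the output is computable from the generic $s$, which need not compute $r$. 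To secure $r\leqT\langle p,q\rangle$ one has to reuse the original input on the output side --- e.g.\ output $\langle p\oplus r,q\oplus r\rangle$, whose halves are Turing incomparable by the same application of Theorem~\ref{thm:Yu} --- which gives an ordinary reduction $\KPT\leqW1\dash\GEN$ but is not available in the strong reduction as you arranged it. So the step ``$s$ is $1$--generic in $r$, hence $s$ computes $r$'' must be removed; it cannot be repaired by a different standard fact about generics.
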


Since $\MLR$ and $1\dash\GEN$ are incomparable, by Corollary~\ref{cor:MLR-1GEN} 
both reductions in Proposition~\ref{prop:KPT-MLR} and Corollary~\ref{cor:KPT-GEN} are strict.
We note that the theorem of Yu~\ref{thm:Yu} has another interesting consequence: it implies that $1\dash\GEN$ is closed under composition.\footnote{Likewise it was
observed by Brattka, Gherardi and H\"olzl (unpublished notes) that van Lambalgen's theorem implies the closure of Martin-L\"of randomness under
composition; that is, $\MLR*\MLR\equivW\MLR$.}

\begin{proposition}
\label{prop:1GEN-composition}
$1\dash\GEN*1\dash\GEN\equivW1\dash\GEN$.
\end{proposition}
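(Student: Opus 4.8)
The statement $1\dash\GEN*1\dash\GEN\equivW1\dash\GEN$ asks us to show that composing the $1$--genericity problem with itself gives nothing new. The reduction $1\dash\GEN\leqW 1\dash\GEN*1\dash\GEN$ is immediate, since $f\leqW f*g$ holds whenever $g$ is pointed (and $1\dash\GEN$ is pointed: any computable $q$ has a $1$--generic point relative to it, so the constant realizer on a fixed computable input witnesses pointedness). So the content is entirely in the direction $1\dash\GEN*1\dash\GEN\leqW1\dash\GEN$. The plan is to unwind the definition of the compositional product and then feed the output of a single call to $1\dash\GEN$ through Yu's Theorem~\ref{thm:Yu} to recover the iterated genericity.

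**Key steps.** First I would recall that $1\dash\GEN*1\dash\GEN$ is realized by the following worst case: on input $q_0\in2^\IN$, compute (via some continuous map) an input $q_1$ to the first $1\dash\GEN$, obtain a point $p_1$ that is $1$--generic in $q_1$, then compute from $\langle q_0,p_1\rangle$ an input $q_2$ to the second $1\dash\GEN$, obtain $p_2$ which is $1$--generic in $q_2$, and output something computed from $\langle q_0,p_1,p_2\rangle$. Since $q_1\leqT q_0$ and $q_2\leqT\langle q_0,p_1\rangle$, and since $1$--genericity relative to an oracle is closed downward in the oracle (if $p$ is $1$--generic in $a$ and $b\leqT a$ then $p$ is $1$--generic in $b$), it suffices to show: from $q_0$ I can produce, using one call to $1\dash\GEN$, a point from which both a point $1$--generic in $q_0$ and a point $1$--generic in $\langle q_0,p_1\rangle$ can be computed. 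The idea is to call $1\dash\GEN$ on the input $q_0$ itself and ask for a point $g$ that is $1$--generic in $q_0$; but $g$ is a single sequence, whereas I need a \emph{pair}. So instead I would call $1\dash\GEN$ on $q_0$ and interpret the returned point as a pair $\langle a,b\rangle$ — more precisely, I would use the reduction with $K$ mapping $q_0$ to $q_0$, let $G$ return some $\langle a,b\rangle$ that is $1$--generic in $q_0$, and then apply Yu's Theorem~\ref{thm:Yu} (with $r:=q_0$): it gives that $b$ is $1$--generic in $q_0$ and $a$ is $1$--generic in $\langle b,q_0\rangle$. Taking $p_1:=b$ and then $p_2:=a$ works, because $q_2\leqT\langle q_0,p_1\rangle=\langle q_0,b\rangle$, so $a$ being $1$--generic in $\langle b,q_0\rangle$ makes it $1$--generic in $q_2$ by downward closure. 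The output modifier $H$ reconstructs $p_1=b$, feeds $\langle q_0,b\rangle$ through the (continuous) map that the composition prescribes to get $q_2$, checks that $q_2\leqT\langle q_0,b\rangle$, uses $a$ as the second generic, and applies the final continuous output map to $\langle q_0,b,a\rangle$. All of this is continuous in $\langle q_0,\langle a,b\rangle\rangle$, which is exactly the data available to $H$ in an ordinary (not strong) Weihrauch reduction.

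**Main obstacle.** The delicate point is ensuring that the outer reduction genuinely has the shape of a single $1\dash\GEN$ call: the compositional product is a \emph{maximum} over all $f_0\leqW 1\dash\GEN$ and $g_0\leqW 1\dash\GEN$ that compose, so I must argue that an arbitrary such composite reduces to one copy of $1\dash\GEN$, not merely the ``canonical'' composite. This is handled by noting that any $g_0\leqW1\dash\GEN$ with range in $2^\IN$ factors through $1\dash\GEN$ via computable $H',K'$, and then bundling $K'$ into the input map and $H'$ into the processing of $p_1$; the key invariant is that the oracle passed to the \emph{second} factor is always Turing-below $\langle q_0,p_1\rangle$, which is the hypothesis under which Yu's theorem (applied with $r=q_0$) gives genericity of the second component. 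A secondary subtlety is that Yu's theorem as stated requires $a,b,q_0\in2^\IN$ and a genuine pairing $\langle a,b\rangle$; one must make sure the pairing used to split the output of $1\dash\GEN$ matches the one in the hypothesis of Theorem~\ref{thm:Yu}, which is just a matter of fixing notation consistently. Once these bookkeeping points are settled, the proof is essentially a transcription of the van~Lambalgen-style argument for $\MLR*\MLR\equivW\MLR$ mentioned in the footnote, with Theorem~\ref{thm:Yu} in place of van~Lambalgen's Theorem.
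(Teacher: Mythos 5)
Your proposal is correct and follows essentially the same route as the paper's proof: unwind an arbitrary $f\leqW1\dash\GEN*1\dash\GEN$ into computable pre-, intermediate- and post-processing maps, make a single call to $1\dash\GEN$ on the original input, split the returned point as a pair, apply Yu's Theorem~\ref{thm:Yu}, and use that $1$--genericity is downward closed under Turing reducibility of the oracle (since the intermediate inputs are computable in, hence Turing-below, the available data). The only cosmetic caveat is that the maps witnessing the composite are computable, not merely continuous, and no "checking" of $q_2\leqT\langle q_0,b\rangle$ is needed since it holds automatically.
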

\begin{proof}
It is clear that $1\dash\GEN\leqW1\dash\GEN*1\dash\GEN$.
Let $f\leqW1\dash\GEN*1\dash\GEN$. Without loss of generality, we can assume that $f$
is of type $f:\In2^\IN\mto2^\IN$. Hence there are computable single-valued functions $F,G,H:\In2^\IN\to2^\IN$
such that any $q\in1\dash\GEN(G(r))$ has the property that any $p\in1\dash\GEN(H\langle q,r\rangle)$ yields some $F\langle p,q,r\rangle\in f(r)$.
Hence, if $\langle p,q\rangle\in1\dash\GEN(r)$, then by Yu's theorem~\ref{thm:Yu} it follows that
$q\in1\dash\GEN(r)$ and $p\in1\dash\GEN(\langle q,r\rangle)$. Now, $G(r)\leqT r$ and $H\langle q,r\rangle\leqT\langle q,r\rangle$,
and hence $1\dash\GEN(r)\In1\dash\GEN(G(r))$ and $1\dash\GEN(\langle q,r\rangle)\In1\dash\GEN(H\langle q,r\rangle)$.
This implies $F\langle p,q,r\rangle\in f(r)$. Thus the function $F$ yields a reduction $f\leqW1\dash\GEN$. 
\end{proof}

\section{The Jump Inversion Theorem}
\label{sec:JIT}

In this section we study the uniform computational content of Friedberg's jump inversion theorem, which 
is an interesting example since it is continuous but not computable. 
Friedberg's jump inversion theorem
in its original formulation reads as follows (see \cite{Fri57}).

\begin{theorem}[Friedberg's jump inversion theorem]
\label{thm:JIT}
For every degree ${\mathbf a}\in\DD$ there exists a degree ${\mathbf b}\in\DD$ with ${\mathbf b'}={\mathbf a}\cup0'$.
\end{theorem}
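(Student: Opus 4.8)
The plan is to prove Theorem~\ref{thm:JIT} by Friedberg's classical finite-extension argument, organised so that it is transparent which oracle carries out which part of the construction --- bookkeeping that we will reuse when placing $\JIT$ in the Weihrauch lattice. Fix a representative $A\in{\mathbf a}$ and put $C:=A\oplus\emptyset'$, so that $C$ represents ${\mathbf a}\cup\emptyset'$ and $\emptyset'\leqT C$. It suffices to build $B\in\IN^\IN$ with $B'\equivT C$, for then ${\mathbf b}:=[B]$ satisfies ${\mathbf b}'={\mathbf a}\cup\emptyset'$. I would construct $B=\bigcup_s\alpha_s$ as an increasing union of finite binary strings, in stages that alternate between \emph{jump stages} and \emph{coding stages}. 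At jump stage $2e$, ask whether some finite $\tau\supseteq\alpha_{2e}$ satisfies $\varphi^\tau_e(e)\!\downarrow$; this is a $\sO{1}$ question in the finite parameter $\alpha_{2e}$, hence is answered (with a least witness, when one exists) by $\emptyset'$. If so, let $\alpha_{2e+1}$ be that $\tau$; otherwise $\alpha_{2e+1}:=\alpha_{2e}$. At coding stage $2e+1$, obtain $\alpha_{2e+2}$ from $\alpha_{2e+1}$ by appending the single bit $C(e)$.

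Three reductions then finish the proof. First, $B\oplus\emptyset'\leqT C$: the construction is recursive in $C$, since jump stages use $\emptyset'\leqT C$ and coding stages use $C$ directly, and $\emptyset'\leqT C$ by hypothesis. Second, $C\leqT B\oplus\emptyset'$: from $B\oplus\emptyset'$ one re-runs the construction, performing each jump stage with $\emptyset'$ and, at each coding stage, simply \emph{reading} the bit $B(|\alpha_{2e+1}|)$; this equals $C(e)$ because $\alpha_{2e+2}\subset B$, and the position $|\alpha_{2e+1}|$ has already been produced by the simulation, so no prior knowledge of $C$ is needed. Third, $B'\leqT B\oplus\emptyset'$: to decide whether $e\in B'$, reconstruct $\alpha_{2e+1}$ from $B\oplus\emptyset'$ as above and output ``$\varphi^{\alpha_{2e+1}}_e(e)\!\downarrow$'', which is correct because if some extension of $\alpha_{2e}$ forces a halting computation we adopted one (and then $\alpha_{2e+1}\subset B$ gives $\varphi^B_e(e)\!\downarrow$ by the use principle), while if none does then no finite initial segment of $B$ does and $\varphi^B_e(e)\!\uparrow$. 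Chaining these with the trivial $B\oplus\emptyset'\leqT B'$ yields
\[B'\leqT B\oplus\emptyset'\leqT C\leqT B\oplus\emptyset'\leqT B',\]
so $B'\equivT C$, as required.

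The delicate point --- indeed the only place the argument can fail --- is the second reduction: one must code so that the decoder can relocate the coding bits from $B\oplus\emptyset'$ alone, equivalently so that $s\mapsto|\alpha_s|$ is computable from $B\oplus\emptyset'$; otherwise one would covertly ``use $C$ to recover $C$''. The alternation above is designed precisely so that every jump stage is $\emptyset'$-decidable and every coding stage contributes exactly one bit that is read back off $B$. I would also record that the construction, carried out with $\emptyset'$ built in as a fixed oracle, is uniform in $A$ and hence yields a \emph{continuous} assignment $A\mapsto B$ --- one that genuinely consults $\emptyset'$ and so is not computable --- which is exactly the behaviour that will let $\JIT$ be located among the continuous, non-computable problems of the Weihrauch lattice.
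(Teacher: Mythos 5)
Your proof is correct and follows essentially the same route as the paper, which (inside the proof of Proposition~\ref{prop:JIT-cKid}) recalls exactly this Friedberg finite-extension construction alternating $\emptyset'$-decidable jump-forcing stages with coding stages. The only inessential difference is that you code $C=A\oplus\emptyset'$ while the paper codes just $\chi_A$ and recovers $\emptyset'\oplus A\leqT B'$ from $A\leqT B$ together with $\emptyset'\leqT B'$; both bookkeeping choices yield $B'\equivT A\oplus\emptyset'$.
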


In particular, this theorem implies that the Turing jump operator on degrees $\J_\DD:\DD\to\DD,{\mathbf a}\mapsto {\mathbf a'}$ 
is surjective onto the upper cone $\{{\mathbf a}\in\DD:0'\leqT {\mathbf a}\}$.
We can formalize the jump inversion theorem as follows. 

\begin{definition}[Jump inversion theorem]
We call 
\[\JIT:\DD\mto\DD,{\mathbf a}\mapsto\{{\mathbf b}\in\DD:{\mathbf b'}={\mathbf a}\cup0'\}\]
the {\em jump inversion theorem}.
\end{definition}

The inverse $\J_\DD^{-1}$ of the Turing jump operator is then a restriction of $\JIT$
to the upper cone $\{{\mathbf a}\in\DD:0'\leqT {\mathbf a}\}$, and hence it is clear that $\J_\DD^{-1}\lSW\JIT$ holds
(the reduction is strict, since $\J_\DD^{-1}$ has no computable points in its domain).
It follows directly from Proposition~\ref{prop:indiscrete} that $\JIT$, $\J_\DD$ and $\J_\DD^{-1}$
are $\omega$--indiscriminative.  
We will see that $\JIT$ is also indiscriminative for a different reason, namely
it is even continuous. This follows from the following result that is a direct consequence
of the classical proof of the Friedberg jump inversion theorem.

As usual we denote by $\varphi^p_i(n)$ the $i$--th partial computable function relative to $p\in\IN^\IN$ on input $n\in\IN$.
We also use the notation $\varphi^\sigma_i(n)$ for a partial oracle $\sigma\in\IN^*$.
The value  $\varphi^\sigma_i(n)$, for $\sigma\in\IN^*$, is undefined if the computation consults the oracle beyond  
bit $\vert\sigma\vert$ and equals the output otherwise. For simplicity, we assume that the Turing degrees are represented by $2^\IN$
in this section. 
This does not make any essential difference, since there is a computable embedding $\iota:\IN^\IN\into 2^\IN$.
As usual, for two words $\sigma,\tau\in\{0,1\}^*$ the concatenation is denoted by $\sigma^\frown\tau$, and similarly $\sigma^\frown p$ denotes the concatenation of $\sigma$
with $p\in2^\IN$.
We denote by $\cc_{\emptyset'}:\IN^\IN\to\IN^\IN$ the 
constant function with the value of the halting problem.

\begin{proposition}
\label{prop:JIT-cKid}
$\JIT\lSW \cc_{\emptyset'}\times\id$.
\end{proposition}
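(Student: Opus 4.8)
The statement $\JIT \lSW \cc_{\emptyset'} \times \id$ asserts that $\JIT$ is strongly Weihrauch reducible to the product of the constant function with value $\emptyset'$ and the identity on Baire space, and that this reduction is strict. The reduction part is the uniform content of the classical Friedberg construction: from a name of a degree ${\mathbf a}$ (a point $p \in 2^\IN$) we must compute, with a single oracle call to $\cc_{\emptyset'} \times \id$ (which simply returns $\langle \emptyset', p\rangle$), a name for some ${\mathbf b}$ with ${\mathbf b}' \equivT {\mathbf a} \cup \emptyset'$. So the whole point is that the Friedberg construction, given $p$ and $\emptyset'$, produces a path $g \in 2^\IN$ of the desired degree in a way that is itself \emph{computable} in $\langle p, \emptyset'\rangle$ — no further oracle is needed. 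I would recall the standard construction: build $g = \bigcup_s \sigma_s$ as an increasing union of finite binary strings, where at stage $s$ one first appends a bit coding $p(s)$ (to guarantee $p \leqT g \oplus \emptyset'$, in fact $p \leqT g'$), and then tries to force the jump: asking whether there is an extension $\tau \succeq \sigma_s$ with $\varphi^\tau_s(s)\!\downarrow$, and if so taking the least such $\tau$, otherwise leaving $\sigma_s$ as is. The key observation is that the question ``does there exist $\tau \succeq \sigma_s$ with $\varphi^\tau_s(s)\!\downarrow$'' is $\Sigma^0_1$ uniformly in $\sigma_s$ and hence decidable from $\emptyset'$; and $\sigma_s$ itself has been built computably from $p$ and the earlier $\emptyset'$-answers. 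So the map $\langle p, \emptyset'\rangle \mapsto g$ is computable, and by the classical verification $g' \equivT g \oplus \emptyset' \equivT p \oplus \emptyset'$, which on degrees says exactly $[g]' = {\mathbf a} \cup 0'$, i.e. $[g] \in \JIT({\mathbf a})$.

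\textbf{Fitting it into the strong reduction framework.} For $f \leqSW h$ with $h = \cc_{\emptyset'}\times\id$ I need computable $K, H$ with $H \circ G \circ K \vdash f$ for every $G \vdash h$. Since $\cc_{\emptyset'}\times\id$ is single-valued with the single (computable-in-$p$-relative sense) realizer $p \mapsto \langle \emptyset', p\rangle$, there is essentially one realizer $G$ up to the representation; I take $K = \id$ (feeding the $\delta_\DD$-name $p$ of ${\mathbf a}$ straight in), and $H$ the computable map that, given $\langle \emptyset', p\rangle$, runs the Friedberg construction described above and outputs $g$. The verification that $\delta_\DD \circ H \circ G \circ K(p) = [g] \in \JIT(\delta_\DD(p))$ is precisely the classical theorem together with the remark that finite modifications do not change $g'$ up to degree, so the construction is insensitive to which $\delta_\DD$-name $p$ of ${\mathbf a}$ we started with. (Here it helps that degrees are represented by $2^\IN$ in this section, as the paper has just stipulated, so the concatenation and bit-coding operations are unproblematic.)

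\textbf{Strictness.} For $\cc_{\emptyset'}\times\id \nleqSW \JIT$ I would argue via dense realization. By Proposition~\ref{prop:indiscrete} and the corollary on Turing degrees, $\JIT : \DD \mto \DD$ is densely realized since its output space $\DD$ is densely realized. By Proposition~\ref{prop:dense}, a densely realized problem does not strongly bound any single-valued non-constant function on Baire space. But $\cc_{\emptyset'}\times\id$ \emph{is} such a function — it is single-valued, and non-constant because of the $\id$-component (e.g. $\langle\emptyset',p\rangle \neq \langle\emptyset',q\rangle$ whenever $p\neq q$). Hence $\cc_{\emptyset'}\times\id \nleqSW \JIT$, giving strictness. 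The main obstacle is really just carefully checking the uniformity in the reduction direction: that every $\emptyset'$-query arising in the Friedberg construction is answerable from the \emph{single} given copy of $\emptyset'$ (because each query is $\Sigma^0_1$ relative to the finite string $\sigma_s$, which is a computable object, not relative to $p$), so that no second-order oracle call to $\JIT$ or anything else is secretly needed — this is exactly what makes the classical proof ``continuous, in fact computable-modulo-$\emptyset'$'' rather than merely ``$\emptyset'$-computable in a non-uniform way.''
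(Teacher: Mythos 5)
Your proof is correct and follows essentially the same route as the paper: the reduction is the classical Friedberg finite-extension construction, computable from the single pair $\langle\emptyset',p\rangle$ because each jump-forcing query is $\Sigma^0_1$ in the finite string built so far, and strictness follows from $\JIT$ being densely realized together with Proposition~\ref{prop:dense}. (Your aside about finite modifications of the name $p$ is superfluous — the realizer condition only needs $[g]'=\mathbf{a}\cup 0'$ for each given name, which you already verify — but this does not affect the argument.)
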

\begin{proof}
We briefly recall the proof of Theorem~\ref{thm:JIT} in \cite[Theorem~V.2.24]{Odi89}.
Given $A\In\IN$ we construct the characteristic function $\chi_B$ of $B\In\IN$ using the finite
extension method. That is, we define a monotone sequence $(\sigma_n)_n$ of words $\sigma_s\in\{0,1\}^*$ with $\chi_B=\sup_n\sigma_n$ inductively in stages $s=0,1,...$ as follows. 
We let $\sigma_0$ be the empty sequence.
If $\sigma_s$ is already given, then we continue as follows.
\begin{itemize}
\item If $s=2i$ and there is $\sigma\in\{0,1\}^*$ with $\sigma_s\prefix \sigma$ such that
        $\varphi_i^\sigma(i)$ is defined, then we let $\sigma_{s+1}:=\sigma$ for the smallest
        such $\sigma$ and otherwise $\sigma_{s+1}:=\sigma_s$.
\item If $s=2i+1$ then we define $\sigma_{s+1}:=\sigma_s^\frown\chi_A(i)$.
\end{itemize}
The first condition is computable in $\emptyset'$ and the second one in $A$.
Therefore $(\sigma_n)_n$, and hence of $B$, is computable in $\emptyset'\oplus A$.
This shows that the function $F:2^\IN\to2^\IN$ that maps $A$ to $B$ is strongly reducible to $\cc_{\emptyset'}\times\id$.
We still need to show that $F$ realizes $\JIT$.
The first condition guarantees $i\in B'\iff\varphi_i^{\sigma_{2i+1}}(i)\downarrow$
and hence $B'\leqT \emptyset'\oplus A$. On the other hand, 
$i\in A\iff\sigma_{2i+1}(|\sigma_{2i+1}|)=1$ and hence $A\leqT B'$. Consequently $\emptyset'\oplus A\leqT B'$, which completes the proof
of $\JIT\leqSW\cc_{\emptyset'}\times\id$.
The reduction is strict by Proposition~\ref{prop:dense} since $\JIT$ is densely realized
(alternatively it is strict because $\cc_{\emptyset'}\times\id$ does not map computable inputs to computable outputs, but $\JIT$ has computable outputs for any computable input).
\end{proof}

We recall that $f$ is called {\em limit computable} if $f\leqW\lim$ holds.
The proof of Proposition~\ref{prop:JIT-cKid} shows not only that $\JIT$ is limit computable, but also that it has a limit computable realizer
whose range only contains $1$--generic points. This is because the conditions used to construct $B$ in the proof ensure that $B$ is $1$--generic.
Hence we also obtain the following corollary by \cite[Proposition~9.5]{BHK16}, which states that every $f$ that has some limit computable realizer
whose range only contains 1--generic points satisfies $f\leqSW\lim_\J$.

\begin{corollary}
\label{cor:JIT-limJ}
$\JIT\lSW\lim_\J$.
\end{corollary}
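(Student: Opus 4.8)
The plan is to feed the construction from the proof of Proposition~\ref{prop:JIT-cKid} into the proposition from \cite{BHK15a} quoted in the statement: if $\JIT$ has a limit computable realizer whose range consists only of $1$--generic points, then $\JIT\leqSW\lim_\J$. So I would verify these two properties of the realizer $F:2^\IN\to2^\IN$, $A\mapsto B$, built there by the finite extension method.

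For limit computability, recall that the monotone sequence $(\sigma_s)_s$ with $\chi_B=\sup_s\sigma_s$ is computed from the oracle $\emptyset'\oplus A$: the even stages pose a $\emptyset'$--question (``is there $\sigma$ with $\sigma_s\prefix\sigma$ and $\varphi^\sigma_i(i)\downarrow$?'') and the odd stages append a bit of $A$. Since $\emptyset'\oplus A\leqT A'$ uniformly in $A$, the map $F$ is computable in the Turing jump of its input, hence limit computable; concretely, I would replace $\emptyset'$ throughout by its standard computable stage approximation, obtaining a computable sequence of candidate outputs converging to $B$, each bit stabilising because every $\emptyset'\oplus A$--computation appearing in the construction is finite and eventually consults only correct approximation values. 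This step is routine bookkeeping.

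For the range, the even stages are exactly $1$--generic forcing: at stage $s=2i$ one searches for an extension $\sigma$ with $\sigma_s\prefix\sigma$ and $\varphi^\sigma_i(i)\downarrow$, committing to the least such $\sigma$ if it exists and keeping $\sigma_s$ otherwise; in the first case $i\in B'$, and in the second case no proper extension of $\sigma_s$ --- in particular no initial segment of $B$ past $\sigma_s$ --- forces $\varphi_i(i)\downarrow$. Thus $B$ meets or avoids the $i$--th $\sO{1}$ set of strings, and since these sets exhaust all c.e.\ sets of conditions (up to the open set they generate), $B$ is $1$--generic; as the even--stage search never consults $A$, the point $B$ is in fact $1$--generic in the computable sequence $\widehat{0}$ for every input $A\in2^\IN$. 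This is the assertion, already noted in the text preceding the corollary, that the range of $F$ lies in the $1$--generics.

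Combining the two properties with the cited proposition of \cite{BHK15a} yields $\JIT\leqSW\lim_\J$. The main thing to get right is the range claim: one must recognise Odifreddi's even--stage strategy, phrased there through the jump requirement $i\in B'$, as genuine $1$--generic forcing and check that it meets or avoids \emph{every} c.e.\ set of string conditions rather than only those of the special syntactic shape exhibited in the proof. This is standard, but deserves to be stated with care; the limit-computability half is then immediate.
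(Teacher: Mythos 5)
Your proposal is correct and follows essentially the same route as the paper: the paper's proof consists precisely of observing that the finite-extension realizer from Proposition~\ref{prop:JIT-cKid} is limit computable and has range contained in the $1$--generic points (the even stages being jump forcing, which by the standard equivalence gives full $1$--genericity), and then invoking the cited proposition of \cite{BHK15a}. You have merely spelled out the details that the paper leaves implicit, including the correct caveat about passing from the jump-forcing conditions to arbitrary c.e.\ sets of strings.
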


The reduction is strict by Proposition~\ref{prop:dense} since $\JIT$ is indiscriminative.
Another conclusion that can be drawn from Proposition~\ref{prop:JIT-cKid} is that $\JIT$ is continuous, because
$\cc_{\emptyset'}\times\id$ is continuous, and this property is preserved downwards by $\leqW$.
An obvious question is now whether $\JIT$ is perhaps even computable?
Given $a=a_0...a_n\in\IN^*$ we denote by $2a:=a_0a_0...a_na_n$ the 
word where each symbol of $a$ is doubled. Analogously, we define $2\alpha$ for $\alpha\in\IN^\IN$.

\begin{proposition}
\label{prop:JD1-non-computable}
$\J_\DD^{-1}$ is not computable.
\end{proposition}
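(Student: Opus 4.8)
The plan is to show that no computable partial function $F:\In2^\IN\to2^\IN$ realizes $\J_\DD^{-1}$. Recall that on the cone $\{\mathbf a\in\DD:0'\leq\mathbf a\}$ we have $\mathbf a\cup0'=\mathbf a$, so a realizer $F$ must be total on $D:=\{p\in2^\IN:\emptyset'\leqT p\}$ and satisfy $(F(p))'\equivT p$ for all $p\in D$. In particular $(F(p))'\equivT p$ forces $F(p)\lT p$, so such an $F$ cannot merely relay a Turing-equivalent copy of its input; the real point is that it cannot uniformly and continuously produce a jump-inverse either, and no single-point continuity argument or recursion-theorem fixed point suffices, so a genuine diagonalization is needed. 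Assume towards a contradiction that such a computable $F$ exists and fix $e$ with $F(p)=\varphi_e^p$ for all $p\in D$.

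I would then construct, by a finite-extension (Kleene--Post style) argument relative to $\emptyset'$, a set $X\in2^\IN$ with $\emptyset'\leqT X$ (hence $X\in D$) and $X\nleqT(\varphi_e^X)'$; by the assumption on $F$ this already yields a contradiction, since $(\varphi_e^X)'\equivT X$ gives $X\leqT(\varphi_e^X)'$. During the construction I keep a $2\emptyset'$-skeleton: along a fixed computable set of coordinates $X$ copies the doubled sequence $2\emptyset'$, so $\emptyset'$ is recoverable from every long enough prefix of $X$ and $\emptyset'\leqT X$ is guaranteed throughout, while the remaining (sparse) coordinates are used for diagonalization. The requirements are $N_i\colon\ X\neq\Phi_i^{(\varphi_e^X)'}$, with $(\Phi_i)_i$ an enumeration of the Turing functionals $\In2^\IN\to2^\IN$. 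To act for $N_i$ from a finite condition $\sigma$ one appoints a fresh diagonalization coordinate $x_i$ and simulates $\Phi_i$ at $x_i$ with oracle $(\varphi_e^X)'$, resolving each oracle query ``$m\in(\varphi_e^X)'$'' by asking $\emptyset'$ whether there is an extension $\tau$ of the current condition with $\varphi_m^{\varphi_e^\tau}(m)\!\downarrow$ --- if so, lengthen the condition to such a $\tau$ and answer ``yes'', otherwise answer ``no'' --- and, if the simulation halts with output $v$, one finally commits the bit of $X$ at $x_i$ to $1-v$. The ``yes'' answers are correct for the final $X$ because they are witnessed by a prefix of it, and the ``no'' answers are correct because ``no extension of $\sigma$ forces a halt'' is preserved as $\sigma$ lengthens; hence the simulated value $v$ really is $\Phi_i^{(\varphi_e^X)'}(x_i)$, so $N_i$ is met when the simulation halts, and it is met vacuously (the computation diverges along every completion) when it does not.

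Assembling the requirements by the usual dovetailing, interleaved with the $2\emptyset'$-skeleton, produces $X$ with $\emptyset'\leqT X$ and $X\nleqT(\varphi_e^X)'$, contradicting that $p\mapsto\varphi_e^p$ realizes $\J_\DD^{-1}$; so $\J_\DD^{-1}$ is not computable, and since $\J_\DD^{-1}\leqSW\JIT$ neither is $\JIT$. The step I expect to be the main obstacle is the jump-forcing together with the fact that the diagonalization witness $x_i$ feeds back into the oracle $(\varphi_e^X)'$: because $(\varphi_e^X)'$ is not a continuous functional of $X$ and depends on the bit at $x_i$, one must arrange that committing $X(x_i)$ at the end does not disturb the halting computations already forced during the simulation (this is where the reserved, sparse placement of the diagonalization coordinates on top of the rigid $2\emptyset'$-skeleton does the work) and that the oracle answers committed while running $\Phi_i$ remain valid for the final $X$ (this is the monotonicity of the forcing conditions just used). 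The remaining ingredients --- that $\varphi_e^X$ is total along the cone, so divergent outcomes genuinely witness $N_i$, and that $\emptyset'$ can be faithfully coded into the skeleton while the construction runs --- are routine.
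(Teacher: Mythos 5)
There is a genuine gap, and it is exactly at the point you flag and then dismiss. Your construction, as described, never uses the hypothesis that $\varphi_e$ realizes $\J_\DD^{-1}$: it is a generic jump-forcing diagonalization that would apply verbatim to any computable functional total on the cone above $\emptyset'$, and would then ``prove'' that for every such functional there is an $X$ with $\emptyset'\leqT X$ and $X\nleqT(\varphi_e^X)'$. That statement is false: for the identity functional $(\varphi_e^X)'=X'$ and $X\leqT X'$ for every $X$, so no such $X$ exists. Hence the construction must break, and it breaks at the feedback circularity: the ``yes''-answers to jump queries are witnessed by extensions $\tau$ which in general must determine the bit at the diagonalization coordinate $x_i$, and can be forced to set it to the very value $v$ the simulation later outputs. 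With the identity functional, the query ``does the machine that halts iff the oracle has a $1$ at position $x_i$ halt'' is answered yes precisely by committing $X(x_i)=1$, after which the final commitment $X(x_i)=1-v$ is impossible. No sparse placement of reserved coordinates on top of the $2\emptyset'$-skeleton can avoid this: conditions are initial segments (and even if you allow partial conditions with holes, the use of a witnessing computation can reach $x_i$ and genuinely depend on that bit). Any repair has to import the realizer hypothesis into the construction itself, e.g.\ the fact that $\varphi_e^X\lT X$, so that the output cannot encode arbitrary bits of the input; your sketch only invokes the hypothesis after the construction, to produce the contradiction. (There are also smaller inaccuracies: the extension search must range over skeleton-respecting conditions, or else $\emptyset'\leqT X$ is lost, and deciding those forcing questions, as well as deciding whether a simulation halts so that you can move on to later requirements, needs more than $\emptyset'$ --- harmless only because no complexity bound on $X$ is required, but not as stated.)

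For comparison, the paper's proof uses the hypothesis from the first line: if $\alpha\nequivT\beta$ both compute $\emptyset'$, then $(\varphi^\alpha)'\equivT\alpha\nequivT\beta\equivT(\varphi^\beta)'$ forces $\varphi^\alpha\not=\varphi^\beta$, so above every string there are two finite extensions on which $\varphi$ provably differs at some argument. From these splittings one builds a computable splitting tree (the doubling and the $01/10$ markers guarantee paths of every degree, in particular a path $\alpha\equivT\emptyset'$), and then recovers $\alpha$ from $\varphi^\alpha$ by walking down the tree and comparing values, giving $\alpha\leqT\varphi^\alpha$; this contradicts $(\varphi^\alpha)'\equivT\alpha$, which forces $\varphi^\alpha\lT\alpha$. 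If you want to salvage a forcing-style argument you would need to build this kind of ``output distinguishes the input'' step into the requirements; as it stands, the proposal does not constitute a proof.
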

\begin{proof}
For this proof we represent $\DD$ by $2^\IN$ in some effective way.
Suppose there is a computable Turing functional $\varphi=\varphi_i:\In2^\IN\to2^\IN$ that realizes $\J_\DD^{-1}$.
Then $\alpha\nequivT\beta$ implies $\varphi^\alpha\not=\varphi^\beta$ for all $\alpha,\beta\in2^\IN$
that compute the halting problem,
since $(\varphi^\alpha)'\equivT\alpha$ and $(\varphi^\beta)'\equivT\beta$.
Hence, for all words $\sigma\in\{0,1\}^*$ there exist $p,q\in2^\IN$ and $n\in\IN$
such that $\varphi^{\sigma^\frown 2p}(n)\not=\varphi^{\sigma^\frown 2q}(n)$ and such that both values exist.
We use some fixed effective enumeration of $\{0,1\}^*\times\{0,1\}^*\times\IN$.
We inductively construct a computable perfect splitting tree $f:\{0,1\}^*\to\{0,1\}^*$ for $\varphi$
with branches of arbitrarily high Turing degree. To begin with, $f$ takes the empty word to the
empty word. Now suppose we have defined $f$ on all words of length $n$ and
let $\sigma\in\{0,1\}^n$. We define
\[f(\sigma^\frown d):=\left\{\begin{array}{ll}
  f(\sigma)^\frown 2a^\frown 01 & \mbox{if $d=0$}\\
  f(\sigma)^\frown 2b^\frown 10 & \mbox{if $d=1$}
\end{array},\right.\]
for $d\in\{0,1\}$, where $(a,b,s)\in\{0,1\}^*\times\{0,1\}^*\times\IN$ is minimal in our fixed enumeration
such that there exists some $n$ with $\varphi^{f(\sigma)^\frown 2a}(n)[s]\not=\varphi^{f(\sigma)^\frown 2b}(n)[s]$ and such that both values exist.
This completes the construction of $f$. The prefix closure $T\In\{0,1\}^*$ of the image
$f(\{0,1\}^*)$ is a computable binary tree such that for all incompatible $a,b\in T$
there exists some $n\in\IN$ with $\varphi^a(n)\not=\varphi^b(n)$ (and such that both values exist).
Moreover, we can injectively map the Turing semi-lattice into $[T]$ (the set of infinite paths of $T$)
via the pairs $01,10$ that have been added in the construction of $f$. 

Now let $\alpha\in[T]$ be such that $\emptyset'\leqT\alpha$. 
This guarantees that $\alpha\in\dom(\varphi)$.
We show that $\alpha$
is computable from $\varphi^\alpha$, which contradicts the assumption that $\varphi$
realizes $\J_\DD^{-1}$ (i.e., the fact that $(\varphi^\alpha)'\equivT\alpha$).
We compute a monotone increasing sequence $(\sigma_n)_n$ of words $\sigma_n\in\{0,1\}^n$
such that $\alpha=\sup_n f(\sigma_n)$. Let $\sigma_0$ is the empty word. 
Since $\alpha\in[T]$, one of $f(0)$ and $f(1)$ is compatible with $\alpha$.
By construction, we can find some $n\in\IN$ such that $\varphi^{f(0)}(n)\not=\varphi^{f(1)}(n)$
(and such that both values exist), and so by comparing $\varphi^\alpha(n)$ with $\varphi^{f(0)}(n)$
and $\varphi^{f(1)}(n)$ we can decide which of $f(0)$ and $f(1)$ is a prefix of $\alpha$.
We let $\sigma_1:=i$ for the corresponding $i$ with $\varphi^{\alpha}(n)=\varphi^{f(i)}(n)$.
An analogous construction works on all levels: having constructed $\sigma_n$, we have that
$f(\sigma_n^\frown i)$ is an initial segment of $\alpha$ for the $i$ such that
$\varphi^\alpha(n)=\varphi^{f(\sigma_n^\frown i)}(n)\not=\varphi^{f(\sigma_n^\frown (1-i))}(n)$ for some $n$ (where all
these values exist). Since $\alpha=\sup_nf(\sigma_n)$, we obtain 
$\alpha\leqT\varphi^\alpha$.
 \end{proof}

As a direct consequence of Propositions~\ref{prop:JIT-cKid} and \ref{prop:JD1-non-computable}
we obtain the following corollary.

\begin{corollary}
\label{cor:JIT} 
$\JIT$ is limit computable and continuous, but not computable. 
\end{corollary}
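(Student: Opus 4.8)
The plan is to assemble Corollary~\ref{cor:JIT} directly from the two preceding results, since all the real work has already been done. Recall what we need: $\JIT$ is (i) limit computable, (ii) continuous, and (iii) not computable.

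First I would obtain limit computability. Proposition~\ref{prop:JIT-cKid} gives $\JIT\leqSW\cc_{\emptyset'}\times\id$, and $\cc_{\emptyset'}$ is obviously limit computable (a computable enumeration of the halting problem converges to it), while $\id$ is computable; a product of limit computable functions is limit computable. Alternatively — and this is perhaps the cleaner route to cite — Corollary~\ref{cor:JIT-limJ} already records $\JIT\leqSW\lim_\J$, and $\lim_\J$ is a restriction of $\lim$, so $\JIT\leqSW\lim$, which is exactly the statement that $\JIT$ has a limit computable realizer. Either way, (i) is immediate.

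Next, continuity. The realizer $F$ constructed in the proof of Proposition~\ref{prop:JIT-cKid} is, up to the fixed oracle $\emptyset'$, a computable (hence continuous) map: the finite extension construction of $(\sigma_s)_s$ reads only finitely much of the input $A$ to determine each $\sigma_s$ (the odd stages consult $\chi_A$ one bit at a time, the even stages consult only $\emptyset'$), so $F$ is continuous as a function $2^\IN\to2^\IN$. Thus $\JIT$ has a continuous realizer, i.e.\ it is continuous. Since continuity also follows formally from $\JIT\leqSW\cc_{\emptyset'}\times\id$ (a fixed oracle does not destroy continuity of the realizer), I would simply remark that the realizer exhibited in Proposition~\ref{prop:JIT-cKid} is continuous and that this gives (ii).

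Finally, non-computability. Here I invoke Proposition~\ref{prop:JD1-non-computable}: the restriction $\J_\DD^{-1}$ of $\JIT$ to the cone $\{\mathbf a:0'\leq\mathbf a\}$ is not computable. Since a computable realizer of $\JIT$ would restrict to a computable realizer of $\J_\DD^{-1}$, it follows that $\JIT$ is not computable either, giving (iii). I do not anticipate any genuine obstacle; the only mild care needed is to make sure the appeal to limit computability is phrased consistently with the paper's conventions (that $f\leqW\lim$, equivalently $f\leqSW\lim$ for the relevant degrees, is synonymous with "$f$ has a limit computable realizer"), and to be explicit that non-computability of a restriction of $\JIT$ transfers upward to $\JIT$. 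The corollary is therefore essentially a one-paragraph bookkeeping exercise combining Propositions~\ref{prop:JIT-cKid} and \ref{prop:JD1-non-computable} (with Corollary~\ref{cor:JIT-limJ} as an alternative source for limit computability).
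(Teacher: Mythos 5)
Your proposal is correct and follows exactly the paper's route: the paper derives the corollary as a direct consequence of Proposition~\ref{prop:JIT-cKid} (which gives both limit computability and a continuous realizer, since $\cc_{\emptyset'}\times\id$ is continuous and limit computable) and Proposition~\ref{prop:JD1-non-computable} (non-computability of the restriction $\J_\DD^{-1}$ of $\JIT$ to the cone above $0'$ rules out a computable realizer of $\JIT$). Your bookkeeping of these two ingredients, including the upward transfer of non-computability from the restriction, matches the intended argument.
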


We note that $\J_\DD^{-1}\circ\J_\DD^{-1}=0$ (where $0$ denotes the nowhere defined function),
since by a version of the jump inversion theorem due to Cooper (see \cite{Coo73} and also
\cite[Theorem~2.18.7]{DH10}) for every $a\geq 0'$ there is some minimal $b$ with $b'=a$
and hence $b\ngeq 0'$. In fact, we can even formulate the following stronger observation
as a corollary of Cooper's theorem.

\begin{corollary}
$\J_\DD^{-1}\stars\J_\DD^{-1}\equivSW0$.
\end{corollary}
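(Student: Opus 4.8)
The plan is to prove $\J_\DD^{-1}\stars\J_\DD^{-1}\equivSW 0$ by showing that the strong compositional product has empty domain, which forces it to be (equivalent to) the nowhere defined function $0$. Recall that $\J_\DD^{-1}$ is the restriction of $\JIT$ to the cone $\{\mathbf a\in\DD:0'\leq\mathbf a\}$, so $\J_\DD^{-1}(\mathbf a)=\{\mathbf b:\mathbf b'=\mathbf a\}$. The key structural fact is Cooper's Jump Inversion Theorem: for every $\mathbf a\geq 0'$ there is a \emph{minimal} degree $\mathbf b$ with $\mathbf b'=\mathbf a$, and since no minimal degree lies above $0'$, such a $\mathbf b$ satisfies $\mathbf b\ngeq 0'$. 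Hence for every $\mathbf a\geq 0'$ there exists $\mathbf b\in\J_\DD^{-1}(\mathbf a)$ with $\mathbf b\notin\dom(\J_\DD^{-1})$.

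First I would recall the definition $f\stars g:=\sup\{f_0\circ g_0:f_0\leqSW f,\ g_0\leqSW g\}$ and observe that since $0\leqSW h$ for every $h$ (the nowhere defined function reduces to everything), it suffices to show that every $f_0\circ g_0$ occurring in this supremum is itself the nowhere defined function, i.e. has empty domain; then the supremum is $0$. So let $f_0\leqSW\J_\DD^{-1}$ and $g_0\leqSW\J_\DD^{-1}$ be composable, and take any $x\in\dom(g_0)$. Unwinding the strong reduction $g_0\leqSW\J_\DD^{-1}$, there are computable $K,H$ with $H G_0 K\vdash g_0$ for every realizer $G_0$ of $\J_\DD^{-1}$; in particular $K$ maps a name of $x$ to a name (i.e. a $\leq$-representative) of some degree $\mathbf a\geq 0'$, and for \emph{every} $\mathbf b\in\J_\DD^{-1}(\mathbf a)$ there is a realizer $G_0$ with $G_0K$ producing a name of $\mathbf b$, so that $H$ applied to that name yields an element of $g_0(x)$. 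By Cooper's theorem we may pick $\mathbf b\in\J_\DD^{-1}(\mathbf a)$ with $\mathbf b\ngeq 0'$, so the corresponding output of $g_0$ is (a name for) a degree outside $\dom(\J_\DD^{-1})$, which is therefore outside $\dom(f_0)$ as well. Hence $f_0\circ g_0$ is undefined at $x$, so $\dom(f_0\circ g_0)=\emptyset$ and $f_0\circ g_0\equivSW 0$.

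The remaining direction $0\leqSW\J_\DD^{-1}\stars\J_\DD^{-1}$ is trivial since $0$ is the bottom element. Combining the two gives $\J_\DD^{-1}\stars\J_\DD^{-1}\equivSW 0$.

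I expect the main obstacle to be the bookkeeping in the middle step: one must be careful that the strong reduction $g_0\leqSW\J_\DD^{-1}$ really does let us choose, adversarially, a non-$0'$-bounding preimage of $\mathbf a$ as the value fed through $H$ — this uses that $\J_\DD^{-1}$ is genuinely multi-valued and that in the definition of $\leqSW$ the property $HG_0K\vdash g_0$ must hold for \emph{all} realizers $G_0$, so in particular for one that selects the Cooper-minimal preimage. A subtlety worth noting explicitly is that $\J_\DD^{-1}(\mathbf a)$ is non-empty exactly for $\mathbf a\geq 0'$, so the name $K(x)$ produced does denote a degree in $\dom(\J_\DD^{-1})$; the point is purely that the \emph{output} of the first factor can be driven outside that domain. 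One should also remark that the same argument, with $\stars$ replaced by $*$, would not obviously work, since ordinary Weihrauch reducibility allows the output modifier access to the original input and hence could in principle ``repair'' a bad value; but for the strong product the output modifier is blind, which is precisely what makes the obstruction unavoidable. This is why the statement is phrased with $\stars$, strengthening the earlier observation $\J_\DD^{-1}\circ\J_\DD^{-1}=0$.
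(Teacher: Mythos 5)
Your overall plan is the right one (and it is the one the paper has in mind, which only sketches this as a consequence of Cooper's theorem): since $f\equivSW 0$ exactly when $f$ is nowhere defined, it suffices to show that \emph{every} composable $f_0\circ g_0$ with $f_0,g_0\leqSW\J_\DD^{-1}$ has empty domain, and Cooper's jump inversion onto a minimal degree is the correct engine. But the key middle step has a genuine gap: from the adversarial realizer producing a name of the minimal $\mathbf b\in\J_\DD^{-1}(\mathbf a)$ you conclude that ``the corresponding output of $g_0$ is (a name for) a degree outside $\dom(\J_\DD^{-1})$, which is therefore outside $\dom(f_0)$ as well.'' Neither half of this is justified. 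The output of $g_0$ is $\delta_Y\circ H$ applied to that name, where $H$ is an arbitrary computable postprocessor and $Y$ is an arbitrary represented space; it need not be a Turing degree at all (it could, say, be a fixed computable point if $H$ is constant). And even if it were a degree, $f_0$ is an \emph{arbitrary} problem with $f_0\leqSW\J_\DD^{-1}$, not a restriction of $\J_\DD^{-1}$, so there is no containment $\dom(f_0)\In\dom(\J_\DD^{-1})$ to appeal to. In effect your argument only proves the weaker, already-stated fact $\J_\DD^{-1}\circ\J_\DD^{-1}=0$; the reduction witnesses for $f_0$ are never used, and they must be.

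The repair uses on the $f_0$-side exactly the observation you already use implicitly on the $g_0$-side (that realizers of $\J_\DD^{-1}$ need not be defined outside $\dom(\J_\DD^{-1}\delta_\DD)$): if $f_0\leqSW\J_\DD^{-1}$ via computable $H_1,K_1$, then for every name $q$ of every point of $\dom(f_0)$ the preprocessed input $K_1(q)$ must name a degree $\geq\mathbf 0'$, and since $K_1(q)\leqT q$ this forces $q\geqT\emptyset'$; so every name of every point in $\dom(f_0)$ computes the halting problem. Now run your construction: given $x\in\dom(g_0)$ with name $p$, the $g_0$-reduction gives a name $K_2(p)$ of some $\mathbf a\geq\mathbf 0'$, Cooper gives a minimal $\mathbf b$ with $\mathbf b'=\mathbf a$, and choosing a realizer that outputs a representative $r\in\mathbf b$ yields a name $q:=H_2(r)\leqT r$ of some $y\in g_0(x)$. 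Since $\mathbf b$ is minimal it is not $\geq\mathbf 0'$ (otherwise $\mathbf 0'$ would lie strictly between $\mathbf 0$ and $\mathbf b$, or equal $\mathbf b$, both impossible), so $r\nleqT\emptyset'$ is not the point — rather $r\not\geqT\emptyset'$, hence $q\not\geqT\emptyset'$, hence $y\notin\dom(f_0)$ by the previous paragraph. Thus $g_0(x)\not\In\dom(f_0)$ and $x\notin\dom(f_0\circ g_0)$, so $\dom(f_0\circ g_0)=\emptyset$ as desired. With this step inserted your proof is complete; the rest (the triviality of $0\leqSW\J_\DD^{-1}\stars\J_\DD^{-1}$, and the remark on why $\stars$ rather than $*$ is essential) is fine.
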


Arno Pauly (personal communication) noted that the following corollary follows from Proposition~\ref{prop:JIT-cKid} 
(using the additional observation that $\cc_{\emptyset'}*\cc_{\emptyset'}\equivW\cc_{\emptyset'}$).
This answers an open question from an earlier version of this article.

\begin{corollary}
$\JIT*\JIT\leqW \cc_{\emptyset'}\leqW\lim$.
\end{corollary}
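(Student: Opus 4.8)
The plan is to reduce $\JIT$ itself to the constant oracle $\cc_{\emptyset'}$, then observe that $\cc_{\emptyset'}$ is idempotent under the compositional product, and finally recall $\cc_{\emptyset'}\leqW\lim$. For the first step, note that Proposition~\ref{prop:JIT-cKid} gives $\JIT\leqSW\cc_{\emptyset'}\times\id$, and that for \emph{ordinary} Weihrauch reducibility one always has $f\times\id\leqW f$: the outer reduction function may simply reconstruct the identity component from the original input (this is precisely why the notion of cylinder is phrased with $\leqSW$). Hence $\cc_{\emptyset'}\times\id\equivW\cc_{\emptyset'}$, and so $\JIT\leqW\cc_{\emptyset'}$.

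The second step is to show $\cc_{\emptyset'}*\cc_{\emptyset'}\equivW\cc_{\emptyset'}$, the nontrivial direction being $\cc_{\emptyset'}*\cc_{\emptyset'}\leqW\cc_{\emptyset'}$. Since $\cc_{\emptyset'}$ ignores its input and has, up to that, the single realizer $p\mapsto\chi_{\emptyset'}$, any $g_0\leqW\cc_{\emptyset'}$ admits a realizer $G_0$ with $G_0(p)\leqT p\oplus\emptyset'$ uniformly in $p$, and likewise any composable $f_0\leqW\cc_{\emptyset'}$ admits a realizer $F_0$ with $F_0(v)\leqT v\oplus\emptyset'$ uniformly. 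Composing, $F_0\circ G_0$ realizes $f_0\circ g_0$ and satisfies $F_0G_0(p)\leqT p\oplus\emptyset'$ uniformly in $p$, so $f_0\circ g_0\leqW\cc_{\emptyset'}$ (feed the original input and the $\chi_{\emptyset'}$ produced by $\cc_{\emptyset'}$ into the computable post-processing). Since $\cc_{\emptyset'}*\cc_{\emptyset'}$ is by definition the maximal such composite $f_0\circ g_0$ with $f_0,g_0\leqW\cc_{\emptyset'}$, this yields $\cc_{\emptyset'}*\cc_{\emptyset'}\leqW\cc_{\emptyset'}$; the reverse reduction is immediate since $\id\leqW\cc_{\emptyset'}$.

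To conclude, the compositional product is monotone in both arguments (immediate from its description as a maximum over composable subproblems), so the first step gives $\JIT*\JIT\leqW\cc_{\emptyset'}*\cc_{\emptyset'}$, which by the second step equals $\cc_{\emptyset'}$. Finally $\cc_{\emptyset'}\leqW\lim$ — indeed $\cc_{\emptyset'}\leqSW\lim$ — because a computable sequence converging to $\chi_{\emptyset'}$, namely the standard stagewise approximation to the halting problem, can be produced from scratch and handed to $\lim$. Chaining the reductions gives $\JIT*\JIT\leqW\cc_{\emptyset'}\leqW\lim$.

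The step I expect to be the main obstacle is the idempotency $\cc_{\emptyset'}*\cc_{\emptyset'}\equivW\cc_{\emptyset'}$: one has to unwind the definition of $*$ as a maximum over composable subproblems $f_0\circ g_0$ and keep track of uniformity in the above chain of Turing reductions. Once one observes that $\cc_{\emptyset'}$ contributes nothing but a fixed oracle whose self-join is no stronger ($\emptyset'\oplus\emptyset'\equivT\emptyset'$), the verification becomes routine.
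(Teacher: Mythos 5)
Your proof is correct and follows essentially the same route as the paper, which states this corollary (attributed to Arno Pauly) as an immediate consequence of Proposition~\ref{prop:JIT-cKid}, i.e.\ of the fact that $\JIT$ is computable relative to the halting problem. Your steps --- $\cc_{\emptyset'}\times\id\equivW\cc_{\emptyset'}$ under ordinary reducibility, idempotency of $\cc_{\emptyset'}$ under $*$ (closure of $\emptyset'$-relative computability under composition, using $\emptyset'\oplus\emptyset'\equivT\emptyset'$), monotonicity of $*$, and $\cc_{\emptyset'}\leqSW\lim$ via the stagewise approximation of $\emptyset'$ --- merely spell out the routine details the paper leaves implicit.
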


\section{The Cohesiveness Problem}
\label{sec:COH}

In this section we want to discuss some properties of the cohesiveness problem.
A set $X\In\IN$ is called {\em cohesive} for a sequence $(R_i)_i$ of sets $R_i\In\IN$ if it is infinite
and for each $i\in\IN$ we have $X\In^*R_i$ or $X\In^*(R_i)^{\rm c}$.
Here we write $X\In^*Y$ if $X\setminus Y$ is finite, i.e., if $X$ is included in $Y$ with only finitely many exceptions.
One can prove that for every sequence $(R_i)_i$ of sets there is always a cohesive set $X$.

\begin{definition}[Cohesiveness Problem]
By $\COH:(2^\IN)^\IN\mto2^\IN$ with
\[\COH(R_i)_i:=\{X\In\IN:X\mbox{ cohesive for }(R_i)_i\}\]
we denote the {\em cohesiveness problem}.
\end{definition}

The cohesiveness problem has been introduced into reverse mathematics by \cite{CJS01}, and the Weihrauch degree
of $\COH$ has already been studied in \cite{DDH+16}. In computability theory a set is called {\em r-cohesive} if it is 
cohesive for the sequence of all computable sets, {\em p-cohesive} if it is cohesive for the sequence of all primitive recursive sets,
and {\em cohesive} if it is cohesive for the sequence of all c.e.\ sets \cite{JS93}.
We say that a Turing degree has property $P$ if it has a member with property $P$, and we extend
the different notions of cohesiveness to degrees in this way.
By \cite[Corollary~2.4]{JS93} the r-cohesive Turing degrees coincide with the cohesive ones.
In order to capture the notion of cohesiveness, we introduce the following variant of $\COH$.

\begin{definition}
By $\COH_+$ we denote the map $\COH_+:\AA_+(\IN)^\IN\mto2^\IN$ with $\COH_+(R_i)_i:=\COH(R_i)_i$.
\end{definition}

We note that the objects in the domain of $\COH$ and $\COH_+$ are the same, they are just represented in different ways.
While $2^\IN$ can be identified with $\AA(\IN)$, the set of closed subsets $A\In\IN$ represented by full information (i.e., by characteristic functions),
the set $\AA_+(\IN)$ captures the set of subsets $A\In\IN$ represented by positive information (i.e., enumerations). 
We will see that $\COH$ can be seen as the uniform version of p--cohesiveness, whereas
$\COH_+$ captures cohesiveness (or r--cohesiveness).
It is also easy to see that $\COH_+$ is located between $\COH$ and $\COH'$.

\begin{proposition}
\label{prop:COH-COHD}
$\COH\leqSW\COH_+\leqSW\COH'$.
\end{proposition}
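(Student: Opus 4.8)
The plan is to establish the two strong Weihrauch reductions separately, both essentially by observing that a representation of $\AA_+(\IN)$ (positive information, i.e.\ enumerations) sits between the representation of $2^\IN = \AA(\IN)$ (full information, i.e.\ characteristic functions) and its jump. Recall that a name of $A \in \AA_+(\IN)$ is an enumeration of $A$, and a name of $A \in \AA(\IN)$ is the characteristic function $\chi_A$; meanwhile a $\delta'$-name of $A \in \AA(\IN)$ is a sequence converging to $\chi_A$.

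For $\COH \leqSW \COH_+$: given a sequence $(R_i)_i$ presented by characteristic functions $\langle \chi_{R_0}, \chi_{R_1}, \dots\rangle$, I would note that from $\chi_{R_i}$ one can trivially compute an enumeration of $R_i$ (just list the elements $n$ with $\chi_{R_i}(n) = 1$), uniformly in $i$. This gives a computable map $K$ on the input side producing a legitimate $\AA_+(\IN)^\IN$-name. Since any $X$ cohesive for $(R_i)_i$ as presented to $\COH_+$ is by definition cohesive for the very same sequence of sets, the output needs no modification: $H = \id$. Hence $\COH \leqSW \COH_+$.

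For $\COH_+ \leqSW \COH'$: a $\COH'$-instance is a $\delta'$-name of a sequence of \emph{characteristic functions}, i.e.\ a sequence converging (coordinatewise, in the Baire topology) to $\langle \chi_{R_0}, \chi_{R_1}, \dots \rangle$. Given an enumeration of each $R_i$ (a $\COH_+$-instance), I would compute a sequence of finite approximations to each $\chi_{R_i}$ that stabilizes to $\chi_{R_i}$: at stage $s$ output the characteristic function of the finite set of elements of $R_i$ enumerated so far (padding with $0$'s); this converges to $\chi_{R_i}$ since each element of $R_i$ eventually appears and each non-element never does. Packaging these approximations appropriately yields a computable $K$ producing a genuine $\delta'$-name of $\langle \chi_{R_i}\rangle_i$, and again the cohesive set produced by $\COH'$ for this sequence of sets is already a valid answer for $\COH_+((R_i)_i)$, so $H = \id$. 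Both reductions are strong because the output is untouched.

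The only subtlety — and the main thing to get right — is bookkeeping with the pairing functions: $\COH'$ expects its input in the form $\langle q_0, q_1, q_2, \dots\rangle$ where the sequence $(q_n)_n$ converges in $\IN^\IN$ to a name of the sequence $(R_i)_i$, i.e.\ one must interleave the per-coordinate approximations correctly so that the limit, under the canonical representation of $(2^\IN)^\IN$, really decodes to $(R_i)_i$. This is routine but requires care that the $i$-th coordinate of $q_n$ stabilizes and that the rate of stabilization need not be uniform in $i$ (which is fine, since $\lim$ does not require uniformity). I expect no genuine obstacle here — the statement is a soft one about representations — so the write-up is short; if anything, one might simply cite that $f \leqSW f'$ always holds (since $\delta'_X = \delta_X \circ \lim$ factors through $\delta_X$) for the second reduction, making that half immediate, and the enumeration-from-characteristic-function observation handles the first.
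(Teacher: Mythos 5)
Your proof is correct and follows essentially the same route as the paper: the first reduction is the computability of the identity $\AA(\IN)\to\AA_+(\IN)$ (characteristic function to enumeration) and the second is the limit computability of the identity $\AA_+(\IN)\to\AA(\IN)$ (your stage-wise approximations; the paper simply cites this fact), with the output left untouched in both cases, giving strong reductions. The only slip is your closing aside: citing $f\leqSW f'$ would only yield $\COH\leqSW\COH'$, not $\COH_+\leqSW\COH'$, so the explicit enumeration-to-converging-approximation argument you give is in fact the step that is needed.
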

\begin{proof}
Since $\id:\AA(\IN)\to\AA_+(\IN)$ is computable and $\id:\AA_+(\IN)\to\AA(\IN)$ is limit computable by \cite[Proposition~4.2]{BG09}
we obtain $\COH\leqSW\COH_+\leqSW\COH'$.
\end{proof}

Next we want to apply a very useful characterization of cohesive degrees of Jockusch and Stephan \cite{JS93}\footnote{We note that none of the results from \cite{JS93} that we use here are affected by the correction \cite{JS97}.}
in order to separate $\COH$ from $\COH_+$.
The following is a corollary of the proof of \cite[Theorem~2.1]{JS93}.

\begin{corollary}[Jockusch and Stephan]
\label{cor:Jockusch-Stephan}
Let ${\mathbf a,\mathbf b}$ be Turing degrees. Then the following are equivalent:
\begin{enumerate}
\item ${\mathbf a}$ is cohesive for every ${\mathbf b}$--computable sequence $(A_i)_i$ of sets $A_i\In 2^\IN$,
\item ${\mathbf a'}\gg{\mathbf b'}$.
\end{enumerate}
\end{corollary}

By Corollary~\ref{cor:Jockusch-Stephan} and by using ideas from the proof of \cite[Theorem~2.9(ii)]{JS93} we obtain the 
following separation result.

\begin{proposition}
\label{prop:COH-COHD-separation}
$\COH_+\nleqW\COH$.
\end{proposition}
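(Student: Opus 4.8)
The plan is to derive a contradiction from an assumed reduction $\COH_+\leqW\COH$, exploiting that a Weihrauch reduction sends a computable input instance to a computable output instance, together with Theorem~\ref{thm:Jockusch-Stephan}, which makes precise that cohesiveness for a single computable sequence is far weaker than cohesiveness for all c.e.\ sets.

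So suppose $\COH_+\leqW\COH$ via computable $K,H$, and let $p$ be the computable name of the sequence $(W_i)_{i\in\IN}\in\AA_+(\IN)^\IN$ of all c.e.\ sets with their canonical enumerations. Then $(A_j)_{j\in\IN}:=K(p)$ is a computable sequence of subsets of $\IN$, presented by characteristic functions. The key step --- which I expect to be the main obstacle --- is to find a set $X$ cohesive for $(A_j)_j$ whose Turing degree is not r-cohesive, equivalently (by Theorem~\ref{thm:Jockusch-Stephan} with ${\mathbf b}={\mathbf 0}$) with $[X]'\not\gg{\mathbf 0}'$. This is exactly where the ideas of \cite[Theorem~2.9(ii)]{JS93} enter: that result produces a p-cohesive set --- one cohesive for the computable sequence of all primitive recursive sets --- whose degree is not r-cohesive, and the construction uses nothing about the sequence beyond its computability. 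Carrying it out with $(A_j)_j$ in place of the primitive recursive sequence yields the desired $X$; concretely one forces $X$ along a computable $\subseteq^*$-decreasing sequence of infinite computable reservoirs, making $X$ cohesive for $(A_j)_j$, while simultaneously, for a fixed $\emptyset'$-computable tree with no $\emptyset'$-computable path, defeating every possibility that $[X]'$ computes a path through it, so that $[X]'$ is not of PA degree over ${\mathbf 0}'$.

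Having fixed such an $X$, choose a realizer $G$ of $\COH$ with $GK(p)$ a name of $X$; this is admissible since $X\in\COH((A_j)_j)$, so one may take any realizer of $\COH$ and alter its value on the single input $K(p)$. Since $H\langle\id,GK\rangle\vdash\COH_+$ and $(W_i)_i\in\dom(\COH_+)$ (every sequence of sets has a cohesive set), the set $Y$ named by $H\langle p,GK(p)\rangle$ lies in $\COH_+((W_i)_i)$, i.e.\ $Y$ is cohesive for every c.e.\ set, in particular for every computable set, hence r-cohesive; moreover $Y\leqT X$ because $p$ and $H$ are computable. Now Theorem~\ref{thm:Jockusch-Stephan} gives $[Y]'\gg{\mathbf 0}'$, and from $Y\leqT X$ we get $[Y]'\leq[X]'$, whence Proposition~\ref{prop:way-below}(4) yields $[X]'\gg[Y]'$ and Proposition~\ref{prop:way-below}(3) yields $[X]'\gg{\mathbf 0}'$, contradicting the choice of $X$. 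Hence $\COH_+\nleqW\COH$.

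Everything except the construction of $X$ is routine bookkeeping about Weihrauch reductions and about the operation $\gg$ (in particular, Proposition~\ref{prop:way-below}(3) and (4) show that the r-cohesive degrees are closed upward under $\leqT$). The only point requiring genuine care is to check that the argument underlying \cite[Theorem~2.9(ii)]{JS93} really depends only on the computability of the list of sets and not on any further special feature of the primitive recursive sets.
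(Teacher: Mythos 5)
Your overall contradiction scheme (computable inputs go to computable inputs, outputs are computed from the oracle answer plus the original input) is the right shape, but the step you yourself flagged as the ``main obstacle'' is not merely hard --- in general it is impossible, and the reason is a conflation at the start of your argument. Theorem~\ref{thm:Jockusch-Stephan} with ${\mathbf b}={\mathbf 0}$ characterizes the degrees that are cohesive for \emph{every} computable sequence of sets (equivalently, the p--cohesive degrees) as exactly those ${\mathbf a}$ with ${\mathbf a}'\gg{\mathbf 0}'$; it does \emph{not} characterize the r--cohesive degrees. The r--cohesive (equivalently cohesive) degrees are a \emph{proper} subclass of the p--cohesive degrees, and this properness is precisely the content of \cite[Theorem~2.9(ii)]{JS93}. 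So your asserted equivalence ``$[X]$ not r--cohesive $\iff$ $[X]'\not\gg{\mathbf 0}'$'' is false, and it sends you after the wrong witness: you try to build $X$ cohesive for $(A_j)_j=K(p)$ with $[X]'\not\gg{\mathbf 0}'$. But $K$ belongs to the adversarial reduction, and nothing prevents $K(p)$ from listing, say, all primitive recursive sets; in that case every set cohesive for $(A_j)_j$ is p--cohesive and hence has jump $\gg{\mathbf 0}'$ by the very same theorem, so your $X$ does not exist. (Your reading of \cite[Theorem~2.9(ii)]{JS93} is also off: that construction produces a set which \emph{is} p--cohesive, hence has jump $\gg{\mathbf 0}'$, but whose degree is not cohesive --- not a set cohesive for a given sequence whose jump fails to be PA over ${\mathbf 0}'$.) Without such an $X$, your final chain only shows that every $X$ cohesive for $K(p)$ has $[X]'\gg{\mathbf 0}'$, which is perfectly consistent with the assumed reduction, so no contradiction is reached.

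The repair is to keep the jump condition and instead defeat the property ``being of cohesive degree.'' Take, directly from \cite[Theorem~2.9(ii)]{JS93}, a degree ${\mathbf a}$ with ${\mathbf a}'\gg{\mathbf 0}'$ that is not cohesive. By Theorem~\ref{thm:Jockusch-Stephan} (the direction from ${\mathbf a}'\gg{\mathbf 0}'$ to cohesiveness for every computable sequence), ${\mathbf a}$ is automatically cohesive for $K(p)$, whatever sequence that happens to be --- no construction relative to $(A_j)_j$ is needed. Choose a realizer $G$ of $\COH$ with $r:=GK(p)$ of degree ${\mathbf a}$; then $H\langle p,r\rangle$ names a member of $\COH_+((W_i)_i)$, i.e.\ a cohesive set, so its degree is cohesive, yet it is $\leqT r$. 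Since the cohesive degrees are closed upwards under Turing reducibility \cite[Corollary~1]{Joc73}, this would force ${\mathbf a}$ to be cohesive, a contradiction. (Note your concluding jump computation cannot replace this last step: in the correct setup $[r]'\gg{\mathbf 0}'$ holds by design, so the contradiction must be located at cohesiveness of the degree, not at the jump.) This is exactly the paper's proof.
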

\begin{proof}
Suppose that $\COH_+\leqW\COH$. Then there are computable functions $H,K$ such that
$H\langle\id,GK\rangle$ is a realizer of $\COH_+$ whenever $G$ is a realizer of $\COH$.
Let $p\in\IN^\IN$ be a computable name for the sequence of all c.e.\ subsets $B\In\IN$,
and consider the sequence $(A_i)_i$ of computable sets given by $K(p)$. By the proof of \cite[Theorem~2.9(ii)]{JS93} there is a Turing degree ${\mathbf a}$ 
that is not cohesive, but such that ${\mathbf a}'\gg{\mathbf 0'}$. 
Hence by Corollary~\ref{cor:Jockusch-Stephan} the degree ${\mathbf a}$ is cohesive for $(A_i)_i$. 
We choose a realizer $G$ of $\COH$ that yields a set $r:=GK(p)\in2^\IN$ that is
cohesive for $(A_i)_i$ and of degree ${\mathbf a}$ and hence not cohesive. Since cohesive degrees are closed upwards with
respect to Turing reducibility by \cite[Corollary~1]{Joc73}, it follows that $H\langle p,GK(p)\rangle\leqT r$
is not of cohesive degree either, which is absurd.
\end{proof}

Next we prove that $\HYP$ is reducible to $\COH$.
The proof is based on a relativized version of the proof of \cite[Theorem~3.1]{JS93}. 

\begin{theorem}
\label{thm:HYP-COH}
$\HYP\leqSW\COH$.
\end{theorem}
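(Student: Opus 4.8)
The plan is to reduce $\HYP$ to $\COH$ by mimicking the classical proof that every cohesive degree is hyperimmune, which is \cite[Theorem~3.1]{JS93}, in a uniform and relativized way. First I would fix an input $p\in\IN^\IN$ for $\HYP$. From $p$ I would compute a sequence $(R_i)_i$ of sets $R_i\In\IN$ with the property that any set $X$ cohesive for $(R_i)_i$ grows fast enough to dominate every $p$-computable function. The natural choice is to take, for each $p$-partial-computable function $\varphi^p_e$ and each threshold $n$, a set of the form $R_{\langle e,n\rangle}:=\{m\in\IN: \varphi^p_e(n)\!\downarrow \text{ within } m \text{ steps}\}$ (or a variant of this), so that whether $X\In^*R_{\langle e,n\rangle}$ or $X\In^*R_{\langle e,n\rangle}^{\rm c}$ records whether $\varphi^p_e(n)$ converges, and if so gives an upper bound on its value in terms of the elements of $X$. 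Since the map $p\mapsto(R_i)_i$ is computable, this is the inner reduction witness $K$.

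Next I would extract, from a cohesive set $X$ for $(R_i)_i$, a function $q$ that is hyperimmune relative to $p$. The standard trick is to let $q$ enumerate $X$ in increasing order (i.e.\ $q=p_X$, the principal function of $X$), possibly after composing with a fixed computable adjustment. The point is that for any total $r\leqT p$, say $r=\varphi^p_e$, cohesiveness for the relevant subfamily of $(R_i)_i$ forces $X$ to eventually avoid the "slow convergence" sets, so that for all sufficiently large $n$ the value $\varphi^p_e(n)$ converges before the $n$-th element of $X$ appears; reading off this element then yields a value of $q$ exceeding $r(n)$ at infinitely many (in fact cofinitely many) $n$. This shows $q\in\HYP(p)$. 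The whole extraction $X\mapsto q$ is computable and does not need access to $p$ beyond what is already encoded, so it can serve as the outer witness $H$, giving a \emph{strong} reduction $\HYP\leqSW\COH$; one should double-check that no use of the original input is needed in $H$ (if it were, one would only get $\leqW$, so keeping the bookkeeping so that $X$ alone determines $q$ is the point of the strong reduction claim).

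The main obstacle I anticipate is getting the indexing and the "cofinitely many $n$" argument to line up so that a single cohesive set handles \emph{all} $p$-computable functions simultaneously and uniformly. In the classical argument one uses that cohesiveness for the family of all $p$-primitive-recursive (or $p$-computable) sets is enough, and the subtlety is that $\COH$ as defined here takes a \emph{sequence} $(R_i)_i$ given by characteristic functions, so one must present the sets $R_i$ with decidable membership and be careful that "$\varphi^p_e(n)\!\downarrow$ within $m$ steps" is indeed $p$-computable uniformly in $e,n,m$ — which it is, by the use theorem — and then arrange the family so that cohesiveness yields, for each $e$, a tail of $X$ contained in $\bigcap_n R_{\langle e,n\rangle}$-type sets where convergence has already happened. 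A secondary point to handle carefully is totality: one must argue that the extracted $q$ is infinite and total, which follows because $X$ is infinite by definition of cohesiveness. Once these combinatorial details are arranged as in \cite{JS93}, the reduction and its strong version fall out, and one records the result; the subsequent corollaries about $\HYP$ sitting below $\COH$ (and hence, with Proposition~\ref{prop:COH-COHD}, below $\COH_+$ and $\COH'$) are then immediate.
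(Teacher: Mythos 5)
There is a genuine gap, and it is in the only place where the real work happens: the choice of the family $(R_i)_i$ and the argument that cohesiveness for it forces hyperimmunity. Your proposed sets $R_{\langle e,n\rangle}:=\{m\in\IN:\varphi^p_e(n)\downarrow\mbox{ within }m\mbox{ steps}\}$ are each either empty (if $\varphi^p_e(n)$ diverges) or a final segment of $\IN$, hence cofinite. Consequently \emph{every} infinite set $X$ is cohesive for this family ($X\In^*R$ holds trivially for cofinite $R$, and $X\In^*R^{\rm c}$ holds trivially for $R=\emptyset$), so a realizer of $\COH$ may legitimately return, say, $X=\IN$, whose principal function is the identity and is certainly not hyperimmune relative to $p$. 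The reduction as described therefore fails. The underlying conceptual problem is the quantifier mismatch in your verification sketch: cohesiveness with respect to a fixed set only constrains a \emph{tail} of $X$, with no control over where that tail begins relative to the index $n$, so it cannot force ``the $n$-th element of $X$ appears after $\varphi^p_e(n)$ converges'' — let alone give $p_X(n)>r(n)$ for cofinitely many $n$, which is anyway stronger than what hyperimmunity asks for and stronger than what is true.

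The paper's proof keeps your outer map (the principal function $p_A$ of the cohesive set, which indeed needs no access to the original input, so the reduction is strong), but the inner map sends $p$ to the sequence of \emph{all sets primitive recursive relative to $p$} (primitive recursive rather than $p$-computable precisely because this family is uniformly $p$-computable as characteristic functions), and the verification is a nontrivial contradiction argument relativizing \cite[Theorem~3.1]{JS93}: if some $r\leqT p$ dominated $p_A$, then using the Limit Lemma and the intervals $B_n=\{n,\dots,r(n)\}\ni p_A(n)$ one shows that every partial $p'$-computable $\{0,1\}$-valued function has a total $p'$-computable extension; by Proposition~\ref{prop:PA} this yields $[p']\gg[p']$, contradicting Proposition~\ref{prop:way-below}(1). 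Cohesiveness enters exactly once, to rule out that some approximation $g(e,\cdot)$ takes both values infinitely often on $A$. So the missing idea in your proposal is this indirect PA-degree/limit-lemma argument (or some equally strong replacement); a direct ``read off a large element'' argument from halting-time sets cannot work.
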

\begin{proof}
Given $p\in\IN^\IN$ we compute a sequence $(R_i)_i$ of all primitive recursive sets relative to $p$.
Given some $A\in\COH((R_i)_i)$ we can compute the principal function $p_A$ of $A$ (i.e., $A=\{p_A(0)<p_A(1)<p_A(2)<...\}$).
We claim that $p_A\in\HYP(p)$. This yields the reduction $\HYP\leqSW\COH$.

We now follow a relativized version of the proof of \cite[Theorem~3.1]{JS93}. 
Let us assume for a contradiction that $p_A\not\in\HYP(p)$, so
there is some $r\leqT p$ that dominates $p_A$ in the sense that $p_A(n)\leq r(n)$ for all $n\in\IN$. 
If we can prove that every partial $p'$--computable function $\gamma:\In\IN\to\{0,1\}$ can be extended
to a $p'$--computable function $h:\IN\to\{0,1\}$, then we obtain $[p']\gg[p']$ by Proposition~\ref{prop:PA},
which is a contradiction to Proposition~\ref{prop:way-below}(1).
Let $\gamma:\In\IN\to\{0,1\}$ be $p'$--computable. Then by the limit lemma there is
a function $g:\IN^2\to\{0,1\}$ that is primitive recursive in $p$ and such that 
$\gamma(e)=\lim_{s\to\infty}g(e,s)$ for all $e\in\dom(\gamma)$. We note that $p_A(n)\in B_n:=\{n,n+1,....,r(n)\}$,
since $r(n)\geq p_A(n)\geq n$ for all $n$.
Let $S$ be the set of all pairs $(e,y)\in\IN\times\{0,1\}$ such that there are only finitely many $n$ with $g(e,s)=y$ for all $s\in B_n$.
Then $S$ is c.e.\ in $\langle p',r\rangle\leqT p'$. We prove that for each $e$ there exists $y\in\{0,1\}$ with $(e,y)\in S$.
Let us assume the contrary. Then there is some $e$ such that $(e,y)\not\in S$ for both $y\in\{0,1\}$.
Then the function $g_e:\IN\to\IN,s\mapsto g(e,s)$ assumes each value $y\in\{0,1\}$ infinitely often on $A$,
and hence the sets $S_e:=\{s:g(e,s)=1\}$ form a sequence of sets that are primitive recursive in $p$
and such that $A$ is not cohesive for $(S_e)_e$. This contradiction to $A\in\COH((R_i)_i)$ ensures that for each $e$ there exists $y\in\{0,1\}$ with $(e,y)\in S$. 
Let $f$ be a function that selects the first $y$ with $(e,y)\in S$ in a $p'$--computable enumeration of $S$.
Then $f(e)\not=\gamma(e)$ for all $e\in\dom(\gamma)$, and hence $h:=1-f$ is a total $p'$--computable extension of $\gamma$.  
\end{proof}

While cohesiveness computes the hyperimmunity problem, it does not compute the Kleene-Post theorem.\footnote{We thank Bj{\o}rn Kjos-Hanssen for pointing out the 
idea for the proof of Proposition~\ref{prop:KPT-COH}, see {\tt http://mathoverflow.net/questions/188596/is-below-every-cohesive-set-a-1-generic}, and we thank
a referee for suggesting a strengthening of our original statement.}

\begin{proposition}
\label{prop:KPT-COH}
$\KPT\nleqW\COH_+$.
\end{proposition}
\begin{proof}
By a result of Jockusch~\cite[Corollary~2]{Joc73}, every degree $a$ that is high (in the sense that $a'\geq0''$) contains a cohesive set.
By Cooper's jump inversion theorem~\cite[Theorem~1]{Coo73} there is a minimal high degree $a$. 
Let us now assume that $\KPT\leqW\COH_+$, and let $p$ be a computable input to $\KPT$.
Then from this input we can compute a sequence $(R_i)_i$ of c.e.\ sets $R_i\In\IN$ such that from any $X\in\COH_+(R_i)_i$ and $p$ we can compute two incomparable sets.
However, $\COH_+(R_i)_i$ contains a set $X$ of minimal high degree,
from which together with computable $p$ one cannot compute two incomparable sets. This contradiction completes the proof.
\end{proof}

Theorem~\ref{thm:HYP-COH} implies $1\dash\WGEN\leqW\COH$ by Corollary~\ref{cor:HYP-1WGEN}.
This result cannot be strengthened to $1\dash\GEN\leqW\COH$ by the following observation, which follows from
Propositions~\ref{prop:KPT-COH}, \ref{prop:KPT-MLR} and Corollary~\ref{cor:KPT-GEN}.

\begin{corollary}
\label{cor:GEN-MLR-COH}
$1\dash\GEN\nleqW\COH_+$ and $\MLR\nleqW\COH_+$.
\end{corollary}

It is clear that $\COH_+$ is densely realized. Hence we obtain $\ACC_\IN\nleqW\COH_+$
and, in particular, the following.

\begin{corollary}
\label{cor:DNCN-COH}
$\DNC_\IN\nleqW\COH_+$.
\end{corollary}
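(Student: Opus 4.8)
The plan is to obtain this as the announced ``in particular'' consequence of dense realization, using only the machinery of Section~\ref{sec:indiscriminative} together with Theorem~\ref{thm:DNC}. First I would check that for \emph{every} sequence $(R_i)_i$ the solution set $\COH((R_i)_i)$ is closed under finite modifications: if $X$ is cohesive for $(R_i)_i$ and $X\triangle X'$ is finite, then $X'$ is still infinite, and since $\In^*$ ignores finite sets we have $X'\In^*R_i\iff X\In^*R_i$ and $X'\In^*R_i^{\rm c}\iff X\In^*R_i^{\rm c}$; hence $X'$ is cohesive for $(R_i)_i$ as well. Since every sequence admits a cohesive set, $\COH$ is total with non-empty values and in particular has a realizer, so by the remark following the definition of dense realization it is enough to verify that $\delta_{2^\IN}^{-1}\circ\COH\circ\delta(p)$ is dense in $\dom(\delta_{2^\IN})$ for each name $p$. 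Under the standard (characteristic-function) representation of $2^\IN$ a name of a set $X$ is just $\chi_X$, and overwriting the first $|w|$ bits of $\chi_X$ by an arbitrary binary word $w$ produces the name of a finite modification of $X$, which is again cohesive for the given sequence by the previous sentence; this exhibits an element of $\delta_{2^\IN}^{-1}\circ\COH\circ\delta(p)$ in the basic neighbourhood $w\{0,1\}^\IN$. Hence $\COH$ is densely realized.

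Next I would apply Proposition~\ref{prop:dense}: every densely realized problem is $\omega$--indiscriminative, so $\ACC_\IN\nleqW\COH$. (If one prefers, the same conclusion follows from Proposition~\ref{prop:COH-COHD}, since $\COH\leqW\COH_+$ and $\COH_+$ is densely realized by the identical argument, so $\ACC_\IN\nleqW\COH_+$ transfers to $\COH$ by transitivity.)

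Finally I would combine this with Theorem~\ref{thm:DNC}, which gives $\DNC_\IN\equivSW\widehat{\ACC_\IN}$. Since a problem always strongly reduces to its parallelization (feed the constant sequence and project onto the first component), $\ACC_\IN\leqW\DNC_\IN$. Were $\DNC_\IN\leqW\COH$ to hold, transitivity of $\leqW$ would then yield $\ACC_\IN\leqW\COH$, contradicting the previous paragraph; hence $\DNC_\IN\nleqW\COH$. There is no genuinely hard step here --- all the substance sits in Section~\ref{sec:indiscriminative} and Theorem~\ref{thm:DNC} --- and the only point deserving a moment's care is the translation between ``$\COH((R_i)_i)$ is closed under finite modification'' and ``$\COH$ is densely realized'', i.e.\ that finite perturbations of names of points of $2^\IN$ correspond to finite modifications of the named sets.
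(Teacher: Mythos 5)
Your proof is correct and follows essentially the paper's own route: the paper observes that $\COH_+$ (and hence $\COH$) is densely realized because cohesive sets are closed under finite modification, applies Proposition~\ref{prop:dense} to conclude $\ACC_\IN\nleqW\COH_+$, and then deduces the corollary from $\ACC_\IN\leqW\widehat{\ACC_\IN}\equivSW\DNC_\IN$ together with $\COH\leqSW\COH_+$. Your choice to verify dense realization for $\COH$ directly instead of passing through $\COH_+$ is an inessential variation, which you in fact note yourself.
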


Finally, we prove that cohesiveness is limit computable.

\begin{proposition}
\label{prop:COH-lim}
$\COH\leqSW\lim$.
\end{proposition}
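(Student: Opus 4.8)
The plan is to build computable functions $K$ and $H$ witnessing $\COH \leqSW \lim$. The point is that a single application of $\lim$ is exactly enough to extract, from the sequence $(R_i)_i$, an infinite path through a suitable co-c.e.\ tree relative to the input, and from such a path a cohesive set can then be computed outright.

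First I would, given the input $(R_i)_i$ (write $B$ for this point of $(2^\IN)^\IN$), associate to each finite binary string $\sigma\in\{0,1\}^*$ the set $C_\sigma:=\{k\in\IN:R_i(k)=\sigma(i)\text{ for all }i<|\sigma|\}$. These sets are uniformly computable from $B$, and for each $n$ the $2^n$ sets $C_\sigma$ with $|\sigma|=n$ partition $\IN$, so at least one of them is infinite. Put $T:=\{\sigma\in\{0,1\}^*:|C_\sigma|\geq|\sigma|\}$. Since $C_\sigma\In C_\tau$ whenever $\tau$ is a prefix of $\sigma$, the set $T$ is closed under prefixes; by the partition remark it contains a string of each length, hence is an infinite binary tree; and its complement is c.e.\ in $B$, so an enumeration of $\{0,1\}^*\setminus T$ is computable from $(R_i)_i$. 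The key observation is that any infinite path $\alpha$ through $T$ satisfies: $C_{\alpha|_n}$ is infinite for every $n$, since $C_{\alpha|_m}\In C_{\alpha|_n}$ for $m\geq n$ while $\alpha|_m\in T$ forces $|C_{\alpha|_m}|\geq m\to\infty$.

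Next, $K$ should output the usual stagewise approximation $(\alpha_s)_s$ to the leftmost path $\alpha$ of $T$: at stage $s$, let $\alpha_s(n)$ be the least $b\in\{0,1\}$ such that $\alpha_s|_n$ has an extension $\tau$ of length $s$ with $\tau(n)=b$ that has not yet been enumerated out of $T$ (and $\alpha_s(n):=0$ for $n$ beyond that length). This sequence is computable from $(R_i)_i$ and converges pointwise to $\alpha$. I would then take $K((R_i)_i):=\langle p_0,p_1,\ldots\rangle$ with $p_s:=\langle (R_i)_i,\alpha_s\rangle$; the $(R_i)_i$-part is constant and the $\alpha_s$-part converges, so this is a convergent sequence and $GK((R_i)_i)=\langle (R_i)_i,\alpha\rangle$ for any $G\vdash\lim$. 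Finally, $H$ on input $\langle (R_i)_i,\alpha\rangle$ computes $X=\{x_0<x_1<\cdots\}$ by letting $x_n$ be the least element of $C_{\alpha|_{n+1}}$ exceeding $x_{n-1}$ (this search halts because $C_{\alpha|_{n+1}}$ is infinite) and outputs $\chi_X$. Then $X$ is infinite and, for each $m$ and all $n\geq m$, $x_n\in C_{\alpha|_{n+1}}\In C_{\alpha|_{m+1}}$, hence $R_m(x_n)=\alpha(m)$; so $X\In^* R_m$ if $\alpha(m)=1$ and $X\In^*(R_m)^{\rm c}$ if $\alpha(m)=0$, i.e.\ $X\in\COH((R_i)_i)$. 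Since $H$ need only behave correctly when $G$ is a genuine realizer of $\lim$, nothing needs to be said about $H$ on other inputs.

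The hard part will be the (standard, but worth spelling out) verification that $\alpha_s\to\alpha$ and that $\alpha$ is a genuine infinite path of $T$ rather than a spurious limit: one must check that for each fixed level $n$, the finitely many strings lying strictly to the left of $\alpha|_n$ that carry only finite subtrees of $T$ are eventually all enumerated out, so that from some stage on $\alpha_s|_n=\alpha|_n$. Granting this, everything else is routine bookkeeping, and since only a single instance of $\lim$ is consumed we obtain the strong reduction $\COH\leqSW\lim$.
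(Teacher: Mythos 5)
Your construction of the tree and your extraction of $X$ from a genuine infinite path $\alpha$ are fine; the gap is in getting $\alpha$ after a single application of $\lim$, and this part is not just "hard" but provably impossible. First, the concrete error: $T=\{\sigma:|C_\sigma|\geq|\sigma|\}$ is c.e.\ relative to $B$, not co-c.e.\ --- the condition $|C_\sigma|\geq|\sigma|$ is confirmed by finding $|\sigma|$ witnesses in the $B$-decidable set $C_\sigma$, whereas $\sigma\notin T$ can never be certified at any finite stage. So nothing is ever ``enumerated out'' of $T$, and your stagewise approximation (the standard one for the leftmost path of a $\Pi^0_1$ tree, which is what your deferred verification sketch proves) has no analogue here: nodes appear over time, the current leftmost branch can jump back to the left when new nodes show up, and it can run past a true node whose long extensions are slow to appear. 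Second, and decisively, no repair of this plan can work: any procedure that, from $(R_i)_i$, produces via computable pre-/post-processing and one application of $\lim$ some $\alpha$ with all $C_{\alpha|_n}$ infinite would be a strong Weihrauch reduction to $\lim$ of the problem of finding a cluster point of the sequence $x_k:=(R_i(k))_i\in2^\IN$ (note that $\alpha$ is a cluster point of $(x_k)_k$ if and only if every $C_{\alpha|_n}$ is infinite, and transposition is a computable bijection between the two kinds of input). That problem is $\BWT_{2^\IN}\equivSW\WKL'$ by Fact~\ref{fact:BWT}, and $\BWT_\IR\nleqW\lim$ by \cite[Theorem~12.7]{BGM12}, exactly the contradiction invoked in the proofs of Theorems~\ref{thm:WBWT-low} and \ref{thm:COH-low}; so $\alpha$ cannot be produced with one limit. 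Concretely, already the first bit of the leftmost path decides whether $C_{(0)}$ is infinite, a properly $\PO{2}$ property of the input, which no $\SO{2}$-measurable (in particular no limit computable) map can decide uniformly.

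This is precisely where the paper's proof takes a different route: it never recovers a path of $T$ with a single limit. Instead it limit-computes the characteristic function of the cohesive set $X$ directly, by a greedy/backtracking enumeration that follows the growing approximation of the (c.e.) tree, enumerating a $k$-th element of $X$ out of $R^y$ whenever a level-$k$ node $y$ is reached and backtracking when stuck; since each level is revisited only finitely often, all but finitely many enumerated elements lie in $R^y$ for the finally settled node $y$ at each level, and cohesiveness only requires the almost-inclusions $X\In^*R^y$. The path itself is only limit computable relative to $T$ (hence two jumps above the input) and is never needed exactly. To fix your write-up you would have to abandon the ``exact path, then exact least elements'' extraction and instead argue directly, as the paper does, that a finite-error enumeration of this kind converges, tolerating finitely many wrong elements at each level.
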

\begin{proof}
We prove $\COH\leqW\J$, from which the result follows since $\J\equivSW\lim$ are cylinders.
Given a sequence $(R_i)_i$ of sets $R_i\In\IN$ we
use the notation $1\cdot R:=R$ and $(-1)\cdot R:=\IN\setminus R$ for sets $R\In\IN$, and for every word $y\in\{0,1\}^*$ we define
\[R^y:=\bigcap_{i<|y|}(-1)^{y(i)}\cdot R_i.\]
We can use $\J$ in order to decide whether $|R^y|>|y|$ for any given $y\in\{0,1\}^*$, where $|R^y|$ denotes the cardinality of $R_y$ and $|y|$ the length of $y$.
This enables us to compute a sequence $(y_n)_n$ of words $y_n\in\{0,1\}^*$ such that $y_n$ is the lexicographically smallest word in $\{0,1\}^n$ such that
$|R^{y_n}|>|y_n|$ for all $n\in\IN$.
Such a sequence $(y_n)_n$ exists since there is a cohesive set $R$ for the sequence $(R_i)_i$, and $\lim_{n\to\infty}y_n(m)$ exists for every $m\in\IN$ since we choose the lexicographically
smallest $y_n$ for every $n$. Now we can also compute an injective sequence $(r_n)_n$ with $r_n\in R^{y_n}$ for all $n$ since $|R^{y_n}|>|y_n|=n$ and hence $R:=\{r_n:n\in\IN\}$
is cohesive for $(R_i)_i$. This algorithm yields an enumeration of an infinite cohesive set $R$ for $(R_i)_i$, and it is clear that from this enumeration
one can compute the characteristic function of an infinite subset of $R$, and any such subset is also cohesive for $(R_i)_i$.
\end{proof}

With the help of Proposition~\ref{prop:COH-COHD} we obtain the following corollary.

\begin{corollary}
\label{cor:COH+-lim}
$\COH_+\leqSW\lim'$.
\end{corollary}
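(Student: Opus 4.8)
The plan is to obtain this as an immediate corollary of the two facts already in hand, using the behaviour of the jump operation under strong Weihrauch reducibility. Recall from Proposition~\ref{prop:COH-lim} that $\COH\leqSW\lim$, and from the second half of Proposition~\ref{prop:COH-COHD} that $\COH_+\leqSW\COH'$. The one additional ingredient is the fact, established in \cite{BGM12}, that the jump operation $f\mapsto f'$ is monotone with respect to $\leqSW$ (this is precisely one of the reasons, noted in the Preliminaries, why it pays to work with the strong reducibility when dealing with jumps).

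Concretely, I would first apply $\leqSW$-monotonicity of the jump to the reduction $\COH\leqSW\lim$ of Proposition~\ref{prop:COH-lim}, which yields $\COH'\leqSW\lim'$. Then, chaining this with the reduction $\COH_+\leqSW\COH'$ from Proposition~\ref{prop:COH-COHD} and using transitivity of $\leqSW$, I obtain $\COH_+\leqSW\COH'\leqSW\lim'$, hence $\COH_+\leqSW\lim'$, as claimed. (Alternatively, one could phrase the argument via $f'\equivSW f\stars\lim$ and the monotonicity of $\stars$ in each argument with respect to $\leqSW$, but the jump-monotonicity formulation is the shortest route.)

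There is essentially no obstacle here; the only point worth checking is bookkeeping, namely that every link in the chain is a \emph{strong} reduction rather than merely an ordinary one, so that the conclusion is the strong statement $\COH_+\leqSW\lim'$. This is guaranteed because Proposition~\ref{prop:COH-lim} and the relevant half of Proposition~\ref{prop:COH-COHD} are both stated for $\leqSW$, and because the jump is $\leqSW$-monotone; had we only had $\COH\leqW\lim$, monotonicity of the jump would fail and the argument would not go through in this form.
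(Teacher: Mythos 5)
Your argument is correct and is exactly the route the paper intends: the corollary is deduced from $\COH_+\leqSW\COH'$ (Proposition~\ref{prop:COH-COHD}) together with $\COH\leqSW\lim$ (Proposition~\ref{prop:COH-lim}) and the $\leqSW$-monotonicity of the jump, giving $\COH_+\leqSW\COH'\leqSW\lim'$. Your remark that the strong reducibility is essential here (since the jump is not $\leqW$-monotone) is also exactly the point the paper emphasizes in its preliminaries.
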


\section{Cohesive Degrees}
\label{sec:COH-deg}

Corollary~\ref{cor:Jockusch-Stephan} highlights the relation of cohesive degrees to
PA--degrees (in light of Proposition~\ref{prop:PA}). A uniform version of this relation 
can be expressed as follows.

\begin{proposition}
\label{prop:COH-composition}
$[\COH]\equivSW\J_\DD^{-1}\circ\PA\circ\J_\DD$.
\end{proposition}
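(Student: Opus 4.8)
The plan is to make the right-hand side explicit and then observe that both directions of the equivalence are witnessed by trivial (or obvious) computable transformations, so that all the mathematical content is supplied by Theorem~\ref{thm:Jockusch-Stephan}.

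First I would unwind the composition: $\J_\DD$ takes $\mathbf c$ to $\mathbf c'$, then $\PA$ takes $\mathbf c'$ to $\{\mathbf a:\mathbf a\gg\mathbf c'\}$, and $\J_\DD^{-1}$ collects all preimages, giving $(\J_\DD^{-1}\circ\PA\circ\J_\DD)(\mathbf c)=\{\mathbf b\in\DD:\mathbf b'\gg\mathbf c'\}$. I would then note this composite is total: $\PA(\mathbf c')\ne\emptyset$, and by Proposition~\ref{prop:way-below}(1) every $\mathbf a\gg\mathbf c'$ satisfies $\mathbf a>\mathbf c'\geq\mathbf 0'$, hence lies in $\dom(\J_\DD^{-1})$ by Friedberg's Jump Inversion Theorem~\ref{thm:JIT}; and $[\COH]$ is total as well. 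This part is just bookkeeping, not the obstacle.

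For $[\COH]\leqSW\J_\DD^{-1}\circ\PA\circ\J_\DD$ I would take both the input and the output translation to be the identity. Given a name $c$ of a sequence $(R_i)_i\in(2^\IN)^\IN$, read $c$ also as a $\delta_\DD$-name of $[c]$; then any realizer $G$ of the right-hand side returns $G(c)$ with $[G(c)]'=G(c)'\gg c'=[c]'$. Since $(R_i)_i$ is uniformly computable from $c$, hence a $[c]$-computable sequence, Theorem~\ref{thm:Jockusch-Stephan} (the direction ``$\mathbf a'\gg\mathbf b'$ implies $\mathbf a$ is cohesive for every $\mathbf b$-computable sequence'') gives that $[G(c)]$ contains a member cohesive for $(R_i)_i$, i.e.\ $[G(c)]\in[\COH]((R_i)_i)$; so $G$ already realizes $[\COH]$. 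Conversely, for $\J_\DD^{-1}\circ\PA\circ\J_\DD\leqSW[\COH]$ I would let the input translation $K$ send a real $q$ to a name of the sequence of all subsets of $\IN$ primitive recursive relative to $q$ (the uniform construction from the proof of Theorem~\ref{thm:HYP-COH}) and keep the output trivial. For any realizer $G$ of $[\COH]$, the real $GK(q)$ names a degree in $[\COH]$ of that sequence, so $[GK(q)]$ contains a set $X$ that is p-cohesive relative to $q$; by the equivalence, established in the proof of Theorem~\ref{thm:Jockusch-Stephan}, of p-cohesiveness relative to $[q]$ with $\mathbf a'\gg\mathbf b'$, we get $[GK(q)]'=[X]'\gg[q]'$, hence $[GK(q)]\in(\J_\DD^{-1}\circ\PA\circ\J_\DD)([q])$. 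Both reductions use only computable pre-/post-processing and no access to the original input on the output side, so they are strong.

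The step I expect to need the most care is not a calculation but the matching of uniformities with the exact formulation of Theorem~\ref{thm:Jockusch-Stephan}: in the first reduction one must use that an arbitrary coded sequence in $(2^\IN)^\IN$ is uniformly computable from its code, so that the theorem's ``$\mathbf b$-computable sequence'' hypothesis is satisfied; in the second one must use that the enumeration of all sets primitive recursive relative to $q$ is uniformly computable from $q$, so that a cohesive set for it is exactly a set p-cohesive relative to $q$. Once these are in place there is nothing further to do.
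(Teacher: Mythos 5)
Your proposal is correct and follows essentially the same route as the paper's proof: unwind the composite to $\{\mathbf b:\mathbf b'\gg\mathbf c'\}$, get $[\COH]\leqSW\J_\DD^{-1}\circ\PA\circ\J_\DD$ directly from Theorem~\ref{thm:Jockusch-Stephan}, and for the converse feed $[\COH]$ the uniformly computed sequence of all sets primitive recursive relative to the input, invoking the equivalence from the proof of that theorem. Your extra remarks on totality and on the uniformity of the two pre-processing steps are just the bookkeeping the paper leaves implicit.
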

\begin{proof}
Firstly, we note that $\J_\DD^{-1}\circ\PA\circ\J_\DD({\bf b})=\{{\bf a}\in\DD:{\bf a}'\gg{\bf b}'\}$.
Hence, we obtain $[\COH]\leqSW\J_\DD^{-1}\circ\PA\circ\J_\DD$ by Corollary~\ref{cor:Jockusch-Stephan}.
For the other direction of the proof we note that given a degree ${\bf b}\in\DD$, we can
compute a sequence $(P_i)_i$ of all sets $P_i\In\IN$ that are primitive recursive relative to ${\bf b}$. 
By the proof of \cite[Theorem~2.1]{JS93} we obtain that every ${\bf a}\in[\COH](P_i)_i$
satisfies ${\bf a}'\gg{\bf b}'$, and hence we have that $\J_\DD^{-1}\circ\PA\circ\J_\DD\leqSW[\COH]$.
\end{proof}

Antitone jumps can be characterized as follows. We note that under the given conditions ${\mathbf a}\in\dom(f)$ and
${\mathbf a}\leqT{\mathbf b}$ imply that ${\mathbf b}\in\dom(f)$.

\begin{lemma}[Antitone jumps]
\label{lem:anti-jumps}
Let $f:\In\DD\mto\DD$ be antitone in the sense that
\[{\mathbf a}\leqT{\mathbf b}\TO\emptyset\not=f({\mathbf b})\In f({\mathbf a})\]
for all ${\mathbf a}\in\dom(f)$ and ${\mathbf b}\in\DD$. Then $f'\equivSW f\circ\J_\DD$.
\end{lemma}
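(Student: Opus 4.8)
The plan is to prove the two strong Weihrauch reductions $f\circ\J_\DD\leqSW f'$ and $f'\leqSW f\circ\J_\DD$ separately, working directly from the definitions. Recall that $f'$ has the same underlying multi-valued map as $f$, so $\dom(f')=\dom(f)$ and $f'({\mathbf a})=f({\mathbf a})$, the only change being that the input representation becomes $\delta_\DD'=\delta_\DD\circ\lim$; whereas $f\circ\J_\DD$ sends a $\delta_\DD$-name of ${\mathbf a}$ to the value set $f({\mathbf a}')$ and has domain $\{{\mathbf a}\in\DD:{\mathbf a}'\in\dom(f)\}$. Two standard facts about the Turing jump do all the work: (i) if $p=\langle p_0,p_1,p_2,\ldots\rangle\in\IN^\IN$ and $\lim_n p_n$ exists, then $\lim_n p_n\leqT p'$; and (ii) from any $q\in\IN^\IN$ one can compute a sequence $\langle r_0,r_1,r_2,\ldots\rangle$ with $\lim_n r_n\equivT q'$, for instance by letting $r_n$ be the characteristic function of $\{k:\varphi^q_k(k)\mbox{ halts within }n\mbox{ steps}\}$.

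For $f\circ\J_\DD\leqSW f'$ I would take $H:=\id$ and let $K$ be the computable map of fact (ii), so that $K(q)$ is a $\delta_\DD'$-name of $\J_\DD([q])=[q']$. If $q$ is a $\delta_\DD$-name of some ${\mathbf a}\in\dom(f\circ\J_\DD)$, then ${\mathbf a}'\in\dom(f)=\dom(f')$, so $K(q)$ lies in the domain of $f'$, and for every $G\vdash f'$ the value $\delta_\DD GK(q)$ lies in $f'({\mathbf a}')=f({\mathbf a}')=(f\circ\J_\DD)({\mathbf a})$. Hence $HGK\vdash f\circ\J_\DD$. Note this direction uses no hypothesis on $f$ at all.

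For the reverse reduction $f'\leqSW f\circ\J_\DD$ I would take $H:=K:=\id$ and re-read a $\delta_\DD'$-name $p=\langle p_0,p_1,\ldots\rangle$ of some ${\mathbf a}\in\dom(f')=\dom(f)$ as a $\delta_\DD$-name of the degree $[p]$. By fact (i) the $\delta_\DD$-name $\lim_n p_n$ of ${\mathbf a}$ satisfies $\lim_n p_n\leqT p'$, whence ${\mathbf a}\leq[p']$. This is the one point where antitonicity enters: since ${\mathbf a}\in\dom(f)$ and ${\mathbf a}\leq[p']$, the hypothesis gives $\emptyset\neq f([p'])\In f({\mathbf a})$, so $[p]\in\dom(f\circ\J_\DD)$, and for every $G\vdash f\circ\J_\DD$ the value $\delta_\DD G(p)$ lies in $(f\circ\J_\DD)([p])=f([p'])\In f({\mathbf a})=f'({\mathbf a})$. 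Combining the two reductions gives $f'\equivSW f\circ\J_\DD$.

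There is no genuine combinatorial obstacle here; the only thing to be careful about is the bookkeeping with representations --- verifying that the inputs produced by $K$ really land in $\dom(f')$, respectively $\dom(f\circ\J_\DD)$, and that the inequality ${\mathbf a}\leq[p']$ handed over by fact (i) is exactly what the antitonicity hypothesis needs in order to confine the output to $f({\mathbf a})$. One could alternatively route the proof through $f'\equivSW f\stars\lim$, but the direct computation above is shorter and keeps the role of the two jump facts and of antitonicity transparent.
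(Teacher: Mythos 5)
Your proposal is correct and follows essentially the same route as the paper: one direction uses the limit computability of the Turing jump (your fact (ii), the paper's computable $G$ with $\lim\circ G=\J$), and the other uses $\lim p\leqT p'$ together with antitonicity to replace the degree named in the limit by the full jump of the name. The paper merely phrases the second direction via $f'\equivSW f\stars\lim$ and an arbitrary $g\leqSW f'$, whereas you instantiate the same computation directly with identity pre- and post-processors; the underlying argument is the same.
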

\begin{proof}
Let $K:\In\IN^\IN\to\IN^\IN$ be a computable function with $\lim\circ K=\J$ and let $S$ be a realizer of $f'$.
Then we obtain for all $p\in\IN^\IN$ with $[p]\in\dom(f\circ\J_\DD)$
\[[SK(p)]\in f([\lim\circ K(p)])=f([p'])=f\circ \J_\DD([p]).\]
Hence, $SK$ is a realizer for $f\circ \J_\DD$ and $f\circ\J_\DD\leqSW f'$ follows.
For the other direction, let $R$ be a realizer of $f\circ\J_\DD$ and let $p\in\dom(f\circ[\lim])$.
Then $\lim p\leqT p'$ and since $f$ is antitone we obtain $[p']=\J_\DD([p])\in\dom(f)$ and 
\[[R(p)]\in f\circ \J_\DD([p])=f([p'])\In f([\lim p ]).\]
Thus $R$ is a realizer for $f'$ and $f'\leqSW f\circ\J_\DD$ follows.
\end{proof}

Using the previous observations we derive the following purely algebraic characterization of $\COH$ on degrees
in terms of the jump of Peano arithmetic.

\begin{theorem}[Cohesive degrees]
\label{thm:COH-degrees}
$[\COH]\equivW(\lim\to\PA')\equivW(\J_\DD\to\PA')$.
\end{theorem}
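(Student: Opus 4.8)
The plan is to derive the result from Proposition~\ref{prop:COH-composition}, which gives $[\COH]\equivSW\J_\DD^{-1}\circ\PA\circ\J_\DD$ and hence $[\COH]({\mathbf b})=\{{\mathbf a}\in\DD:{\mathbf a}'\gg{\mathbf b}'\}$, together with the identity $\PA'\equivSW\PA\circ\J_\DD$ already established above (Lemma~\ref{lem:anti-jumps}, since $\PA$ is antitone by Proposition~\ref{prop:way-below}(3)). As $(\lim\to\PA')$ and $(\J_\DD\to\PA')$ depend only on the Weihrauch degree of $\PA'$, I will freely use $\PA\circ\J_\DD$ in place of $\PA'$. Two elementary facts will be used throughout. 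First, $\J_\DD\leqW\lim$: given a $\delta_\DD$-name $a$ of ${\mathbf a}$, the stagewise approximations to the Turing jump $a'$ form a convergent sequence whose limit, produced by $\lim$, is a $\delta_\DD$-name of ${\mathbf a}'$. Second, $\gg$ is monotone upward in its first argument, i.e.\ ${\mathbf c}\geq{\mathbf a}\gg{\mathbf b}$ implies ${\mathbf c}\gg{\mathbf b}$; this follows by chaining Proposition~\ref{prop:way-below}(4), then (1), then (3).

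First I would prove $(\J_\DD\to\PA')\leqW[\COH]$ and $(\lim\to\PA')\leqW[\COH]$. By minimality of the implication it suffices to show $\PA'\leqW\J_\DD*[\COH]$ (the $\lim$-version is then immediate, since $\J_\DD\leqW\lim$ and $*$ is monotone). This reduction is almost trivial: on input ${\mathbf b}$ apply $[\COH]$ to obtain ${\mathbf a}$ with ${\mathbf a}'\gg{\mathbf b}'$, then apply $\J_\DD$ to produce ${\mathbf a}'$; since ${\mathbf a}'\gg{\mathbf b}'$ means exactly ${\mathbf a}'\in\PA({\mathbf b}')=(\PA\circ\J_\DD)({\mathbf b})$, the composition $\J_\DD\circ[\COH]$ is a restriction of $\PA\circ\J_\DD$ in the output, so $\PA\circ\J_\DD\leqW\J_\DD\circ[\COH]\leqW\J_\DD*[\COH]$.

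The hard part will be the converse reductions $[\COH]\leqW(\J_\DD\to\PA')$ and $[\COH]\leqW(\lim\to\PA')$, and here I expect the main obstacle: the implications are only given as minima, so I must unwind the operational content of the compositional product. Write $h:=(\lim\to\PA')$, so $\PA\circ\J_\DD\leqW\lim*h$. Tracing this reduction on a $\delta_\DD$-name $q$ of ${\mathbf b}$: it first feeds a computable preprocessing $q_h\leqT q$ of $q$ to a realizer of $h$, obtaining an output $y$; then it feeds a computable function of $\langle q,y\rangle$ (hence a string Turing below $\langle q,y\rangle$) to $\lim$, obtaining its limit $z$; since the limit of a convergent sequence is Turing below the jump of that sequence, $z\leqT\langle q,y\rangle'$; finally a computable post-processing of $\langle q,y,z\rangle$ yields a $\delta_\DD$-name $c$ of some ${\mathbf c}\in(\PA\circ\J_\DD)({\mathbf b})$, and necessarily $c\leqT\langle q,y\rangle'$. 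Thus ${\mathbf c}\gg{\mathbf b}'$ and ${\mathbf c}=[c]\leq[\langle q,y\rangle']=[\langle q,y\rangle]'$, so upward monotonicity of $\gg$ gives $[\langle q,y\rangle]'\gg{\mathbf b}'$, that is, $[\langle q,y\rangle]\in[\COH]({\mathbf b})$. Hence the map that, on input $q$, calls $h$ once on $q_h$ and returns $\langle q,y\rangle$ is a Weihrauch reduction $[\COH]\leqW h$ (here $q_h\in\dom(h)$ because $q\in\dom(\PA\circ\J_\DD)=\DD$). The argument with $\J_\DD$ in place of $\lim$ is verbatim the same, using that the output of $\J_\DD$ on an input Turing below $\langle q,y\rangle$ is Turing below $\langle q,y\rangle'$.

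Combining the two halves yields $[\COH]\equivW(\J_\DD\to\PA')\equivW(\lim\to\PA')$. The only remaining points are routine: the bookkeeping that in the compositional product exactly one oracle call to $h$ occurs, so that $y$ is well defined and $\langle q,y\rangle$ is computable from $q$ together with that one oracle answer; and the observation that, since the codomain $\DD$ is densely realized, all of these arguments depend only on the Turing degrees of the names involved, never on the particular names themselves.
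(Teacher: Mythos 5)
Your proposal is correct, and its skeleton coincides with the paper's: both directions rest on Proposition~\ref{prop:COH-composition} together with $\PA'\equivSW\PA\circ\J_\DD$ from Lemma~\ref{lem:anti-jumps}, and your easy direction ($\PA\circ\J_\DD\leqW\J_\DD\circ[\COH]\leqW\J_\DD*[\COH]$, hence $(\J_\DD\to\PA')\leqW[\COH]$, with the $\lim$-version by monotonicity of $*$) is exactly the paper's. Where you genuinely diverge is in the execution of the hard direction. The paper takes $h$ with $\PA\circ\J_\DD\leqW\J*h$, normalizes $h$ to a cylinder, invokes the structural representation $\J*h\equivW F\circ\J\circ G\circ h\circ K$, factors the limit computable outer part as $H\circ\J$, and then argues that $h\circ K(p)$ itself is a valid answer, so that $[\COH]\leqW h\circ K\leqW h$; the cylinder normalization is what licenses ignoring the post-processor's access to the original input. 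You instead unwind the definition of $*$ as an attained maximum of compositions directly at the realizer level and return the pair $\langle q,y\rangle$ of the original name and the single $h$-answer; the upward monotonicity of $\gg$ (your chain through Proposition~\ref{prop:way-below}(4),(1),(3)) then absorbs both the $\lim$- (resp.\ $\J_\DD$-) answer, which is Turing below $\langle q,y\rangle'$, and the post-processor's use of $q$. This buys you a proof that needs neither the cylinder trick nor the factorization lemma, at the price of more explicit realizer bookkeeping, and you prove both implications separately where the paper gets $(\J\to\PA')\leqW(\J_\DD\to\PA')$ for free from antitonicity of $\to$ in its first argument. One presentational caveat: strictly, $\COH$ and hence $[\COH]$ take a sequence $(R_i)_i\in(2^\IN)^\IN$ as input, so your formula ``$[\COH]({\mathbf b})=\{{\mathbf a}:{\mathbf a}'\gg{\mathbf b}'\}$'' should be read through the strong equivalence $[\COH]\equivSW\J_\DD^{-1}\circ\PA\circ\J_\DD$ of Proposition~\ref{prop:COH-composition}; since the theorem is stated up to $\equivW$ this substitution is harmless (the paper makes the same identification in its final step), but it deserves one explicit sentence. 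The closing appeal to $\DD$ being densely realized is unnecessary; what you actually use is just that $\delta_\DD$ is degree-invariant.
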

\begin{proof}
Since $\J_\DD\leqSW\J\equivSW\lim$ and $\PA'\equivSW\PA\circ\J_\DD$ by Lemma~\ref{lem:anti-jumps},
it suffices to prove $(\J_\DD\to\PA\circ\J_\DD)\leqW[\COH]\leqW(\J\to\PA\circ\J_\DD)$.
It is clear that $\PA\circ\J_\DD\leqW\J_\DD*(\J_\DD^{-1}\circ\PA\circ\J_\DD)\equivW\J_\DD*[\COH]$ by Proposition~\ref{prop:COH-composition}.
Hence $(\J_\DD\to\PA\circ\J_\DD)\leqW[\COH]$. For the second reduction, suppose
$h$ is such that $\PA\circ\J_\DD\leqW\J*h$. Without loss of generality,
we can assume that $h$ is of type $h:\In\IN^\IN\mto\IN^\IN$ and that $h$ is
a cylinder (since the cylindrification $\id\times h$ satisfies $\id\times h\equivW h$).
Since $\J$ is a cylinder too, by \cite[Lemma~21]{BP16} there is a computable $G$ such that $\J*h\equivW \J\circ G\circ h$.
Since $\J\circ G$ is limit computable, there is a computable $H$ such that 
$\J\circ G=H\circ\J$. This implies that $\PA\circ\J_\DD\leqW H\circ\J\circ h$.
Hence there is a computable $K$ such that for each $p\in\IN^\IN$ we obtain that $q:=\J\circ h\circ K(p)$ satisfies
$[q]\in\PA\circ\J_\DD([p])$, which implies $[q]\gg[p']=[p]'$.
Thus $[h\circ K(p)]'\gg [p]'$
by Proposition~\ref{prop:way-below}(4).
This implies, by Proposition~\ref{prop:COH-composition}, that $[\COH]\equivSW\J_\DD^{-1}\circ\PA\circ\J_\DD\leqW h$,
and hence we obtain $[\COH]\leqW(\J\to\PA\circ\J_\DD)$.
\end{proof}

Since $\PA\equivSW[\WKL]$ by Corollaries~\ref{cor:PA-DNC} and \ref{cor:WKL-DNC}, 
we can also express Theorem~\ref{thm:COH-degrees} as follows.

\begin{corollary}
\label{cor:COH-WKL-degrees}
$[\COH]\equivW(\lim\to[\WKL]')$. 
\end{corollary}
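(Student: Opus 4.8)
The plan is to read this corollary off from Theorem~\ref{thm:COH-degrees} by substituting $[\WKL]'$ for $\PA'$. Concretely, I would establish three ingredients: (i) $\PA\equivSW[\WKL]$; (ii) since the jump operation is monotone with respect to strong Weihrauch reducibility, (i) upgrades to $\PA'\equivSW[\WKL]'$, and in particular $\PA'\equivW[\WKL]'$; (iii) the operation $f\mapsto(\lim\to f)$ is monotone with respect to $\leqW$, so (ii) yields $(\lim\to\PA')\equivW(\lim\to[\WKL]')$. Combining (iii) with the equivalence $[\COH]\equivW(\lim\to\PA')$ of Theorem~\ref{thm:COH-degrees} then gives $[\COH]\equivW(\lim\to[\WKL]')$, as desired.

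For ingredient (iii) I would just invoke the minimality in the definition of the implication: if $f\leqW g$, then from $f\leqW g\leqW\lim*(\lim\to g)$ we get $f\leqW\lim*(\lim\to g)$, hence $(\lim\to f)\leqW(\lim\to g)$; applied in both directions this shows $f\equivW g$ implies $(\lim\to f)\equivW(\lim\to g)$. For ingredient (i), Corollary~\ref{cor:PA-DNC} gives $\PA\equivSW[\DNC_2]$ and Corollary~\ref{cor:WKL-DNC} gives $\WKL\equivSW\DNC_2$, so the point is to see $[\DNC_2]\equivSW[\WKL]$. The reduction $[\DNC_2]\leqSW[\WKL]$ is immediate: from any $p$ one computes a binary tree $T_p$ with $[T_p]=\DNC_2(p)$ (the observation following Corollary~\ref{cor:WKL-DNC}), so every realizer of $[\WKL]$ applied to a name of $T_p$ already realizes $[\DNC_2]$ on $p$. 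The reverse reduction $[\WKL]\leqSW[\DNC_2]$ is meant to come from the reduction $\WKL\leqSW\DNC_2$ of Corollary~\ref{cor:WKL-DNC}, composed with the passage to Turing degrees.

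The step I expect to require the most care is precisely $[\WKL]\leqSW[\DNC_2]$ (equivalently, $[\WKL]\leqSW\PA$): a realizer of $[\DNC_2]$ produces only a $\delta_\DD$-name of its output, i.e.\ a sequence Turing equivalent to — but not literally equal to — a genuine diagonally non-computable function, so the output modifier witnessing $\WKL\leqSW\DNC_2$, which reconstructs an infinite path of the tree from a diagonally non-computable function, cannot be reused verbatim. The main work is therefore to carry the whole argument out at the level of Turing degrees, exploiting that the value of $[\WKL]$ is itself a Turing degree (and hence the output representation $\delta_\DD$ is densely realized, giving considerable freedom); this is the only part of the proof that is not pure bookkeeping, the substantive content already residing in Theorem~\ref{thm:COH-degrees}. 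Finally, since $\PA\equivSW[\WKL]$ and $\PA\equivSW[\DNC_n]$, one could equivalently phrase the corollary with $[\DNC_n]'$ in place of $[\WKL]'$.
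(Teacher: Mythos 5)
Your overall route is the same as the paper's: substitute $\PA\equivSW[\WKL]$ into Theorem~\ref{thm:COH-degrees}, using monotonicity of the jump under $\leqSW$ and monotonicity of $f\mapsto(\lim\to f)$ under $\leqW$. Ingredients (ii) and (iii) are fine as you state them, and your easy direction $[\DNC_2]\leqSW[\WKL]$ (equivalently $\PA\leqSW[\WKL]$, via the tree of $\DNC_2$-functions relative to $p$ and Proposition~\ref{prop:PA}(4)) is correct; it already yields $\PA'\leqSW[\WKL]'$ and hence the inequality $[\COH]\equivW(\lim\to\PA')\leqW(\lim\to[\WKL]')$.

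The problem is the step you defer, $[\WKL]\leqSW[\DNC_2]$ (equivalently $[\WKL]\leqSW\PA$): this is not merely the delicate part, it is false, so the "carry the argument out at the level of degrees" plan cannot be completed in the announced form. Concretely, let $t_1$ be a computable name of the tree $T_1=\{0^n:n\in\IN\}$, whose unique path is computable, so $[\WKL](T_1)=\{\mathbf{0}\}$, and let $t_2$ be a computable name of a computable tree $T_2$ with $[T_2]=\DNC_2(\widehat{0})$, so $[\WKL](T_2)=\{{\mathbf a}:{\mathbf a}\gg\mathbf{0}\}$, which does not contain $\mathbf{0}$ by Proposition~\ref{prop:way-below}(1). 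For any computable $K$ the points $K(t_1),K(t_2)$ are computable, and since $\PA$ (or $[\DNC_2]$) depends only on the degree of its input, both have the same solution set $\{{\mathbf a}:{\mathbf a}\gg\mathbf{0}\}$; hence there is a realizer $G$ returning one and the same name $q_0$ of, say, $\mathbf{0}'$ on both $K(t_1)$ and $K(t_2)$ (and $p\mapsto p'$ elsewhere). In a strong reduction $H$ never sees the tree, so $HGK(t_1)=HGK(t_2)=H(q_0)$ would have to name a degree lying simultaneously in $\{\mathbf{0}\}$ and in $\{{\mathbf a}:{\mathbf a}\gg\mathbf{0}\}$, a contradiction. (An ordinary reduction would not help either, since the jump is not monotone for $\leqW$, which your step (ii) needs.) So your proposal, like the paper's very terse justification "$\PA\equivSW[\WKL]$", only delivers one inequality of the corollary; the reverse inequality $(\lim\to[\WKL]')\leqW[\COH]$ requires a separate argument -- e.g.\ establishing $[\WKL]'\leqW\lim*[\COH]$ or rerunning the proof of Theorem~\ref{thm:COH-degrees} with $[\WKL]'$ in place of $\PA'$ -- and neither your sketch nor the substitution argument supplies it. (Your closing remark is unaffected: phrasing the statement with $[\DNC_n]'$ is unproblematic, since $\PA\equivSW[\DNC_n]$ is Corollary~\ref{cor:PA-DNC}; the issue is specific to $[\WKL]$.)
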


In the next section we will prove a corresponding characterization of $\COH$.

\section{The Weak Bolzano-Weierstra\ss{} Theorem}
\label{sec:BWT}

In this section we briefly discuss the relation of the Bolzano-Weierstra\ss{} theorem to the
cohesiveness problem, which was already considered \cite{Kre11}.
We recall that the Bolzano-Weierstra\ss{} theorem can be formalized as follows (see \cite{BGM12,Kre12}).

\begin{definition}[Bolzano-Weierstra\ss{} theorem]
Let $X$ be a computable metric space and let
\[\BWT_X:\In X^\IN\mto X,(x_i)_i\mapsto\{x:x\mbox{ is a cluster point of }(x_i)_i\}\]
where $\dom(\BWT_X)$ is the set of all sequences $(x_i)_i$  such that $\{x_i:i\in\IN\}$ has compact closure.
\end{definition}

We note the following \cite[Corollaries~11.6 and 11.17]{BGM12}.

\begin{fact}
\label{fact:BWT}
$\BWT_{2^\IN}\equivSW\BWT_\IR\equivSW\BWT_{[0,1]}\equivSW\BWT_{[0,1]^\IN}\equivSW\WKL'$.
\end{fact}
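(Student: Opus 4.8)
The plan is to reduce the whole chain to a small number of facts that are already in the literature, namely the characterization of the jump of closed choice by the cluster point problem, $\C_X'\equivSW\CL_X$ for computable metric spaces $X$ (the reductions in \cite{BGM12} only manipulate inputs and outputs, so the strong version is available), together with the strong equivalences $\C_{2^\IN}\equivSW\C_{[0,1]}\equivSW\C_{[0,1]^\IN}\equivSW\WKL$ for these (computably) compact computable metric spaces (see \cite{BG11,BBP12}). Granting these, the rest is bookkeeping. Indeed, the equivalences $\BWT_{2^\IN}\equivSW\WKL'$ and $\BWT_\IR\equivSW\WKL'$ are \cite[Corollaries~11.6 and 11.17]{BGM12}; I would reconstruct them and then fold in the remaining two spaces.

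First I would observe that for a \emph{compact} computable metric space $X$ every sequence in $X$ automatically has compact closure, so $\dom(\BWT_X)=X^\IN=\dom(\CL_X)$ and, since the output conditions coincide, $\BWT_X=\CL_X$ literally; in particular $\BWT_{2^\IN}=\CL_{2^\IN}$, $\BWT_{[0,1]}=\CL_{[0,1]}$ and $\BWT_{[0,1]^\IN}=\CL_{[0,1]^\IN}$. Since the jump $f\mapsto f'$ is monotone with respect to $\leqSW$, the equivalences $\C_X\equivSW\WKL$ lift to $\C_X'\equivSW\WKL'$, and combining with $\CL_X\equivSW\C_X'$ we get $\BWT_{2^\IN}\equivSW\BWT_{[0,1]}\equivSW\BWT_{[0,1]^\IN}\equivSW\WKL'$. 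To bring $\BWT_\IR$ into the chain I would show $\BWT_\IR\equivSW\BWT_{[0,1]}$ directly: $\BWT_{[0,1]}\leqSW\BWT_\IR$ is trivial via the inclusion $[0,1]\hookrightarrow\IR$, and for $\BWT_\IR\leqSW\BWT_{[0,1]}$ one uses the computable homeomorphism $h\colon\IR\to(0,1)$, $h(x):=(\arctan x)/\pi+\tfrac12$; given $(x_i)_i\in\dom(\BWT_\IR)$, the sequence $(h(x_i))_i$ lies in $[0,1]$, a cluster point $y$ produced by $\BWT_{[0,1]}$ satisfies $y\notin\{0,1\}$ because $(x_i)_i$ is bounded, and $h^{-1}(y)$ is then a cluster point of $(x_i)_i$ computable from a name of $y$ (strongly, since the output map need not consult the original input).

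The step I expect to carry the real weight is the imported equivalence $\CL_X\equivSW\C_X'$, in particular the direction $\CL_X\leqSW\C_X'$: from a sequence $(x_i)_i$ one must produce, limit-computably, a negative-information name of a nonempty closed set all of whose members are cluster points. The obstacle is that being a ``cluster-point node'' of the obvious tree is only a $\pO2$ condition and so cannot be decided in the limit; one must instead exploit the pigeonhole/compactness structure (at least one node at each level has infinitely many extensions among the $x_i$) to emit a valid $\psi_-$-approximation. Since this is exactly what \cite[Corollaries~11.6 and 11.17]{BGM12} establish, I would cite it rather than reprove it; the only genuinely new observations in the argument above are the identifications $\BWT_X=\CL_X$ on compact spaces and $\BWT_\IR\equivSW\BWT_{[0,1]}$ by rescaling.
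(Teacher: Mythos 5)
Your proposal is correct, but it is worth saying up front that the paper offers no proof of this Fact at all: it is imported wholesale, the entire ``proof'' being the citation \cite[Corollaries~11.6 and 11.17]{BGM12}, so any reconstruction is automatically a different route. Your decomposition is sound: $\BWT_X=\CL_X$ literally on compact spaces, $\CL_X\equivSW\C_X'$, the strong equivalences $\C_{2^\IN}\equivSW\C_{[0,1]}\equivSW\C_{[0,1]^\IN}\equivSW\WKL$ for these computably compact spaces, monotonicity of the jump under $\leqSW$, and the arctan rescaling for $\BWT_\IR\equivSW\BWT_{[0,1]}$ (your map $(\arctan x)/\pi+\tfrac12$ is the correct one; the paper later uses essentially the same trick for $\SBWT_\IR\leqSW\SBWT_{[0,1]}$ in Proposition~\ref{prop:SBWT}, where the formula as printed there is garbled). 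Two caveats. First, the paper's preliminaries only record $\C_X'\equivW\CL_X$ as an ordinary equivalence, so your parenthetical claim that the strong version is ``available'' needs either a check or, as you in effect do, a fallback on the very corollaries of \cite{BGM12} being cited. Second, you have singled out the wrong direction as the weight-bearing one: $\CL_X\leqSW\C_X'$ is the easy half, since the complement of the set of cluster points is the union of all rational balls containing only finitely many terms of the sequence, a $\Sigma^0_2$ condition in the input and hence c.e.\ in its jump, so a $\psi_-$--name of this nonempty closed set is computable from a jump--name of the sequence; the genuinely hard half is $\C_{2^\IN}'\leqSW\CL_{2^\IN}$, i.e.\ $\WKL'\leqSW\BWT_{2^\IN}$, which is where \cite{BGM12} does the real work. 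Since you cite that source for the full equivalence anyway, this misattribution does not create a gap, but it should be corrected if your sketch were expanded into a self-contained proof.
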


$\BWT_\IR$ determines a cluster point of a given bounded sequence $(x_n)_n$.
One could also consider the problem of finding a convergent subsequence of $(x_n)_n$.
Equivalently, we define the following weakening of the Bolzano-Weierstra\ss{} theorem.

\begin{definition}[Weak Bolzano-Weierstra\ss{} theorem]
Let $X$ be a computable metric space with Cauchy representation $\delta_X$.
By $\WBWT_X:\In X^\IN\mto X'$ we denote the same problem as $\BWT_X$
but with the jump $\delta_X'$ of the Cauchy representation as representation on the output side.
\end{definition}

This means that the output of a realizer of $\WBWT_X$ on some input $(x_n)_n$ is a sequence in $\IN^\IN$
that converges to a Cauchy name of a cluster point of $(x_n)_n$. 
The following result expresses the relation between the Bolzano-Weierstra\ss{} theorem
and the weak Bolzano-Weierstra\ss{} theorem.

\begin{theorem}[Weak Bolzano-Weierstra\ss{} theorem]
\label{thm:WBWT}
$\WBWT_X\equivW(\lim\to\BWT_X)$ for every computable metric space $X$.
\end{theorem}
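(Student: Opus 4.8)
The plan is to prove the two Weihrauch reductions $(\lim\to\BWT_X)\leqW\WBWT_X$ and $\WBWT_X\leqW(\lim\to\BWT_X)$ separately. Recall that $\lim\to\BWT_X=\min\{h:\BWT_X\leqW\lim*h\}$ and that, by definition of the jump of the Cauchy representation, a $\delta_X'$--name of a point $x$ is a sequence in $\IN^\IN$ whose $\lim$ is a $\delta_X$--name of $x$; also $\dom(\WBWT_X)=\dom(\BWT_X)$, since the two problems differ only in their output representation.

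For $(\lim\to\BWT_X)\leqW\WBWT_X$ it suffices, by minimality, to check that $\WBWT_X$ belongs to the set $\{h:\BWT_X\leqW\lim*h\}$, i.e.\ that $\BWT_X\leqW\lim*\WBWT_X$. This is immediate: given a sequence in $\dom(\BWT_X)=\dom(\WBWT_X)$, first apply $\WBWT_X$ to obtain a $\delta_X'$--name $s$ of a cluster point, and then apply $\lim$ to $s$, obtaining a $\delta_X$--name of that cluster point. Since this is literally ``$\WBWT_X$ followed by $\lim$'', it witnesses $\BWT_X\leqW\lim*\WBWT_X$.

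For $\WBWT_X\leqW(\lim\to\BWT_X)$ it again suffices, by minimality, to show $\WBWT_X\leqW h$ for every $h$ with $\BWT_X\leqW\lim*h$. So I would fix such an $h$ and unwind the compositional product: since $\lim$ is a cylinder, the reduction $\BWT_X\leqW\lim*h$ can be put into the normal form that there are computable procedures such that, for any realizer $\Gamma$ of $h$ and any name $p$ of a sequence in $\dom(\BWT_X)$, one computes from $p$ an $h$--input $K(p)$, then computes from $p$ and $q:=\Gamma(K(p))$ a sequence $(r_n)_n\in\dom(\lim)$ with limit $r$, and finally computes from $p,q$ and $r$ a $\delta_X$--name $H\langle p,q,r\rangle$ of a cluster point of the sequence named by $p$. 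I would then build a realizer of $\WBWT_X$ using $h$: given $p$, form $K(p)$, call $\Gamma$ to get $q$, compute $(r_n)_n$, and — instead of forming $r$ — output the sequence $s=\langle s_0,s_1,\dots\rangle$ defined by letting $s_n(k)$ be the value of $H\langle p,q,r_n\rangle(k)$ if this computation halts in at most $n$ steps and $s_n(k):=0$ otherwise. Since $H\langle p,q,\cdot\rangle$ is continuous (being computable), for each $k$ the value $H\langle p,q,r\rangle(k)$ is produced after finitely many steps while reading a finite prefix of $r$; because $r_n\to r$ coordinatewise, for all sufficiently large $n$ that prefix is matched by $r_n$ and $n$ exceeds the relevant step bound, whence $s_n(k)=H\langle p,q,r\rangle(k)$. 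Thus $\lim_n s_n=H\langle p,q,r\rangle$, a $\delta_X$--name of a cluster point, so $s$ is a $\delta_X'$--name of a cluster point of the sequence named by $p$. As $s$ is obtained from $p$ and $q$ by purely computable operations — and the output modifier of a Weihrauch reduction may consult the original input $p$ — this yields $\WBWT_X\leqW h$.

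The substantive direction is the second one, and within it the two non-routine points are: (i) justifying the normal form, namely that in $\lim*h$ the single $\lim$--call may be taken to be the last oracle call, followed only by computable postprocessing which may use the original input and the earlier $h$--output — this is a consequence of the definition of the compositional product together with $\lim$ being a cylinder (and idempotent); and (ii) the passage from ``$r=\lim_n r_n$ and $H\langle p,q,r\rangle$ is a $\delta_X$--name'' to ``$s$ is a $\delta_X'$--name'', for which the continuity of $H$ combined with the time-bounded evaluation above forces $(s_n)_n$ to stabilise coordinatewise to $H\langle p,q,r\rangle$. I would also remark that the argument uses nothing specific about $\BWT_X$: it shows in general that replacing the output representation $\delta_Y$ of any problem $f$ by its jump $\delta_Y'=\delta_Y\circ\lim$ produces exactly the Weihrauch degree $\lim\to f$.
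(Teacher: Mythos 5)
Your proposal is correct and follows essentially the same route as the paper: the first direction is the same immediate observation that $\BWT_X\leqW\lim*\WBWT_X$, and the second direction likewise fixes $h$ with $\BWT_X\leqW\lim*h$, puts the reduction into a cylinder normal form, and strips off the final $\lim$. The only difference is that you spell out, via the time-bounded simulation and continuity of the post-processor, the step the paper compresses into ``it follows that $\WBWT_X\leqW H\circ h\circ K$'', which is a legitimate filling-in of the same argument rather than a new one.
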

\begin{proof}
It follows immediately from the definitions that $\BWT_X\leqW\lim*\WBWT_X$, which implies
$(\lim\to\BWT_X)\leqW\WBWT_X$.
Let now $h$ be a multi-valued function such that $\BWT_X\leqW\lim*h$. Without loss of generality we can
assume that $h$ is of type $h:\In\IN^\IN\mto\IN^\IN$ and that $h$ is a cylinder.
Hence there are computable functions $H,K$ such that $\lim\circ H\circ h\circ K\equivW\lim*h$.
Since $\BWT_X\leqW\lim*h\equivW\lim\circ H\circ h\circ K$, it follows that
$\WBWT_X\leqW H\circ h\circ K\leqW h$, which implies $\WBWT_X\leqW(\lim\to\BWT_X)$.
\end{proof}

We obtain $\WBWT_{2^\IN}\equivW\WBWT_\IR\equivW\WBWT_{[0,1]}\equivW\WBWT_{[0,1]^\IN}$
with Fact~\ref{fact:BWT} and Theorem~\ref{thm:WBWT}. 
In Proposition~\ref{prop:SBWT} we are going to prove a slightly stronger result.

While the weak Bolzano-Weierstra\ss{} theorem determines a sequence that converges
to a cluster point, one could also consider a variant where the result is a function that
selects a converging subsequence.

\begin{definition}[Subsequential Bolzano-Weierstra\ss{} theorem]
Let $X$ be a computable metric space. We define $\SBWT_X:\In X^\IN\mto\IN^\IN$ by
\[\SBWT_X((x_i)_i):=\{s\in\IN^\IN:\mbox{$(x_{s(n)})_n$ converges and $s$ is strictly monotone}\}\]
where $\dom(\SBWT_X)$ is the set of all sequences $(x_i)_i$ such that $\{x_i:i\in\IN\}$ has compact closure.
\end{definition}

This version comes closest to the weak Bolzano-Weierstra\ss{} theorem that has been introduced in \cite{Kre11}.

\begin{proposition}
\label{prop:WBWT-SBWT}
$\WBWT_X\equivW\SBWT_X$ for every complete computable metric space $X$.
\end{proposition}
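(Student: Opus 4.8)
The plan is to prove the two reductions $\WBWT_X\leqW\SBWT_X$ and $\SBWT_X\leqW\WBWT_X$ separately. In both directions the preprocessing simply hands the given sequence $(x_i)_i$ — which lies in the common domain of the two problems — to the oracle, and all of the work is done by the output modifier $H$, which reads both the oracle's answer and the original sequence $(x_i)_i$. This use of the input is essential, which is why the argument yields only an ordinary and not a strong reduction.

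For $\WBWT_X\leqW\SBWT_X$: applying $\SBWT_X$ to $(x_i)_i$ yields a strictly increasing $s$ for which $(x_{s(m)})_m$ converges, say to $y$, and $y$ is then a cluster point of $(x_i)_i$. From the input we recover $\delta_X$-names of the points $x_{s(m)}$, and it remains to turn the convergent sequence $(x_{s(m)})_m$ into a $\delta_X'$-name of $y$, i.e.\ a sequence $\langle r_0,r_1,\dots\rangle$ of fast Cauchy names converging in Baire space to a single fast Cauchy name of $y$. The naive choice ``$r_m=$ name of $x_{s(m)}$'' need not converge in Baire space, so I would construct the $r_m$ \emph{lazily}: keep $r_m$ a (shifted) fast Cauchy name of $x_{s(m)}$, and obtain $r_m$ from $r_{m-1}$ by changing its $k$-th entry only when that entry is detected to be too far from $x_{s(m)}$, in which case it is reset to a fresh fine approximation of $x_{s(m)}$. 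Since $x_{s(m)}\to y$, each coordinate is changed only finitely often, so $(r_m)_m$ converges coordinatewise to a fast Cauchy name $r$ of $y$, and $\langle r_0,r_1,\dots\rangle$ is the desired $\delta_X'$-name.

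For $\SBWT_X\leqW\WBWT_X$: applying $\WBWT_X$ to $(x_i)_i$ yields a $\delta_X'$-name $\langle p_0,p_1,\dots\rangle$ of a cluster point $x$, so the sequences $p_j$ converge coordinatewise, as $j\to\infty$, to a fast Cauchy name of $x$. I would build $s$ in stages. At stage $n$, with $s(n-1)$ already defined, dovetail a search over indices $j\ge n$, lengths $L\ge n$ and input positions $i>s(n-1)$ — reading longer and longer prefixes of the $p_j$'s and of $(x_i)_i$ — for a configuration in which the first $L$ entries of $p_j$ form a legitimate fast Cauchy prefix whose last named point lies within distance $2^{-L+2}$ of $x_i$; put $s(n):=i$ for the first such configuration found. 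Each stage terminates: taking $L=n$ and $j$ beyond the existing (though not effectively bounded) stabilization index of the first $n$ coordinates of the limit name, the first $n$ entries of $p_j$ form an honest prefix whose last named point is within distance $2^{-n+1}$ of $x$, and, $x$ being a cluster point of $(x_i)_i$, there are arbitrarily late $i$ with $x_i$ within distance $2^{-n}$ of $x$. For correctness, the side condition $j\ge n$ forces the component latched onto at stage $n$ to tend to infinity, so — as the $p_j$ converge coordinatewise to the limit name — the prefix used at stage $n$ eventually agrees with that name on more and more coordinates; since a legitimate fast Cauchy prefix agreeing with the limit name on its first $m$ coordinates has its last named point within distance $O(2^{-m})$ of $x$, and $L\ge n\to\infty$ as well, we obtain $x_{s(n)}\to x$, whence $(x_{s(n)})_n$ converges and $s\in\SBWT_X((x_i)_i)$.

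The main obstacle, present in both directions, is that at a finite stage one cannot certify which of the partial data read from the oracle are the ``stable'' ones, so any construction that commits irrevocably on the basis of a finite view risks committing wrongly. What rescues the argument is that neither the output of $\SBWT_X$ nor a $\delta_X'$-name demands any \emph{rate} of convergence, so finitely many early mistakes — a bad early value $s(n)$, or a bad early coordinate of some $r_m$ — are harmless; this is precisely the slack exploited by the lazy-update rule and by the dovetailed search with the side condition $j\ge n$. The only genuinely technical points that remain are the termination of each search stage and the two elementary metric estimates about fast Cauchy prefixes, both of which are routine.
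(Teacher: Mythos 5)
Your argument is correct. Note that the paper itself offers no proof of this proposition: it is declared ``easy to prove'', with only the remark that access to the original input is what blocks a strong reduction — a remark your proposal reproduces exactly, since in both of your reductions the oracle receives the input sequence unchanged and the output modifier must consult the original names of the $x_i$. Your two constructions are the natural realizations of the claim: for $\WBWT_X\leqW\SBWT_X$ you show, in effect, that the limit of a convergent sequence in $X$ is computable when the output representation is the jump $\delta_X'$, via the lazy coordinatewise update; for $\SBWT_X\leqW\WBWT_X$ the dovetailed search with the growing side conditions $j\geq n$ and $L\geq n$ is the standard way to extract a convergent subsequence from data that only converge in Baire space. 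Two of the points you dismiss as routine deserve one explicit line each in a write-up. First, in the forward direction the precision of the ``fresh fine approximation'' must be strictly finer than the detection threshold (e.g.\ reset coordinate $k$ to within $2^{-k-2}$ of the current $x_{s(m)}$, but trigger a reset only on a certified distance $>2^{-k}$); if the two precisions coincide, a coordinate can be reset infinitely often. One should also say explicitly why the stabilized value of coordinate $k$ lies within $2^{-k}$ of the limit point: any larger distance would eventually be certified from finite input prefixes and trigger a further reset, contradicting stabilization. Second, in the backward direction the test that a finite word is a ``legitimate fast Cauchy prefix'' involves non-strict inequalities between computable reals and is not semi-decidable as stated; replace it by a strict test with slack (say, distances $<2^{-a+1}$ between the named ideal points), which the true name passes and which only changes the constants in your $O(2^{-m})$ estimate. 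With these adjustments, and the routine dovetailing needed to emit the array entries of the $r_m$ in order, your proof is complete and fills a gap the paper leaves to the reader.
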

\begin{proof}
Given a sequence $(x_i)_i$, we can use $\SBWT_X(x_i)_i$ in order to find a strictly monotone $s\in\IN^\IN$ such that
$(x_{s(n)})_n$ converges. Hence $\lim_{n\to\infty} x_{s(n)}$ is a cluster point of $(x_i)_i$.
Since $\lim_X\leqSW\lim$, e.g., by \cite[Proposition~9.1]{Bra05}, we obtain $\WBWT_X\leqW\SBWT_X$.
On the other hand, given $(x_i)_i$, we can use $\WBWT_X(x_i)_i$ in order to find a sequence of names $(p_n)_n$ such that $p:=\lim_{n\to\infty}p_n$
is a name for a cluster point $x$ of $(x_i)_i$. Since $(X,d)$ is a complete computable metric space, we can assume without loss of generality that 
we use a total Cauchy representation (this exists, for example, by \cite[Lemma~6.1]{BHK16}). Hence there is a sequence $(y_n)_n$ in $X$
such that $p_n$ is a name for $y_n$ and $p$ is a name for $x=\lim_{n\to\infty}y_n$. 
Now, for every $n\in\IN$ we can find a number $s(n)\in\IN$ such that $d(y_n,x_{s(n)})<2^{-n}$, which exists since $(y_n)_n$ converges to a cluster point of $(x_i)_i$.
Additionally, we can construct such an $s$ that is strictly increasing.
It follows that $\lim_{n\to\infty}x_{s(n)}=x$, and the algorithm yields $\SBWT_X\leqW\WBWT_X$.
\end{proof}

We note that this proof only yields Weihrauch equivalence and not strong Weih\-rauch equivalence,
as the access to the input is crucial in both directions.

As for the other versions of the Bolzano-Weierstra\ss{} theorem, $\SBWT_X$ yields
one and the same equivalence class for many different computable metric spaces $X$.

\begin{proposition}
\label{prop:SBWT}
$\SBWT_{2^\IN}\equivSW\SBWT_\IR\equivSW\SBWT_{[0,1]}\equivSW\SBWT_{[0,1]^\IN}$.
\end{proposition}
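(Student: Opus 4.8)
The plan is to prove all four strong equivalences at once by establishing the cycle
\[\SBWT_\IR\leqSW\SBWT_{[0,1]}\leqSW\SBWT_{[0,1]^\IN}\leqSW\SBWT_{2^\IN}\leqSW\SBWT_\IR.\]
The structural point that makes most of this straightforward is that, unlike $\BWT_X$ and $\WBWT_X$, the problem $\SBWT_X$ always outputs an element of $\IN^\IN$ with its standard representation, independently of $X$. Hence, to prove $\SBWT_X\leqSW\SBWT_Y$ it suffices to exhibit a computable map $K$ sending a name of an admissible input $(x_i)_i$ for $\SBWT_X$ to a name of an admissible input for $\SBWT_Y$ such that every strictly monotone $s$ for which the $K$-image subsequence converges in $Y$ is already an admissible output for $(x_i)_i$; the output modifier can then be taken to be the identity.

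Three of the four links come from computable embeddings and are routine. For $\SBWT_\IR\leqSW\SBWT_{[0,1]}$ apply the computable homeomorphism $\phi:\IR\to(0,1)\In[0,1]$, $\phi(x):=\frac1\pi\arctan x+\frac12$, to each term: a sequence in $\dom(\SBWT_\IR)$ is bounded, so its $\phi$-image lies in a compact subinterval of $(0,1)$, the image is therefore admissible for $\SBWT_{[0,1]}$, and since $\phi^{-1}$ is continuous on $(0,1)$ and the limit of any convergent subsequence of the image stays in that compact subinterval, a selected subsequence of the image converges in $[0,1]$ iff the corresponding subsequence of $(x_i)_i$ converges in $\IR$. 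For $\SBWT_{[0,1]}\leqSW\SBWT_{[0,1]^\IN}$ send $a$ to $(a,0,0,\dots)$ and recover convergence from the first coordinate. For $\SBWT_{2^\IN}\leqSW\SBWT_\IR$ use the ternary Cantor embedding $p\mapsto\sum_k2p(k)3^{-k-1}$, a computable homeomorphism of $2^\IN$ onto a compact subset of $\IR$. In all three cases the required compact-closure conditions on the transformed input hold automatically, since closed subsets of $[0,1]$, of $[0,1]^\IN$ and of the Cantor set are compact.

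The substantial step is $\SBWT_{[0,1]^\IN}\leqSW\SBWT_{2^\IN}$, and I expect this to be the main obstacle: there is no topological embedding of the Hilbert cube into the totally disconnected space $2^\IN$, so one cannot code points by points. Instead one exploits the representation and codes, for each index $i$, an entire system of \emph{bounded} rational approximations of $x_i$, the boundedness preventing convergence in $2^\IN$ from escaping to infinity. Concretely, from a name of $(x_i)_i$ one computes uniformly, for all $i,j,k$, a dyadic rational $q_{i,j,k}\in 2^{-(k+1)}\IZ\cap[0,1]$ with $|q_{i,j,k}-x_i(j)|<2^{-k+1}$ (round an explicit rational approximant taken from the name; for fixed $j,k$ only finitely many values can occur). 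Encode the bounded function $\langle j,k\rangle\mapsto 2^{k+1}q_{i,j,k}$ as a point $z_i\in 2^\IN$ via a self-delimiting coding $\IN^\IN\to 2^\IN$, e.g. $(a_0,a_1,\dots)\mapsto 0^{a_0}10^{a_1}1\cdots$, and set $K((x_i)_i):=(z_i)_i$; this is computable and $(z_i)_i$ is always admissible for $\SBWT_{2^\IN}$. If $s$ is strictly monotone and $(z_{s(n)})_n$ converges in $2^\IN$, then because every coordinate of the encoded functions is bounded the limit is a genuine encoding and each value $2^{k+1}q_{s(n),j,k}$ stabilizes as $n\to\infty$; hence for every $j$ the sequence $(x_{s(n)}(j))_n$ is Cauchy (comparing it, at precision $2^{-m}$, with the eventually constant $q_{s(n),j,m+1}$), so $(x_{s(n)})_n$ converges coordinatewise, i.e. in $[0,1]^\IN$. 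Thus $\SBWT_{2^\IN}(K((x_i)_i))\In\SBWT_{[0,1]^\IN}((x_i)_i)$, which gives the strong reduction with identity output modifier. Going around the cycle then yields $\SBWT_{2^\IN}\equivSW\SBWT_\IR\equivSW\SBWT_{[0,1]}\equivSW\SBWT_{[0,1]^\IN}$. The only delicate point is this last ``no escape to infinity'' argument, which is exactly where the boundedness of the $q_{i,j,k}$ is used — without it a sequence of encodings could converge in $2^\IN$ while the underlying points diverge.
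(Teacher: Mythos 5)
Your proof is correct and follows the same overall route as the paper: the same cycle of four strong reductions, with the same three embeddings (the arctangent map, the canonical injection of $[0,1]$ into $[0,1]^\IN$, and the ternary Cantor embedding) handling the easy links, and with the identity as outer function throughout. The only place where you genuinely diverge is the reduction $\SBWT_{[0,1]^\IN}\leqSW\SBWT_{2^\IN}$: the paper simply invokes the standard fact that the compact computable metric space $[0,1]^\IN$ admits a total representation $\rho:2^\IN\to[0,1]^\IN$ computably equivalent to its Cauchy representation, so that convergence of the names $p_{s(n)}$ in $2^\IN$ yields convergence of $\rho(p_{s(n)})$ simply by continuity of $\rho$; you instead build an explicit self-delimiting code of bounded dyadic approximations and prove by hand that a convergent sequence of codes has blockwise stabilizing blocks (your per-position boundedness is exactly what rules out escape to infinity), whence the coded points converge coordinatewise. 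Both arguments are sound; the paper's is shorter and generalizes verbatim to any computably compact computable metric space in place of $[0,1]^\IN$, while yours is self-contained, avoids the representation-theoretic fact, and in effect constructs a concrete (partial) such representation, making the continuity/stabilization mechanism explicit. Incidentally, your map $\frac{1}{\pi}\arctan x+\frac{1}{2}$ corrects a small typo in the paper's formula $\pi\arctan(x)+\frac{1}{2}$.
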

\begin{proof}
The reduction $\SBWT_{2^\IN}\leqSW\SBWT_\IR$ can be established using the function
$f:2^\IN\to\IR,p\mapsto\sum_{i=0}^\infty 2p(i)3^{-i-1}$ that maps $2^\IN$ computably and injectively to the Cantor discontinuum.
This map has a partial continuous inverse, and hence $(x_{s(n)})_n$ converges in $2^\IN$ whenever $(f(x_{s(n)}))_n$ converges in $\IR$.
The inverse reduction, $\SBWT_{\IR}\leqSW\SBWT_{[0,1]}$, follows similarly using the computable\footnote{That $f$ is computable follows since $\arctan$ is computable,
which holds since $\arctan(x)=\int_0^x\frac{1}{1+t^2}\;{\mathrm d}t$, and integration is computable by \cite[Theorem~6.4.1]{Wei00}.}
function $f:\IR\to[0,1],x\mapsto\frac{1}{\pi}\arctan(x)+\frac{1}{2}$. 
The reduction $\SBWT_{[0,1]}\leqSW\SBWT_{[0,1]^\IN}$ follows using the canonical injection $f:[0,1]\to[0,1]^\IN$.
Finally, for the reduction $\SBWT_{[0,1]^\IN}\leqSW\SBWT_{2^\IN}$ 
we can assume, without loss of generality, that $[0,1]^\IN$ is represented by a total version of the Cauchy representation $\rho:2^\IN\to[0,1]^\IN$ (which exists by \cite[Proposition~4.1]{BBP12}
since $[0,1]^\IN$ is computably compact).
Given a sequence $(x_n)_n$ in $[0,1]^\IN$ as a sequence of names $p_n\in2^\IN$ with $x_n=\rho(p_n)$,
we obtain that $(x_{s(n)})_n$ converges if $(p_{s(n)})_n$ converges. 
\end{proof}

We note that basically the same proof shows that we could also get strong equivalences in the case of $\WBWT$.
Now essentially the same proof as that of \cite[Theorem~3.2]{Kre11} yields the following result.

\begin{theorem}[Subsequential Bolzano-Weierstra\ss{} theorem]
\label{thm:SBWT-COH}
$\SBWT_\IR\equivSW\COH$.
\end{theorem}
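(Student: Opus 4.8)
The plan is to prove the strong equivalence $\SBWT_\IR\equivSW\COH$ by establishing both reductions, using the identification of a cohesive set $X\subseteq\IN$ with a strictly monotone sequence (its principal function) and the encoding of a binary sequence $(R_i)_i$ into a real-valued (or $2^\IN$-valued) sequence whose convergent subsequences carry exactly the cohesiveness information. By Proposition~\ref{prop:SBWT} it suffices to work with $\SBWT_{2^\IN}$ in place of $\SBWT_\IR$, which is more convenient.

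\textbf{Reduction $\COH\leqSW\SBWT_{2^\IN}$.} Given a sequence $(R_i)_i$ of subsets of $\IN$ (as elements of $2^\IN$), I would build a sequence $(x_n)_n$ in $2^\IN$ by setting $x_n$ to encode, roughly, the characteristic pattern $\langle\chi_{R_0}(n),\chi_{R_1}(n),\dots,\chi_{R_{k(n)}}(n)\rangle$ of which $R_i$'s the element $n$ belongs to, padded out into an element of $2^\IN$ in a fixed computable way (say $x_n = \chi_{R_0}(n)\,\chi_{R_1}(n)\cdots\chi_{R_{n}}(n)\,0^\IN$, or better, using a convergence-friendly coding so that $x_n\to p$ in $2^\IN$ forces, for each $i$, that $\chi_{R_i}(n)$ is eventually constant along the subsequence). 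Then $\{x_n:n\in\IN\}\subseteq 2^\IN$ is automatically relatively compact. A strictly monotone $s$ with $(x_{s(n)})_n$ convergent gives, for each fixed $i$, a tail on which $\chi_{R_i}$ is constant; hence the range $X:=\{s(n):n\in\IN\}$ is cohesive for $(R_i)_i$ (one must check $X$ is infinite, which follows from strict monotonicity of $s$). The output modifier is strongly computable: from $s$ just read off its range, so this gives $\leqSW$.

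\textbf{Reduction $\SBWT_{2^\IN}\leqSW\COH$.} This is the direction that mirrors \cite[Theorem~3.2]{Kre11} and is the main obstacle. Given a sequence $(x_n)_n$ in $2^\IN$ with relatively compact range, I would produce a sequence $(R_i)_i$ of subsets of $\IN$ whose cohesive sets $X$ let one computably (with access to the original input) extract a strictly monotone $s$ with $(x_{s(n)})_n$ convergent. The idea: for each position $j$ and each bit value $b\in\{0,1\}$ put $R_{\langle j,b\rangle}:=\{n:x_n(j)=b\}$. A set $X$ cohesive for this family decides, for every $j$, whether $X\subseteq^* R_{\langle j,0\rangle}$ or $X\subseteq^* R_{\langle j,1\rangle}$, i.e.\ the limit bit $p(j)$; since the range is relatively compact one argues $p\in 2^\IN$ is a genuine cluster point. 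Then, using $X$ together with the original $(x_n)_n$, enumerate $s$ by: at stage $k$, wait for an element $n\in X$ with $n>s(k-1)$ and $x_n\restriction k$ agreeing with the (by cohesiveness, eventually fixed) target prefix, and set $s(k):=n$; cohesiveness guarantees such $n$ exist, monotonicity is built in, and $x_{s(n)}\to p$. The delicate points are (i) making the extraction genuinely strongly Weihrauch, i.e.\ the output transformation uses only the $\COH$-output and not the input — here one must instead allow the input access, so one should double-check whether the claim is $\equivSW$ or $\equivW$; the paper states $\equivSW$, so I would follow \cite{Kre11} in arranging the coding so that the final selection from $X$ is itself computable from $X$ alone (e.g.\ by coding the $x_n$ data directly into the $R_i$'s so that $X$ already ``knows'' enough), and (ii) verifying relative compactness is genuinely used to upgrade ``$p$ decided bit-by-bit'' to ``$p$ is a cluster point.''

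\textbf{Assembly.} Combining the two reductions and Proposition~\ref{prop:SBWT} yields $\SBWT_\IR\equivSW\SBWT_{2^\IN}\equivSW\COH$. I expect the write-up to be short once the two codings are pinned down; the real work is the careful bookkeeping in the second reduction to keep it \emph{strong}, exactly as in the proof of \cite[Theorem~3.2]{Kre11}, to which I would defer for the routine verification that the enumerated $s$ is strictly monotone and that $(x_{s(n)})_n$ converges.
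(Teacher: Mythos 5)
Your two codings are essentially the ones the paper uses (transplanted from $[0,1]^\IN$ resp.\ $[0,1]$ to $2^\IN$, which is legitimate by Proposition~\ref{prop:SBWT}), and your direction $\COH\leqSW\SBWT_{2^\IN}$ is correct as sketched: a strictly monotone $s$ with $(x_{s(k)})_k$ convergent forces each bit $\chi_{R_i}(s(k))$ to stabilize, and reading off the range of $s$ uses only the $\SBWT$-output. The problem is the converse direction, which is exactly where you stop short. Your extraction procedure ``wait for $n\in X$ with $x_n|_k$ agreeing with the target prefix'' is not computable: knowing the target prefix $p|_k$ amounts to deciding, for each $j<k$, which of the almost-inclusions $X\In^* R_j$ or $X\In^*(\IN\setminus R_j)$ holds, and that is a limit statement, not something you can read off from $X$ and $(x_n)_n$ in finite time. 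On top of that, the procedure consults the input $(x_n)_n$, so even if it worked it would only give $\leqW$, and you explicitly leave the upgrade to $\leqSW$ as something to be arranged ``as in \cite{Kre11}'' without saying how. That unresolved point is the actual content of the theorem's strong form.

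The missing observation, which is how the paper closes this direction, is that no extraction step is needed at all: with your sets $R_j:=\{n:x_n(j)=1\}$ (computable from Cauchy names of the $x_n$ since bits of points of $2^\IN$ are clopen conditions), any $X\in\COH((R_j)_j)$ already \emph{is} the answer, via its principal function $p_X$. Indeed, cohesiveness means that for every $j$ the bit $x_n(j)$ is eventually constant for $n\in X$, so the sequence $(x_{p_X(k)})_k$ converges coordinatewise, hence converges in $2^\IN$, and $p_X$ is strictly monotone; the map $X\mapsto p_X$ uses only the $\COH$-output, so the reduction is strong. (You never need to name the limit point $p$ --- $\SBWT$ only asks for the index sequence.) The paper runs the same argument over $[0,1]$ with dyadic intervals in place of bits, which forces it to first replace the input by a rational sequence so that interval membership becomes decidable; your $2^\IN$ formulation avoids that detour, but as written your proof of the key direction has a genuine gap rather than a routine verification.
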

\begin{proof}
Firstly, we note that $\SBWT_{[0,1]}\equivSW\SBWT_{[0,1]}|_{\IQ^\IN}$.
This is because given a sequence $(x_i)_i$ in $[0,1]$,
we can compute a sequence $(y_i)_i$ in $\IQ\cap[0,1]$ such that $|x_i-y_i|<2^{-i}$, and hence 
the sets of cluster points of both sequences coincide.
By Proposition~\ref{prop:SBWT} it suffices to prove $\SBWT_{[0,1]}|_{\IQ^\IN}\leqSW\COH\leqSW\SBWT_{2^\IN}$.

Now we prove $\SBWT_{[0,1]}|_{\IQ^\IN}\leqSW\COH$. Given a sequence $(x_i)_i$ of rational
numbers in $[0,1]$ we compute the sequence $(R_i)_{i}$ of sets with
\[R_i:=\left\{j\in\IN:x_j\in\bigcup_{k<2^{i}}\left[\frac{2k}{2^{i+1}},\frac{2k+1}{2^{i+1}}\right]\right\}\]
for all $i\in\IN$. 
Now given a cohesive set $A\in\COH((R_i)_i)$ for $(R_i)_i$ we can compute the principal function
$p_A$ of $A$, which is strictly monotone. It suffices to show that $(x_{p_A(n)})_n$ is a Cauchy sequence.
For every set $R\In\IN$ we use the notation $1\cdot R:=R$ and $(-1)\cdot R:=\IN\setminus R$,
and we define for every word $y\in\{0,1\}^*$
\[R^y:=\bigcap_{i<|y|}(-1)^{y(i)}\cdot R_{i}.\]
By the definition of the sets $R_i$ it follows that $i,j\in R^y$ implies $|x_i-x_j|\leq2^{-|y|+1}$.
Since $A$ is cohesive for $(R_i)_i$ it follows that for every $k\in\IN$ there is some $m\in\IN$
and some $y\in\{0,1\}^{k+2}$ such that $p_A(n)\in R^y$ for all $n>m$. In particular, for every $k\in\IN$
there is some $m\in\IN$ such that $|x_{p_A(n)}-x_{p_A(m)}|<2^{-k}$ for all $n>m$. This means that $(x_{p_A(n)})_n$
is a Cauchy sequence.

Now we prove that $\COH\leqSW\SBWT_{2^\IN}$. Given a sequence $(R_i)_i$ of sets $R_i\In\IN$
we compute a sequence $(x_i)_i$ in $2^\IN$ by 
\[x_i(n):=\left\{\begin{array}{ll}
  1 & \mbox{if $i\in R_n$}\\
  0 & \mbox{otherwise}
\end{array}\right..\]
Now $\SBWT_{2^\IN}((x_i)_i)$ yields a strictly increasing function $s:\IN\to\IN$ such that
$(x_{s(i)})_i$ is a convergent subsequence of $(x_i)_i$. Given $s$ we can compute its range $A:=\{s(n):n\in\IN\}$,
and it suffices to show that $A$ is cohesive for $(R_i)_i$. 
The fact that $(x_{s(i)})_i$ is convergent, and hence Cauchy,
implies that for all $k\in\IN$ there is some $m\in\IN$ such that $x_{s(j)}|_k=x_{s(m)}|_k$ for all $j\geq m$;
i.e., such that $s(j)\in R_n\iff s(m)\in R_n$ for all $j\geq m$ and $n<k$.
This proves that $A$ is cohesive for $(R_i)_i$.
\end{proof}

Using Theorems~\ref{thm:WBWT} and \ref{thm:SBWT-COH}, Proposition~\ref{prop:WBWT-SBWT} and Fact~\ref{fact:BWT} 
we get the following purely algebraic characterization of cohesiveness
in terms of the jump of weak K\H{o}nig's lemma, which is the counterpart of Corollary~\ref{cor:COH-WKL-degrees}
that expresses a similar relation on the corresponding problems on Turing degrees.

\begin{corollary}[Cohesiveness]
\label{cor:COH}
$\COH\equivW(\lim\to\WKL')$.
\end{corollary}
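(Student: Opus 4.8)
The plan is to obtain this corollary purely by chaining the results established in this section with Fact~\ref{fact:BWT}, so no genuinely new argument is required. First I would use Theorem~\ref{thm:SBWT-COH} to replace $\COH$ by $\SBWT_\IR$; since strong equivalence implies ordinary equivalence, this already gives $\COH\equivW\SBWT_\IR$. Next, Proposition~\ref{prop:WBWT-SBWT} yields $\SBWT_\IR\equivW\WBWT_\IR$, and Theorem~\ref{thm:WBWT} applied to the computable metric space $X=\IR$ yields $\WBWT_\IR\equivW(\lim\to\BWT_\IR)$. Finally, Fact~\ref{fact:BWT} gives $\BWT_\IR\equivSW\WKL'$, and in particular $\BWT_\IR\equivW\WKL'$.

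The only point that deserves a sentence of justification is that the implication operation $g\mapsto(\lim\to g)$ is invariant under Weihrauch equivalence of its right argument. This is immediate from the defining property $\lim\to g=\min\{h:g\leqW\lim*h\}$: if $g\equivW g'$ then $\{h:g\leqW\lim*h\}=\{h:g'\leqW\lim*h\}$, so the two minima coincide. Applying this with $g=\BWT_\IR$ and $g'=\WKL'$ turns $\BWT_\IR\equivW\WKL'$ into $(\lim\to\BWT_\IR)\equivW(\lim\to\WKL')$, and composing all the displayed equivalences gives $\COH\equivW(\lim\to\WKL')$.

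I do not expect any real obstacle here. The one thing to keep track of is that Theorem~\ref{thm:WBWT} and Proposition~\ref{prop:WBWT-SBWT} are formulated only for ordinary Weihrauch reducibility, so the conclusion can only be an ordinary (not strong) equivalence — which is exactly what the corollary asserts. It is also worth remarking, for the reader, that this is the counterpart of Corollary~\ref{cor:COH-WKL-degrees} at the level of problems rather than Turing degrees, and that the two are genuinely different statements since $[\WKL]'$ and $\WKL'$ need not coincide.
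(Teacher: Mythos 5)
Your proposal is correct and is essentially the paper's own argument: the corollary is obtained exactly by chaining Theorem~\ref{thm:SBWT-COH}, Proposition~\ref{prop:WBWT-SBWT}, Theorem~\ref{thm:WBWT} and Fact~\ref{fact:BWT}, with the (correct) observation that $g\mapsto(\lim\to g)$ only depends on the Weihrauch degree of $g$. Nothing further is needed.
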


We can derive the following interesting consequence of this corollary.

\begin{proposition}
\label{cor:COR-WBWT2}
$\COH\equivW\widehat{\WBWT_2}$.
\end{proposition}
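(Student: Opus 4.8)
The plan is to factor the claim through the weak Bolzano--Weierstra\ss{} theorem on Cantor space, proving $\COH\equivW\WBWT_{2^\IN}$ and then $\WBWT_{2^\IN}\equivW\widehat{\WBWT_2}$. The first equivalence is essentially already available: by Theorem~\ref{thm:WBWT} we have $\WBWT_{2^\IN}\equivW(\lim\to\BWT_{2^\IN})$, by Fact~\ref{fact:BWT} we have $\BWT_{2^\IN}\equivSW\WKL'$, and by Corollary~\ref{cor:COH} we have $\COH\equivW(\lim\to\WKL')$; since the implication respects $\equivW$ in its second argument, these combine to $\COH\equivW\WBWT_{2^\IN}$. (Alternatively one may invoke $\WBWT_{2^\IN}\equivW\WBWT_\IR\equivW\SBWT_\IR\equivSW\COH$ via Proposition~\ref{prop:WBWT-SBWT} and Theorem~\ref{thm:SBWT-COH}.)

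For $\widehat{\WBWT_2}\leqW\WBWT_{2^\IN}$ I would use a ``columns'' trick. Given a sequence $\big((x^{(k)}_i)_i\big)_k$ of binary sequences, form the single sequence $(y_i)_i$ in $2^\IN$ with $y_i(k):=x^{(k)}_i$. If $z\in2^\IN$ is a cluster point of $(y_i)_i$, then for each $k$, inspecting the basic neighbourhood of length $k+1$ shows that $z(k)$ is a bit attained infinitely often by $(x^{(k)}_i)_i$, i.e.\ a cluster point of $(x^{(k)}_i)_i$. A $\delta_{2^\IN}'$--name of $z$ is, up to computable equivalence, a sequence converging pointwise to $z$, from which one reads off a $\delta_2'$--name of each $z(k)$ uniformly in $k$; hence $\WBWT_{2^\IN}$ computes a realizer of $\widehat{\WBWT_2}$.

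The crux is $\WBWT_{2^\IN}\leqW\widehat{\WBWT_2}$. Given $(x_i)_i$ in $2^\IN$, attach to each word $\sigma\in\{0,1\}^*$ the visiting set $I_\sigma:=\{i:\sigma\prefix x_i\}$; these form a finitely branching tree with $I_{\langle\rangle}=\IN$ and $I_\sigma=I_{\sigma0}\sqcup I_{\sigma1}$. For each $\sigma$ define a total computable binary sequence $g^\sigma$ by $g^\sigma_m:=x_i(|\sigma|)$, where $i:=\max\big(I_\sigma\cap\{0,\dots,m\}\big)$, and $g^\sigma_m:=0$ if $I_\sigma\cap\{0,\dots,m\}=\emptyset$. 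Since the running maximum of $I_\sigma$ eventually runs through every element of $I_\sigma$, the cluster points of $g^\sigma$ are exactly the bits $d$ with $I_{\sigma d}$ infinite; in particular, if $I_\sigma$ is infinite then \emph{every} cluster point $d$ of $g^\sigma$ has $I_{\sigma d}$ infinite. Feeding $(g^\sigma)_{\sigma\in\{0,1\}^*}$ to $\widehat{\WBWT_2}$ returns, for each $\sigma$, a $\delta_2'$--name of some cluster point $d_\sigma$ of $g^\sigma$, from which one limit--computes the bit $d_\sigma$ (routine, as $\delta_2$ decodes its point from a fixed finite prefix, so this is a single limit). Now set $z(0):=d_{\langle\rangle}$ and $z(n+1):=d_{z|_{n+1}}$. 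An easy induction using $I_{\langle\rangle}=\IN$ and the stated property of $g^\sigma$ gives that $I_{z|_n}$ is infinite for all $n$, so every finite prefix of $z$ is visited infinitely often and $z$ is a cluster point of $(x_i)_i$. As the procedure only limit--computes the bits $d_\sigma$, it only limit--computes $z$, and that is precisely what a $\delta_{2^\IN}'$--name of $z$ supplies. Combining the three steps yields $\COH\equivW\WBWT_{2^\IN}\equivW\widehat{\WBWT_2}$.

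The delicate point I expect is the design of $g^\sigma$ so that the cluster--point oracle is informative no matter which cluster point it returns: a direct encoding of ``$\sigma$ is visited infinitely often'' into a binary sequence makes $0$ a cluster point for trivial reasons, so the answer $0$ would carry no information; passing instead to the running enumeration of $I_\sigma$ and recording membership in the two children forces both possible answers to be meaningful. A secondary remark is that, unlike in Proposition~\ref{prop:COH-lim}, no backtracking is required here: the invariant ``$I_{z|_n}$ infinite'' is preserved by any choice of cluster point, so the nondeterminism of $\WBWT_2$ cannot derail the construction, and the output side of $\WBWT_{2^\IN}$ already lives under the jumped representation, so a merely limit--computable $z$ is an admissible output.
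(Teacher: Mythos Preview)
Your argument is correct. The ``single limit'' step in the hard direction is the only place a reader might pause, but it works: at stage $s$ you can form $z^{[s]}$ by following the approximations $d_\sigma^{[s]}$ down the tree, and since $d_{\langle\rangle}^{[s]}$ stabilises first, then $d_{z|_1}^{[s]}$, and so on, each coordinate of $z^{[s]}$ eventually stabilises, which is exactly what a $\delta_{2^\IN}'$--name requires.

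The paper takes a different, more algebraic route. For $\widehat{\WBWT_2}\leqW\COH$ it simply notes that $\WBWT_2\leqW\WBWT_{2^\IN}\leqW\COH$ and that $\COH$ is parallelizable (merge countably many sequences of sets into one; a cohesive set for the merged sequence is cohesive for each component). For the hard direction $\COH\leqW\widehat{\WBWT_2}$ it avoids any direct construction: citing $\BWT_{2^\IN}\equivW\widehat{\BWT_2}$ from \cite{BGM12} and using that parallelization distributes over $*$, it gets the chain
\[
\WKL'\equivW\BWT_{2^\IN}\equivW\widehat{\BWT_2}\leqW\widehat{\lim\nolimits_2*\WBWT_2}\leqW\widehat{\lim\nolimits_2}*\widehat{\WBWT_2}\leqW\lim*\widehat{\WBWT_2},
\]
and then invokes $\COH\equivW(\lim\to\WKL')$. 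So the paper delegates the combinatorial work to the cited equivalence $\BWT_{2^\IN}\equivW\widehat{\BWT_2}$, whereas you essentially reprove its weak analogue $\WBWT_{2^\IN}\equivW\widehat{\WBWT_2}$ by hand. Your approach is more self-contained and makes the mechanism visible (the $g^\sigma$ trick forcing every cluster point to be informative is a nice touch); the paper's approach is shorter and shows how the result drops out of the existing algebraic machinery once $\COH\equivW(\lim\to\WKL')$ is in place.
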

\begin{proof}
It is clear that $\WBWT_2\leqW\WBWT_{2^\IN}\leqW\COH$ by Theorem~\ref{thm:SBWT-COH} and Proposition~\ref{prop:WBWT-SBWT}.
Since $\COH$ is parallelizable by definition, it follows that $\widehat{\WBWT_2}\leqW\COH$. For the other direction of the reduction we note that
$\BWT_{2^\IN}\equivW\widehat{\BWT_2}$ by \cite[Corollary~11.12]{BGM12}.
By \cite[Proposition~41]{BP16} we obtain for $F:=\lim_2*\WBWT_2$
that $\widehat{F}\leqW\widehat{\lim_2}*\widehat{\WBWT_2}$.
Hence it follows from Fact~\ref{fact:BWT} that
\[\WKL'\equivW\BWT_{2^\IN}\leqW\widehat{\BWT_{2}}\leqW\widehat{F}\leqW\widehat{\lim\nolimits_2}*\widehat{\WBWT_2}\leqW\lim*\widehat{\WBWT_2}.\]
Hence $\COH\leqW\widehat{\WBWT_2}$ by Corollary~\ref{cor:COH}.
\end{proof}

From Corollary~\ref{cor:COH} it follows that $\WKL'\leqW\lim*\COH$. In Corollary~\ref{cor:WKL-lim-COH} we are going to 
prove that even equivalence holds. As a preparation,
we first prove a theorem that shows that computable functions that map converging
sequences to converging sequences can be mimicked on the limits by a function that is
computable in the halting problem.

We recall that we use the coding $\langle\;\rangle:(\IN^\IN)^\IN\to\IN^\IN$ given by
\[\langle p_0,p_1,p_2,...\rangle\langle n,k\rangle:=p_n(k)\]
for all $p_i\in\IN^\IN$ and $n,k\in\IN$. Given words $v_0,...,v_t\in\IN^*$, we define analogously
$\langle v_0,...,v_t\rangle$ to be the longest word $w\in\IN^*$ such that $w\langle n,k\rangle=v_n(k)$ is defined for
all $n,k\in\IN$ with $\langle n,k\rangle<|w|$.

\begin{theorem}[Double Limit]
\label{thm:double-limit}
For every computable function $G:\In\IN^\IN\to\IN^\IN$ there exists a function
$F:\In\IN^\IN\to\IN^\IN$ that is computable in $\emptyset'$ and such that
\[F(p)\in\lim\circ G\circ\lim\nolimits^{-1}(p)\]
for all $p\in\dom(\lim\circ G\circ\lim\nolimits^{-1})$. 
\end{theorem}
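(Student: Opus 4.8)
The plan is to produce, for each $p$ in the domain, both a sequence $q'=q'(p)$ converging to $p$ with $q'\in\dom(G)$ and $G(q')$ convergent, and the sequence $r=r(p):=\lim G(q')\in2^\IN$, by a single construction that is uniformly computable in $\emptyset'$ with $p$ as input; then $F(p):=r(p)$ will be the function we want. One cannot simply build $q'$ first and afterwards form $\lim G(q')$: applying $\lim$ to a $(\emptyset'\oplus p)$-computable object only yields a $(\emptyset'\oplus p)'$-computable output, whereas $F$ must be $(\emptyset'\oplus p)$-computable, so the bits of $r$ have to be committed -- and revised only finitely often -- during the construction itself. I would fix an oracle machine $M$ computing $G$, so that on $q\in\dom(G)$ the infinite output of $M$ on $q$ is $G(q)=\langle s_0,s_1,\dots\rangle$ with $s_n\in2^\IN$, and recall that this output lies in $\dom(\lim)$ exactly when every coordinate $\lim_n s_n(k)$ exists.

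I would then build $q'=\langle q'_0,q'_1,q'_2,\dots\rangle$ and $r$ as a $(\emptyset'\oplus p)$-computable monotone union of finite conditions. A condition fixes a finite prefix of $q'$, a finite prefix of $r$, and an ``agreement schedule'' prescribing for finitely many coordinates $k$ a length $\ell_k$ to which $q'_k$ is forced to equal $p\restriction\ell_k$; the schedule is extended at every stage so that all agreement lengths grow to infinity, which makes $\lim_k q'_k=p$ hold by construction -- crucially, coordinates are only forced to agree with $p$ on a prefix, never to equal $p$ outright, since a good preimage of $p$ may have to differ from $p$ far out in infinitely many coordinates. At each stage one asks $\emptyset'$ the finitary ($\Sigma^0_1$) question whether the current condition admits an extension, respecting the schedule (possibly enlarging it), on which $M$ emits one further output symbol; if so, one takes the least such extension. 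If that symbol lies in a block-coordinate already committed in $r$ and disagrees with it, one backtracks to the last condition that still admits an alternative such extension; if it lies in a fresh block-coordinate, one commits $r$ there accordingly. If the $\emptyset'$-query answers negatively, one merely lengthens the agreement schedule, extending $q'$ by copying more of $p$.

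The verification is where the substance lies, and the main obstacle is to show that the backtracking always terminates, i.e.\ that the construction never reaches a condition from which no good preimage of $p$ extends compatibly with the current $r$. The tool is an invariant: at every stage the current condition is compatible with some $\tilde q$ converging to $p$ with $\tilde q\in\dom(G)$, with $M^{\tilde q}$ convergent and agreeing with the current prefix of $r$; this holds initially with $\tilde q$ the given good $q$, each stage-action preserves such a witness, and the backtracking branch retreats to a node at which the witness first recorded there still applies. Since some branch is infinitely extendible (because a good $q$ exists), the leftmost-so-far path is followed in the limit, exactly as in the proof of Proposition~\ref{prop:COH-lim}; along it $M^{q'}$ is total (a witness always permits a one-symbol extension, so the $\emptyset'$-query eventually returns ``yes'') and $G(q')$ converges to $r$ (each block-coordinate of $r$ is revised, and backtracked, only finitely often, which also secures $r\leqT\emptyset'\oplus p$). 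The points I expect to require the most care are the adaptive agreement-schedule bookkeeping -- reconciling convergence of $q'$ to $p$ with leaving the coordinates enough freedom to meet the output requirements, given that the rate at which $q$ converges to $p$ is not known in advance -- and setting up the notion of ``condition compatible with a good preimage'' so that all queries actually posed to $\emptyset'$ stay $\Sigma^0_1$.
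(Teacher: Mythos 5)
You have identified the right difficulty (the bits of $F(p)$ cannot be produced by first building a preimage and then taking a limit), but your construction does not overcome it. The decisive gap is the inference ``each block-coordinate of $r$ is revised, and backtracked, only finitely often, which also secures $r\leq_{\mathrm{T}}\emptyset'\oplus p$'': a commit-and-retract construction that is computable in $\emptyset'\oplus p$ and whose committed bits stabilize after finitely many revisions only shows that $r$ is \emph{limit} computable in $\emptyset'\oplus p$, i.e.\ $r\leq_{\mathrm{T}}(\emptyset'\oplus p)'$. A function computable in $\emptyset'$ must write its output irrevocably, and nothing in your construction lets the machine certify, from $\emptyset'\oplus p$ alone, that a committed bit will never be backtracked. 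Your appeal to the backtracking/leftmost-path argument of Proposition~\ref{prop:COH-lim} makes the problem visible: that argument produces a limit computation, and it suffices there only because cohesiveness is invariant under finitely many errors, whereas here the target value $\lim G(q')$ tolerates none. Moreover, the invariant you rely on is not in fact preserved by the stage actions: when a fresh coordinate $k$ of $r$ is committed to the \emph{first} symbol $M$ emits in that coordinate, this may be a transient value that disagrees with $(\lim G(\tilde q))(k)$ for \emph{every} good $\tilde q$ compatible with the current condition, so the required witness need not exist; and since conditions extend inside $\IN^\IN$, the tree of alternative extensions is infinitely branching, so there is no compactness reason why backtracking past a fixed level happens only finitely often. (The point you flag about keeping the $\emptyset'$-queries $\Sigma^0_1$ rather than $\Sigma^0_1(p)$ is, by contrast, repairable in the way you suspect: only explicit finite prefixes of $p$ may enter each query.)

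The paper's proof avoids retraction altogether and is worth contrasting. First, by the composition convention every $q$ with $\lim q=p$ already lies in $\dom(\lim\circ G)$ when $p$ is in the stated domain, so any finite configuration of columns (each carrying a prefix of $p$) can be completed to an admissible preimage simply by appending copies of $p$; one never has to steer towards a distinguished ``good'' preimage, which removes the need for your witness invariant. Exploiting this, for each output position $i$ the paper builds, computably in $\emptyset'$ and consulting only finite prefixes of $p$ together with a single $p$-independent c.e.\ set $A$, a sequence of columns converging to $p$, and asks for each threshold $k$ and parity $b\in\{0,1\}$ the $\Sigma^0_1$ question whether some finite extension makes the $i$-th coordinate of the output show the bit $b$ at least $k$ times. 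Some parity passes every threshold, the wrong parity passes only finitely many, and the answers are monotone in $k$; hence the bit can be read off as the unique passing parity at the least threshold where exactly one parity passes --- a criterion $\emptyset'$ can verify \emph{before} the bit is emitted, so each bit of $F(p)$ is written once and never revised. To salvage your forcing-style construction you would need an analogous $\emptyset'$-verifiable certificate of finality for each commitment; as it stands, your argument yields at best a realizer of $\lim\circ G\circ\lim^{-1}$ that is limit computable in $\emptyset'\oplus p$, which is strictly weaker than the theorem.
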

\begin{proof}
Using a computable standard embedding $\iota:\IN^\IN\into2^\IN$ we can assume without loss of generality
that $G$ is a computable function of type $G:\In\IN^\IN\to2^\IN$. Then there exists a computable monotone function $g:\IN^*\to2^*$
that approximates $G$ in the sense that $G(q)=\sup_{w\prefix q}g(w)$ for all $q\in\dom(G)$.
Let $g_i:\IN^*\to2^*$ be the part of $g$ that contributes to $(\lim\circ G(q))(i)$:
for $w\in\IN^*$ and $i\in\IN$ we define
$g_i(w):=g(w)\langle 0,i\rangle...g(w)\langle n,i\rangle$
for the largest $n\in\IN$ such that all the values are defined.
Let  $A\In\{0,1\}\times\IN^3\times(\IN^*)^2$  be the set of values $(b,k,i,s,w,\langle u_0,...,u_s\rangle)$
such that there exist $t>s$ and $u_{s+1},...,u_t\in\IN^*$ with 
\begin{enumerate}
\item[(a)] $w\prefix u_\iota$ for all $\iota=s+1,...,t$,
\item[(b)] $g_i(\langle u_00^t,...,u_s0^t,u_{s+1},...,u_t\rangle)$ contains at least $k$--times the bit $b$.
\end{enumerate} 
The set $A$ is c.e.\ and hence computable in $\emptyset'$.

Now we describe how we can compute a suitable function $F:\In\IN^\IN\to2^\IN$ with the help of $A$.
Given some input $p\in\dom(\lim\circ G\circ\lim\nolimits^{-1})$ we compute
$F(p)(i)$ with the help of a sequence $(u_s)_s$ of words $u_s\in\IN^*$. 

For each fixed $i\in\IN$ we determine this sequence $(u_s)_s$ inductively in stages $j=0,1,2,...$.
We start with the empty sequence $(u_s)_s$ and $s_0:=-1$.
At stage $j=2k+b$, $u_0,...,u_{s_j}$ are already determined and we check whether
\begin{eqnarray}
\label{eq:double-limit}
(b,k,i,s_j,p|_j,\langle u_0,...,u_{s_j}\rangle)&\in& A.
\end{eqnarray}
If so, then we compute corresponding words $u_{s_j+1},...,u_t$
that satisfy the conditions (a) and (b) given above and we extend $u_0,...,u_{s_j}$ by these words, so $s_{j+1}:=t$.
Otherwise, we leave the sequence as it is and set $s_{j+1}:=s_j$. 

For each $k\in\IN$ the test~(\ref{eq:double-limit}) above is positive for at least one $b\in\{0,1\}$: the word $\langle u_0,...,u_{s_j}\rangle$ can be extended to 
$q:=\langle u_0\widehat{0},...,u_{s_j}\widehat{0},p,p,p,...\rangle$ and $\lim(q)=p$, and hence $q\in\dom(G)$
and $\lim\circ G(q)$ exists. This means that $(\lim\circ G(q))(i)=b$ for some $b\in\{0,1\}$, and for this $b$ a suitable $t$ and an
extension $u_{s_j+1},...,u_t$ can be found by the computability of $G$.
In particular, the sequence $(u_s)_s$ is actually infinite, and $u:=\langle u_0\widehat{0},u_1\widehat{0},u_2\widehat{0},...\rangle$ 
satisfies $\lim(u)=p$ by construction. 

Since $\lim(u)=p$, it follows that $u\in\dom(G)$ and $\lim\circ G(u)$ exists, 
which means that $(\lim\circ G(u))(i)=d$ for some $d\in\{0,1\}$.
Hence, for our fixed $(u_s)_s$ and $b=1-d$, the test~(\ref{eq:double-limit}) is positive for 
only finitely many $k$ and $j=2k+b$. If the test is negative for some fixed $k,b$, then it is also
negative for all larger $k$ and the corresponding $j$.
Hence there is a minimal $k\in\IN$ such that the test~(\ref{eq:double-limit}) 
is positive for $j=2k+b$ with a unique $b\in\{0,1\}$, and for this unique $b$
we obtain $b=d$.

Hence, in order to compute $F(p)(i)$ given $u$, we search for the minimal $k\in\IN$
with the property that (\ref{eq:double-limit}) is satisfied for $j=2k+b$ with
a unique $b$ and we let $F(p)(i)=b$, which guarantees 
\[F\circ\lim(u)=F(p)=\lim\circ G(u)\]
and hence $F(p)\in\lim\circ G\circ\lim\nolimits^{-1}(p)$.
Since $A$ is computable in the halting problem $\emptyset'$, it follows that $F$ is also.
\end{proof}

If the function $G$ is extensional in the sense that the limit of its output only depends on the limit
of its input, then the function $F$ is uniquely determined. This unique version 
generalizes \cite[Theorem~22 (2)$\TO$(1)]{BH02}. Already this unique version shows
that $F$ cannot be computable in general (a function $G$ that is constant and computes a
sequence that converges to the halting problem is a counterexample). 
The technique used to prove Theorem~\ref{thm:double-limit} is somewhat reminiscent of the
first jump control technique~\cite[Section~4]{CJS01}.
We obtain the following corollary.
We note that $f\leqSW\cc_{\emptyset'}\times\id$ holds if and only if $f$ is computable with
respect to the halting problem.  

\begin{corollary}
$\lim\stars\lim^{-1}\equivSW \cc_{\emptyset'}\times\id$ and $\lim*\lim^{-1}\equivW\lim$.
\end{corollary}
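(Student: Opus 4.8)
The plan is to prove the two equivalences separately. Both rest on the elementary observation that $\lim^{-1}$ is computable and pointed: the map $p\mapsto\langle p,p,p,\dots\rangle$ is a computable realizer of $\lim^{-1}$, and $\widehat0$ is a computable point of $\dom(\lim^{-1})$ with a computable value, so $\lim^{-1}\equivW\C_1$. For the second statement this already suffices: since $\C_1\equivW\lim^{-1}$ and the compositional product is monotone, $\lim*\lim^{-1}\equivW\lim*\C_1$; for any computable $g_0$ and any $f_0\leqW\lim$ that can be composed with it one has $f_0\circ g_0\leqW f_0\leqW\lim$ (precompose the reduction witnessing $f_0\leqW\lim$ with $g_0$), so $\lim*\C_1\leqW\lim$, while $\lim=\lim\circ\id\leqW\lim*\C_1$ trivially. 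Hence $\lim*\lim^{-1}\equivW\lim$.

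The substantial statement is $\lim\stars\lim^{-1}\equivSW\cc_{\emptyset'}\times\id$; here one uses that $h\leqSW\cc_{\emptyset'}\times\id$ holds exactly when $h$ admits a realizer $R$ with $R(p)\leqT p\oplus\emptyset'$ uniformly. For the lower bound I would exhibit one pair in the family $\{f_0\circ g_0: f_0\leqSW\lim,\ g_0\leqSW\lim^{-1}\}$ that is already strongly equivalent to $\cc_{\emptyset'}\times\id$: take $g_0:=\lim^{-1}$, and let $f_0$ send a convergent sequence $s=\langle s_0,s_1,\dots\rangle$ to $\langle\chi_{\emptyset'},\lim s\rangle$. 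Fixing a computable sequence $(e_k)_k$ with $e_k\to\chi_{\emptyset'}$, the map $s\mapsto\langle\langle e_0,s_0\rangle,\langle e_1,s_1\rangle,\dots\rangle$ is computable and has limit $\langle\chi_{\emptyset'},\lim s\rangle$, so $f_0\leqSW\lim$; and since every $s\in\lim^{-1}(x)$ has $\lim s=x$, we get $(f_0\circ g_0)(x)=\{\langle\chi_{\emptyset'},x\rangle\}$, which is immediately seen to be $\equivSW\cc_{\emptyset'}\times\id$. For the upper bound I would show that $\cc_{\emptyset'}\times\id$ bounds every member of that family; together with the lower bound this gives that the strong compositional product exists here and equals $\cc_{\emptyset'}\times\id$. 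So let $f_0\leqSW\lim$ and $g_0\leqSW\lim^{-1}$ have composable types. Strong reductions give computable $H_g,K_g$ with $H_gGK_g\vdash g_0$ for every $G\vdash\lim^{-1}$, and computable $H_f,K_f$ with $H_f\circ\lim\circ K_f\vdash f_0$. Put $\Psi:=K_f\circ H_g$, which is computable. By the Double Limit Theorem~\ref{thm:double-limit} (whose proof applies, with the same argument, to $\IN^\IN$-valued functions and not only to $2^\IN$-valued ones) there is an $\emptyset'$-computable $F$ with $F(q)\in\lim\circ\Psi\circ\lim^{-1}(q)$ for all admissible $q$. Then $R(x):=H_f\bigl(F(K_g(x))\bigr)$ is a realizer of $f_0\circ g_0$: writing $F(K_g(x))=\lim\Psi(s_0)$ with $s_0\to K_g(x)$, the string $H_g(s_0)$ lies in $g_0(x)$ because $s_0$ is the value at $K_g(x)$ of some realizer of $\lim^{-1}$, and then $H_f(\lim K_f(H_g(s_0)))\in f_0(H_g(s_0))\In(f_0\circ g_0)(x)$; moreover $R$ is computable in $\emptyset'$ since $F$ is and $H_f,K_g$ are computable. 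Hence $f_0\circ g_0\leqSW\cc_{\emptyset'}\times\id$.

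The main obstacle is precisely this last step, and it is where Theorem~\ref{thm:double-limit} is genuinely needed. The obvious candidate realizer — feed $\lim^{-1}$ the constant sequence at $K_g(x)$ and then apply $\lim$ — produces a value of Turing degree at most $x'$, and $x'$ need not be computable from $x\oplus\emptyset'$ (for instance when $x\equivT\emptyset'$), so that realizer does not witness $f_0\circ g_0\leqSW\cc_{\emptyset'}\times\id$. The Double Limit Theorem supplies instead a non-constant approach sequence to $K_g(x)$ for which the resulting value is computable in $x\oplus\emptyset'$; the rest is bookkeeping of realizers and representations, plus the routine check that the Double Limit Theorem generalizes from $2^\IN$ to $\IN^\IN$.
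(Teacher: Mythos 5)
Your proof is correct and is essentially the paper's (largely implicit) argument: the paper derives the first equivalence directly from the Double Limit Theorem~\ref{thm:double-limit} (upper bound via an $\emptyset'$-computable realizer for any composition $f_0\circ g_0$ with $f_0\leqSW\lim$, $g_0\leqSW\lim^{-1}$, lower bound by exhibiting such a composition equivalent to $\cc_{\emptyset'}\times\id$) and dismisses the second with the remark that $\lim^{-1}$ is computable, exactly as you do. One small caveat: the extension of Theorem~\ref{thm:double-limit} to $\IN^\IN$-valued $G$ is not literally ``the same argument'' (the extraction step searches for a unique $b\in\{0,1\}$), but it is routine if you instead compose $G$ with a computable, limit-commuting embedding of $\IN^\IN$ into $2^\IN$ (e.g.\ unary coding) whose inverse is computable on its image, and apply the stated theorem to that composition.
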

\begin{proof}
We show that the maximum of $M:=\{f_0\circ f_1:f_0\leqSW\lim,f_1\leqSW\lim^{-1}\}$ with respect to $\leqSW$ exists and is 
strongly Weihrauch equivalent to $\cc_{\emptyset'}\times\id$.
Let $f_0\leqSW\lim$ and $f_1\leqSW\lim^{-1}$. Then there are computable $H_0,K_0,H_1,K_1$ such that
$H_0\circ\lim\circ K_0(p)\in f_0(p)$ for all $p\in\dom(f_0)$ and $\emptyset\not=H_1\circ\lim^{-1}\circ K_1(p)\In f_1(p)$ for all $p\in\dom(f_1)$.
Let $G:=K_0\circ H_1$ and
let $F\leqSW\cc_{\emptyset'}\times\id$ be the function from Theorem~\ref{thm:double-limit} for this $G$.
We obtain
\[f_0\circ f_1=H_0\circ\lim\circ G\circ\lim\nolimits^{-1}\circ K_1\leqSW\lim\circ G\circ\lim\nolimits^{-1}\leqSW F\leqSW \cc_{\emptyset'}\times\id.\]
On the other hand, there are clearly computable functions $H,G,K$ such that $\cc_{\emptyset'}\times\id=H\circ\lim\circ G\circ\lim^{-1}\circ K\in M$. 
Hence $\lim\stars\lim^{-1}$ exists and $\lim\stars\lim^{-1}\equivSW\cc_{\emptyset'}\times\id$. 

The second equivalence holds since $\lim^{-1}$ is computable.   
\end{proof}

The proof of Theorem~\ref{thm:double-limit} works analogously for the following parameterized
version. We just need to replace the c.e.\ set $A$ in the proof by a suitable set $A_q$ that depends on the
additional parameter $q$. In this case $A_q$ is c.e.\ in $q$, and hence it can be computed with the help of the Turing jump $\J(q)=q'$ of $q$.

\begin{theorem}[Parameterized Double Limit]
\label{thm:para-double-limit}
For every computable function $G:\In\IN^\IN\to\IN^\IN$ there exists a computable function
$F:\In\IN^\IN\to\IN^\IN$ such that
\[F\langle \J(q),p\rangle\in\lim\circ G\circ\langle\id\times\lim\nolimits^{-1}\rangle(q,p)\]
for all $(q,p)\in\dom(\lim\circ G\circ\langle\id\times\lim\nolimits^{-1}\rangle)$. 
\end{theorem}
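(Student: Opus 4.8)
The plan is to rerun the proof of Theorem~\ref{thm:double-limit} carrying the extra argument $q$ along as a parameter, and to note that its only non-constructive ingredient, the c.e.\ set $A$, becomes a set $A_q$ that is $\Sigma^0_1$ in $q$ \emph{uniformly}, hence decidable from $\J(q)$ uniformly. Fix the computable $G:\In\IN^\IN\to2^\IN$ and set $\widetilde G(q,r):=G\langle q,r\rangle$, a computable function of $(q,r)$. Then $(q,p)\in\dom(\lim\circ G\circ\langle\id\times\lim\nolimits^{-1}\rangle)$ if and only if $p\in\dom(\lim\circ\widetilde G(q,\cdot)\circ\lim\nolimits^{-1})$, and a value $F\langle\J(q),p\rangle$ as required by the statement is exactly a value of $\lim\circ\widetilde G(q,\cdot)\circ\lim\nolimits^{-1}(p)$. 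So it suffices to perform the construction of Theorem~\ref{thm:double-limit} for the family $(\widetilde G(q,\cdot))_q$, uniformly in $q$ and with $\J(q)$ replacing $\emptyset'$.

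First I would fix a computable monotone $g:\IN^*\to2^*$ with $G(v)=\sup_{w\prefix v}g(w)$ for $v\in\dom(G)$, and for each $q$ and $w\in\IN^*$ let $g^q(w):=g(w')$, where $w'$ is the word of length $2|w|$ obtained by interleaving the prefix $q|_{|w|}$ with $w$ (so that $w\prefix r$ implies $w'\prefix\langle q,r\rangle$). Then $g^q$ is monotone, is computable from $w$ consulting only a length-$|w|$ prefix of $q$ --- hence uniformly computable in $q$ and $w$ --- and satisfies $\widetilde G(q,r)=\sup_{w\prefix r}g^q(w)$. Defining $g^q_i$ from $g^q$ exactly as $g_i$ was defined from $g$, let $A_q\In\{0,1\}\times\IN^3\times(\IN^*)^2$ be the set of tuples $(b,k,i,s,w,\langle u_0,\dots,u_s\rangle)$ for which there are $t>s$ and $u_{s+1},\dots,u_t\in\IN^*$ with $w\prefix u_\iota$ for $\iota=s+1,\dots,t$ and $g^q_i(\langle u_00^t,\dots,u_s0^t,u_{s+1},\dots,u_t\rangle)$ containing the bit $b$ at least $k$ times. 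Since $g^q$ depends computably on $q$ with finite use, membership in $A_q$ is $\Sigma^0_1$ in $q$, uniformly in all the parameters, and so is decidable from $\J(q)$, uniformly.

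Then I would run the stage-by-stage procedure of Theorem~\ref{thm:double-limit} verbatim on input $\langle\J(q),p\rangle$: for each $i$ build $(u_s)_s$ through stages $j=2k+b$, at stage $j$ testing whether $(b,k,i,s_j,p|_j,\langle u_0,\dots,u_{s_j}\rangle)\in A_q$ (answerable from the oracle $\J(q)$), extending the column sequence when the test succeeds, and outputting $F\langle\J(q),p\rangle(i):=b$ for the unique $b$ attached to the least $k$ at which the test at stage $2k+b$ succeeds. The verification is the one already given in the proof of Theorem~\ref{thm:double-limit}, with $g,g_i,A,\emptyset',G$ replaced throughout by $g^q,g^q_i,A_q,\J(q),\widetilde G(q,\cdot)$: for each $k$ some $b$ makes the test succeed (extend $\langle u_0,\dots,u_{s_j}\rangle$ by copies of $p$ and use that $\lim\circ\widetilde G(q,\cdot)$ exists along the resulting sequence, which has limit $p$), for the complementary bit it succeeds only finitely often, the completed $u$ satisfies $\lim(u)=p$, and hence $F\langle\J(q),p\rangle=\lim\circ\widetilde G(q,u)=\lim\circ G\langle q,u\rangle\in\lim\circ G\circ\langle\id\times\lim\nolimits^{-1}\rangle(q,p)$. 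The only genuinely new point beyond Theorem~\ref{thm:double-limit} is that uniformity of $g^q$ in $q$, and hence of the reduction of $A_q$ to $\J(q)$, makes $F$ outright computable rather than merely computable in $q$; the main thing to check with care is the bookkeeping around the interleaving --- monotonicity of $g^q$, the identity $\widetilde G(q,r)=\sup_{w\prefix r}g^q(w)$, and that the padding conventions in $A_q$ still behave correctly with $q$ sitting in a separate coordinate --- none of which alters the logical skeleton of the argument.
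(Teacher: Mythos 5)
Your proposal is correct and takes essentially the same route as the paper: the paper's own proof of Theorem~\ref{thm:para-double-limit} is precisely the remark that one reruns the proof of Theorem~\ref{thm:double-limit} with the c.e.\ set $A$ replaced by a set $A_q$ that is c.e.\ in $q$ uniformly and hence uniformly decidable from $\J(q)$. Your extra bookkeeping (the interleaved approximation $g^q$ and its uniformity in $q$) merely spells out details the paper leaves implicit.
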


Now we obtain the following characterization of the Bolzano-Weierstra\ss{} theorem
with the help of the parameterized double limit theorem~\ref{thm:para-double-limit}.

\begin{theorem}
\label{thm:BWT-COH}
$\BWT_\IR\equivW\lim*\WBWT_\IR$ and $\BWT_\IR\equivSW\lim\stars\WBWT_\IR$.
\end{theorem}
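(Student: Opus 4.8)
The plan is to establish the two easy reductions $\BWT_\IR\leqSW\lim\stars\WBWT_\IR$ and $\BWT_\IR\leqW\lim*\WBWT_\IR$ directly, and the two hard ones, $\lim\stars\WBWT_\IR\leqSW\BWT_\IR$ and $\lim*\WBWT_\IR\leqW\BWT_\IR$, by means of the Double Limit Theorems~\ref{thm:double-limit} and~\ref{thm:para-double-limit}. First I would pass to $2^\IN$: by Fact~\ref{fact:BWT} we have $\BWT_\IR\equivSW\BWT_{2^\IN}$, and by the remark after Proposition~\ref{prop:SBWT} one also has $\WBWT_\IR\equivSW\WBWT_{2^\IN}$, so it suffices to work with $\BWT_{2^\IN}$ and $\WBWT_{2^\IN}$; this matches the target spaces to the hypotheses of Theorems~\ref{thm:double-limit} and~\ref{thm:para-double-limit} and lets me use $2^\IN\times2^\IN\cong2^\IN$ freely. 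The easy directions are then immediate: composing a realizer of $\WBWT_{2^\IN}$ (which on a sequence produces a sequence converging to a name of a cluster point) with $\lim$ gives a realizer of $\BWT_{2^\IN}$, and since this is of the form $f_0\circ g_0$ with $f_0\leqSW\lim$ and $g_0\leqSW\WBWT_{2^\IN}$ it witnesses $\BWT_{2^\IN}\leqSW\lim\stars\WBWT_{2^\IN}$; the ordinary variant $\BWT_{2^\IN}\leqW\lim*\WBWT_{2^\IN}$ also follows (or from $\WBWT_{2^\IN}\equivW(\lim\to\BWT_{2^\IN})$, Theorem~\ref{thm:WBWT}).

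For the hard direction in the strong case, fix $f_0\leqSW\lim$ and $g_0\leqSW\WBWT_{2^\IN}$ that compose. Using that $\lim$ is single-valued and that a strong reduction to $\WBWT_{2^\IN}$ must succeed for \emph{every} realizer, I can write a realizer of $f_0\circ g_0$ on input $p$ in the form $A\bigl(\lim\bigl(G(W(D(p)))\bigr)\bigr)$ for fixed computable $A,G,D$ and an arbitrary realizer $W$ of $\WBWT_{2^\IN}$. Here $D(p)$ codes a sequence $(x_n)_n$ in $2^\IN$, the value $W(D(p))$ is a sequence converging to a $\delta_{2^\IN}$-name of some cluster point of $(x_n)_n$, and --- crucially, because the reduction does not depend on $W$ --- $G$ is defined and yields a convergent sequence on \emph{any} sequence converging to \emph{any} name of \emph{any} cluster point of $(x_n)_n$. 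So the task is: using one $\BWT_{2^\IN}$-query, output $A(\lim(G(q_0)))$ for some $q_0$ converging to a name of a cluster point of $(x_n)_n$. I would feed $\BWT_{2^\IN}$ the sequence obtained by interleaving $(x_n)_n$ with a fixed computable sequence converging to $\emptyset'$ (inside $2^\IN\times2^\IN\cong2^\IN$); since the only cluster point of a convergent sequence is its limit, the oracle returns a name $y$ of a cluster point $x^\ast$ of $(x_n)_n$ together with $\emptyset'$. Now Theorem~\ref{thm:double-limit} applied to $G$ provides an $F$ computable in $\emptyset'$ with $F(y)\in\lim\circ G\circ\lim^{-1}(y)$ --- $y$ lies in this domain because, as noted, every sequence converging to $y$ is in $\dom(\lim\circ G)$ --- hence $F(y)=\lim(G(q_0))$ for some $q_0$ converging to $y$; such a $q_0$ is a legitimate output of a $\WBWT_{2^\IN}$-realizer, so $A(F(y))$ is a legitimate output of $f_0\circ g_0$. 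The output transformation extracts $y$ and $\emptyset'$ from the $\BWT_{2^\IN}$-answer and applies $A\circ F$, never consulting $p$, so this is a strong reduction; taking the supremum over all admissible $f_0,g_0$ gives $\lim\stars\WBWT_\IR\leqSW\BWT_\IR$.

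The ordinary case $\lim*\WBWT_\IR\leqW\BWT_\IR$ runs along the same lines with two adjustments. Now $g_0\leqW\WBWT_{2^\IN}$ produces a realizer of $f_0\circ g_0$ of the form $A(\lim(G(p,W(K(p)))))$ in which the post-processing $G$ may also depend on $p$; so I replace Theorem~\ref{thm:double-limit} by its parameterized version, Theorem~\ref{thm:para-double-limit}, obtaining a computable $F$ with $F\langle\J(p),y\rangle\in\lim\circ G\circ\langle\id\times\lim^{-1}\rangle(p,y)$ (recall $p\leqT\J(p)$, so $F$ needs only $\J(p)$ and $y$). Correspondingly I interleave $(x_n)_n$ with a computable sequence converging to $\J(p)=p'$ instead of to $\emptyset'$ --- this sequence is computed from $p$ in the forward part of the reduction, which is permitted --- so the oracle now returns $y$ together with $p'$; the output map computes $F\langle p',y\rangle$ and applies $A$, and may use $p$ as allowed by ordinary reducibility. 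Since $\lim*\WBWT_{2^\IN}$ is the maximum over all such $f_0\circ g_0$, this gives $\lim*\WBWT_\IR\leqW\BWT_\IR$, and together with the easy reverse reductions and $\BWT_\IR\equivSW\BWT_{2^\IN}$, $\WBWT_\IR\equivSW\WBWT_{2^\IN}$ we conclude $\BWT_\IR\equivW\lim*\WBWT_\IR$ and $\BWT_\IR\equivSW\lim\stars\WBWT_\IR$.

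The main obstacle is precisely the non-extensionality of the intermediate computation $G$: once $\WBWT_\IR$ hands over a convergent sequence $q$, the value $\lim(G(q))$ depends on all of $q$, not merely on its limit $y=\lim q$, so one cannot simply run $\BWT_\IR$ to obtain $y$ and then continue computably. The Double Limit Theorems are exactly what circumvents this --- they guarantee that \emph{some} convergent preimage $q_0$ of $y$ can be found for which $\lim(G(q_0))$ is computable from $y$ plus one jump --- and, by the very shape of a reduction to $\WBWT_\IR$, any such $q_0$ counts as a valid $\WBWT_\IR$-answer. A secondary point that needs care is that a single $\BWT_\IR$-query can be arranged to deliver both a cluster point and the auxiliary jump ($\emptyset'$ or $p'$), by packing a convergent sequence, whose unique cluster point is its limit, into the same $\BWT$-instance.
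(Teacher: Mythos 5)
Your proof is correct, and for the hard directions it takes a noticeably different packaging than the paper's. The paper only proves the ordinary reduction $\lim*\WBWT_{2^\IN}\leqW\BWT_{2^\IN}$ directly: it applies the Parameterized Double Limit Theorem to conclude $g\circ h\leqW\J\times\BWT_{2^\IN}\equivW\lim\times\BWT_{2^\IN}$ and then absorbs the extra $\lim$ by citing idempotency of $\BWT_{2^\IN}$ and $\lim\leqW\BWT_{2^\IN}$ from \cite{BGM12}; the strong statement is then deduced from the ordinary one via $\lim\stars\WBWT_\IR\leqW\lim*\WBWT_\IR\leqW\BWT_\IR$ together with the fact that $\BWT_\IR$ is a cylinder. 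You instead prove both reductions in one shot by interleaving the $\BWT$--instance with a computable sequence converging to $\emptyset'$ (respectively to $\J(p)$ in the ordinary case), so that the single oracle answer delivers both a cluster point and the jump needed to run the function $F$ from the Double Limit Theorem; this is in effect an inline proof of $\J\times\BWT_{2^\IN}\leqSW\BWT_{2^\IN}$ replacing the paper's cited absorption facts, and it has the additional payoff that the strong equivalence comes out directly (your key observation that in a strong reduction the post-processing $G$ does not see $p$, so the unparameterized Theorem~\ref{thm:double-limit} with oracle $\emptyset'$ suffices, is exactly right, as is the verification that every sequence converging to a name of a cluster point is a legitimate realizer output, so the domain condition of the theorem holds). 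One small point to make explicit: the Double Limit Theorems are stated for $G$ with values in $2^\IN$ (their proofs genuinely use the binary alphabet), so "passing to $2^\IN$" must include replacing the middle $\lim$ by $\lim_{2^\IN}$ via $\lim\equivSW\lim_{2^\IN}$ -- this is why the paper reduces to proving $\lim_{2^\IN}*\WBWT_{2^\IN}\leqW\BWT_{2^\IN}$, citing \cite[Proposition~9.1]{Bra05}; your argument needs the same (routine) substitution.
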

\begin{proof}
It is clear that $\BWT_\IR\leqW\lim*\WBWT_\IR$.
For the other reduction of the first claim it suffices to prove $\lim_{2^\IN}*\WBWT_{2^\IN}\leqW\BWT_{2^\IN}$
by Fact~\ref{fact:BWT}, Propositions~\ref{prop:SBWT}, \ref{prop:WBWT-SBWT} and \cite[Proposition~9.1]{Bra05}. 
To this end, let $g\leqW\lim_{2^\IN}$ and $h\leqW\WBWT_{2^\IN}$ be such that
$g\circ h$ exists. Since $\lim_{2^\IN}$ is a cylinder, we can even assume $g\leqSW\lim_{2^\IN}$.
Without loss of generality, we can assume that
$g,h$ are of type $g,h:\In\IN^\IN\mto\IN^\IN$.
We need to prove that $g\circ h\leqW\BWT_{2^\IN}$.
There are computable $H,G,K:\In\IN^\IN\to\IN^\IN$ such that
\[H\circ\lim\nolimits_{2^\IN}\circ G\langle q,\WBWT_{2^\IN}K(q)\rangle\In g\circ h(q)\]
for all $q\in\dom(h)$. Since $\WBWT_{2^\IN}K(q)$ yields arbitrary converging sequences $p\in2^\IN$ as output that converge to some cluster point of the input $K(q)$, $G$
needs to be defined on all corresponding pairs $\langle q,p\rangle$ for each fixed $q\in\dom(h)$.
By the parameterized double limit theorem (Theorem~\ref{thm:para-double-limit}) we obtain that there is a computable function $F:\In\IN^\IN\to2^\IN$ such that
\[H\circ F\langle\J(q),\BWT_{2^\IN}K(q)\rangle\In H\circ\lim\nolimits_{2^\IN}\circ G\langle q,\lim\nolimits^{-1}\circ\BWT_{2^\IN}K(q)\rangle\In g\circ h(q)\]
for all $q\in\dom(h)$, which implies $g\circ h\leqW\J\times\BWT_{2^\IN}\equivW\lim\times\BWT_{2^\IN}$,
since $\J\equivW\lim$.
Since $\BWT_{2^\IN}$ is idempotent \cite[Corollary~11.13]{BGM12} and $\lim\leqW\BWT_{2^\IN}$ \cite[Corollary~11.22]{BGM12}, we have that
$\lim\times\BWT_{2^\IN}\leqW\BWT_{2^\IN}$, and hence
$g\circ h\leqW\BWT_{2^\IN}$.

We now show that the maximum of 
\[M:=\{f_0\circ g_0:f_0\leqSW\lim,g_0\leqSW\WBWT_\IR\}\] 
with respect to $\leqSW$ exists and is strongly Weihrauch equivalent to $\BWT_\IR$.
Clearly $\BWT_\IR\in M$. On the other hand, $h\in M$ implies
\[h\leqW\lim*\WBWT_\IR\leqW\BWT_\IR.\]
Since $\BWT_\IR$ is a cylinder \cite[Corollary~11.13]{BGM12}, we obtain  
$h\leqSW\BWT_\IR$. Altogether, this proves the claim.
\end{proof}

By Fact~\ref{fact:BWT}, Proposition~\ref{prop:WBWT-SBWT}, Theorem~\ref{thm:SBWT-COH}
and \ref{thm:BWT-COH}
we obtain the following corollary.

\begin{corollary}
\label{cor:WKL-lim-COH}
$\WKL'\equivW\lim*\COH$.
\end{corollary}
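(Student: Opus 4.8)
The plan is to derive the equivalence purely by concatenating equivalences that are already available, keeping careful track of the distinction between strong and ordinary Weihrauch reducibility, since the compositional product $*$ is only known to behave well with respect to $\equivW$.

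First I would record that $\WKL'\equivSW\BWT_\IR$ by Fact~\ref{fact:BWT}, and in particular $\WKL'\equivW\BWT_\IR$. Next, Theorem~\ref{thm:BWT-COH} supplies $\BWT_\IR\equivW\lim*\WBWT_\IR$. It then remains to see that $\WBWT_\IR$ and $\COH$ lie in the same Weihrauch degree: Proposition~\ref{prop:WBWT-SBWT} gives $\WBWT_\IR\equivW\SBWT_\IR$, and Theorem~\ref{thm:SBWT-COH} gives $\SBWT_\IR\equivSW\COH$, so altogether $\WBWT_\IR\equivW\COH$.

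Finally I would use that the compositional product $f*g=\max\{f_0\circ g_0:f_0\leqW f\text{ and }g_0\leqW g\}$ depends only on the Weihrauch degrees of $f$ and $g$; hence from $\WBWT_\IR\equivW\COH$ we obtain $\lim*\WBWT_\IR\equivW\lim*\COH$. Chaining all of this,
\[\WKL'\equivW\BWT_\IR\equivW\lim*\WBWT_\IR\equivW\lim*\COH,\]
which is the claim.

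There is no genuine difficulty in this argument; the only points to watch are that each intermediate equivalence survives the weakening from $\equivSW$ to $\equivW$ where needed, and that substituting $\COH$ for $\WBWT_\IR$ inside the compositional product is legitimate, which follows from the definition of $*$ as a maximum over all reducts. It is worth noting that the argument produces only $\equivW$ and not $\equivSW$: the link $\WBWT_\IR\equivW\COH$ passes through Proposition~\ref{prop:WBWT-SBWT}, which is genuinely only an ordinary Weihrauch equivalence (access to the input is used on both sides), so even the strong compositional product $\stars$ from Theorem~\ref{thm:BWT-COH} cannot be used to sharpen the conclusion.
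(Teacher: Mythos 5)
Your argument is correct and is essentially the paper's own proof: the corollary is obtained there by exactly the same chain, citing Fact~\ref{fact:BWT}, Proposition~\ref{prop:WBWT-SBWT}, Theorem~\ref{thm:SBWT-COH} and Theorem~\ref{thm:BWT-COH}, with the substitution of $\COH$ for $\WBWT_\IR$ inside $\lim*(\cdot)$ justified just as you say by the degree-invariance of the compositional product. Your remark that only $\equivW$ (not $\equivSW$) can be extracted, because Proposition~\ref{prop:WBWT-SBWT} is only an ordinary equivalence, matches the paper's formulation.
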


We continue this section with some lowness properties of $\WBWT_\IR$ and cohesiveness $\COH$.
In \cite{BGM12} a multi-valued function $f$ is called {\em low$_n$}, for $n\geq1$, if $f\leqSW\Low_n$, where $\Low_n:=(\J^{-1})^{\circ n}\circ\lim^{\circ n}$.
Here $f^{\circ n}$ denotes the $n$--fold composition of $f$ with itself, i.e., $f^{\circ 1}=f$, $f^{\circ 2}=f\circ f$ and so forth. 
Instead of low$_1$ we simply say {\em low}.  We are going to use the following characterization of lowness.

\begin{proposition}[Lowness]
\label{prop:lowness}
Let $f$ and $n\geq1$ be such that $\lim^{\circ n}\stars f$ exists. Then
$f$ is low$_n$ if and only if $\lim^{\circ n}\stars f\leqSW \lim^{\circ n}$.
\end{proposition}
\begin{proof}
It suffices to consider $f,g$ of type $f,g:\In\IN^\IN\mto\IN^\IN$.
Theorem~8.6 of \cite{BGM12} provides the following characterization of lowness for all $n\geq1$:
\[\mbox{$f$ is low$_n\iff (\forall g\leqW\lim^{\circ n})\;g\circ f\leqW\lim^{\circ n}$.}\]
We note that $\leqW$ can be replaced by $\leqSW$ here, since $\lim^{\circ n}$ is a cylinder.
For the if direction, we assume that $\lim^{\circ n}\stars f\leqSW \lim^{\circ n}$, and we consider $g\leqSW\lim^{\circ n}$.
Then we directly obtain $g\circ f\leqSW\lim^{\circ n}\stars f\leqSW \lim^{\circ n}$.
For the only if direction, we assume that $f$ is low$_n$ and that $g_0\leqSW\lim^{\circ n}$ and $f_0\leqSW f$ hold.
Then there are computable $H_0,K_0,H_1,K_1$ such that 
\[g_0\circ f_0\leqSW H_0\circ\lim\nolimits^{\circ n}\circ K_0\circ H_1\circ f\circ K_1\leqSW\lim\nolimits^{\circ n}\circ K_0\circ H_1\circ f\leqSW\lim\nolimits^{\circ n},\]
since $g:=\lim^{\circ n}\circ K_0\circ H_1\leqSW\lim^{\circ n}$.
\end{proof}

\begin{theorem}
\label{thm:WBWT-low}
$\WBWT_\IR\leqSW\Low_2$ and $\WBWT_\IR\nleqSW\Low$, i.e., $\WBWT_\IR$ is low$_2$, but not low.
\end{theorem}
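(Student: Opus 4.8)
The plan is to handle the two inequalities by quite different arguments. For $\WBWT_\IR\leqSW\Low_2$ I would realise $\WBWT_\IR$ by a relativised Uniform Low Basis construction, crucially exploiting that the output of $\WBWT_\IR$ lies in the derived space $\IR'$ and so need only be produced in the limit. By $\BWT_\IR\equivSW\WKL'$ (Fact~\ref{fact:BWT}) one can, from a name $A$ of a sequence $(x_i)_i$ in the domain of $\WBWT_\IR$, compute at the $\delta'$-level an infinite binary tree $T$ — presented at the $\delta'$-level, equivalently computable relative to the jump of $A$ — whose paths code (via a computable map) the cluster points of $(x_i)_i$. Applying the Uniform Low Basis Theorem~\cite[Theorem~8.3]{BBP12} relativised to this limit-presented oracle yields, uniformly in $A$, a path $p$ of $T$ with $p'\leqT A''$ via a fixed procedure computable uniformly in $A$; let $x_\infty$ be the cluster point coded by $p$, so $x_\infty'\leqT p'\leqT A''$, again uniformly.

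It then remains to cast this as a strong reduction to $\Low_2=\J^{-1}\circ\J^{-1}\circ\lim\circ\lim$. Since $p\leqT\J(p)$ holds uniformly, $\J$ is injective on Baire space and $\J^{-1}$ is a single-valued partial map. Let $q$ be the unique set with $q'=A'\oplus x_\infty$; then $q''\equivT(A'\oplus x_\infty)'\equivT A''\oplus x_\infty'\equivT A''$. Since $A''$ is computable from $A$ by an iterated limit and $x_\infty'\leqT A''$ by the procedure above, a $\lim\circ\lim$-name of the set $q''=A''\oplus x_\infty'$ is computable from $A$; let $K$ output such a name. Then $\Low_2\circ K$ returns exactly $q$, and the output modifier $H$ limit-computes $q'=A'\oplus x_\infty$ from $q$ and reads off $x_\infty$, which is a $\delta_\IR'$-name of a cluster point of $(x_i)_i$. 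The bulk of the work — and the main obstacle — is the bookkeeping: verifying that the Uniform Low Basis Theorem relativises with the stated uniformity and that the jump arithmetic lands exactly in the shape $\J^{-1}\circ\J^{-1}\circ\lim\circ\lim$. It is essential that $\WBWT_\IR$, not $\BWT_\IR$, is being bounded: $\BWT_\IR$ itself is not low$_2$, since the $\emptyset'$-computable tree of complete consistent extensions of Peano arithmetic relativised to $\emptyset'$ is a $\WKL'$-instance no solution of which is low$_2$.

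For $\WBWT_\IR\nleqSW\Low$ I would in fact prove the stronger $\WBWT_\IR\nleqW\Low$. By Proposition~\ref{prop:WBWT-SBWT} and Theorem~\ref{thm:SBWT-COH} we have $\WBWT_\IR\equivW\COH$, so it suffices to refute $\COH\leqW\Low$. Suppose $\COH\leqW\Low$ via computable $K,H$, and apply the reduction to the computable sequence $\vec R=(R_i)_i$ of all primitive recursive sets. As $K(\vec R)$ is computable, $\lim K(\vec R)\leqT\emptyset'$, so the set $C\in\Low(K(\vec R))$ satisfies $C'=\lim K(\vec R)\leqT\emptyset'$ and hence is low; therefore $H\langle\vec R,C\rangle\leqT C$ is low as well. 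But $H\langle\vec R,C\rangle\in\COH(\vec R)$ is cohesive for all primitive recursive sets, so by the chain of equivalences in the proof of Theorem~\ref{thm:Jockusch-Stephan} (with $\mathbf b=\mathbf 0$) its Turing degree $\mathbf a$ satisfies $\mathbf a'\gg\emptyset'$, whence $\mathbf a'>\emptyset'$ by Proposition~\ref{prop:way-below}(1) — contradicting $\mathbf a'\equivT\emptyset'$. Therefore $\COH\nleqW\Low$, and a fortiori $\WBWT_\IR\nleqSW\Low$.
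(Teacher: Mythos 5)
The two halves of your proposal fare very differently, so let me take them in turn.

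The positive half, $\WBWT_\IR\leqSW\Low_2$, has a genuine gap, and it sits exactly at the point you dismiss as ``bookkeeping''. Since $\Low_2=\J^{-1}\circ\J^{-1}\circ\lim\circ\lim$ involves the inverse of the Turing jump as a map on Baire space, the value handed to the inner $\J^{-1}\circ\J^{-1}$ must be \emph{literally} a double jump $\J(\J(q))$ of some $q$, not merely a set Turing equivalent to one. Your construction violates this twice. First, ``let $q$ be the unique set with $q'=A'\oplus x_\infty$'': uniqueness is fine (the jump is injective), but existence fails in general, because $A'\oplus x_\infty$ need not lie in the range of $\J$; Friedberg jump inversion only supplies some $q$ with $q'\equivT A'\oplus x_\infty$, which does not help here. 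Second, the step ``a $\lim\circ\lim$-name of the set $q''=A''\oplus x_\infty'$ is computable from $A$'' conflates $\equivT$ with equality: even if such a $q$ existed, $\J(\J(q))$ is not the set $A''\oplus x_\infty'$, so $\Low_2\circ K$ applied to your name is not defined on it, let alone equal to $q$. Hence the claimed strong reduction is not established; making the low-basis-over-the-jump construction output an \emph{exact} double jump is precisely the hard content. The paper circumvents this by arguing at the level of problem degrees: it combines $\BWT_\IR\leqSW\Low'$ \cite[Corollary~11.15]{BGM12}, the absorption $\lim\stars\Low\equivSW\lim$ \cite[Corollary~8.8]{BGM12}, and the nontrivial direction $\lim\stars\WBWT_\IR\leqSW\BWT_\IR$ of Theorem~\ref{thm:BWT-COH} (which rests on the Parameterized Double Limit Theorem~\ref{thm:para-double-limit}), and then concludes via the characterization of $\leqSW\Low_2$ in \cite[Theorem~8.6]{BGM12} --- exactly the tool that does the exact-jump bookkeeping you would otherwise have to redo by hand. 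To repair your direct approach you would either have to reprove such an absorption lemma or route the argument through these results. (Your parenthetical that $\BWT_\IR$ is not low$_2$ non-uniformly is fine: any ${\mathbf a}\gg{\mathbf 0}'$ satisfies ${\mathbf a}\geq{\mathbf 0}'$, hence ${\mathbf a}''\geq{\mathbf 0}'''$.)

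The negative half is correct and takes a genuinely different route from the paper. The paper assumes $\WBWT_\IR\leqSW\Low$ and obtains $\BWT_\IR\leqSW\lim\stars\WBWT_\IR\leqSW\lim\stars\Low\leqSW\lim$, contradicting $\BWT_\IR\nleqW\lim$ \cite[Theorem~12.7]{BGM12}. You instead use $\WBWT_\IR\equivW\COH$ (Proposition~\ref{prop:WBWT-SBWT} and Theorem~\ref{thm:SBWT-COH}) and refute $\COH\leqW\Low$ concretely: on the computable sequence of all primitive recursive sets the inner name $K((R_i)_i)$ is computable, so its limit is $\leqT\emptyset'$ and the $\Low$-output $C$ is low; the final output is then a low p-cohesive set, while the chain in the proof of Theorem~\ref{thm:Jockusch-Stephan} with ${\mathbf b}={\mathbf 0}$ together with Proposition~\ref{prop:way-below}(1) forces its jump strictly above ${\mathbf 0}'$ --- a contradiction. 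This argument is sound and in fact yields the stronger conclusions $\WBWT_\IR\nleqW\Low$ and $\COH\nleqW\Low$ (slightly more than Theorem~\ref{thm:COH-low}), at the price of invoking the Jockusch--Stephan analysis rather than the purely algebraic contradiction used in the paper.
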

\begin{proof}
Corollary~11.15 of \cite{BGM12} says that $\BWT_\IR\leqSW\Low'\equivSW\Low\stars\lim$, and 
Corollary~8.8 of \cite{BGM12} says that $\lim\stars\Low\equivSW\lim$. In particular, $\lim\stars\Low$ exists as a maximum
since $\lim\equivSW\J\circ\J^{-1}\circ\lim=\J\circ\Low$.
By Theorem~\ref{thm:BWT-COH}, $\lim\stars\WBWT_\IR\equivSW\BWT_\IR$ exists, and it is a cylinder.
With the help of Lemmas~\ref{lem:strong-composition} and \ref{lem:associative} we obtain
\[\lim\stars\lim\stars\WBWT_\IR\leqSW\lim\stars\BWT_\IR\leqSW\lim\stars\Low\stars\lim\leqSW\lim\stars\lim.\]
This implies $\WBWT_\IR\leqSW\Low_2$ by Proposition~\ref{prop:lowness}.

Let us now assume that $\WBWT_\IR\leqSW\Low$. With the help of Theorem~\ref{thm:BWT-COH} we obtain
\[\BWT_\IR\leqSW\lim\stars\WBWT_\IR\leqSW\lim\stars\Low\leqSW\lim\]
in contradiction to \cite[Theorem~12.7]{BGM12}, which states that $\lim\lW\BWT_\IR$.
\end{proof}

The first statement of this result implies the following non-uniform version, which was already
proved in \cite[Theorem~3.5(1)]{Kre11}.

\begin{corollary}
For every bounded sequence $(x_n)_n$ of real numbers there exists a low$_2$ sequence of reals that converges
to a cluster point of $(x_n)_n$.
\end{corollary}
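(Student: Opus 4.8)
The plan is simply to specialise the uniform reduction $\WBWT_\IR\leqSW\Low_2$ from Theorem~\ref{thm:WBWT-low}, relativised to a name of the given (not necessarily computable) sequence, and to extract an honest subsequence at the end. First I would fix a bounded sequence $(x_n)_n$ of reals together with a $\delta_\IR^\IN$-name $p\in\IN^\IN$ for it; boundedness yields that $\{x_n:n\in\IN\}$ has compact closure, so $(x_n)_n\in\dom(\SBWT_\IR)$. Combining $\SBWT_\IR\equivW\WBWT_\IR$ (Proposition~\ref{prop:WBWT-SBWT}) with $\WBWT_\IR\leqSW\Low_2$ gives $\SBWT_\IR\leqW\Low_2$, so there are computable $H,K$ with $H\langle\id,GK\rangle\vdash\SBWT_\IR$ for every $G\vdash\Low_2$, where $\Low_2=\J^{-1}\circ\J^{-1}\circ\lim\circ\lim$.

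Next I would run this on $p$ and track Turing degrees. Since $K(p)\leqT p$ and $K(p)$ lies in the domain of $\lim\circ\lim$ (by validity of the reduction), and since $\lim$ maps every $q$-computable convergent sequence to an output $\leqT q'$, we get $\lim\lim K(p)\leqT(K(p))''\leqT p''$. Hence, for any $G\vdash\Low_2$, the output $y:=GK(p)$ satisfies $\J(\J(y))=\lim\lim K(p)\leqT p''$, i.e.\ $y$ is low$_2$ relative to $p$. Setting $s:=H\langle p,y\rangle\in\SBWT_\IR((x_n)_n)$ we have $s\leqT p\oplus y$, and therefore $s''\leqT(p\oplus y)''\equivT p''\oplus y''\equivT p''$, so $s$ is low$_2$ relative to $p$ as well.

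Finally I would observe that, by definition of $\SBWT_\IR$, the map $s$ is strictly increasing and $(x_{s(n)})_n$ converges, necessarily to a cluster point of $(x_n)_n$; and a $\delta_\IR^\IN$-name of the subsequence $(x_{s(n)})_n$ is computable from $p\oplus s$, hence again low$_2$ relative to $p$. This $(x_{s(n)})_n$ is the desired low$_2$ sequence of reals converging to a cluster point of $(x_n)_n$.

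Since the heavy lifting is done by Theorem~\ref{thm:WBWT-low}, I expect no serious obstacle; the only points requiring care are the relativisation (the input sequence is arbitrary, so everything must be tracked relative to a fixed name $p$, using that the jump and join interact monotonically with $\leqT$), the standard fact $\lim q\leqT q'$ which is what makes a $\Low_2$-realizer send an input to an output that is low$_2$ over that input, and passing to $\SBWT_\IR$ rather than $\WBWT_\IR$ so that the output is literally a convergent subsequence of reals rather than merely a $\delta_\IR'$-name.
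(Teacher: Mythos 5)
You are following the paper's route---the corollary is meant to be read off directly from the uniform reduction $\WBWT_\IR\leqSW\Low_2$ of Theorem~\ref{thm:WBWT-low} (the paper states exactly this and refers to Kreuzer for the original non-uniform proof)---but your detour through $\SBWT_\IR$ introduces a genuine error in the degree bookkeeping. Since $\SBWT_\IR\equivW\WBWT_\IR$ is only an ordinary equivalence, your final object is $s=H\langle p,y\rangle$, and you bound its complexity by asserting $(p\oplus y)''\equivT p''\oplus y''$. The double jump does not distribute over joins: one only has $p''\oplus y''\leqT(p\oplus y)''$, and knowing $y''\leqT p''$ does not give $(p\oplus y)''\leqT p''$. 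For instance, by the Sacks Splitting Theorem there are low sets $a,b$ with $a\oplus b\equivT\emptyset'$, so $a''\oplus b''\equivT\emptyset''$ while $(a\oplus b)''\equivT\emptyset'''$. Hence for a non-computable name $p$ your argument only yields $s\leqT p\oplus y$, and the desired bound $s''\leqT p''$ does not follow as written; this is exactly where the relativized low$_2$ claim breaks. (If the corollary is read for computable input sequences, as in Kreuzer's original formulation, then $p\equivT\emptyset$, the offending step is trivially true, and your argument is fine.)

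The repair is to use the strong reduction itself, which is precisely what strongness buys you: with $H,K$ witnessing $\WBWT_\IR\leqSW\Low_2$, the output name is $H(y)$ with no access to $p$, so $H(y)''\leqT y''\equivT\lim\lim K(p)\leqT K(p)''\leqT p''$. To turn this $\delta_\IR'$-name into a sequence of reals, do not pass to $\SBWT_\IR$; instead use that the jump of the Cauchy representation of $\IR$ is computably equivalent to the representation of reals by arbitrary (not necessarily fast) converging sequences of rationals, so from $H(y)$ alone one computes a sequence of reals converging to the cluster point, and its name keeps the double-jump bound $\leqT p''$. This is the one-line derivation from Theorem~\ref{thm:WBWT-low} that the paper intends.
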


The proof idea of the second part of Theorem~\ref{thm:WBWT-low} can be used to show that low$_2$ cannot be replaced by low in this corollary.

We note that Proposition~\ref{prop:WBWT-SBWT} is only formulated for ordinary Weihrauch reducibility
and not for strong Weihrauch reducibility. Hence we cannot directly transfer Theorem~\ref{thm:WBWT-low} to $\SBWT_\IR$. 
However, we can easily derive a corresponding result for $\SBWT_\IR$ or equivalently (by Theorem~\ref{thm:SBWT-COH}) for $\COH$. 

\begin{theorem}
\label{thm:COH-low}
$\COH\leqSW\Low_2$ and $\COH\nleqW\Low$, i.e., $\COH$ is low$_2$, but not low.
\end{theorem}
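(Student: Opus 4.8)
The plan is to mirror the proof of Theorem~\ref{thm:WBWT-low}, with $\COH$ in place of $\WBWT_\IR$. By Theorem~\ref{thm:SBWT-COH} we have $\COH\equivSW\SBWT_\IR$, but $\SBWT_\IR\equivW\WBWT_\IR$ holds only for ordinary Weihrauch reducibility (the access to the input being essential, cf.\ the remark after Proposition~\ref{prop:WBWT-SBWT}), so Theorem~\ref{thm:WBWT-low} cannot be quoted directly and the argument has to be redone for $\COH$.

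For the positive statement I would reduce, via \cite[Theorem~8.6]{BGM12}, to proving $\lim\stars\lim\stars\COH\leqSW\lim\stars\lim$. Proposition~\ref{prop:COH-lim} already gives a realizer of $\COH$ that maps an input $(R_i)_i$ to a cohesive set $X$ with $X\leqT(R_i)_i'$, uniformly; but this by itself yields only $\lim\stars\lim\stars\COH\leqSW\lim\stars\lim\stars\lim$. The key step I would carry out is to refine the construction in Proposition~\ref{prop:COH-lim} by a uniform version of the first jump control technique of \cite[Section~4]{CJS01} (in the same spirit as the proof of Theorem~\ref{thm:double-limit}), so that the cohesive set $X$ additionally satisfies $X''\leqT(R_i)_i''$, uniformly in $(R_i)_i$. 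With such a realizer, everything that two applications of $\lim$ can extract from the $\COH$-output is already computable from $(R_i)_i''$, hence from the original input in two limits, which is exactly $\lim\stars\lim\stars\COH\leqSW\lim\stars\lim$; the claim $\COH\leqSW\Low_2$ then follows.

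For the negative statement, suppose towards a contradiction that $\COH\leqSW\Low$. Whenever $q$ is computable and lies in the domain of $\Low=\J^{-1}\circ\lim$, every point of $\Low(q)$ is low (its jump is the $\Delta^0_2$ set $\lim(q)$, which is Turing equivalent to $\emptyset'$), and since lowness is downward closed under Turing reducibility, $\COH$ would then possess a realizer sending computable inputs to low outputs. I would then feed to $\COH$ a computable sequence $(R_i)_i$ enumerating all primitive recursive subsets of $\IN$; this is a computable point in the domain of $\COH$, and the realizer would produce a low set $A$ cohesive for $(R_i)_i$, that is, a low p--cohesive set. By the equivalence (3)$\Leftrightarrow$(4) established inside the proof of Theorem~\ref{thm:Jockusch-Stephan} (applied with $\mathbf b=\mathbf 0$) this forces $\mathbf a'\gg\mathbf 0'$ for $\mathbf a=[A]$; but $\mathbf a'=\mathbf 0'$ since $A$ is low, so $\mathbf 0'\gg\mathbf 0'$, contradicting Proposition~\ref{prop:way-below}(1). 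The same argument actually yields $\COH\nleqW\Low$.

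The hardest part will be the positive direction: making the first jump control construction fully uniform and packaging it in the compositional-product form required by \cite[Theorem~8.6]{BGM12}; once that is in place the negative direction is a short application of the Jockusch--Stephan machinery of Theorem~\ref{thm:Jockusch-Stephan}.
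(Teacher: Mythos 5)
Your negative half is fine, but the positive half has a genuine gap: the entire content of $\COH\leqSW\Low_2$ is delegated to a ``uniform first jump control'' construction that you only name and never carry out. Producing, uniformly in $(R_i)_i$, a cohesive set $X$ with $X''\leqT\left((R_i)_i\right)''$ is precisely the hard part (non-uniformly this is the Cholak--Jockusch--Slaman low$_2$ solution theorem), and even granting it, the passage to the required strong reduction needs more than ``everything two limits can extract is computable from the double jump of the input'': one has to exhibit computable $K,H$ such that $\lim\circ\lim\circ K(p)$ is literally the double jump of a point from which $H$ computes a solution, or verify the hypothesis of \cite[Theorem~8.6]{BGM12} for arbitrary compositions of functions strongly below the factors. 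None of this is done, and you yourself flag it as the hardest step. Moreover it is unnecessary: the jump-control-type work is already encapsulated, uniformly, in the Double Limit Theorem~\ref{thm:double-limit}, which is what underlies Corollary~\ref{cor:WKL-lim-COH} ($\WKL'\equivW\lim*\COH$). The paper's proof just combines this with the lowness of $\WKL$ (\cite[Corollary~8.5]{BBP12}): $\lim*\WKL\leqW\lim$ gives $\lim*\WKL'\leqW\lim\nolimits'$, hence $\lim\nolimits'\stars\COH\leqW\lim*\lim*\COH\leqW\lim*\WKL'\leqW\lim\nolimits'$, and since $\lim\nolimits'$ is a cylinder this upgrades to $\leqSW$, so $\COH\leqSW\Low_2$ by \cite[Theorem~8.6]{BGM12}. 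You should either substitute this short derivation or actually carry out the uniform jump control, which would be a substantial piece of new work rather than a routine refinement of Proposition~\ref{prop:COH-lim}.

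The non-lowness argument you give is correct and genuinely different from the paper's. You argue directly: a strong (indeed ordinary) reduction to $\Low$ would send the computable enumeration of all primitive recursive sets to a low p--cohesive set $A$, and then the equivalence (3)$\Rightarrow$(4) inside Theorem~\ref{thm:Jockusch-Stephan} with ${\mathbf b}={\mathbf 0}$ forces $[A]'\gg{\mathbf 0}'$, contradicting $[A]'={\mathbf 0}'$ via Proposition~\ref{prop:way-below}(1); as you note, this even shows $\COH\nleqW\Low$. The paper instead stays inside the Weihrauch calculus: $\COH\leqSW\Low$ would give $\BWT_\IR\leqW\lim*\WBWT_\IR\leqW\lim*\COH\leqW\lim*\Low\leqW\lim$ (using Theorems~\ref{thm:BWT-COH}, \ref{thm:SBWT-COH} and Proposition~\ref{prop:WBWT-SBWT}), contradicting \cite[Theorem~12.7]{BGM12}. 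Your version is more self-contained on the computability-theoretic side; the paper's reuses results it has already established and parallels the proof of Theorem~\ref{thm:WBWT-low}.
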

\begin{proof}
By Proposition~\ref{prop:lowness}, $\lim*\WKL\leqW\lim$ since $\WKL$ is low by \cite[Corollary~8.5]{BBP12}
and hence $\lim*\WKL'\leqW\lim'$.
With Corollary~\ref{cor:WKL-lim-COH} we obtain
\[\lim\nolimits'*\COH\leqW\lim*\lim*\COH\leqW\lim*\WKL'\leqW\lim*\lim\leqW\lim\nolimits'.\]
Hence $\COH$ is low$_2$ by Proposition~\ref{prop:lowness}.

Let us now assume that $\COH\leqW\Low$. With the help of Theorems~\ref{thm:BWT-COH}, \ref{thm:SBWT-COH} and Proposition~\ref{prop:WBWT-SBWT} we obtain
\[\BWT_\IR\leqW\lim*\WBWT_\IR\leqW\lim*\COH\leqW\lim*\Low\leqW\lim\]
in contradiction to \cite[Theorem~12.7]{BGM12}, which states that $\lim\lW\BWT_\IR$.
\end{proof}

Finally, we want to show that $\COH$ is not probabilistic,
for this purpose we need the following technical lemma.

\begin{lemma}
\label{lem:jump-measure}
The set $C=\{n\in\IN:\mu\{p\in2^\IN:F(p)(n)=j\}>r\}$ is a $\SO{2}$--set for every limit computable $F:2^\IN\to2^\IN$, $r\in\IQ$ and $j\in\IN$.
\end{lemma}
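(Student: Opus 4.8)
The plan is to unfold the limit computability of $F$ into a computable stage function and then read off the quantifier complexity of $C$ from how the relevant sets sit in the Borel hierarchy. Since $F\colon 2^\IN\to 2^\IN$ is limit computable and total, any $\lim$-realizer of $F$ provides a computable $f\colon 2^\IN\times\IN\times\IN\to\{0,1\}$ (computed by an oracle machine with oracle $p$, uniformly in $n,s$) such that $F(p)(n)=\lim_{s\to\infty}f(p,n,s)$ for all $p\in 2^\IN$ and all $n$, the limit existing for every $p$ by totality. We may assume $j\in\{0,1\}$, the other cases being trivially decidable. Set $A_n:=\{p\in 2^\IN:F(p)(n)=j\}$ and $B_{n,s_0}:=\{p:(\forall s\ge s_0)\ f(p,n,s)=j\}$. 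Because the limit exists we have $A_n=\bigcup_{s_0\in\IN}B_{n,s_0}$, and the $B_{n,s_0}$ increase with $s_0$, so $\mu(A_n)=\sup_{s_0}\mu(B_{n,s_0})$.

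The next step is the effective-measure bookkeeping. For fixed $n,s$ the set $D_{n,s}:=\{p:f(p,n,s)=j\}$ is effectively clopen -- a total computable $\{0,1\}$-valued map on the compact space $2^\IN$ has a computable modulus of uniform continuity, so $D_{n,s}$ is computed from $n,s$ as a finite union of cylinders, and in particular $\mu(D_{n,s})$ is a computable rational, uniformly. Hence $q_{n,s_0,t}:=\mu\bigl(\bigcap_{s_0\le s\le t}D_{n,s}\bigr)$ is a computable rational, uniformly in $n,s_0,t$, nonincreasing in $t$, with $\inf_t q_{n,s_0,t}=\mu(B_{n,s_0})$; thus $\mu(B_{n,s_0})$ is a right-c.e.\ real, uniformly. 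Now
\[
 n\in C \iff \mu(A_n)>r \iff (\exists s_0)(\exists r'\in\IQ)\bigl(r'>r\ \wedge\ (\forall t)\ q_{n,s_0,t}\ge r'\bigr):
\]
the first equivalence is the definition, the second uses $\mu(A_n)=\sup_{s_0}\mu(B_{n,s_0})$ together with the facts that for $\alpha=\inf_t q_t$ one has $\alpha>r$ iff some rational $r'$ satisfies $r<r'\le\alpha$, and $\alpha\ge r'$ iff $q_t\ge r'$ for all $t$. The matrix of the right-hand side is decidable uniformly in $n$, so $C$ is $\SO{2}$ (in fact lightface $\sO{2}$, as $f,r,j$ are effectively given).

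I expect no genuine obstacle here; the only points needing a moment's care are routine. First, that ``limit computable'' for a total $F$ on $2^\IN$ really yields such an $f$ with the limit defined for every $p$ -- immediate, since a $\lim$-realizer of $F$ must send every $p$ into $\dom(\lim)$. Second, that $D_{n,s}$ is effectively clopen with computable rational measure -- this is where compactness of $2^\IN$ enters. Third, the strict-versus-nonstrict juggling in the last equivalence, which is forced because the measure of an effectively closed set is only right-c.e., so taking the supremum over $s_0$ lands one exactly at the $\exists\exists\forall$ shape above and no lower. The real content of the lemma is precisely this structural observation: $A_n$ is an increasing union of effectively closed sets whose measures are uniformly right-c.e.
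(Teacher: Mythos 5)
Your proof is correct. It reaches the same conclusion by a mildly different route than the paper: the paper first cites the effective Borel hierarchy result that a limit computable $F$ is effectively $\SO{2}$--measurable, so that $A_n=F^{-1}\{q:q(n)=j\}$ comes uniformly with a presentation $A_n=(\bigcap_{i}U_i)^{\rm c}$ by c.e.\ open sets, and then shows that $\mu(A_n)\leq r$ is a $\PO{2}$--condition via stage-wise approximations $\mu(\bigcap_{i\leq t}U_i[s])>1-r-2^{-t}$, taking complements at the end. You instead avoid that cited measurability lemma altogether: you unpack a limit realizer into a computable stage function $f(p,n,s)$, use compactness of $2^\IN$ to make each stage set $D_{n,s}$ uniformly computably clopen with computable rational measure, decompose $A_n$ as the increasing union of the effectively closed sets $B_{n,s_0}=\bigcap_{s\geq s_0}D_{n,s}$ whose measures are uniformly right-c.e., and count quantifiers directly on $\mu(A_n)>r$ rather than on the complementary inequality. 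The measure bookkeeping (monotone continuity of $\mu$ plus a rational threshold witness) is the same in both arguments; what your version buys is self-containedness and an explicit lightface $\sO{2}$ bound, while the paper's version is shorter because the level-two description of $A_n$ is imported from the literature. Your minor care points (totality guaranteeing the pointwise limits, uniform computability of the clopen stages, and the strict-versus-nonstrict handling forced by right-c.e.\ measures) are exactly the places where the details matter, and you treat them correctly.
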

\begin{proof}
By $V_n:=\{q\in2^\IN:q(n)=j\}$ we define a computable sequence of c.e.\ open sets, and hence $A_n:=F^{-1}(V_n)$ is a computable
sequence of $\SO{2}$--sets with respect to the effective Borel hierarchy \cite{Bra05}.
For every such $\SO{2}$--set there is a computable  sequence $(U_i)_i$ of open sets such that $A_n=(\bigcap_{i=0}^\infty U_i)^{\rm c}$.
By $U_i[s]$ we denote the enumeration of $U_i$ up to stage $s\in\IN$ (which is a finite union of basic open balls).
Then we obtain, by the continuity of the probability measure $\mu$,
\[\mu\left(A_n\right)\leq r\iff\mu\left(\bigcap_{i=0}^\infty U_i\right)\geq 1-r\iff(\forall t)(\exists s)\;\mu\left(\bigcap_{i=0}^t U_i[s]\right)>1-r-2^{-t},\]
which is a $\PO{2}$--property. Hence the complementary property $\mu(A_n)>r$ is $\SO{2}$.
\end{proof}

In order to express the next result we need a generalization of the concept of being probabilistic. 
We call a function $f:\In(X,\delta_X)\mto(Y,\delta_Y)$ on represented spaces {\em limit probabilistic}
if there is a limit computable function $F:\In\IN^\IN\times2^\IN\to\IN^\IN$ and a family $(A_p)_{p\in D}$ of measurable sets $A_p\In2^\IN$
with $D:=\dom(f\delta_X)$ such that $\mu(A_p)>0$ for all $p\in D$, and $\delta_YF(p,r)\in f\delta_X(p)$ for all $p\in D$ and $r\in A_p$.
This concept is obviously weaker than being probabilistic, $\WKL$ is for instance not probabilistic by \cite[Proposition~14.8]{BGH15a},
but it is limit computable and hence limit probabilistic. 
However, we can prove the following.

\begin{lemma}
\label{lem:WKL-limit-probabilistic}
$\WKL'$ is not limit probabilistic.
\end{lemma}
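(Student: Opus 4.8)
The plan is to mimic the argument used for $\PA$ (Proposition~\ref{prop:PA-probabilistic}) but now one level up, replacing the classical measure-zero fact of Jockusch and Soare (Lemma~\ref{lem:PA-measure}) by its jump-relativized counterpart and keeping track of the extra limit computation via Lemma~\ref{lem:jump-measure}. Concretely, suppose for a contradiction that $\WKL'$ is limit probabilistic, witnessed by a limit computable $F\colon\In\IN^\IN\times2^\IN\to\IN^\IN$ and a family $(A_p)_{p\in D}$ of positive-measure sets with $\delta F(p,r)$ a $\WKL'$-solution whenever $r\in A_p$. Since $\WKL'\equivSW\WKL\stars\lim$ (Fact~\ref{fact:BWT} together with $\WKL'\equivSW\WKL\stars\lim$ from the jump discussion), a $\WKL'$-instance can be presented as a $\lim$-name $q$ of (a name of) an infinite binary tree $T$; fixing the computable zero input $p$, we get that for every $r\in A_p$ the real $F(p,r)$ computes — after one further limit — an infinite path through a tree $T$ that is itself only limit computable. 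Relativizing to the halting problem: for each $r$ in the positive-measure set $A_p$, the jump $[F(p,r)]'$ is a $\PA$-degree relative to $\emptyset'$, i.e. $[F(p,r)]'\gg\emptyset'$.

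The heart of the argument is then a measure bound: the set $B:=\{r\in2^\IN : [r]'\gg\emptyset'\}$ should have measure zero. This follows by relativizing Lemma~\ref{lem:PA-measure} (Jockusch--Soare) to $\emptyset'$, since $[r]'\leq[r\oplus\emptyset']'\equiv[r]'$ and the relativized Low Basis / measure estimate says that the set of reals whose $\emptyset'$-jump bounds a $\emptyset'$-relative PA degree has measure zero; alternatively one invokes directly that $\{r:[r]\gg\emptyset'\}$ has measure zero by Lemma~\ref{lem:PA-measure} applied to the halting problem as the base, and observes $[F(p,r)]'\gg\emptyset'$ forces $[F(p,r)]\oplus\emptyset'$ into a measure-zero set; but $r\mapsto F(p,r)\oplus\emptyset'$ is only limit computable, so one cannot immediately conclude $\mu(A_p)=0$. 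This is where Lemma~\ref{lem:jump-measure} enters: it guarantees that the relevant ``large fibre'' sets remain arithmetically definable ($\SO{2}$), so one can run the standard argument that a limit computable $F$ pushing a positive-measure set into a null set of the appropriate form is impossible. More precisely, if $\mu(A_p)>0$ then, decomposing $A_p$ along the output bits of $F$ and using Lemma~\ref{lem:jump-measure} to control the measures of the cylinders $\{r:F(p,r)(n)=j\}$, one extracts a single ``random-like'' real whose image lies in the null set $B$ above, contradicting $\mu(B)=0$. Then Proposition~\ref{prop:PA-probabilistic}-style reasoning (using that limit probabilistic degrees are downward closed, as in \cite[Proposition~14.3]{BGH15a} suitably relativized) finishes it.

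The main obstacle — and the reason Lemma~\ref{lem:jump-measure} was isolated just before this statement — is precisely the mismatch between ``limit computable'' and ``measurable in a uniformly arithmetic way'': a bare limit computable map can in principle be so wild that the preimages of measurable sets are merely $\DO{2}$ rather than effectively so, and one needs the $\SO{2}$-uniformity to carry out the Fubini-type measure computation and to relativize the Jockusch--Soare null-set argument correctly. Once that definability is in hand, the rest is a routine transcription of the proof of Proposition~\ref{prop:PA-probabilistic} one jump higher. I would therefore organize the write-up as: (i) reduce to the fixed computable input and extract the consequence $[F(p,r)]'\gg\emptyset'$ for $r\in A_p$; (ii) state and use the $\emptyset'$-relativization of Lemma~\ref{lem:PA-measure} to get a null set $B$; (iii) invoke Lemma~\ref{lem:jump-measure} to see the fibres are $\SO{2}$ and run the measure-theoretic contradiction; (iv) conclude $\mu(A_p)=0$, contradicting positivity, hence $\WKL'$ is not limit probabilistic.
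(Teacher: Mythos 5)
Your overall plan---fix a computable instance, argue that any limit probabilistic witness would push a positive-measure set of advice reals $r$ into outputs $\leqT r'$ with a strong degree-theoretic property, and contradict a jump-relativized Jockusch--Soare null-set result whose proof needs Lemma~\ref{lem:jump-measure}---is a legitimate alternative to the paper's route (the paper instead uses $\SEP'\leqSW\WKL'$, takes a pair of $\SO{2}$ sets with no $\DO{2}$ separator as the computable instance, and relativizes \cite[Theorem~5.3]{JS72} to show $\mu\{D:(\exists C\leqT D')\,C\mbox{ separates }A,B\}=0$). But as written your argument has a genuine gap at its central step. Unlike $\PA$, whose very definition forces every solution to be of PA degree relative to the input, $\WKL'$ only promises a path through whatever tree you feed in; the property of the solutions therefore depends entirely on the chosen instance. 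You fix ``the computable zero input'' and then assert that every $F(p,r)$ with $r\in A_p$ satisfies $[F(p,r)]'\gg\emptyset'$ --- for that input this is simply unjustified (and false in general). To make step (i) work you must choose $p$ to be a computable $\delta'$-name of a specific $\emptyset'$-computable tree all of whose paths are $\DNC_2$ relative to $\emptyset'$ (equivalently, of PA degree over $\emptyset'$ by Proposition~\ref{prop:PA}), so that $[F(p,r)]\gg\emptyset'$ and, since $F$ is limit computable and $p$ computable, $F(p,r)\leqT r'$. (Also note the jump changes only the input representation, so no ``further limit'' is applied to the output.)

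The second gap is the null-set step. The set you actually need to be null is $\{r:(\exists C\leqT r')\,[C]\gg\emptyset'\}$, and your justification does not establish this: the displayed relativization ``$[r]'\leq[r\oplus\emptyset']'\equiv[r]'$'' is false in general (for computable $r$ the middle degree is $\emptyset''$), and the plain relativization of Lemma~\ref{lem:PA-measure} to $\emptyset'$ only controls degrees below $[r\oplus\emptyset']$, not below $[r']$. Bridging that mismatch requires real work: either rerun the Jockusch--Soare measure argument one jump up --- this is exactly where the paper uses Lemma~\ref{lem:jump-measure}, to check that the sets occurring in that proof are $\SO{2}$ so the majority-vote estimate goes through at the $\DO{2}$ level --- or invoke the additional fact that almost every $r$ satisfies $r'\equivT r\oplus\emptyset'$ (e.g.\ every $2$-random is generalized low$_1$) and then apply the $\emptyset'$-relativization of Lemma~\ref{lem:PA-measure} on that conull set. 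Your paragraph about ``extracting a random-like real'' and a ``Fubini-type computation'' gestures at this but does not constitute a proof. With the instance chosen as above and one of these two completions of the measure step, your $\PA$-style route does yield the lemma, essentially parallel to the paper's $\SEP'$-based proof.
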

\begin{proof}
The proof of \cite[Theorem~5.3]{JS72} relativizes and yields the following:
since there are $\SO{2}$--sets $A,B\In\IN$ that cannot be separated by a $\Delta_2^0$--set,
there is a tree $T\leqT\emptyset'$ such that
\[\mu\{D\in2^\IN:(\exists C\leqT D')\;C\in[T]\}=0.\]
The proof follows exactly along the lines of the proof given in \cite{JS72}, with the additional observation that the sets $C_i$
defined analogously to those in that proof are $\SO{2}$--sets by Lemma~\ref{lem:jump-measure}.
\end{proof}

Now we can derived the following corollary. 

\begin{corollary}
\label{cor:COH-probabilistic}
$\COH$ is not probabilistic.
\end{corollary}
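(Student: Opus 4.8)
The plan is to derive Corollary~\ref{cor:COH-probabilistic} from the already-established facts about $\COH$ together with Lemma~\ref{lem:WKL-limit-probabilistic}. The key observation is that probabilistic Weihrauch degrees are closed downwards (by \cite[Proposition~14.3]{BGH15a}), and more is true: the compositional product of a limit computable function with a probabilistic function is limit probabilistic. Since $\WKL'\equivW\lim*\COH$ by Corollary~\ref{cor:WKL-lim-COH}, if $\COH$ were probabilistic, then $\WKL'$ would be limit probabilistic, contradicting Lemma~\ref{lem:WKL-limit-probabilistic}.

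First I would make precise the passage from "probabilistic" to "limit probabilistic under composition with $\lim$". Suppose $\COH$ is probabilistic, witnessed by a computable $F_0:\In\IN^\IN\times2^\IN\to\IN^\IN$ and a family $(A_p)_p$ of positive-measure sets with $\delta_{2^\IN}F_0(p,r)\in\COH\delta(p)$ for $r\in A_p$. Composing with $\lim$ on the output side, the map $p\mapsto\lim(F_0(p,r))$ (for a good random advice $r$) is limit computable in $p$ and produces, for each fixed $p$ in the domain, an output in $\lim*\COH$ applied to $p$. More carefully, one uses the characterization $\lim*\COH\equivW\lim\circ G\circ\COH\circ K$ for suitable computable $G,K$ (this is how compositional products are realized), and observes that $\lim\circ G\circ F_0(K(p),r)$ is limit computable in $p$ with positive-measure advice $A_{K(p)}$; this exhibits $\lim*\COH$, hence $\WKL'$, as limit probabilistic.

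The one technical point to verify is that "limit probabilistic" is indeed preserved under the equivalence $\equivW$ and under pre-composition by computable functions and post-composition by limit computable functions — i.e. that the class of limit probabilistic degrees is downward closed in the appropriate sense. This is entirely analogous to the corresponding statement for probabilistic degrees in \cite[Proposition~14.3]{BGH15a}: a limit computable reduction applied before or after a limit computable map is again limit computable (limit computable functions are closed under composition, as $\lim\stars\lim\equivSW\cc_{\emptyset'}\times\id$ by the corollary to Theorem~\ref{thm:double-limit}, and in particular $\lim$ absorbs a further $\lim$ up to the halting problem which does not affect the advice-based argument), and the advice set can be pulled back along computable input transformations. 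I expect this bookkeeping to be the main (though routine) obstacle; once it is in place the corollary is immediate:

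\begin{proof}
Assume for a contradiction that $\COH$ is probabilistic. Since $\lim$ is limit computable and the class of limit probabilistic functions is closed under pre-composition with computable functions and post-composition with limit computable functions (by the argument of \cite[Proposition~14.3]{BGH15a}, using that limit computable functions are closed under composition), it follows that $\lim*\COH$ is limit probabilistic. By Corollary~\ref{cor:WKL-lim-COH} we have $\WKL'\equivW\lim*\COH$, so $\WKL'$ is limit probabilistic, contradicting Lemma~\ref{lem:WKL-limit-probabilistic}. Hence $\COH$ is not probabilistic.
\end{proof}
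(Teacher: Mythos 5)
Your proposal is correct and follows essentially the same route as the paper: the paper's proof is exactly the three-step argument that $\COH$ probabilistic would make $\lim*\COH$ limit probabilistic, hence $\WKL'$ limit probabilistic via Corollary~\ref{cor:WKL-lim-COH}, contradicting Lemma~\ref{lem:WKL-limit-probabilistic}. Your additional bookkeeping about composing the advice-based realizer with $\lim$ and the downward closure of limit probabilistic degrees is just an expansion of the step the paper states without detail.
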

\begin{proof}
Let us assume that $\COH$ is probabilistic. Then $\lim*\COH$ is limit probabilistic and
hence $\WKL'$ too by Corollary~\ref{cor:WKL-lim-COH}. This contradicts Lemma~\ref{lem:WKL-limit-probabilistic}.
\end{proof}

\begin{figure}[htb]
\begin{tikzpicture}[scale=0.84,auto=left,every node/.style={fill=black!15}]
\useasboundingbox  rectangle (16,16);
\def\rvdots{\raisebox{1mm}[\height][\depth]{$\huge\vdots$}};


\draw[style={fill=black!1}]  (0,16) rectangle (16,0);

\draw[style={fill=black!3}]  (12,2.3) ellipse (3 and 2);

  \node (COH) at (11.2,14) {$\SBWT_\IR\equivSW\COH$};
  \node (JD) at (1,14) {$\J_\DD$};
  \node (LIM) at (3,15.2) {$\lim\equivSW\J$};
  \node (LIMJ) at (13,13) {$\lim_\J$};
  \node (LIMD) at (12,12) {$\lim_\Delta$};
  \node (L) at (3,14) {$\Low\equivSW(\J^{-1})'$};
  \node (LBT) at (1,13) {$\LBT$};
  \node (CR) at (5,13) {$\C_\IR$};
  \node (WKL) at (3,12) {$\DNC_2\equivSW\WKL$};
  \node (DNC3) at (3,11) {$\DNC_3$};
  \node (DNCN) at (3.5,8.5) {$\DNC_\IN$};
  \node (PA) at (2.5,7.5) {$\PA$};
  \node (WWKLD) at (6,12) {$\WWKL^\Delta$};
  \node (WWKL) at (5,11) {$\WWKL$};
  \node (1SWWKL) at (5,10) {$(1-*)\dash\WWKL$};
  \node (MLR) at (9,9) {$\MLR$};
  \node (1GEN) at (13,9) {$1\dash\GEN$};
  \node (1WGEN) at (13,8) {$1\dash\WGEN$};
  \node (KPT) at (9,7.7) {$\KPT$};
  \node (BCT0) at (13,2.5) {$\BCT_0$};
  \node (BCT0S) at (13,11) {$\BCT_0'$};
  \node (HYP) at (11.2,7) {$\HYP$};
  \node (CN) at (9.2,11) {$\C_\IN\equivSW\BCT_1$};
  \node (LPO) at (8,10) {$\LPO$};
  \node (LLPO) at (6,9) {$\C_2\equivSW\LLPO$};
  \node (ACCN) at (6,8) {$\ACC_\IN$};
  \node (NON) at (9,5) {$\NON$};
  \node (ID) at (12,3.5) {$\id$};
  \node (JM) at (11,2) {$\J^{-1}$};
  \node (JD1) at (14.1,5) {$\J_\DD^{-1}$};
  \node (JIT) at (14.1,6) {$\JIT$};
  \node (cKid) at (15.1,7) {$\cc_{\emptyset'}\times\id$};

 
 \foreach \from/\to in 
 {
 LIM/COH,
  COH/HYP,
  LIM/L,
  LIM/JD,
  L/LBT,
  L/CR,
  LBT/WKL,
  CR/WKL,
  CR/WWKLD,
  WKL/WWKL,
  WKL/DNC3,
  DNC3/DNCN,
  DNCN/ACCN,
  WWKLD/WWKL,
  WWKL/1SWWKL,
  1SWWKL/DNCN,
  LLPO/ACCN,
  DNCN/NON,
  DNC3/PA,
  PA/NON,
  WWKLD/CN,
  CN/LPO,
  LPO/LLPO,
  1SWWKL/MLR,
  L/LIMJ,
  LIMJ/BCT0S,
  LIMJ/LIMD,
  LIMD/CN,
  BCT0S/1GEN,
  1WGEN/BCT0,
  1GEN/1WGEN,
  1WGEN/HYP,
  KPT/NON,
  HYP/NON,
  ID/BCT0,
  LIMD/ID,
  CR/LIMD,
  ID/JM,
  cKid/JIT,
  JIT/JD1}
 \draw [->,thick] (\from) -- (\to);

 
 \foreach \from/\to in 
 {NON/ID,
  1GEN/KPT,
  MLR/KPT, 
  BCT0/JM}
 \draw [->,dashed] (\from) -- (\to);


\draw [->,thick,looseness=1.5] (WWKL) to [out=0,in=30] (LLPO);
\draw [->,thick,looseness=1.5] (WKL) to [out=220,in=170] (ID);
\draw [->,thick,looseness=0.7] (JD) to [out=220,in=150] (PA);
\draw [->,thick,looseness=0.6] (LIMJ) to [out=330,in=90] (JIT);
\draw [->,thick,looseness=1.8] (LIM) to [out=0,in=90] (cKid);


\draw [->,dashed,looseness=1.1] (HYP) to [out=10,in=240] (1WGEN);
\draw [->,dashed,looseness=1.2] (ACCN) to [out=340,in=120] (ID);
\draw [->,dashed,looseness=1.9] (JD1) to [out=270,in=350] (JM);


\node[style={fill=black!1}] at (3,1.1) {limit computable $=\SO{2}$};
\node[style={fill=black!3}] at (12,1.1) {computable $=\SO{1}$};

\end{tikzpicture}
\ \\[-0.5cm]
\caption{Part of the computability theory zoo in the Weihrauch lattice. The solid arrows indicate strong Weihrauch reductions in the opposite direction and the dashed
arrows indicate ordinary Weihrauch reductions.}
\label{fig:diagram-Full}
\end{figure}

\section{Conclusion}

In this paper we have started to classify the uniform computational content of computability theory with the
help of the Weihrauch lattice.
The diagram in Figure~\ref{fig:diagram-Full} visualizes some of the reductions that we have studied.
While the non-relativized versions of problems that do not depend on parameters can be studied in the Medvedev lattice,
and in the less uniform Muchnik lattice too \cite{Sim15}, there are some significant differences. 
Perhaps most noticeable is that the problems $\DNC_\IN$ and $\MLR$ are incomparable in our lattice,
whereas the problem corresponding to $\DNC_\IN$ in the Muchnik lattice is reducible to the problem corresponding to $\MLR$ \cite{Sim15}.

While we have studied a number of relevant computability-theoretic properties, we have only started
to look at some of the most basic (non-constructive) theorems of computability theory. 
It is a promising new research programme to continue along these lines and to study
more advanced theorems that require priority constructions and other techniques whose
uniform computational content has not yet been investigated.

\section{Acknowledgments}

We would like to thank Arno Pauly for helpful comments on an earlier version of this article
and the anonymous referee for her or his very careful proof reading that helped us to improve
some results and the presentation of the article.

\bibliographystyle{plain}

\bibliography{C:/Users/vbrattka/Dropbox/Bibliography/lit}

\end{document}